\declaretheorem[name=Theorem,numberwithin=section]{thm}
\newtheorem{lem}[thm]{Lemma}%[section]
\newtheorem{prop}[thm]{Proposition}%[section]
\theoremstyle{definition}
\newtheorem{defn}[thm]{Definition}%[section]
\newtheorem{rem}[thm]{Remark}
\newtheorem{assume}[thm]{Assumption}
\newtheorem{claim}[thm]{Claim}
\numberwithin{equation}{section}
\numberwithin{table}{section}
\numberwithin{figure}{section}
\newcommand{\e}{\varepsilon}
\newcommand{\p}{\varphi}
\newcommand{\al}{\alpha}
\newcommand{\Om}{\Omega}
\newcommand{\R}{\mathbb{R}}
\newcommand{\Z}{\mathbb{Z}}
\newcommand{\N}{\mathbb{N}}
\newcommand{\C}{\mathbb{C}}
\newcommand{\Q}{\mathbb{Q}}
\newcommand{\E}{\mathbb{E}}
\newcommand{\Pb}{\mathbb{P}}
\newcommand{\D}{\mathscr{D}}%{\mathcal{D}}
\newcommand{\Dd}{\mathcal{D}}
\newcommand{\bmo}{{\rm BMO}}
\newcommand{\hardy}{H^1_{{\rm prod}}(\mu) }
\newcommand{\intav}{-\!\!\!\!\!\!\int}
\DeclareMathOperator{\supp}{supp}
\def\ls{\lesssim}
\def\gs{\gtrsim}
\def\laz{\langle}
\def\raz{\rangle}
\def\noz{\nonumber}
\begin{document}
\setlength{\parskip}{5pt}
\begin{frontmatter}

\title{Nonhomogeneous $T(1)$ Theorem  on Product Quasimetric Spaces\tnoteref{mytitlenote}}
\tnotetext[mytitlenote]{ The first and second authors are supported by DP170101060. The second author is supported by a Lift-off Fellowship of the Australian Mathematical Society. The third author is supported by DP160100153.  The fourth author is partially supported by National Science Foundation DMS grant \# 1800057 and Australian Research Council DP 190100970.}

\author[mymainaddress]{Ji Li}
\ead{ji.li@mq.edu.au}

\author[mysecondaryaddress]{Trang T.T. Nguyen}
\ead{trang.t.nguyen1@mymail.unisa.edu.au}

\author[mysecondaryaddress]{Lesley A. Ward}
\ead{lesley.ward@unisa.edu.au}

\author[mythirdaddress]{Brett D. Wick
}
\ead{bwick@wustl.edu}

\address[mymainaddress]{Department of Mathematics\\ Macquarie University\\ NSW 2109, Australia}

\address[mysecondaryaddress]{School of Information Technology and Mathematical Sciences\\ University of South Australia\\ Mawson Lakes SA 5095, Australia}

\address[mythirdaddress]{Department of Mathematics \& Statistics\\ Washington University -- St. Louis\\ One Brookings Drive, St. Louis, MO USA 63130-4899}

\begin{abstract}
In this paper, we provide a non-homogeneous  $T(1)$ theorem on product spaces~$(X_1 \times X_2, \rho_1 \times \rho_2, \mu_1 \times \mu_2)$ equipped with a quasimetric~$\rho_1 \times \rho_2$ and a Borel measure~$\mu_1 \times \mu_2$, which, need not be doubling but satisfies an upper control on the size of quasiballs.
\end{abstract}

\begin{keyword}
$T(1)$ theorem, $\bmo$, nonhomogeneous settings, product settings, quasimetric spaces
\MSC[2010] Primary 42B20; Secondary 30L99, 43A15
\end{keyword}

\end{frontmatter}
%\linenumbers

%\newpage

\tableofcontents

\section{Introduction And Statement Of Main Results}
\setcounter{equation}{0}

The \emph{$T(1)$ Theorem} of David and Journ\'{e} was a ground-breaking result that gave easily-checked criteria for a SIO~$T$ to be bounded from~$L^2(\mathbb{R}^n)$ to~$L^2(\mathbb{R}^n)$ \cite{DJ84}.
The name is derived from the fact that these criteria are expressed in terms of the image~$T(1)$ of the constant function~$1$.
Specifically, the~$T(1)$ theorem says that a Calder\'on--Zygmund operator is bounded on~$L^2(\R^n)$ if and only if $T(1) \in \bmo(\R^n)$; $T^*(1) \in \bmo(\R^n)$, and~$T$ satisfies the so-called weak boundedness property.  Here~$T^*$ is the adjoint operator of~$T$.  While incredibly useful, sometimes checking the behavior on $T(1)$ is not obvious nor direct and instead one wants to test on a function $b$ that has properties similar to the function $1$ (a so called para-accretive function).  This extension was also investigated and established by David, Journ\'{e} and Semmes \cite{DJS85} providing at $T(b)$ theorem.

The setting of this classical $T(1)$ theorem of David and Journ\'{e} is Euclidean space~$\mathbb{R}^n$, equipped with the usual Euclidean metric and Lebesgue measure. In particular, the functions~$f$ that the operator~$T$ acts on are defined on~$\mathbb{R}^n$.
Since then, several remarkable results have been obtained along this line of research, further unveiling the strong relationship between the function space~$\bmo$ and the $L^2$-boundedness of singular integral operators. In the last twenty years or so, substantial progresses have been made by Tolsa \cite{T01a,T01b,T01c, T03}, Nazarov--Treil--Volberg \cite{NTV03}, in showing that a suitable version of  $T(1)$ theorem also hold without the doubling property of the measure.
Specifically, in $\mathbb R^n$, the standard Lebesgue measure can be replaced by a more general measure $\mu$ satisfying only the \emph{growth condition}:\ \  for some fixed positive constants $C$ and $d\in (0, n]$,
$$ \mu(B(x,r))\leq Cr^d \quad {\rm for\ all\ } x\in \mathbb R^n,\ \  r>0.$$

When it comes to the \emph{multi-parameter setting} (or \emph{product setting}), meaning one considers  singular integral operators that are invariant under non-isotropic dilations
$(x_1, \ldots, x_n) \rightarrow (\delta_1x_1, \ldots, \delta_n x_n )$, where $x_i \in \mathbb R^{n_i}$, $\delta_i >0$, the theory of Calder\'{o}n and Zygmund for the one-parameter setting does not carry over in a straightforward way. Multiparameter harmonic analysis was introduced in the '70s and studied extensively in the '80s, led by S.-Y. A. Chang, R. Fefferman, R. Gundy, J. Journ\'e, J. Pipher, E. Stein and others (see for example \cite{Cha,GS,CF1,Fef,CF2,FS82,CF3,KM,Fef86,Fef87,P}).
 The first $T(1)$ type theorem for product spaces was proved by Journ\'e \cite{Jou85}, where he formulated the assumptions in the language of vector-valued Calder\'on--Zygmund theory. Later, this has been studied extensively by many authors via Haar expansions, or via product representation theorem, see for example (but not limited to)  Pott--Villarroya \cite{PV13}, Martikainen \cite{Mar12}, Herr\'an \cite{He}, Ou \cite{Ou15},  Li--Martikainen--Vuorinen \cite{LMV}.  Han, Lin and the first author \cite{HLL15} also established the $T(1)$ theorem on the tensor product of spaces of homogeneous type in the sense of Coifman and Weiss, by using the product version of discrete reproducing formula.

A recent breakthrough was due to Hyt\"{o}nen and Martikainen \cite{HM14}, where they proved a $T(1)$ theorem on bi-parameter Euclidean spaces~$\R^{n_1} \times \R^{n_2}$, equipped with an upper doubling measure.

Ten year ago, Hyt\"onen \cite{H10} set up a more general setting for non-homogeneous analysis on metric spaces $(X, d, \mu)$, where $(X, d)$ is geometrically doubling, meaning that every ball of radius $r$ can be covered by at most $N$ balls of radius $r/2$, and the measure $\mu$ satisfies the so-called upper doubling condition as follows:  there exists a dominating function $\lambda: X\times\mathbb R_+\to\mathbb R_+$ and a positive constant $C_\lambda$ such that for all $x\in X$ and $r>0$,
$$ {\rm(i)}:\ r\to\lambda(x,r)\ {\rm is \ increasing\ },\quad {\rm(ii)}:\ \lambda(x,2r)\leq C_\lambda \lambda(x,r),\quad {\rm(iii)}:\  \mu(B(x,r))\leq \lambda(x,r). $$
Later, many results on the $T(b)$ and local $T(b)$ theorems  were extended to this general setting. See, for example, \cite{HK12,HM12a,HM12b,HYY12} and the references therein.

The goal of this paper is to establish a full version of the product $T(1)$ theorem for Calder\'on--Zygmund type singular integrals $T$ on non-homogeneous metric measure spaces setting.  This result will marry the multiparameter theory with that of non-homogeneous harmonic analysis.

To be more precise,
let $(\mathcal{X},\rho,\mu) := (X_1 \times X_2, \rho_1 \times \rho_2, \mu_1 \times \mu_2)$ be a non-homogeneous bi-parameter quasimetric space such that for $i = 1,2$ we have $X_i$ is a set,
 $\rho_i$ is a quasimetric on~$X_i$ satisfying the regularity property, $\mu_i$ is an upper doubling measure on~$X_i$.
On $(\mathcal{X},\rho,\mu)$,
let $C^{\eta}_{c}(\mathcal{X}) = C_c^{\eta}(X_1) \times C_c^{\eta}(X_2)$, $\eta >0$ denote the space of continuous functions~$f$ with compact support such that
\begin{equation}\label{eq:cts_func}
\|f\|_{\eta} := \sup_{x_1 \neq y_1, x_2 \neq y_2}
 \frac{|f(x_1,x_2) - f(y_1,x_2) - f(x_1,y_2) + f(y_1,y_2)|}
 {\rho_1(x_1,y_1)^{\eta} \, \rho_2(x_2,y_2)^{\eta}} < \infty.
\end{equation}
We study the operators which satisfy the four Assumptions~\ref{assum_1}--\ref{assum_4} as in Section~\ref{subsec:assume}.
\begin{defn}\label{defn:bipara_SIO}
  A linear operator $T: C^{\eta}_{c}(\mathcal{X}) \rightarrow C^{\eta}_{c}(\mathcal{X})'$ is called a \emph{bi-parameter singular integral operator (bi-parameter SIO)} if it satisfies Assumptions~\ref{assum_1}--\ref{assum_4} on size and regularities.
\end{defn}

The main result of this paper is as follows.
\begin{thm}\label{thm:T1thm_ver3}
Let $(\mathcal{X},\rho,\mu)$ be a non-homogeneous bi-parameter quasimetric space.
Let $T$ be a bi-parameter SIO as defined above.
Assume also that~$T$ satisfies Assumptions~\ref{assum:5b}--\ref{assum:7b} on the weak boundedness properties, and $S(1) \in \bmo_{\textup{prod}}(\mu)$ for all
$S \in \{T,T^*,T_1,T^*_1\}$.
Then~$T$ extends to a bounded operator on~$L^2(\mu)$ and $\|T\| \ls 1$, with a bound depending only on the assumptions and the $\bmo_{\textup{prod}}(\mu)$ norms of the four $S(1)$.
\end{thm}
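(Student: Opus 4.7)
The plan is to follow the bi-parameter non-homogeneous $T(1)$ strategy pioneered by Hyt\"onen--Martikainen (and extended by Martikainen, Ou, and others), but reworked on a quasimetric product space built from two Hyt\"onen--Kairema random dyadic systems. Fix $f,g \in C^\eta_c(\mathcal{X})$; I expand them via bi-parameter martingale differences $\Delta^1_{Q}\Delta^2_{R}$ indexed by random dyadic cubes $Q \in \D_1$ and $R \in \D_2$ on the two factors $X_1$ and $X_2$. The main reduction step is the probabilistic \emph{good/bad cube} argument applied independently in each parameter: calling a cube bad if it lies too close (relative to a smallness parameter) to the boundary of a substantially larger cube in the same system, the expected contribution from pairs containing any bad cube is $O(\e)(\|f\|_2\|g\|_2)$, so it suffices to control the sum over good-good-good-good pairs $(Q,R,Q',R')$ with, say, $\ell(Q)\le\ell(Q')$ and $\ell(R)\le\ell(R')$.

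Next I decompose $\langle Tf,g\rangle$ according to the relative geometry of $(Q,Q')$ in $X_1$ and of $(R,R')$ in $X_2$. In each parameter there are three regimes: \emph{separated} (the smaller good cube lies outside a controlled expansion of the larger one), \emph{nested/inside} (the smaller is contained in a child of the larger), and \emph{equal-sized diagonal} (the scales and locations are comparable). Crossing these with the two parameters yields nine cases. The four separated-type subcases (where at least one parameter is separated) are handled by the full bi-parameter kernel regularity from the SIO assumptions together with the upper-doubling estimates on $\mu_1,\mu_2$ via $\lambda_1,\lambda_2$; goodness gives the geometric decay needed to sum the resulting geometric series. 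The diagonal-diagonal case is absorbed by the weak boundedness assumptions on $T$ and its partial adjoints (Assumptions~\ref{assum:5b}--\ref{assum:7b}). The mixed nested/separated cases, and above all the nested/nested case, are where the BMO hypotheses enter.

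For the nested/nested case I split the bilinear form into a principal term involving an average-over-$Q'$ and average-over-$R'$ of $f$ (which produces a full bi-parameter paraproduct acting on $T(1)$) plus three correction terms producing partial paraproducts acting on $T^*(1)$, $T_1(1)$ and $T_1^*(1)$ respectively. Each of these four paraproducts is shown to be $L^2(\mu)$-bounded using the hypothesis that the relevant $S(1)$ lies in $\bmo_{\pro}(\mu)$, combined with a bi-parameter square-function estimate for martingale differences and, for the full paraproduct, the non-homogeneous product $H^1$--$\bmo$ duality together with a Journ\'e-type covering lemma adapted to the upper-doubling measure.

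The main obstacle, as I see it, is precisely this paraproduct step: in the non-homogeneous product setting the dyadic product BMO on a single pair of systems $\D_1\times\D_2$ is strictly larger than $\bmo_{\pro}(\mu)$, so one cannot simply read off an $L^2$ bound from a single realization of the random systems. The standard fix is to run the averaging over random shifts first and invoke the stopping-time/Journ\'e lemma for $\bmo_{\pro}(\mu)$ only after all probabilistic reductions are complete; this requires verifying that the Carleson-type sums produced by the paraproduct are controllable by $\|S(1)\|_{\bmo_{\pro}(\mu)}$ uniformly in the random parameters. A secondary technical obstacle is bookkeeping in the separated case on a quasimetric, where the usual triangle inequality picks up the quasimetric constant and must be compensated by shrinking the goodness parameter; this forces the regularity exponent in the kernel estimates to be strictly larger than a quantity depending on the dimension exponents appearing in~$\lambda_1,\lambda_2$, and one must check this constraint is consistent with Assumptions~\ref{assum_1}--\ref{assum_4}. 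Once these points are settled, summing all nine cases yields $|\langle Tf,g\rangle| \ls \|f\|_2\|g\|_2$ and hence the claimed bound.
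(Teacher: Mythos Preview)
Your overall architecture is recognisable, but there is a genuine structural gap: the proof you outline is a \emph{direct} estimate $|\langle Tf,g\rangle|\ls \|f\|_2\|g\|_2$, and in the non-homogeneous setting that is not what the argument actually delivers. Two of your steps secretly use $\|T\|$ on the right-hand side. First, the bad-cube reduction: to discard $\langle Tf_{\mathrm{bad}},g\rangle$ one bounds it by $\|T\|\,\|f_{\mathrm{bad}}\|_2\|g\|_2$ and then makes $\mathbb{E}\|f_{\mathrm{bad}}\|_2$ small; this gives $c(r)\|T\|$, not $O(\epsilon)\|f\|_2\|g\|_2$. Second, the ``equal-sized diagonal'' (adjacent/adjacent) case is not simply absorbed by weak boundedness. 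Because cubes from two independent grids overlap nontrivially, one must perform \emph{surgery} and introduce an auxiliary random almost-covering by balls; the resulting decomposition produces terms of size $\upsilon^{1/2}\|T\|$ and $\epsilon^{\eta/2}\|T\|$ in addition to terms controlled by the weak boundedness constants. So the honest output of the nine-case analysis is
\[
|\langle Tf,g\rangle|\le C+0.2\,\|T\|,
\]
and one closes by an absorption argument: pick $f,g$ with $0.7\,\|T\|\le|\langle Tf,g\rangle|$ and conclude $\|T\|\le 2C$. This requires \emph{a priori} knowing $\|T\|<\infty$, which is exactly the content of the paper's Theorem~\ref{thm:T1thm_ver1}.

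What your outline is missing, then, is the two-step removal of that \emph{a priori} assumption. The paper first weakens $\|T\|<\infty$ to a local bound $|\langle Tf,g\rangle|\le C(R)\|f\|_2\|g\|_2$ for $f,g$ supported in a common rectangle $R$ (Theorem~\ref{thm:T1thm_ver2}); the same absorption argument runs with $M(R)$ in place of $\|T\|$. Then, to obtain Theorem~\ref{thm:T1thm_ver3} with no \emph{a priori} bound at all, one passes to the truncated operators $T_\tau$, verifies that they satisfy the hypotheses of Theorem~\ref{thm:T1thm_ver2} uniformly in $\tau$ (this is where Assumptions~\ref{assum:5b}--\ref{assum:7b}, as opposed to \ref{assum_5}--\ref{assum_6}, are used, together with a nontrivial pointwise estimate of $T_\tau(\widetilde\chi_U\otimes\widetilde\chi_V)$ via a stopping-time chain of balls), and takes $\tau\to 0$. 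Your sketch has no analogue of this truncation step. Two smaller remarks: the full bi-parameter paraproduct is bounded directly from the definition of $\bmo_{\mathrm{prod}}(\mu)$ and the $L^2$-boundedness of the strong maximal function, without Journ\'e's lemma; and the ``obstacle'' you describe about dyadic versus continuous product $\bmo$ does not arise here because $\bmo_{\mathrm{prod}}(\mu)$ in this paper is \emph{defined} by requiring the Carleson condition uniformly over all random grids.
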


\noindent \emph{Outline of the proof of Theorem~\ref{thm:T1thm_ver3}.}
Inspired by the idea in~\cite{NTV03},
our goal is divided into three main steps:
\begin{enumerate}
  \item[Step 1] Establish Theorem~\ref{thm:T1thm_ver3} under the \emph{a priori} assumption that the operator norm~$\|T\|$ of~$T$ is finite, that is, $\|T\| < \infty$. See Theorem~\ref{thm:T1thm_ver1}.
  \item[Step 2] Establish Theorem~\ref{thm:T1thm_ver3} under the weaker \emph{a priori} assumption that for all functions $f, g \in C^{\eta}_{\textup{c}}(\mathcal{X}) $ which are supported on a rectangle~$R \subset \mathcal{X}$, we have
      $|\langle Tf,g \rangle| \leq C(R) \|g\|_{L^2(\mu)} \|g\|_{L^2(\mu)}.$ See Theorem~\ref{thm:T1thm_ver2}.
  \item[Step 3] Establish Theorem~\ref{thm:T1thm_ver3}, which the full version of a $T(1)$ theorem on $(\mathcal{X},\rho,\mu)$ in which the   assumption of \emph{a priori} boundedness of~$T$ is relaxed.
\end{enumerate}

Initially, it may seem surprising that in Step~1, we assume an \emph{a priori} bound on the operator norm~$\|T\|$, and then prove that in fact there is then a bound depends (only) on the assumptions and the~$\bmo$ norms of~$S(1)$.
The big picture is that in Step~2, this assumption of an \emph{a priori} bound on~$\|T\|$ is weakened, and in Step~3, it is removed entirely.

In fact, the major result of this paper is Step~1. This is because Steps~2 and~3 can always be reduced to Step~1.

We first state the result in Step~1.
\begin{restatable}{thm}{Tonethmverone}\label{thm:T1thm_ver1}
Let $(\mathcal{X},\rho,\mu)$ be a non-homogeneous bi-parameter quasimetric space.
Let $T$ be a bi-parameter singular integral operator as in Definition \ref{defn:bipara_SIO}, such that $\|T\| < \infty$.
Assume also that~$T$ satisfies Assumptions~\ref{assum_5}--\ref{assum_6} on the weak boundedness properties, and $S(1) \in \bmo_{\textup{prod}}(\mu)$ for all
$S \in \{T,T^*,T_1,T^*_1\}$.

Then there holds that $\|T\| \ls 1$, with a bound depending only on the assumptions and the $\bmo_{\textup{prod}}(\mu)$ norms of the four functions $S(1)$, but independent of the \emph{a priori} bound on~$\|T\|$.
\end{restatable}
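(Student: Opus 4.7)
The plan is to adapt the Nazarov--Treil--Volberg random dyadic machinery to the bi-parameter non-homogeneous setting, combining it with Hyt\"onen--Martikainen style bi-parameter case analysis. First I construct random dyadic systems $\mathcal{D}_1$ and $\mathcal{D}_2$ on $X_1$ and $X_2$ respectively (in the spirit of Hyt\"onen's dyadic systems on geometrically doubling quasimetric spaces), and introduce martingale difference operators $\Delta^1_{I}$, $\Delta^2_{J}$ associated with these grids together with their non-cancellative siblings $E^1_{I}$, $E^2_{J}$. Given $f,g \in L^2(\mu)$, I decompose
\begin{equation*}
 f=\sum_{I\in\mathcal{D}_1,\,J\in\mathcal{D}_2}\Delta^1_{I}\Delta^2_{J}f, \qquad g=\sum_{K\in\mathcal{D}_1,\,L\in\mathcal{D}_2}\Delta^1_{K}\Delta^2_{L}g,
\end{equation*}
and write the goal pairing $\langle Tf,g\rangle$ as a quadruple sum over rectangles $I\times J$ and $K\times L$.

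Next I introduce the good/bad partition of cubes: a cube $I\in\mathcal{D}_1$ of generation $k$ is bad if it lies too close (at a scale determined by a fixed $\gamma\in(0,1)$ and a large integer parameter $r$) to the boundary of some strictly larger cube in $\mathcal{D}_1$, and similarly for $\mathcal{D}_2$. The key probabilistic lemma, proved as in the one-parameter non-homogeneous theory, is that $\mathbb{P}(I \text{ is bad})$ can be made arbitrarily small by choosing $r$ large. Applying this separately in each parameter and averaging over the independent grids, the expected contribution from any rectangle with at least one bad side is bounded by $\varepsilon \|T\|\|f\|_2\|g\|_2$; this is exactly where the a priori hypothesis $\|T\|<\infty$ is invoked, and after absorbing $\varepsilon\|T\|$ into the left-hand side it suffices to estimate the sum restricted to good rectangles on both sides.

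For the good-rectangle sum I split according to the relative sizes and positions of $I$ versus $K$ and of $J$ versus $L$. The diagonal-like case (both pairs comparable and close) is handled by Assumptions~\ref{assum_5}--\ref{assum_6} (weak boundedness) together with the upper doubling control $\lambda$. The fully separated case (both pairs far apart) is treated by the full bi-parameter kernel regularity in Assumption~\ref{assum_4}, where goodness forces the smaller cube to lie deep inside the father of the larger cube, producing the decisive decay factors. The mixed cases, where one coordinate is nested and the other behaves diagonally, are reduced via the partial kernel representations to one-parameter non-homogeneous estimates using $T_1$ and $T_1^*$. The genuinely nested case, where both $I\subsetneq K$ (or reverse) and $J\subsetneq L$, is where paraproducts appear: after expanding $E^1_K \Delta^1_I = \Delta^1_I$ and pulling out the averages, one obtains product paraproducts of the form $\Pi_{S(1)}$ with $S\in\{T,T^*,T_1,T_1^*\}$, whose $L^2$-boundedness is controlled by $\|S(1)\|_{\bmo_{\textup{prod}}(\mu)}$.

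The main obstacle I expect is the treatment of the nested bi-parameter sums and their associated product paraproducts in the non-homogeneous setting: one has to verify, with only the upper doubling hypothesis and without any reverse doubling, that the square function and paraproduct estimates associated to $\bmo_{\textup{prod}}(\mu)$ give the correct $L^2$ control, and to handle the four different $S(1)$ data simultaneously. A secondary technical hurdle is to make the good/bad probabilistic argument work uniformly across both parameters while preserving the partial adjoint structure used to reduce the mixed cases; this requires the randomization in the two coordinates to be independent and the goodness parameter $r$ to be chosen after all the kernel and weak-boundedness constants are fixed.
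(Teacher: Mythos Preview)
Your overall plan---random dyadic grids, good/bad absorption, case-splitting into separated/diagonal/nested, paraproducts controlled by $\bmo_{\textup{prod}}$---is the right architecture, and matches the paper in spirit. But there is a genuine structural gap in the diagonal (adjacent) case that your setup cannot close.

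You propose \emph{two} random grids $\mathcal{D}_1,\mathcal{D}_2$ (one per factor), expand both $f$ and $g$ in the same grids, and define badness of $I\in\mathcal{D}_1$ relative to larger cubes of the \emph{same} grid $\mathcal{D}_1$. In the non-homogeneous setting this fails at the diagonal term. When $I=K$ and $J=L$, after writing Haar functions as combinations of characteristic functions of children you face pairings of the form $\langle T(\chi_{I'}\otimes\chi_{J'}),\chi_{I'}\otimes\chi_{J'}\rangle$, and Assumption~\ref{assum_5} only gives the bound $\mu_1(\Lambda B_{I'})\mu_2(\Lambda B_{J'})$. After multiplying by the Haar normalizations $\sim\mu_1(I')^{-1}\mu_2(J')^{-1}$ you are left with $\mu_1(\Lambda B_{I'})/\mu_1(I')\cdot\mu_2(\Lambda B_{J'})/\mu_2(J')$, which is \emph{unbounded} without doubling. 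The upper-doubling function $\lambda$ gives only upper bounds on ball measures, never lower bounds, so it cannot help here.

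The paper's fix, following \cite{NTV03,HM12a,HM14}, is to use \emph{four} independent random grids: $f$ is expanded in $\mathcal{D}_1\times\mathcal{D}_2$ and $g$ in $\mathcal{D}'_1\times\mathcal{D}'_2$, with badness of $Q\in\mathcal{D}_i$ defined relative to the \emph{other} grid $\mathcal{D}'_i$. Then adjacent cubes $Q\in\mathcal{D}_1$ and $Q'\in\mathcal{D}'_1$ can overlap arbitrarily, and the intersection $\Delta=Q\cap Q'$ is handled by a surgery argument: one introduces a random almost-covering of $\widetilde\Delta$ by small balls $B\in\mathcal{B}$ with $\Lambda B\subset\Delta$, so that weak boundedness gives $\sum_{B}\mu(\Lambda B)\le\mu(\Delta)\le\mu(Q)$, while the boundary layers $Q_{\partial}\subset Q_b$ satisfy $\mathbb{E}_{\mathcal{D}'}[\mu(Q_b)]\lesssim\epsilon^{\eta}\mu(Q)$ thanks to the independence of $\mathcal{D}'$ from $\mathcal{D}$. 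These boundary and residual terms produce contributions of the form $(\upsilon^{1/2}+\epsilon^{\eta/2})\|T\|$, which are absorbed into the left-hand side---so the a~priori bound $\|T\|<\infty$ is used here as well, not only at the good/bad split. Without the second grid in each factor there is no randomness left to make the boundary layers small, and the surgery cannot be carried out.
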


There is a clear path toward proving Theorem~\ref{thm:T1thm_ver1}.
We adapt and combine methods of proofs that are developed in two recent articles: (1) a non-homogeneous $T(1)$ theorem on product Euclidean spaces $\R^{n_1} \times \R^{n_1}$, which deals with an extension from the one-parameter setting to the product setting~\cite{HM14}, and (2) a non-homogeneous $T(b)$ theorem on one-parameter metric measure spaces~$(X,d,\mu)$, which deals with an extension from the setting of Euclidean spaces to the setting of metric measure spaces~\cite{HM12a}.

In terms of the proof of Theorem~\ref{thm:T1thm_ver1}, we would like to highlight the powerful tools of representing arbitrary singular integral operators as averages of simpler components, in particular, Haar functions; and of eliminating bad cases by averaging over random dyadic lattices.
These techniques have already been used by other authors. See for example~\cite{HM14,HM12a,NTV03}.  However, when we use them in our setting, it requires an additional layer of difficulty, due to the complexity of working both in nonhomogeneous setting whose underlying spaces are of homogeneous type while incorporating the multiparameter theory.

\noindent \emph{Outline of the proof of Theorem~\ref{thm:T1thm_ver1}.}
Fix functions $f$, $g \in C^{\eta}_{\textup{c}}(\mathcal{X}) $ so that
$\|f\|_{L^2(\mu)} = \|g\|_{L^2(\mu)} = 1$,
$\supp f \subset R$, $\supp g \subset R$ where $R$ is a rectangle in~$\mathcal{X}$, and
\[0.7 \|T\| \leq |\langle Tf,g \rangle|.\]
We want to show that $|\langle Tf,g \rangle| \leq C$, where the constant~$C$ is independent of~$f$ and~$g$.
To do so, we will decompose
\begin{equation}\label{eq4:T1part}
|\langle Tf,g \rangle|
\leq |\langle Tf_{\text{good}}, g_{\text{good}} \rangle|
+ |\langle Tf_{\text{good}}, g_{\text{bad}} \rangle|
+ |\langle Tf_{\text{good}}, g \rangle|,
\end{equation}
where $f_{\text{good}}$ and $f_{\text{bad}}$ are called the \emph{good part} and \emph{bad part} of~$f$, respectively; a similar statement holds for the function $g$.
Then we will show that
\begin{eqnarray}
  |\langle Tf_{\text{good}}, g \rangle| &\leq& 0.05 \|T\|, \label{eq1:T1part}\\
  |\langle Tf_{\text{good}}, g_{\text{bad}} \rangle| &\leq& 0.05 \|T\| \quad
  \text{and} \label{eq2:T1part}\\
  |\langle Tf_{\text{good}}, g_{\text{good}} \rangle| &\leq& C + 0.1 \|T\|. \label{eq3:T1part}
\end{eqnarray}
This implies $0.7 \|T\| \leq |\langle Tf,g \rangle| \leq C + 0.2 \|T\|$, and so $\|T\| \leq 2C$. This establishes Theorem~\ref{thm:T1thm_ver1}.

In fact, inequalities~\eqref{eq1:T1part} and~\eqref{eq2:T1part} are straightforward. Therefore, most of the proof is devoted to showing~\eqref{eq3:T1part}. Please refer to Section~\ref{subsec:initial_reduce} for the details of this reduction.
Also refer to Section~\ref{subsec:outline} for further outline of the proof of~\eqref{eq3:T1part}.
The proof of Theorem~\ref{thm:T1thm_ver1} is given in Sections~\ref{sec:strategy}--\ref{sec:mix_para}.

The main result in Step~2 is stated formally in Theorem~\ref{thm:T1thm_ver2} below.
\begin{thm}\label{thm:T1thm_ver2}
 Let $(\mathcal{X},\rho,\mu)$ be a non-homogeneous bi-parameter quasimetric space.
Let $T$ be a bi-parameter singular integral operator as in  Definition \ref{defn:bipara_SIO}, such that  for all functions $f, g \in C^{\eta}_{\textup{c}}(\mathcal{X}) $ with $\supp f \subset R$ and $\supp g \subset R$ for some rectangle~$R \subset \mathcal{X}$ we have
      \[|\langle Tf,g \rangle| \leq C(R) \|f\|_{L^2(\mu)} \|g\|_{L^2(\mu)}.\]
Assume also that~$T$ satisfies Assumptions~\ref{assum_5}--\ref{assum_6} on the weak boundedness properties, and $S(1) \in \bmo_{\textup{prod}}(\mu)$ for all
$S \in \{T,T^*,T_1,T^*_1\}$.

Then there holds that $\|T\| \ls 1$, with a bound depending only on the assumptions and the $\bmo_{\textup{prod}}(\mu)$ norms of the four functions $S(1)$, but independent of the \emph{a priori} bound~$C(R)$.
\end{thm}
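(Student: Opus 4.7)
The strategy is to reduce Theorem~\ref{thm:T1thm_ver2} to Theorem~\ref{thm:T1thm_ver1} by approximating $T$ with a family of operators $\{T_N\}_{N \geq 1}$ that each enjoy the global \emph{a priori} bound $\|T_N\| < \infty$ required in Step~1, while preserving the standing hypotheses (size/regularity, weak boundedness, and the $\bmo_{\textup{prod}}(\mu)$ conditions on $S(1)$) with constants independent of $N$. Once such a family is in hand, Theorem~\ref{thm:T1thm_ver1} furnishes a uniform bound $\|T_N\| \lesssim 1$, and passing to the limit (for a fixed pair $f,g \in C_c^{\eta}(\mathcal{X})$ of test functions, so only finitely many relevant scales appear) yields the desired bound $|\langle Tf, g \rangle| \lesssim \|f\|_{L^2(\mu)} \|g\|_{L^2(\mu)}$.

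For the construction, I would use a bi-parameter truncation of the kernel. Fix exhausting rectangles $R_N \nearrow \mathcal{X}$ and a smooth cutoff $\chi_N(x,y)$ on $\mathcal{X}\times\mathcal{X}$ that vanishes outside $R_N \times R_N$ and also vanishes in the $1/N$-neighbourhood of the partial diagonals $\{x_1 = y_1\}$ and $\{x_2 = y_2\}$. Replace the kernel $K$ of $T$ by $K_N := \chi_N K$, and let $T_N$ be the associated operator. Because $K_N$ is bounded and compactly supported away from the partial diagonals, a crude Schur estimate gives $\|T_N\| \leq C_N < \infty$. Moreover, $T_N$ inherits the size and mixed regularity bounds uniformly in $N$, since $\chi_N$ is of unit order on the regions where those bounds are tested. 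The weak boundedness Assumptions~\ref{assum_5}--\ref{assum_6} transfer in the same way, using the local a priori bound $C(R)$ together with the uniform kernel estimates to handle the boundary regions in which $\chi_N$ varies.

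The main obstacle is to verify that $T_N(1)$, $T_N^{*}(1)$, $(T_N)_1(1)$, and $(T_N)_1^{*}(1)$ remain in $\bmo_{\textup{prod}}(\mu)$ with norms uniformly bounded in $N$. This is delicate because the truncation breaks the pointwise identity $T_N(1)(x) = T(1)(x)$, and one cannot use duality with $H^1_{\textup{prod}}(\mu)$ naively since $1$ is not a test function. I would argue instead by pairing $T_N(1) - T(1)$ against rectangle atoms (or Journé-type atoms) of $H^1_{\textup{prod}}(\mu)$: on such an atom $a$ supported in a rectangle $R$, the mean-zero conditions in each variable combined with the size/regularity bounds on $K - K_N$ produce an estimate that vanishes as $N \to \infty$ on the scales where the atom lives, while the uniform bound in $N$ follows from the same size and regularity estimates together with the local boundedness $|\langle T_N \mathbf{1}_R, a\rangle| \leq C(R_N) \mu(R)^{1/2} \|a\|_{L^2(\mu)}$, which is harmless once $R \subset R_N$ is fixed. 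The analogous argument handles the partial operators $(T_N)_1$ and their adjoints.

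With $\bmo_{\textup{prod}}(\mu)$ norms of the four $S_N(1)$ bounded uniformly in $N$ and the remaining hypotheses of Theorem~\ref{thm:T1thm_ver1} in force with $N$-independent constants, Step~1 applies to each $T_N$ and yields $\|T_N\| \lesssim 1$ with an $N$-independent implicit constant. Finally, for any fixed $f, g \in C_c^{\eta}(\mathcal{X})$, the dominated convergence argument $\langle T_N f, g\rangle \to \langle Tf, g\rangle$ (which uses the locally bounded kernel and the a priori hypothesis $|\langle Tf,g\rangle| \leq C(R)\|f\|_{L^2(\mu)}\|g\|_{L^2(\mu)}$ to justify the limit on the bounded region where $f$ and $g$ are supported) yields the bound $|\langle Tf, g\rangle| \lesssim \|f\|_{L^2(\mu)}\|g\|_{L^2(\mu)}$, independently of $C(R)$. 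This completes the reduction and establishes Theorem~\ref{thm:T1thm_ver2}.
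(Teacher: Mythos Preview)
Your approach is genuinely different from the paper's, and while the idea is natural, the sketch has real gaps.

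The paper does not approximate $T$ at all in Step~2. Instead it observes that the entire proof of Theorem~\ref{thm:T1thm_ver1} uses the finiteness of $\|T\|$ only through inequalities of the form $|\langle T\varphi,\psi\rangle|\le \|T\|\,\|\varphi\|_{L^2(\mu)}\|\psi\|_{L^2(\mu)}$ where $\varphi$ and $\psi$ are supported in the fixed rectangle $R_{x,m,\delta}$ (this occurs in the bad-part reduction and in the surgery estimates of Sections~\ref{subsec:adj_in}--\ref{subsec:adj_adj}). One therefore sets $M(\delta^m):=\sup\{|\langle Tf,g\rangle|:\|f\|_{L^2}=\|g\|_{L^2}=1,\ \supp f,\supp g\subset R_{x,m,\delta}\}$, chooses $f,g$ with $0.7\,M(\delta^m)\le|\langle Tf,g\rangle|$, and reruns the Step~1 argument verbatim with $M(\delta^m)$ in place of $\|T\|$. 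The output is $0.6\,M(\delta^m)\le C+0.1\,M(\delta^m)$, hence $M(\delta^m)\le 2C$ with $C$ independent of the a~priori constant $C(R)$. No truncation, no limit.

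Your truncation route is essentially the paper's Step~3 mechanism transplanted to Step~2, and the sketch does not close. First, defining $T_N$ purely by the truncated kernel $K_N=\chi_N K$ severs the link to $T$ on overlapping supports: the pairing $\langle Tf,g\rangle$ is not determined by $K$ there, so you cannot read off the weak boundedness of $T_N$ or the identity $\langle T_Nf,g\rangle\to\langle Tf,g\rangle$ from kernel convergence alone. Second, your uniform $\bmo_{\textup{prod}}$ bound for $T_N(1)$ invokes $|\langle T_N\mathbf 1_R,a\rangle|\le C(R_N)\mu(R)^{1/2}\|a\|_{L^2}$; but $C(R_N)$ depends on $N$, so this does not give uniformity, and ``harmless once $R\subset R_N$ is fixed'' is exactly the wrong direction (the $\bmo$ norm is a supremum over all $\Omega$). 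The paper devotes all of Section~\ref{sec:T1 full} to the analogous verification for $T_\tau$, and even there it leans on Step~2 as an input. In short, the paper's approach for Theorem~\ref{thm:T1thm_ver2} is both simpler and avoids the circularity your outline would have to untangle.
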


\noindent \emph{Outline of the proof of Theorem~\ref{thm:T1thm_ver2}.}
Let~$E$ be the set of functions in $C^{\eta}_{\textup{c}}(\mathcal{X}) $ such that $\supp f \subset R$, $\supp g \subset R$ where $R$ is a rectangle in~$\mathcal{X}$, and  $\|f\|_{L^2(\mu)} = \|g\|_{L^2(\mu)} = 1$.
 Define
\[M(R) := \sup \{|\langle Tf,g \rangle|: f, g \in E\}.\]
Fix $f, g \in E$ such that
\[0.7 M(R) \leq |\langle Tf,g \rangle|.\]
We want to show that $M(R) \leq C$, where the constant~$C$ is independent of~$f$ and~$g$.
As in~\eqref{eq4:T1part}, we decompose $|\langle Tf, g \rangle|$ into three parts, and aim to prove that
\begin{eqnarray*}
  |\langle Tf_{\text{good}}, g \rangle| &\leq& 0.05 M(R), \\
  |\langle Tf_{\text{good}}, g_{\text{bad}} \rangle| &\leq& 0.05 M(R) \quad
  \text{and} \\
  |\langle Tf_{\text{good}}, g_{\text{good}} \rangle| &\leq& C + 0.1 M(R).
\end{eqnarray*}
This implies  $\|T\| = M(R) \leq 2C$.

Notice that the structure of the proof of Theorem~\ref{thm:T1thm_ver2} in Step~2 is similar to that of Theorem~\ref{thm:T1thm_ver1} in Step~1, except replacing $\|T\|$ by~$M(R)$. In fact, the proof of Theorem~\ref{thm:T1thm_ver2} is almost the same as that of Theorem~\ref{thm:T1thm_ver1}. See Section~\ref{sec:T1 ver 2} for further details of the proof of Theorem~\ref{thm:T1thm_ver2}, and how it can be reduced to the proof of Theorem~\ref{thm:T1thm_ver1}.

In Step~3, we consider the most general case, where the operator~$T$ does not necessarily satisfy the \emph{a priori }bounds.
The idea is that we will approximate the operator~$T$ by truncated operators~$T_{\tau}$, where~$\tau>0$.
As shown in~\cite[p.165]{NTV03}, it is reasonable to think of boundedness of~$T$ as the uniform boundedness of~$T_{\tau}$.
In Section~\ref{sec:T1 full} we will show that hypotheses of Theorem~\ref{thm:T1thm_ver2} hold for~$T_{\tau}$.
This implies that the sequence~$T_{\tau}$ is uniformly bounded on~$L^2(\mu)$ and hence, $T$ is bounded on~$L^2(\mu)$. Thus, Theorem~\ref{thm:T1thm_ver3} is established.

Throughout the paper,
we use the usual notation
$\langle f\rangle _E=\intav_{E} f\,d\mu = \frac{1}{\mu(E)}\int_{E} f\,d\mu$, where $E \subset X$.
We denote by~$C$ a positive constant that is independent of the main parameters but may vary from line to line.
If $f\le Cg$, we write $f\ls g$ or $g\gs f$; and if $f \ls g\ls
f$, we  write $f\sim g$, or~$f \sim_C g$ when we want to emphasise the implied constant.

This paper is organised as follows.
In Section \ref{sec:defns}, we present the mathematical concepts needed later in the paper.
In Section~\ref{subsec:product_BMO}, we introduce the Hardy and~$\bmo$ spaces on non-homogeneous setting and discuss their duality.
The  proof of Theorem~\ref{thm:T1thm_ver1} is carried out in Sections~\ref{sec:strategy}--\ref{sec:mix_para}.
In Section~\ref{sec:T1 ver 2}, we establish Theorem~\ref{thm:T1thm_ver2}.
In Section~\ref{sec:T1 full}, we prove the main result of this paper which is Theorem~\ref{thm:T1thm_ver3}.
Finally, in Section~\ref{sec:appln}, we describe an application of our result to Carleson measures on reproducing kernel Hilbert spaces over the bidisc.  These results are applicable to the Hardy spaces and Bergman spaces, and more generally to the Besov-Sobolev spaces of analytic functions on the bidisc.

\section{Preliminaries}\label{sec:defns}
This section is organised as follows.
In Section \ref{subsec:spaceX}, we introduce metric measure spaces $(X,d,\mu)$ and spaces of homogeneous type~$(X,\rho,\mu)$.
In Section~\ref{subsec:upper_dbl}, we define upper doubling measures.
In Section~\ref{subsec:product_space}, we introduce non-homogeneous bi-parameter quasimetric spaces~$(\mathcal{X},\rho,\mu)$.
In Section~\ref{subsec:assume}, we introduce the class of Calder\'on-Zygmund operators~$T$ that we work with.
In Section~\ref{sec:WBP}, we discuss the weak boundedness properties of the operator~$T$.
In Section \ref{sec:dyadiccubes}, we explain systems of dyadic cubes.
In Section~\ref{subsec:good_bad_cube}, we discuss good and bad cubes.
In Section~\ref{subsec:Haar_func}, we recall Haar functions.
In Section~\ref{subsec:intepretation}, we give the interpretation of~$T(1)$.
%%%%%%%%%%%%%%%%%%%%%%%%%%%%%%%%%%%%%%%%%%%%%%%%%%%%%%%

\subsection{Spaces of homogeneous type~$(X,\rho,\mu)$}\label{subsec:spaceX}
In this section, we define metrics, quasimetrics and doubling measures, which will let us define metric measure spaces and spaces of homogeneous type.

A \emph{metric} on a set $X$ is a function $d: X\times X \rightarrow [0, \infty)$ satisfying the following conditions for all $x, y, z\in X$:
  $d(x,y) = 0$ if and only if $x=y$,
  $d(x,y) = d(y,x)$, and
  $d(x,z) \leq d(x,y) + d(y,z)$.
The pair~$(X,d)$ is called a \emph{metric space}.

A \emph{quasimetric} on a set~$X$ is a function~$\rho: X\times X \rightarrow [0, \infty)$ satisfying the same conditions as a metric, except that the triangle inequality is replaced by a \emph{quasitriangle inequality}:
\begin{equation}\label{eq:quasi_triangle_inequality}
\rho(x,z) \leq A_0\rho(x,y) + A_0d\rho(y,z),
\end{equation}
where the \emph{quasitriangle constant} $A_0 \geq 1$ does not depend on $x, y$ or $z$.
In this paper, we assume that the quasimetric~$\rho$ also satisfies the following \emph{regularity} property: for all~$\e>0$ there exists $A(\e) < \infty$ so that
\begin{equation}\label{eq:regularity}
\rho(x,y) \leq (1+\e)\rho_1(x,z)+ A(\e)\rho_1(z,y).
\end{equation}
The pair~$(X,\rho)$ is called a \emph{quasimetric space}.
A \emph{doubling measure} on the space $(X, d)$ is a measure $\mu$  on $X$ such that the balls in $(X, d)$ are $\mu$-measurable sets, and the following condition holds for all $x \in X$ and all  $r > 0$:
\begin{equation}\label{eq:dbl_measure_1}
0 < \mu(B(x, 2r)) \leq A_1 \mu(B(x, r)) < \infty,
\end{equation}
where $B(x,r) := \{y \in X: d(x,y)<r\}$ and the \emph{doubling constant} $A_1 \geq 1$ does not depend on $x$ and $r$.
In fact, inequality~\eqref{eq:dbl_measure_1} implies a more general property of the doubling measure~$\mu$. Namely, for all~$x \in X$, $r > 0$ and~$\lambda > 1$ we have
\begin{equation}\label{eq:dbl_measure_2}
    \mu(B(x, \lambda r)) \leq A_1^{1 + \log_2 \lambda} \mu(B(x, r)).
\end{equation}

A doubling measure on a quasimetric space~$(X,\rho)$ is defined in the same way, except the metric~$d$ is replaced by the quasimetric~$\rho$ in the definition of~$B(x,r)$.
When a metric space~$(X,d)$ is equipped with a doubling measure~$\mu$, the triple~$(X,d,\mu)$ is called a \emph{metric measure space}.
When a quasimetric space~$(X,\rho)$ is equipped with a doubling measure~$\mu$, the triple~$(X,\rho,\mu)$ is called a \emph{space of homogeneous type}.

We note that every quasimetric space is \emph{geometrically doubling} \cite{CW71}, meaning that there exists $N$ such that every ball $B(x,r)$ in~$(X,\rho)$ can be covered by at most $N$ balls of radius~$r/2$.

%%%%%%%%%%%%%%%%%%%%%%%%%%%%%%%%%%%%%%%%%%%%%%%%%%%%%%%%%%%%%%%%%%%%%%%%%%%

\subsection{Upper doubling measures}\label{subsec:upper_dbl}
Let~$(X,\rho)$ be a quasimetric space as defined in Section~\ref{subsec:spaceX}.
We equip~$X$ with a so-called \emph{upper doubling} measure~$\mu$, which is a generalisation of both doubling measures and those with the \emph{upper power bound }property $\mu(B(x,r)) \leq Cr^d$.
\begin{defn}[\textbf{Upper doubling measure}]\label{defn:upper_dbl}
  A Borel measure~$\mu$ on a quasimetric space~$(X,\rho)$ is called \emph{upper doubling} if there exist a dominating function $\lambda: X \times (0,\infty) \rightarrow (0,\infty)$ and a constant~$C_{\lambda}$ so that $r \mapsto \lambda(x,r)$ is non-decreasing for each $x \in X$, $\lambda(x,2r) \leq C_{\lambda}\lambda(x,r)$ and $\mu(B(x,r)) \leq \lambda(x,r)$ for all~$x \in X$ and~$r>0$. We set $t_{\lambda}:= \log_2 C_{\lambda}$. We call $C_{\lambda}$ the \emph{upper doubling constant}.
  The space~$(X,\rho,\mu)$ is called a \emph{non-homogeneous quasimetric space}.
\end{defn}
We also assume that the dominating function~$\lambda$ satisfies the additional symmetry property that there exists a constant~$C$ such that for all $x,y \in X$ with~$\rho(x,y) \leq r$,
\begin{equation}\label{eq2:up_dbl}
  \lambda(x,r) \leq C\lambda(y,r).
\end{equation}
Then it follows from~\eqref{eq2:up_dbl} that
\begin{equation}\label{eq1:up_dbl}
  \lambda(x,\rho(x,y)) \sim \lambda(y,\rho(x,y)).
\end{equation}
Assumption~\eqref{eq2:up_dbl} is reasonable because as shown in~\cite[Proposition~1.1]{HYY12}, on a non-homogeneous quasimetric space~$(X,\rho,\mu)$ where~$\mu$ is an upper doubling measure associated with the dominating function~$\lambda$, the function $\Lambda(x,r) := \inf_{z \in X} \lambda(z, r + \rho(x,z))$ satisfies that $r \mapsto \Lambda(x,r)$ is non-decreasing for each $x \in X$, $\Lambda(x,2r) \leq C_{\lambda}\Lambda(x,r)$, $\mu(B(x,r)) \leq \Lambda(x,r)$, $\Lambda(x,r) \leq \lambda(x,r)$ and $\Lambda(x,r) \leq C_{\lambda} \Lambda(y,r)$ if $\rho(x,y) \leq r$.

The following result in~\cite{HM12a} will be useful in later computation.
\begin{lem}[\text{\cite[Lemma~2.2]{HM12a}}]
\label{upper_dbl_lem1}
  Let~$(X,\rho,\mu)$ be a non-homogeneous quasimetric space.
  Then for every ball $B = B(x_B, r_B) \subset X$ and for every $\e > 0$ we have that
  \[\int_{X \backslash B} \frac{\rho(x,x_B)^{-\e}}{\lambda(x_B,\rho(x,x_B))}\, d\mu(x) \leq C_{\lambda} A_{\e} r_B^{-\e},\]
  where $A_{\e} = 2^{\e}/(2^{\e}-1)$.
\end{lem}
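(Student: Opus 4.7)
The approach is a standard dyadic annular decomposition tailored to the upper doubling condition. I would write $X \setminus B$ as the disjoint union of the shells
\[
A_k := B(x_B, 2^{k+1} r_B) \setminus B(x_B, 2^k r_B), \qquad k = 0, 1, 2, \ldots,
\]
and bound the integral shell by shell. On $A_k$, the pointwise lower bound $\rho(x, x_B) \geq 2^k r_B$ combined with monotonicity of $r \mapsto \lambda(x_B, r)$ (from Definition \ref{defn:upper_dbl}) gives the pointwise estimate
\[
\frac{\rho(x, x_B)^{-\e}}{\lambda(x_B, \rho(x, x_B))} \leq \frac{(2^k r_B)^{-\e}}{\lambda(x_B, 2^k r_B)}.
\]

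Next, I would control $\mu(A_k) \leq \mu(B(x_B, 2^{k+1} r_B))$ using the defining inequality $\mu(B(x,r)) \leq \lambda(x,r)$ together with the scaling property $\lambda(x_B, 2^{k+1} r_B) \leq C_{\lambda}\lambda(x_B, 2^k r_B)$. Multiplying and integrating on $A_k$ yields
\[
\int_{A_k} \frac{\rho(x, x_B)^{-\e}}{\lambda(x_B, \rho(x, x_B))}\, d\mu(x) \leq \frac{(2^k r_B)^{-\e}}{\lambda(x_B, 2^k r_B)} \cdot C_{\lambda} \lambda(x_B, 2^k r_B) = C_{\lambda} (2^k r_B)^{-\e},
\]
so the $\lambda$ factors telescope out; this is exactly what upper doubling is designed to give.

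Summing over $k \geq 0$ and evaluating the resulting geometric series,
\[
\int_{X \setminus B} \frac{\rho(x, x_B)^{-\e}}{\lambda(x_B, \rho(x, x_B))}\, d\mu(x) \leq C_{\lambda} r_B^{-\e} \sum_{k=0}^{\infty} 2^{-k\e} = C_{\lambda} r_B^{-\e} \cdot \frac{2^{\e}}{2^{\e} - 1} = C_{\lambda} A_{\e} r_B^{-\e},
\]
which is the claimed bound. There is no genuine obstacle here: the upper doubling hypothesis is precisely engineered so that the growth of $\lambda(x_B, 2^k r_B)$ cancels against the measure of the $k$-th shell, leaving only the summable factor $(2^k r_B)^{-\e}$; the only mild subtlety is recognising that monotonicity of $\lambda$ in the radius gives a clean lower bound for $\lambda(x_B, \rho(x, x_B))$ on each shell, and that the geometric sum reproduces exactly the constant $A_{\e}$.
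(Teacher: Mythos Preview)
Your proof is correct. The paper does not supply its own proof of this lemma but simply cites \cite[Lemma~2.2]{HM12a}; your dyadic annular decomposition is exactly the standard argument used there, and every step (monotonicity of $\lambda$, the upper doubling bound $\mu(B(x_B,2^{k+1}r_B))\le C_\lambda\lambda(x_B,2^kr_B)$, and the geometric series giving $A_\e$) is handled correctly.
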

%%%%%%%%%%%%%%%%%%%%%%%%%%%%%%%%%%%%%%%%%%%%%%%%%%%%%%%%%%%%%%%
\subsection{Non-homogeneous bi-parameter quasimetric spaces $(\mathcal{X},\rho,\mu)$}\label{subsec:product_space}

We denote the \emph{product} of such spaces by $\mathcal{X} := X_1 \times \cdots \times X_n$ equipped with a product quasimetric $\rho := \rho_1 \times \cdots \times \rho_n$ and a product upper doubling measure $\mu := \mu_1 \times \cdots \times \mu_n$, where for each $i =1,\ldots,n$, $\rho_i$ is a quasimetric and $\mu_i$ is an upper doubling measure. The space~$(\mathcal{X},\rho,\mu)$ is called \emph{non-homogeneous product quasimetric space}.

In this paper, we only work on the case~$n =2$.
Specifically, we consider \emph{non-homogeneous bi-parameter quasimetric spaces}
$$(\mathcal{X},\rho,\mu) := (X_1 \times X_2, \rho_1 \times \rho_2, \mu_1 \times \mu_2).$$
The quasimetrics~$\rho_1$ and~$\rho_2$ satisfy the quasitriangle inequality~\eqref{eq:quasi_triangle_inequality} with quasitriangle constant~$A_0^{(1)}$ and~$A_0^{(2)}$, respectively. Without loss of generality, we assume that $A_0^{(1)} = A_0^{(2)} = A_0$.

\subsection{Singular integral operators on $(\mathcal{X},\rho,\mu)$}\label{subsec:assume}
Recall that in the original $T(1)$ theorem of David and Journ\'{e}, the hypotheses involve assumptions on the size and smoothness of the kernel, a weak boundedness property, and~$\bmo$ conditions.
Therefore, when it comes to the bi-parameter setting, it is natural to formulate assumptions about `mixed type' conditions, which is actually what Hyt\"{o}nen and Martikainen did in~\cite{HM14}. Following their idea, we define a class of singular integral operators on non-homogeneous bi-parameter quasimetric spaces.

On a non-homogeneous bi-parameter quasimetric space $(\mathcal{X},\rho,\mu)$,
let $C^{\eta}_{c}(\mathcal{X}) = C_c^{\eta}(X_1) \times C_c^{\eta}(X_2)$, $\eta >0$ denote the space of continuous functions~$f$ with compact support such that
\[
\|f\|_{\eta} := \sup_{x_1 \neq y_1, x_2 \neq y_2}
 \frac{|f(x_1,x_2) - f(y_1,x_2) - f(x_1,y_2) + f(y_1,y_2)|}
 {\rho_1(x_1,y_1)^{\eta} \, \rho_2(x_2,y_2)^{\eta}} < \infty.
\]

Let~$T$ be a linear operator continuously mapping $C_c^{\eta}(\mathcal{X})$ to its dual~$C_c^{\eta}(\mathcal{X})'$.
Besides the usual adjoint~$T^*$, we also consider the \emph{partial adjoint}~$T_1$ defined by
\[\langle T_1(f_1 \otimes f_2), g_1 \otimes g_2\rangle
= \langle T(g_1 \otimes f_2), f_1 \otimes g_2\rangle.\]
$T$ is assumed to satisfy the following four assumptions.
\begin{assume}\label{assum_1}\textbf{(Full kernel representation)}
If~$f=f_1 \otimes f_2 \in C^{\eta}_{c}(\mathcal{X}) $ and $g=g_1\otimes g_2 \in C^{\eta}_{c}(\mathcal{X}) $ with
$f_1,g_1: (X_1,\rho_1,\mu_1) \rightarrow \C$, $f_2,g_2: (X_2,\rho_2,\mu_2) \rightarrow \C$, $\supp f_1 \cap \supp g_1 = \emptyset$ and~$\supp f_2 \cap \supp g_2 = \emptyset$, we have the \emph{full kernel representation}
\begin{eqnarray*}
  \langle Tf,g\rangle &=& \int_{\mathcal{X}} \int_{\mathcal{X}} K(x,y)f(y)g(x) \,d\mu(x) \,d\mu(y)\\
   &=& \iint_{X_1 \times X_2}\iint_{X_1 \times X_2} K(x_1,x_2;y_1,y_2)f(y_1,y_2)g(x_1,x_2)\,d\mu(x_1,x_2)\,d\mu(y_1,y_2)
\end{eqnarray*}
where the kernel~$K$ is a function
\[K: (\mathcal{X} \times \mathcal{X})\backslash \{(x,y) \in \mathcal{X} \times \mathcal{X}: x_1 = y_1 \text{ or } x_2 = y_2\} \rightarrow \C.\]
For the kernel representation for~$T^*, T_1$ and~$T^*_1$, we use
\begin{equation*}
  K^*(x,y) := K(y_1,y_2;x_1,x_2), \hspace{0.5cm}
  K_1(x,y) := K(y_1,x_2;x_1,y_2), \text{ and} \ \
  K_1^*(x,y) := K(x_1,y_2;y_1,x_2).
\end{equation*}
\end{assume}

\begin{assume}\label{assum_2}\textbf{(Full standard size and regularity estimates)} There exist $\al_1$, $\al_2 > 0$ and~$ C_K >~1$ such that
the kernel~$K$ satisfies the \emph{size condition}
\[|K(x,y)| \leq C \frac{1}{\lambda_1(x_1,\rho_1(x_1,y_1))}\frac{1}
{\lambda_2(x_2,\rho_2(x_2,y_2))},\]
the \emph{H\"{o}lder condition}
\begin{eqnarray*}
  \lefteqn{|K(x,y) - K(x,(y_1,y_2')) - K(x,(y_1',y_2)) + K(x,y')|} \\ &\leq&C\frac{\rho_1(y_1,y_1')^{\al_1}}{\rho_1(x_1,y_1)^{\al_1}\lambda_1(x_1,\rho_1(x_1,y_1))}
  \frac{\rho_2(y_2,y_2')^{\al_2}}{\rho_2(x_2,y_2)^{\al_2}\lambda_2(x_2,\rho_2(x_2,y_2))}
\end{eqnarray*}
whenever~$\rho_1(x_1,y_1) \geq C_K \rho_1(y_1,y_1')$ and~$\rho_2(x_2,y_2) \geq C_K \rho_2(y_2,y_2')$,
and the \emph{mixed H\"{o}lder and size condition}
\[|K(x,y) - K(x,(y_1',y_2))|
\leq C \frac{\rho_1(y_1,y_1')^{\al_1}}{\rho_1(x_1,y_1)^{\al_1}\lambda_1(x_1,\rho_1(x_1,y_1))}
\frac{1}{\lambda_2(x_2,\rho_2(x_2,y_2))}\]
whenever~$\rho_1(x_1,y_1) \geq C_K \rho_1(y_1,y_1')$.
 The same conditions are imposed on~$K^*$, $K_1$, and~$K^*_1$.
\end{assume}
\begin{assume}\label{assum_3}\textbf{(Partial kernel representation)}
  If~$f=f_1 \otimes f_2 \in C^{\eta}_{c}(\mathcal{X}) $ and $g=g_1\otimes g_2 \in C^{\eta}_{c}(\mathcal{X}) $ with
  $\supp f_1 \cap \supp g_1 = \emptyset$, then we assume the \emph{partial kernel representation}
  \[ \langle Tf,g\rangle = \int_{X_1} \int_{X_1} K_{f_2,g_2}(x_1,y_1)f_1(y_1)g_1(x_1) \,d\mu_1(x_1) \,d\mu_1(y_1).\]
\end{assume}
\begin{assume}\label{assum_4}\textbf{(Partial boundedness $\times$ standard estimates)}
The kernel from Assumption~\ref{assum_3}
\[K_{f_2,g_2}:(X_1 \times X_1)\backslash \left\{(x_1,y_1)\in X_1 \times X_1: x_1=y_1\right\} \rightarrow \C\]
is assumed to satisfy the \emph{size condition}
\[|K_{f_2,g_2}(x_1,y_1)| \leq C(f_2,g_2)\frac{1}{\lambda_1(x_1,\rho_1(x_1,y_1))},\]
for some constant $C(f_2,g_2)$,
and the \emph{H\"{o}lder conditions}
\[|K_{f_2,g_2}(x_1,y_1) - K_{f_2,g_2}(x_1',y_1)| \leq C(f_2,g_2) \frac{\rho_1(x_1,x_1')^{\al_1}}{\rho_1(x_1,y_1)^{\al_1}\lambda_1(x_1,\rho_1(x_1,y_1))}\]
whenever~$\rho_1(x_1,y_1) \geq C_K \rho_1(x_1,x_1')$, and
\[|K_{f_2,g_2}(x_1,y_1) - K_{f_2,g_2}(x_1,y_1')| \leq C(f_2,g_2) \frac{\rho_1(y_1,y_1')^{\al_1}}{\rho_1(x_1,y_1)^{\al_1}\lambda_1(x_1,\rho_1(x_1,y_1))}\]
whenever~$\rho_1(x_1,y_1) \geq C_K \rho_1(y_1,y_1')$.

We assume that~$C(f_2,g_2) \ls \|f_2\|_{L^2(\mu_2)}\|g_2\|_{L^2(\mu_2)}$.
We also assume the analogous representation and properties with a kernel~$K_{f_1,g_1}$ in the case $\supp f_2 \cap \supp g_2 = \emptyset.$
\end{assume}

Here we do assume some type of $L^2$-boundedness separately on~$(X_1,\mu_1)$ and $(X_2,\mu_2)$, namely the bounds for~$C(f_1,g_1)$ and~$C(f_2,g_2)$. However, as noted in~\cite{HM14}, this seems to be a standard assumption in the classical homogeneous work, and we also require it in our non-homogeneous setting.
\begin{rem}\label{rem:prod kernel}
  Let~$K$ be a kernel on~$\mathcal{X} = X_1 \times X_2$ which can be written in form
  \[
  K(x,y) = K(x_1,x_2;y_1,y_2) := K_1(x_1,y_1)K_2(x_2,y_2),
  \]
  where $K_1(x_1,y_1)$, $K_2(x_2,y_2)$ are standard kernels on~$X_1$ and~$X_2$, respectively.
  See~\cite[Section~2.3]{HM12a} for the definition of (one-parameter) standard kernels.
  Then the kernel~$K$ satisfies Assumptions~\ref{assum_1}--\ref{assum_4}.
\end{rem}

%%%%%%%%%%%%%%%%%%%%%%%%%%%%%%%%%%%%%%%%%%%%%%%%%%%%%%
The following lemmas will be useful for later calculations, they will give estimates on the quantity $|\langle T(\phi), \theta \rangle|$, where $\phi$ and~$\theta \in L^2(\mu)$.
\begin{lem}\label{lem1:property_kernel}
  Let $T: L^2(\mu) \rightarrow L^2(\mu)$ be a bi-parameter SIO as defined in Definition~\ref{defn:bipara_SIO}.
   Take $\phi = \phi_1 \otimes \phi_2 \in L^2(\mu)$ and $\theta = \theta_1 \otimes \theta_2 \in L^2(\mu)$.
   Suppose $\supp \phi_1 \cap \supp \theta_1 = \emptyset$ and
   $\supp \phi_2 \cap \supp \theta_2 = \emptyset$.
    Moreover, suppose that
    for all $x_1 \in \supp \theta_1 =: E_{\theta_1}$ and $y_1, z \in \supp \phi_1 =: E_{\phi_1}$ we have $\rho_1(y_1,z) \leq \rho_1(x_1,z)/C_K$ for some constant~$C_K$ depending on the kernel of~$T$,
    and for all $x_2 \in \supp \theta_2 =: E_{\theta_2}$ and $y_2, w \in \supp \phi_2 =: E_{\phi_2}$ we have $\rho_2(y_2,w) \leq \rho_2(x_2,w)/C_K$.
    Also assume that $\int \phi_1 \,d\mu_1 = \int \phi_2 \,d\mu_2 = 0$.
  Then
  \begin{eqnarray*}
|\langle T(\phi_1 \otimes \phi_2), \theta_1 \otimes \theta_2 \rangle|
     &\ls&  \|\phi_1\|_{L^2(\mu_1)} \|\phi_2\|_{L^2(\mu_2)}
     \mu_1(E_{\phi_1})^{1/2} \mu_2(E_{\phi_2})^{1/2}
      \\
     &&\times \int_{E_{\theta_1}} \frac{\rho_1(y_1,z)^{\al_1}}{\rho_1(x_1,z)^{\al_1}\lambda_1(z,\rho_1(x_1,z))}
     |\theta_1(x_1)|\, d\mu_1(x_1) \\
     &&\times  \int_{E_{\theta_2}} \frac{\rho_2(y_2,w)^{\al_2}}{\rho_2(x_2,w)^{\al_2}\lambda_2(w,\rho_2(x_2,w))}
     |\theta_2(x_2)|\, d\mu_2(x_2).
  \end{eqnarray*}
\end{lem}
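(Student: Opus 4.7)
The plan is to exploit the disjoint support and the cancellation conditions $\int \phi_1\,d\mu_1 = \int \phi_2\,d\mu_2 = 0$ to express the pairing as an integral against a second-order kernel difference, which can then be estimated by the full mixed Hölder condition of Assumption~\ref{assum_2}. First, since $\operatorname{supp}\phi_1\cap\operatorname{supp}\theta_1=\emptyset$ and $\operatorname{supp}\phi_2\cap\operatorname{supp}\theta_2=\emptyset$, Assumption~\ref{assum_1} (full kernel representation) gives
\[
\langle T\phi,\theta\rangle = \iint_{\mathcal{X}}\iint_{\mathcal{X}} K(x,y)\,\phi_1(y_1)\phi_2(y_2)\,\theta_1(x_1)\theta_2(x_2)\,d\mu(y)\,d\mu(x).
\]
Fix arbitrary reference points $z\in E_{\phi_1}$ and $w\in E_{\phi_2}$. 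Using $\int \phi_1\,d\mu_1=0$ twice (first to subtract $K(x;z,y_2)$, then $K(x;y_1,w)-K(x;z,w)$ via $\int \phi_2\,d\mu_2=0$), the inner double integral can be rewritten as $\iint \phi_1(y_1)\phi_2(y_2)\,\Delta K(x;y,z,w)\,d\mu_1(y_1)\,d\mu_2(y_2)$, where
\[
\Delta K(x;y,z,w) := K(x;y_1,y_2) - K(x;z,y_2) - K(x;y_1,w) + K(x;z,w).
\]

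Next I would invoke the full Hölder estimate of Assumption~\ref{assum_2}. The hypothesis $\rho_1(y_1,z)\le \rho_1(x_1,z)/C_K$ combined with the quasitriangle inequality~\eqref{eq:quasi_triangle_inequality} yields $\rho_1(x_1,y_1)\sim \rho_1(x_1,z)$ (and similarly $\rho_1(x_1,y_1)\ge C_K\rho_1(y_1,z)$ after choosing $C_K$ large relative to $A_0$), so that the Hölder hypothesis for $K$ is verified; the analogous statement holds in the second variable. Applying the mixed Hölder bound therefore gives
\[
|\Delta K(x;y,z,w)| \lesssim \frac{\rho_1(y_1,z)^{\al_1}}{\rho_1(x_1,y_1)^{\al_1}\lambda_1(x_1,\rho_1(x_1,y_1))}\cdot\frac{\rho_2(y_2,w)^{\al_2}}{\rho_2(x_2,y_2)^{\al_2}\lambda_2(x_2,\rho_2(x_2,y_2))}.
\]
Now the comparability $\rho_1(x_1,y_1)\sim\rho_1(x_1,z)$ together with monotonicity and doubling of $\lambda_1$ lets me replace $\rho_1(x_1,y_1)$ by $\rho_1(x_1,z)$ in both the power and the $\lambda_1$ factor; the symmetry property~\eqref{eq2:up_dbl} then converts $\lambda_1(x_1,\rho_1(x_1,z))$ into $\lambda_1(z,\rho_1(x_1,z))$, and analogously for the $x_2$-variables. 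This is where the lion's share of bookkeeping happens.

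At this point the $y$- and $x$-integrals separate completely: the numerator $\rho_1(y_1,z)^{\al_1}\rho_2(y_2,w)^{\al_2}$ and the $\phi$-factors live in the $y$-integral, while the rest lives in the $x$-integral. Applying Cauchy--Schwarz on each factor,
\[
\int_{E_{\phi_1}} |\phi_1(y_1)|\,\rho_1(y_1,z)^{\al_1}\,d\mu_1(y_1) \le \|\phi_1\|_{L^2(\mu_1)}\,\mu_1(E_{\phi_1})^{1/2}\,\sup_{y_1\in E_{\phi_1}}\rho_1(y_1,z)^{\al_1},
\]
and similarly for $\phi_2$, produces the $\|\phi_i\|_{L^2}\mu_i(E_{\phi_i})^{1/2}$ factors of the conclusion, with $\rho_1(y_1,z)^{\al_1}$ and $\rho_2(y_2,w)^{\al_2}$ read as the corresponding suprema (i.e.\ essentially the diameters of the $\phi$-supports, but expressed via representative points $y_1,y_2$ as in the statement). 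The remaining $x$-integral is exactly the product of the two integrals on the right-hand side of the claimed bound. The main obstacle is the careful verification that the chain $\rho_1(y_1,z)\to\rho_1(x_1,y_1)\to\rho_1(x_1,z)$ and the corresponding change of basepoint in $\lambda_1$ can be carried out with implicit constants depending only on $A_0$, $C_K$, and $C_\lambda$; once these comparabilities are in place the rest is a routine bookkeeping of Fubini and Cauchy--Schwarz.
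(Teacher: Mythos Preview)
Your proposal is correct and follows essentially the same route as the paper: use the full kernel representation (Assumption~\ref{assum_1}), exploit the two cancellation conditions to pass to the second-order difference $\Delta K$, apply the mixed H\"older estimate of Assumption~\ref{assum_2}, and then separate the $x$- and $y$-integrals with Cauchy--Schwarz on the $\phi_i$. The paper's proof is terser---it jumps directly from the H\"older bound to the conclusion---whereas you spell out the comparability $\rho_1(x_1,y_1)\sim\rho_1(x_1,z)$ and the basepoint change $\lambda_1(x_1,\cdot)\to\lambda_1(z,\cdot)$ via~\eqref{eq1:up_dbl}; these are exactly the implicit steps the paper is suppressing, so your extra care is justified rather than superfluous. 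One small simplification: since the hypothesis $\rho_1(y_1,z)\le\rho_1(x_1,z)/C_K$ is assumed for \emph{all} $y_1,z\in E_{\phi_1}$, it is symmetric in $y_1$ and $z$, so the H\"older hypothesis $\rho_1(x_1,y_1)\ge C_K\rho_1(y_1,z)$ follows by swapping rather than via the quasitriangle inequality.
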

\begin{proof}
  Since $\supp \phi_1 \cap \supp \theta_1 = \emptyset$ and $\supp \phi_2 \cap \supp \theta_2 = \emptyset$,
using Assumption~\ref{assum_1}, we can write $\langle T(\phi_1 \otimes \phi_2), \theta_1 \otimes \theta_2 \rangle$ as
\[\int_{E_{\phi_1}} \int_{E_{\phi_2}} \int_{E_{\theta_1}} \int_{E_{\theta_1}} K(x,y)
\phi_1(y_1) \phi_2(y_2) \theta_1(x_1) \theta_2(x_2)
\,d\mu_2(x_2)\,d\mu_1(x_1)\,d\mu_2(y_2)\,d\mu_1(y_1)\]
  then replace~$K(x,y)$ by
  \[K(x,y) - K(x,(y_1,w)) - K(x,(z,y_2))+ K(x,(z,w)),\]
  where~$z \in E_{\phi_1}$ and~$w \in E_{\phi_2}$ are arbitrary. The replacement can be done since $\int \phi_1\,d\mu_1 = \int \phi_2\,d\mu_2 = 0$.

  As $\rho_1(y_1,z) \leq \rho_1(x_1,z)/C_K$ for all $x_1 \in  E_{\theta_1}$ and $y_1, z \in E_{\phi_1}$ ,
    and $\rho_2(y_2,w) \leq \rho_2(x_2,w)/C_K$ for all $x_2 \in E_{\theta_2}$ and $y_2, w \in E_{\phi_2}$,
   using the H\"{o}lder condition for~$K$ in Assumption~\ref{assum_2}, this yields
  \begin{eqnarray*}
    \lefteqn{|K(x,y) - K(x,(y_1,w)) - K(x,(z,y_2)+ K(x,(z,w))|} \\
     &\ls&  \frac{\rho_1(y_1,z)^{\al_1}}{\rho_1(x_1,z)^{\al_1}\lambda_1(z,\rho_1(x_1,z))}
  \frac{\rho_2(y_2,w)^{\al_2}}{\rho_2(x_2,w)^{\al_2}\lambda_2(w,\rho_2(x_2,w))}. \\
  \end{eqnarray*}
    Hence, Lemma~\ref{lem1:property_kernel} follows.
\end{proof}

%%%%%%%%%%%%%%%%%%%%%%%%%%%%%%%%%%%%%%%%%%%%%
\begin{lem}\label{lem:property_kernel}
  Let $T: L^2(\mu) \rightarrow L^2(\mu)$ be a bi-parameter SIO as defined in Definition~\ref{defn:bipara_SIO}.
   Take $\phi = \phi_1 \otimes \phi_2 \in L^2(\mu)$ and $\theta = \theta_1 \otimes \theta_2 \in L^2(\mu)$.
  If $\supp \phi_1 \cap \supp \theta_1 = \emptyset$; for all $x_1 \in \supp \theta_1 =: E_{\theta_1}$ and $y_1, z \in \supp \phi_1 =: E_{\phi_1}$ we have $\rho_1(y_1,z) \leq \rho_1(x_1,z)/C_K$ for some constant~$C_K$ depending on the kernel of~$T$; and $\int \phi_1 \,d\mu_1 = 0$
  then
  \begin{eqnarray*}
    |\langle T(\phi_1 \otimes \phi_2), \theta_1 \otimes \theta_2 \rangle|
     &\ls& \|\phi_1\|_{L^2(\mu_1)} \|\phi_2\|_{L^2(\mu_2)}  \mu_1(E_{\phi_1})^{1/2} \|\theta_2\|_{L^2(\mu_2)}
      \\
     &&\times \int_{E_{\theta_1}} \frac{\rho_1(y_1,z)^{\al_1}}{\rho_1(x_1,z)^{\al_1}\lambda_1(z,\rho_1(x_1,z))}
     |\theta_1(x_1)|\, d\mu_1(x_1).
  \end{eqnarray*}

  Similarly, if $\supp \phi_2 \cap \supp \theta_2 = \emptyset$; for all $x_2 \in \supp \theta_2 =: E_{\theta_2}$ and $y_2, w \in \supp \phi_2 =: E_{\phi_2}$ we have $\rho_2(y_2,w) \leq \rho_2(x_2,w)/C_K$; and $\int \phi_2 \,d\mu_2 = 0$
  then
  \begin{eqnarray*}
    |\langle T(\phi_1 \otimes \phi_2), \theta_1 \otimes \theta_2 \rangle|
     &\ls& \|\phi_1\|_{L^2(\mu_1)} \|\phi_2\|_{L^2(\mu_2)} \|\theta_1\|_{L^2(\mu_1)}  \mu_2(E_{\phi_2})^{1/2}
      \\
     &&\times \int_{E_{\theta_2}} \frac{\rho_2(y_2,w)^{\al_2}}{\rho_2(x_2,w)^{\al_2}\lambda_2(w,\rho_2(x_2,w))}
     |\theta_2(x_2)|\, d\mu_2(x_2).
  \end{eqnarray*}
\end{lem}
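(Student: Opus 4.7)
The first statement mirrors Lemma~\ref{lem1:property_kernel}, but since the supports are only disjoint in the first coordinate, the plan is to invoke the \emph{partial} kernel representation (Assumption~\ref{assum_3}) rather than the full one. Specifically, I would begin by using Assumption~\ref{assum_3} to write
\begin{equation*}
\langle T(\phi_1 \otimes \phi_2), \theta_1 \otimes \theta_2 \rangle = \int_{E_{\phi_1}} \int_{E_{\theta_1}} K_{\phi_2, \theta_2}(x_1, y_1)\, \phi_1(y_1)\, \theta_1(x_1) \, d\mu_1(x_1) \, d\mu_1(y_1),
\end{equation*}
and then estimate this integral by invoking the size and H\"older bounds for $K_{\phi_2, \theta_2}$ from Assumption~\ref{assum_4}, whose constant satisfies $C(\phi_2, \theta_2) \ls \|\phi_2\|_{L^2(\mu_2)} \|\theta_2\|_{L^2(\mu_2)}$.

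Fix an arbitrary point $z \in E_{\phi_1}$. Because $\int \phi_1 \, d\mu_1 = 0$, I would replace $K_{\phi_2, \theta_2}(x_1, y_1)$ by $K_{\phi_2, \theta_2}(x_1, y_1) - K_{\phi_2, \theta_2}(x_1, z)$ without altering the value of the integral. The hypothesis $\rho_1(y_1, z) \leq \rho_1(x_1, z)/C_K$ together with the quasi-triangle inequality yields $\rho_1(x_1, y_1) \sim \rho_1(x_1, z)$, and in particular $\rho_1(x_1, y_1) \geq C_K \rho_1(y_1, z)$ once $C_K$ is taken sufficiently large, which licenses an application of the $y$-variable H\"older condition in Assumption~\ref{assum_4}:
\begin{equation*}
|K_{\phi_2, \theta_2}(x_1, y_1) - K_{\phi_2, \theta_2}(x_1, z)| \ls \|\phi_2\|_{L^2(\mu_2)} \|\theta_2\|_{L^2(\mu_2)} \, \frac{\rho_1(y_1, z)^{\al_1}}{\rho_1(x_1, y_1)^{\al_1} \, \lambda_1(x_1, \rho_1(x_1, y_1))}.
\end{equation*}
The upper doubling property (monotonicity in $r$ together with $\lambda_1(\cdot, 2r) \leq C_\lambda \lambda_1(\cdot, r)$) and the symmetry~\eqref{eq2:up_dbl} then let me replace $\rho_1(x_1, y_1)^{\al_1} \lambda_1(x_1, \rho_1(x_1, y_1))$ by the comparable quantity $\rho_1(x_1, z)^{\al_1} \lambda_1(z, \rho_1(x_1, z))$. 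Integrating $|\phi_1(y_1)|$ against $d\mu_1(y_1)$ over $E_{\phi_1}$ via Cauchy--Schwarz produces the factor $\|\phi_1\|_{L^2(\mu_1)} \mu_1(E_{\phi_1})^{1/2}$, while the remaining integral over $x_1 \in E_{\theta_1}$ is exactly the one on the right-hand side of the claim.

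The second part of the lemma follows by an identical argument with the roles of the two coordinates swapped, using the analogous partial kernel $K_{\phi_1, \theta_1}$ (whose existence and bounds are asserted at the end of Assumption~\ref{assum_4}) together with the cancellation $\int \phi_2 \, d\mu_2 = 0$. The only technical obstacle is the transition from bounds phrased in terms of $\rho_1(x_1, y_1)$ and $\lambda_1(x_1, \cdot)$ to bounds phrased in terms of $\rho_1(x_1, z)$ and $\lambda_1(z, \cdot)$; this requires combining the quasi-triangle inequality, the doubling of $\lambda_1$ in its second argument, and the symmetry~\eqref{eq2:up_dbl}, but it is routine and is the only place where the non-homogeneity of the measure needs careful handling.
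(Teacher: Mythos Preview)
Your proposal is correct and follows essentially the same approach as the paper: both use the partial kernel representation (Assumption~\ref{assum_3}), exploit the cancellation $\int \phi_1\,d\mu_1=0$ to subtract $K_{\phi_2,\theta_2}(x_1,z)$, apply the H\"older estimate from Assumption~\ref{assum_4}, and finish with Cauchy--Schwarz on the $y_1$-integral. The paper simply applies the H\"older condition with the roles of $y_1$ and $z$ swapped (so that $\rho_1(x_1,z)$ and $\lambda_1(z,\rho_1(x_1,z))$ appear directly, via~\eqref{eq1:up_dbl}), whereas you first obtain the bound with $\rho_1(x_1,y_1)$ and $\lambda_1(x_1,\cdot)$ and then convert; both routes are equivalent and equally valid.
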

\begin{proof}
We will prove the first conclusion. The second conclusion can be shown analogously.
  Since $\supp \phi_1 \cap \supp \theta_1 = \emptyset$,
using Assumption~\ref{assum_3}, we can write $\langle T(\phi_1 \otimes \phi_2), \theta_1 \otimes \theta_2 \rangle$ as
\[\int_{E_{\phi_1}} \int_{E_{\theta_1}}K_{\phi_2,\theta_2}(x_1,y_1) \phi_1(y_1) \theta_1(x_1) \,d\mu_1(x_1) \, d\mu_1(y_1),\]
and replace $K_{\phi_1,\theta_1}(x_1,y_1)$ by
$K_{\phi_2,\theta_2}(x_1,y_1) - K_{\phi_2,\theta_2}(x_1,z),$
where $z \in E_{\phi_1}$ is arbitrary.
 The replacement can be done since~$\int \phi_1\,d\mu_1 =0$.
Since $\rho_1(y_1,z) \leq \rho_1(x_1,z)/C_K$ for all $x_1 \in E_{\theta_1}$ and $y_1, z \in  E_{\phi_1}$, we can use Assumption~\ref{assum_4} to estimate
\begin{eqnarray*}
  |K_{\phi_2,\theta_2}(x_1,y_1) - K_{\phi_2,\theta_2}(x_1,z)|
   &\leq& C(\phi_2,\theta_2)  \frac{\rho_1(y_1,z)^{\al_1}}{\rho_1(x_1,z)^{\al_1}
      \lambda_1(z,\rho_1(x_1,z))} \\
   &\ls& \|\phi_2\|_{L^2(\mu_2)}  \|\theta_2\|_{L^2(\mu_2)}
    \frac{\rho_1(y_1,z)^{\al_1}}{\rho_1(x_1,z)^{\al_1}
      \lambda_1(z,\rho_1(x_1,z))}.
\end{eqnarray*}
Thus
\begin{eqnarray*}
  \lefteqn{|\langle T(\phi_1 \otimes \phi_2), \theta_1 \otimes \theta_2 \rangle|} \\
   &\ls& \|\phi_2\|_{L^2(\mu_2)}  \|\theta_2\|_{L^2(\mu_2)}
   \int_{E_{\phi_1}} |\phi_1(y_1)| \,d\mu_1(y_1)  \int_{E_{\theta_2}}  \frac{\rho_1(y_1,z)^{\al_1}}{\rho_1(x_1,z)^{\al_1}
      \lambda_1(z,\rho_1(x_1,z))}
    |\theta_1(x_1)| \, d\mu_1(x_1) \\
   &\leq&  \|\phi_2\|_{L^2(\mu_2)}  \|\theta_2\|_{L^2(\mu_2)}
   \|\phi_1\|_{L^2(\mu_1)} \mu_1(E_{\phi_1})^{1/2}  \int_{E_{\theta_2}}
   \frac{\rho_1(y_1,z)^{\al_1}}{\rho_1(x_1,z)^{\al_1}
      \lambda_1(z,\rho_1(x_1,z))}
    |\theta_1(x_1)| \, d\mu_1(x_1),
\end{eqnarray*}
as required.
\end{proof}

%%%%%%%%%%%%%%%%%%%%%%%%
\subsection{Weak Boundedness Properties}\label{sec:WBP}
Depending on the way we interpret the bi-parameter SIO~$T$, we need to use suitable assumptions on the weak boundedness property of~$T$.
In this paper, we give two ways to interpret the operator~$T$. First, we assume that~$T$ has some \emph{a priori} boundedness. Second, we assume that the bilinear form~$\langle Tf,g\rangle$ is initially well-defined on functions~$f$ and~$g \in C^{\eta}_c(\mathcal{X})$.

When we say~$T$ has  \emph{a priori} boundedness, we either mean that the operator norm~$\|T\|$ of~$T$ is finite, that is, $\|T\| < \infty$, or that
for all functions $f, g \in C^{\eta}_{\textup{c}}(\mathcal{X}) $ with $\supp f \subset B(x,R)$ and $\supp g \subset B(x, R)$ for some $x \in \mathcal{X}$ and $R > 0$, we have
      \[|\langle Tf,g \rangle| \leq C(R) \|g\|_{L^2(\mu)} \|g\|_{L^2(\mu)}.\]
We will make it clear in the paper which definition is used whenever we mention the \emph{a priori} boundedness.
With this interpretation, the operator~$T$ is assumed to satisfy additional Assumptions~\ref{assum_5}--\ref{assum_6}.
\begin{assume}\label{assum_5}\textbf{(Full weak boundedness property)}
There exist~$C>0$ and~$\Lambda >1$ such that
for every ball~$B_1 \subset X_1$ and~$B_2 \subset X_2$,
\[|\langle T(\chi_{B_1} \otimes \chi_{B_2}), \chi_{B_1} \otimes \chi_{B_2}\rangle| \leq C\mu_1(\Lambda B_1)\mu_2(\Lambda B_2).\]
\end{assume}

\begin{assume}\label{assum_6}\textbf{(Partial weak boundedness $\times$ $\bmo$ condition)}
There exist~$C>0$ and~$\Lambda >1$ such that
for all~$B_1 \subset X_1$, $B_2 \subset X_2$  and for all functions~$a_{B_1}$, $a_{B_2}$ with~$\int a_{B_1}\,d\mu_1 = \int a_{B_2}\,d\mu_2 = 0, \supp a_{B_1} \subset B_1$ and $\supp a_{B_2} \subset B_2$ we have
\begin{align*}
&|\langle T(\chi_{B_1} \otimes \chi_{B_2}), a_{B_1} \otimes \chi_{B_2}\rangle|
+ |\langle T^*(\chi_{B_1} \otimes \chi_{B_2}), a_{B_1} \otimes \chi_{B_2}\rangle| \leq C\|a_{B_1}\|_{L^2(\mu_1)}\mu_1(\Lambda B_1)^{1/2}\mu_2(\Lambda B_2)
\end{align*}
and
\begin{align*}
&|\langle T(\chi_{B_1} \otimes \chi_{B_2}), \chi_{B_1} \otimes a_{B_2}\rangle|
+|\langle T^*(\chi_{B_1} \otimes \chi_{B_2}), \chi_{B_1} \otimes a_{B_2}\rangle|  \leq C\|a_{B_2}\|_{L^2(\mu_2)}\mu_1(\Lambda B_1)\mu_2(\Lambda B_2)^{1/2}.
 \end{align*}
\end{assume}
As noted in~\cite{HM12a}, it is sometimes of interest to replace the rough test functions~$\chi_B$ by some more regular ones. Specifically, in our setting we can replace~$\chi_B$ by $\widetilde{\chi}_{B,\epsilon} \in C^{\eta}(\mathcal{X})$, where $\chi_B(x) \leq \widetilde{\chi}_{B,\epsilon}(x) \leq \chi_{(1+\epsilon)B}(x)$ for every~$x \in \mathcal{X}$, every ball~$B$ and every~$\epsilon \in (0,1]$. Therefore, we can obtain weaker conditions than those in Assumptions~\ref{assum_5} and~\ref{assum_6} by replacing~$\chi_{B_1}$ and~$\chi_{B_2}$ by~$\widetilde{\chi}_{B_1,\epsilon}$
and~$\widetilde{\chi}_{B_2,\epsilon}$ on the left-hand side of the inequalities. Also, the constant~$C = C(\epsilon)< \infty$ depends on~$\epsilon$, but is independent of the other quantities.

If we interpret the operator~$T$ in the sense that the bilinear form~$\langle Tf,g \rangle$ is defined on functions~$f$ and~$g \in C^{\eta}_c(\mathcal{X})$, then the weak boundedness of~$T$ means
Assumptions~\ref{assum:5b}--\ref{assum:7b}.
Again, for every ball~$B$ and every~$\epsilon \in (0,1]$ we have function $\widetilde{\chi}_{B,\epsilon} \in C^{\eta}(\mathcal{X})$ and
$\chi_B \leq \widetilde{\chi}_{B,\epsilon} \leq \chi_{(1+\epsilon)B}$ .

\begin{assume}\label{assum:5b}\textbf{(Full weak boundedness property)}
There exist~$C>0$ and~$\Lambda >1$ such that
for every ball~$B_1, B'_1 \subset X_1$ with $\supp B_1 \cap \supp B'_1 \neq \emptyset$ and~$B_2 \subset B'_2 \subset X_2$ with $\supp B_2 \cap \supp B'_2 \neq \emptyset$ we have
\[|\langle T(\widetilde{\chi}_{B'_1,\epsilon} \otimes
\widetilde{\chi}_{B'_2,\epsilon}),
\widetilde{\chi}_{B_1,\epsilon} \otimes
\widetilde{\chi}_{B_2,\epsilon}\rangle| \leq C\mu_1(\Lambda B_1)\mu_2(\Lambda B_2).\]
\end{assume}

\begin{assume}\label{assum:6b}\textbf{(Partial weak boundedness property)}
There exist~$C>0$ and~$\Lambda >1$ such that
for every ball~$B_1, B'_1 \subset X_1$ with $\supp B_1 \cap \supp B'_1 \neq \emptyset$ and~$B_2 \subset B'_2 \subset X_2$ with $\supp B_2 \cap \supp B'_2 \neq \emptyset$  we have
\begin{align*}
|\langle T(\widetilde{\chi}_{B'_1,\epsilon} \otimes
\widetilde{\chi}_{B'_2,\epsilon}),
\widetilde{\chi}_{B_1,\epsilon} \otimes
\delta_{x_{B_2}}\rangle|
+ |\langle T^*(\widetilde{\chi}_{B'_1,\epsilon} \otimes
\widetilde{\chi}_{B'_2,\epsilon}),
\widetilde{\chi}_{B_1,\epsilon} \otimes
\delta_{x_{B_2}}\rangle|
\leq C\mu_1(\Lambda B_1),
\end{align*}
and
\[|\langle T(\widetilde{\chi}_{B'_1,\epsilon} \otimes
\widetilde{\chi}_{B'_2,\epsilon}),
\delta_{x_{B_1}} \otimes
\widetilde{\chi}_{B_2,\epsilon}\rangle|
+ |\langle T^*(\widetilde{\chi}_{B'_1,\epsilon} \otimes
\widetilde{\chi}_{B'_2,\epsilon}),
\delta_{x_{B_1}} \otimes
\widetilde{\chi}_{B_2,\epsilon}\rangle|
\leq C\mu_2(\Lambda B_2),\]
where for $i=1,2$,
$x_{B_i}$ is the centre of the ball~$B_i$ and
$\delta_{x_{B_i}}(x) = 1$ if $x=x_{B_i}$ and $0$ otherwise.
\end{assume}

\begin{assume}\label{assum:7b}\textbf{(Partial weak boundedness $\times$ $\bmo$ condition)}
There exist~$C>0$ and~$\Lambda >1$ such that
for all~$B_1 \subset X_1$, $B_2 \subset X_2$  and for all functions~$a_{B_1}$, $a_{B_2}$ with~$\int a_{B_1}\,d\mu_1 = \int a_{B_2}\,d\mu_2 = 0, \supp a_{B_1} \subset B_1$ and $\supp a_{B_2} \subset B_2$ we have
\begin{align*}
&|\langle T(\widetilde{\chi}_{B_1,\epsilon} \otimes
\widetilde{\chi}_{B_2,\epsilon}),
a_{B_1} \otimes
\widetilde{\chi}_{B_2,\epsilon}\rangle|
+ |\langle T^*(\widetilde{\chi}_{B_1,\epsilon} \otimes
 \widetilde{\chi}_{B_2,\epsilon}),
 a_{B_1} \otimes \widetilde{\chi}_{B_2,\epsilon}\rangle|\\
 & \leq C\|a_{B_1}\|_{L^2(\mu_1)}\mu_1(\Lambda B_1)^{1/2}\mu_2(\Lambda B_2)
\end{align*}
and
\begin{align*}
&|\langle T(\widetilde{\chi}_{B_1,\epsilon} \otimes
\widetilde{\chi}_{B_2,\epsilon}),
\widetilde{\chi}_{B_1,\epsilon} \otimes a_{B_2}\rangle|
+|\langle T^*(\widetilde{\chi}_{B_1,\epsilon} \otimes
\widetilde{\chi}_{B_2,\epsilon}),
\widetilde{\chi}_{B_1,\epsilon} \otimes a_{B_2}\rangle| \\
& \leq C\|a_{B_2}\|_{L^2(\mu_2)}\mu_1(\Lambda B_1)\mu_2(\Lambda B_2)^{1/2}.
 \end{align*}
\end{assume}

Note that the same conditions hold when replacing~$T$ by~$T_1$ in Assumptions~\ref{assum_5}--\ref{assum:7b}.

%%%%%%%%%%%%%%%%%%%%%%%%%%%%%%%%%%%%%%%%%%%%%%%%%%%%%%%%%%%%%%%%%%%%%%%%%%%
\subsection{Dyadic cubes in~$(X,\rho)$}\label{sec:dyadiccubes}
In this section, we recall the constructions of dyadic cubes, adjacent systems of dyadic cubes and random systems of dyadic cubes.
We use the construction from~\cite{HK12}. We present here the (slightly reworded) version that appears in~\cite[Section~2]{KLPW16}.
For the history of the development of systems of dyadic cubes, and collection of such systems which generalise the ``one-third trick'', see \cite{HK12} and the references therein, especially \cite{Chr55} and \cite {SaWh}.

\begin{defn}[\cite{KLPW16}]\label{dyadiccubes} (\textbf{A system of dyadic cubes}) In a geometrically doubling quasimetric space $(X, \rho)$, a countable family
\[\mathscr{D} = \bigcup_{k \in \mathbb{Z}} \mathscr{D}_k, \hspace{0.6cm} \mathscr{D}_k = \{Q_{\alpha}^k: \alpha \in \mathscr{A}_k\},\]
of Borel sets $Q_{\alpha}^k \subset X$ together with a fixed collection of countably many (reference) points $x_{\alpha}^k$ in $X$, with $x_{\alpha}^k \in Q_{\alpha}^k$ for each $k \in \mathbb{Z}$ and each $\alpha \in \mathscr{A}_k$,
is called a \emph{system of dyadic cubes with parameters $\delta \in (0,1)$, $c_Q$ and $C_Q$} such that $0 < c_Q < C_Q < \infty$ if it has the following properties.
\begin{align}
  &\hspace{.5cm} 1. \text{ } X = \bigcup_{\alpha \in \mathscr{A}_k} Q_{\alpha}^k \text{ (disjoint union) for all } k \in \mathbb{Z}.\label{a1}\\
  &\hspace{.5cm} 2. \text{ If }l \geq k, \text{ then either } Q_{\beta}^l \subset Q_{\alpha}^k \text{ or } Q_{\alpha}^k \cap Q_{\beta}^l = \emptyset. \label{a2}\\
  &\hspace{.5cm} 3. \text{ } B(x_{\alpha}^k, c_Q\delta ^k) \subset Q_{\alpha}^k \subset B(x_{\alpha}^k, C_Q\delta ^k) =: B(Q_{\alpha}^k).\label{a5}\\
 &\hspace{.5cm} 4. \text{ If } l \geq k \text{ and } Q_{\beta}^l \subset Q_{\alpha}^k, \text{ then } B(Q_{\beta}^l) \subset B(Q_{\alpha}^k).\label{a6}\\
&\hspace{.5cm} 5. \text{ For each } (k, \alpha) \text{ and  each } l \leq k, \text{ there exists a unique } \beta \text{ such that } Q_{\alpha}^k \subset  Q_{\beta}^l. \label{a3}\\
& \hspace{.5cm}6. \text{ For each }  (k, \alpha) {\rm there\ is\ at\ most\ } M {\rm (a\ fixed\ geometric\ constant)}\  cubes \ Q_{\beta}^{k+1} \nonumber
 \end{align}
 \begin{equation}\label{a4}
\hskip-4cm
{\rm\ such\ that\ }  Q_{\beta}^{k+1} \subset Q_{\alpha}^k, \text{ and } Q_{\alpha}^k = \bigcup_{Q \in \mathscr{D}_{k+1}, Q\subset Q_{\alpha}^k} Q.
  \end{equation}
The set $Q_{\alpha}^k$ is called a \emph{dyadic cube of generation $k$} with \emph{centre point} $x_{\alpha}^k \in Q_{\alpha}^k$ and \emph{side length}~$\delta ^k.$ We will use $\ell(Q_{\alpha}^k)$ to denote the side length of $Q_{\alpha}^k$.
\end{defn}

\begin{thm}[\text{\cite[Theorem 2.1]{KLPW16}}] \label{klp}
Let $(X, \rho)$ be a geometrically doubling quasimetric space. Then there exists a system~$\D$ of dyadic cubes with parameters $0 < \delta \leq (12A_0^3)^{-1}$, $c_Q = (3A_0^2)^{-1}$ and $C_Q = 2A_0.$ The construction only depends on some fixed set of countably many centre points $x_{\alpha}^k$, satisfying the two inequalities
\begin{equation*}%\label{a7}
  \rho (x_{\alpha}^k, x_{\beta}^k) \geq \delta ^k \hspace{0.5 cm} (\alpha \neq \beta),
  \hspace{1.5cm} \min_{\alpha} \rho (x, x_{\alpha}^k) < \delta ^k \text{ for all } x \in X,
\end{equation*}
and a certain partial order $\leq$ among their index pairs $(k, \alpha)$.
\end{thm}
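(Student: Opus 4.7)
The plan is to follow the Christ-style construction, as refined for geometrically doubling quasimetric spaces by Hytönen and Kairema. The proof proceeds in three stages: selection of reference centre points at every scale, definition of a parent-child partial order on index pairs, and construction of the cubes from these ingredients.

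First, I would fix $\delta \in (0, (12A_0^3)^{-1}]$ and, for each $k \in \Z$, choose by Zorn's lemma a maximal $\delta^k$-separated set $\{x_\alpha^k : \alpha \in \mathscr{A}_k\}$ in $X$. Maximality directly yields the two displayed inequalities of the theorem: $\rho(x_\alpha^k, x_\beta^k) \geq \delta^k$ for $\alpha \neq \beta$ by separation, and $\min_\alpha \rho(x, x_\alpha^k) < \delta^k$ for every $x \in X$ because otherwise $x$ could be added to the set, contradicting maximality. The geometric doubling hypothesis forces each $\mathscr{A}_k$ to be locally finite, hence countable.

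Next, I would define the partial order. For each $(k+1, \beta)$, single out a unique parent $(k, \alpha)$ from among those indices $\alpha$ satisfying $\rho(x_\beta^{k+1}, x_\alpha^k) < \delta^k$, using any fixed selection rule; such an $\alpha$ exists by the covering inequality. Extend transitively to declare $(l, \beta) \leq (k, \alpha)$ whenever $l \geq k$ and a chain of parents links $(l, \beta)$ up to $(k, \alpha)$. Then build pre-cubes recursively via $\widetilde{Q}_\alpha^k := \{x_\alpha^k\} \cup \bigcup_{(k+1,\beta) \leq (k,\alpha)} \widetilde{Q}_\beta^{k+1}$, growing downward through descendants, and promote to honest Borel cubes $Q_\alpha^k$ by assigning each $x \in X$ at generation $k$ to the unique cube whose ancestry chain it belongs to (closest-centre tiebreaking along the chain). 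Disjointness (a1), nestedness (a2), unique ancestry (a3), and child partitioning (a4) are then built into the construction.

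The main obstacle is simultaneously achieving both ball inclusions in (a5) with the sharp constants $c_Q = (3A_0^2)^{-1}$ and $C_Q = 2A_0$, while also respecting (a6) that ancestral containing balls dominate descendant ones. Iterating the quasitriangle inequality along descending chains produces terms of the form $\sum_{j \geq 0} A_0^{j+1} \delta^{k+j}$, and for this series to close within a $C_Q\delta^k = 2A_0\delta^k$ budget one needs exactly $\delta \leq (12A_0^3)^{-1}$; this is where the cubic threshold in $A_0$ enters. The inner inclusion $B(x_\alpha^k, (3A_0^2)^{-1}\delta^k) \subset Q_\alpha^k$ uses that every competing centre point lies at distance at least $\delta^k$ from $x_\alpha^k$, together with the regularity property \eqref{eq:regularity} of $\rho$, to ensure that any point within $(3A_0^2)^{-1}\delta^k$ of $x_\alpha^k$ gets unambiguously assigned to $x_\alpha^k$ at every level of the ancestry chain. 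The uniform bound $M$ on the number of children in (a4) then follows directly from the geometric doubling hypothesis applied inside $B(x_\alpha^k, C_Q\delta^k)$, since children's centres form a $\delta^{k+1}$-separated subset there.
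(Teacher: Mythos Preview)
The paper does not prove this theorem at all: it is stated with attribution \cite[Theorem~2.1]{KLPW16} and used as a black box, so there is no ``paper's own proof'' to compare against. Your sketch is a reasonable outline of the Hyt\"{o}nen--Kairema construction that underlies the cited result, and the broad architecture (maximal $\delta^k$-nets, parent assignment, transitive partial order, geometric series controlled by the threshold $\delta \leq (12A_0^3)^{-1}$) is correct. If anything, note that in the actual construction the inner-ball inclusion does not rely on the extra regularity property \eqref{eq:regularity}; the standard quasitriangle inequality with constant $A_0$ suffices, and the constants $c_Q = (3A_0^2)^{-1}$, $C_Q = 2A_0$ come out of a careful bookkeeping of the chain estimates rather than from \eqref{eq:regularity}.
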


\begin{defn}[\cite{KLPW16}] \label{cubesinX}  (\textbf{Adjacent Systems of Dyadic Cubes})
In a geometrically doubling quasimetric space $(X, \rho)$, a finite collection $\{\mathscr{D}^t: t = 1,2,\ldots,T\}$ of families $\mathscr{D}^t$ is called a \emph{collection of adjacent systems of dyadic cubes with parameters $\delta \in (0,1)$, $c_Q$ and $C_Q$} such that $0 < c_Q < C_Q < \infty$ and $C \in [1, \infty)$ if it has the following properties: individually, each $\mathscr{D}^t$ is a system of dyadic cubes with parameters $\delta \in (0,1)$ and $0 < c_Q < C_Q < \infty$; collectively, for each ball $B(x,r)\subset X$ with $\delta^{k+3} <r< \delta^{k+2}, k \in \mathbb{Z}$, there exist $t \in \{1,2,\ldots,T\}$ and $Q \in \mathscr{D}^t$ of generation $k$ and with centre point $x_{\alpha}^{k,t}$ such that $\rho(x, x_{\alpha}^{k,t}) < 2A_0\delta ^k$ and
\begin{equation}\label{a8}
  B(x,r) \subset Q \subset B(x, Cr).
\end{equation}
\end{defn}

\begin{thm}[\text{\cite[Theorem 2.7]{KLPW16}}]\label{klp1}
Let $(X, \rho)$ be a geometrically doubling quasimetric space. Then there exists a collection $\{\mathscr{D}^t = 1,2,\ldots,T\}$ of adjacent systems of dyadic cubes with parameters $0 < \delta \leq (96A_0^6)^{-1}$, $c_Q = (12A_0^4)^{-1}$, $C_Q = 4A_0^2$ and $C = 8A_0^3\delta^{-3}$. The centre points $x_{\alpha}^{k,t}$ of the cubes $Q \in \mathscr{D}^t_k $ satisfy, for each $t \in \{1,2,\ldots,T\}$, the  two inequalities
\begin{equation*}%\label{a9}
 \rho (x_{\alpha}^{k,t}, x_{\beta}^{k,t}) \geq (4A_0^2)^{-1}\delta ^k \hspace{0.5 cm} (\alpha \neq \beta),
  \hspace{1.5cm} \min_{\alpha} \rho (x, x_{\alpha}^{k,t}) < 2A_0\delta ^k \text{ for all } x \in X,
\end{equation*}
and a certain partial order $\leq$ among their index pairs $(k, \alpha)$.
\end{thm}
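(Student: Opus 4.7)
The plan is to build the $T$ adjacent systems by applying Theorem~\ref{klp} to $T$ carefully chosen families of reference points that together ``cover'' every possible ball center at every generation. First I would fix $\delta \in (0, (96 A_0^6)^{-1}]$ and, for each generation $k \in \mathbb{Z}$, select a maximal $(4 A_0^2)^{-1} \delta^k$-separated subset of $X$; the geometric doubling property bounds the local cardinality of such a set inside any ball of radius comparable to $\delta^k$ by a fixed geometric constant $N$. The idea is to partition these maximal sets (in a manner respecting the nesting across generations) into $T$ subfamilies $\{x_\alpha^{k,t}\}_\alpha$, $t = 1,\dots, T$, where $T$ depends only on the doubling constant, so that each individual subfamily remains $\delta^k$-separated (thereby satisfying the hypothesis of Theorem~\ref{klp}) while their union is $2 A_0 \delta^k$-dense.

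Next I would apply Theorem~\ref{klp} separately to each family $\{x_\alpha^{k,t}\}_{k,\alpha}$, producing systems $\mathscr{D}^t$ that individually satisfy properties~\eqref{a1}--\eqref{a4} with the claimed parameters $c_Q = (12 A_0^4)^{-1}$ and $C_Q = 4 A_0^2$. The two separation and density inequalities on the centers $x_\alpha^{k,t}$ in each $\mathscr{D}^t$ and the compatible partial order then follow from the construction of the center points together with the corresponding conclusions of Theorem~\ref{klp}.

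The main step is to establish the adjacency condition~\eqref{a8}: given a ball $B(x, r)$ with $\delta^{k+3} < r < \delta^{k+2}$, I must exhibit some $t$ and some $Q_\alpha^{k,t} \in \mathscr{D}^t_k$ with $\rho(x, x_\alpha^{k,t}) < 2 A_0 \delta^k$ and $B(x, r) \subset Q_\alpha^{k,t} \subset B(x, C r)$ for $C = 8 A_0^3 \delta^{-3}$. The existence of such a center is precisely the $2 A_0 \delta^k$-density guaranteed by the partition in the first paragraph. Granted this, the inner inclusion $B(x, r) \subset Q_\alpha^{k,t}$ comes from the inner-ball bound $B(x_\alpha^{k,t}, c_Q \delta^k) \subset Q_\alpha^{k,t}$ together with the quasi-triangle inequality, using $r < \delta^{k+2}$ and $\rho(x, x_\alpha^{k,t}) < 2 A_0 \delta^k$ to verify $B(x, r) \subset B(x_\alpha^{k,t}, c_Q \delta^k)$ after absorbing the quasi-constants into the smallness of $\delta$. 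The outer inclusion $Q_\alpha^{k,t} \subset B(x, C r)$ follows from $Q_\alpha^{k,t} \subset B(x_\alpha^{k,t}, C_Q \delta^k)$ and the lower bound $r > \delta^{k+3}$, again via the quasi-triangle inequality, which yields the explicit constant $C = 8 A_0^3 \delta^{-3}$.

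The hard part will be the combinatorial partitioning of the maximal separated sets in the first step: one must show that a fixed finite $T = T(N)$ of shifted families suffices to achieve simultaneous $2 A_0 \delta^k$-density at every level $k$, and that these families can be chosen coherently with the nested partial order that Theorem~\ref{klp} requires. Once this combinatorial lemma is in hand, the rest of the argument is book-keeping with the quasi-triangle inequality~\eqref{eq:quasi_triangle_inequality}, proceeding in essentially the same spirit as the Euclidean ``one-third trick'' but with the constants $A_0$ propagated throughout.
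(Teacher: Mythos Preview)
The paper does not prove this statement at all: Theorem~\ref{klp1} is quoted from \cite[Theorem~2.7]{KLPW16} as a black box, with no argument given. So there is no ``paper's own proof'' to compare against; your write-up is a proof sketch for a result the authors simply import.

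That said, your outline has a genuine gap. You propose to take, at each level $k$, a single maximal $(4A_0^2)^{-1}\delta^k$-separated set and then \emph{partition} it into $T$ subfamilies, each of which is $\delta^k$-separated, and whose \emph{union} is $2A_0\delta^k$-dense. But the theorem asserts that \emph{each} family $\{x_\alpha^{k,t}\}_\alpha$ individually satisfies the density bound $\min_\alpha \rho(x,x_\alpha^{k,t}) < 2A_0\delta^k$; partitioning a dense set does not yield dense pieces. Without per-family density you cannot even invoke Theorem~\ref{klp} for each $t$, since that theorem requires $\min_\alpha \rho(x,x_\alpha^k) < \delta^k$ for the reference points of a single system.

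The construction in \cite{HK12, KLPW16} proceeds differently: one builds $T$ \emph{separate} systems of reference points (think of $T$ shifted grids in the Euclidean model, not one grid cut into $T$ colors), each of which is both suitably separated and suitably dense on its own. The collective adjacency property~\eqref{a8} is then obtained by a pigeonhole/probabilistic argument showing that for any given ball, at least one of the $T$ systems has a cube of the right generation whose center is close enough to the ball's center. Your verification of the two inclusions in~\eqref{a8} via the quasi-triangle inequality and the constants $c_Q,C_Q$ is correct in spirit, but it rests on first producing $T$ genuinely dense systems, which your partition step does not deliver.
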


We want to construct systems of dyadic cubes randomly and independently.
This is done by defining a maximal collection of reference points~$x^k_{\al}$ with fixed separation, as introduced in Definition~\ref{dyadiccubes}. From these reference points, we can build a single system of dyadic cubes. However, we need more than one system. Therefore, we generate new systems of dyadic cubes by varying the reference points. This way we can eventually randomise the grids. Please refer to \cite{HK12} for more details of the construction.
\begin{thm}[\text{\cite{HK12}}]\textnormal{(\textbf{Random systems of dyadic cubes})}
\label{thm:random_sys_dyadic_cube}
Given a set of reference points $\{x_{\al}^k\}, k \in \Z, \al \in \mathscr{A}_k$, suppose the constant~$\delta \in (0,1)$ satisfies $96A_0^6\delta \leq 1$. Then there exists a probability space~$(\Omega, \mathbb{P})$ such that every~$\omega \in \Omega$ defines a dyadic system~$\mathscr{D}(\omega) = \{Q_{\al}^k(\omega)\}_{k,\al}$, related to new dyadic points~$ \{z_{\al}^k(\omega)\}_{k,\al}$, with the properties~\eqref{a1}--\eqref{a4} in Definition~\ref{dyadiccubes}. Further, the probability space~$(\Omega, \mathbb{P})$ has the following properties:\\
\textup{(i)} $\Omega = \prod_{k \in \Z} \Omega_k, \quad \omega = (\omega_k)_{k \in \Z}$ with  $\omega_k \in \Omega_k$ being independent;  \\
\textup{(ii)} $ z_{\al}^k(\omega) = z_{\al}^k(\omega_k)$; and \\
\textup{(iii)} if $(k +1,\beta) \leq (k,\al)$,  then  $\mathbb{P}(\{\omega \in \Omega: z_{\al}^k(\omega) = x_{\beta}^{k+1} \geq \tau_0 > 0\})$.
\end{thm}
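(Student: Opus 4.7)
I propose to prove the theorem via a randomization strategy, building on Christ's dyadic construction combined with random cube centers at each generation.

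First, fix the deterministic skeleton. For each $k \in \Z$, a standard Zorn's lemma argument produces a maximal $\delta^k$-separated set of reference points $\{x_\al^k\}_{\al \in \mathscr{A}_k}$, which is automatically a $\delta^k$-net in $X$ by maximality. Then introduce a partial order $\leq$ on index pairs by declaring $(k+1,\beta) \leq (k,\al)$ whenever $\rho(x_\beta^{k+1},x_\al^k) < r\delta^k$ for an appropriately chosen threshold $r = r(A_0) > 0$; by geometric doubling, the set of admissible descendants $C_\al^k := \{\beta : (k+1,\beta) \leq (k,\al)\}$ has cardinality at most a constant $M$ depending only on the doubling dimension, uniformly in $(k,\al)$.

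Next, introduce randomness. Take $\Omega_k := \prod_{\al \in \mathscr{A}_k} C_\al^k$ equipped with the uniform product probability measure, and set $\Omega := \prod_{k \in \Z} \Omega_k$ with the product measure $\Pb$; this immediately gives~(i). For $\omega = (\omega_k)_k$ with $\omega_k = (\omega_k^\al)_\al \in \Omega_k$, define the random dyadic center $z_\al^k(\omega) := x_{\omega_k^\al}^{k+1}$, which by construction depends only on $\omega_k$, giving~(ii). Since $|C_\al^k| \leq M$ uniformly and the sampling is uniform, $\Pb(z_\al^k(\omega) = x_\beta^{k+1}) \geq 1/M =: \tau_0 > 0$ for every $\beta$ with $(k+1,\beta) \leq (k,\al)$, which is~(iii).

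With random centers fixed for each $\omega$, the cubes $Q_\al^k(\omega)$ are constructed by a Christ-type top-down induction: form a partition of $X$ around $\{z_\al^k(\omega)\}_\al$ using a nearest-center rule (broken with ties via the partial order), then enforce nesting so that each generation-$(k+1)$ cube lies inside exactly one generation-$k$ cube. Standard arguments, as in \cite{Chr55} and adapted in \cite{HK12}, show that provided $r$ is chosen consistently with $96 A_0^6 \delta \leq 1$, the resulting sets satisfy the sandwich $B(z_\al^k,c_Q\delta^k) \subset Q_\al^k(\omega) \subset B(z_\al^k,C_Q\delta^k)$ as well as properties \eqref{a1}--\eqref{a4} of Definition~\ref{dyadiccubes}, realization by realization.

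The main obstacle is juggling three simultaneous requirements: independence of the $\omega_k$ across generations, which forces the descendant sets $C_\al^k$ to be defined purely from the deterministic reference points and not from earlier random choices; nesting of the random cubes, which requires $r$ to be calibrated to $\delta$ and $A_0$ so that Christ's repair step succeeds deterministically for every $\omega$; and the uniform lower bound $\tau_0 > 0$ in~(iii), which requires the geometric doubling bound on $|C_\al^k|$. The hypothesis $96 A_0^6 \delta \leq 1$ is precisely what reconciles the first two, while geometric doubling delivers the third.
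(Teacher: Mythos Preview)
The paper does not prove this theorem; it is quoted from \cite{HK12} with the remark ``Please refer to \cite{HK12} for more details of the construction,'' so there is no in-paper proof to compare against. Your outline is precisely the Hyt\"onen--Kairema strategy from that reference: fix the deterministic hierarchy $\{x_\al^k\}$ and partial order, let the randomness at level $k$ consist of independently promoting one admissible child reference point to be the new center $z_\al^k$, and then run the Christ-type assembly realization by realization; the uniform bound $|C_\al^k|\le M$ from geometric doubling delivers~(iii), and the calibration $96A_0^6\delta\le1$ makes the nesting repair work deterministically. So your proposal is correct and matches the cited source.
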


In~\cite{HM12a}, they also present a construction of dyadic cubes, with the same properties described above, except the choice of the parameters. In particular, $\delta \leq 1/1000$, $c_Q = 1/100$ and $C_Q = 10$.
As noted in~\cite{HK12}, the construction of dyadic cubes in~\cite{HK12} is a simplified and streamlined version of that in~\cite{HM12a}.

All the results presented in~\cite{HM12a} are in terms of the construction of dyadic cubes presented there. In fact, these results still hold if we use the construction of dyadic cubes in~\cite{HK12}. This is because their proofs rely on the properties of the dyadic cubes, not on how the cube are constructed, and both constructions in~\cite{HM12a} and~\cite{HK12} yield the same properties of dyadic cubes.

\subsection{Good and bad cubes}\label{subsec:good_bad_cube}
Let $(\mathcal{X},\rho,\mu) := (X_1 \times X_2, \rho_1 \times \rho_2, \mu_1 \times \mu_2)$ be a non-homogeneous bi-parameter quasimetric space.
 Let~$\Dd_1, \Dd_1' \in X_1$ and~$\Dd_2, \Dd_2' \in X_2$ be four independent random systems of dyadic cubes, which are defined from a probability space~$(\Omega, \mathbb{P})$ (see Theorem~\ref{thm:random_sys_dyadic_cube}, Section~\ref{sec:dyadiccubes}).

We recall property~\eqref{a5} of Definition~\ref{dyadiccubes} that
there exist universal constants~$c_Q$ and~$C_Q$ such that for all dyadic cubes~$Q$ in a fixed dyadic grid~$\Dd$ with centre point~$x_Q$ and side length~$\ell(Q)$ we have
\begin{equation}\label{eq1:Part3}
B(x_Q,c_Q\ell(Q)) \subset Q\subset B(x_Q,C_Q\ell(Q)) =: B(Q).
\end{equation}
In general, the constants~$c_Q$ and~$C_Q$ can be different for different dyadic grids~$\Dd$. However, for simplicity, without loss of generality, we will use the constants~$c_Q$ and~$C_Q$ for both grids~$\Dd_1$ and~$\Dd_2$.

We separate the cubes of~$\Dd_1$ into \emph{good} and \emph{bad} cubes depending on their side length and how close they are to the boundary of cubes in~$\Dd'_1$. In simple terms, a cube~$Q \in \Dd_1$ is bad if there is a much bigger cube~$Q' \in \Dd'_1$ such that $Q$ is close to the boundary of~$Q'$.
\begin{defn}\label{def:bad cube}
   A cube~$Q$ in~$\Dd_1$ is called \emph{bad} if there exists a cube~$Q'\in \Dd_1'$ for which~$\ell(Q) \leq \delta^r \ell(Q')$ and
\begin{equation}\label{grideq.1}
  \rho_1(Q,\partial Q') \leq \delta\mathcal{C}\ell(Q)^{\gamma_1}\ell(Q')^{1-\gamma_1},
\end{equation}
 where
  $\mathcal{C} := 2A_0C_QC_K$, $A_0$ is the quasitriangle constant of~$\rho_1$, $C_Q$ is from equation~\eqref{eq1:Part3} and $C_K$ is from the kernel estimates in Assumptions~\ref{assum_2} and~\ref{assum_4}.
 Here~$\ell(Q) =  \delta^k$ denotes the \emph{side length} of~$Q$ where $\delta \in (0,1)$ and $k = \text{gen}(Q)$, $\gamma_1 :=\frac{\al_1}{2(\al_1 +t_1)}$ where~$\al_1 >0$ appears in the kernel estimates, $t_1:= \log_2 C_{\lambda_1}$, $C_{\lambda_1}$ is the upper doubling constant of~$\mu_1$, and~$r$ is a sufficiently large parameter that will be chosen later.

The badness of cubes in~$\Dd_1$ is relative to the grid~$\Dd'_1$. Thus, here bad really means~$\Dd_1'$-bad, or even~$(\Dd_1',r,\gamma_1)$-bad.
\end{defn}
Notice that condition~\eqref{grideq.1} is equivalent to that of both
\[\rho_1(Q,Q') \leq \delta\mathcal{C}\ell(Q)^{\gamma_1}\ell(Q')^{1-\gamma_1}
\text{ and }  \rho_1(Q,X_1 \backslash Q') \leq \delta\mathcal{C}\ell(Q)^{\gamma_1}\ell(Q')^{1-\gamma_1}.\]
Moreover,  the cubes~$Q$ and~$Q'$ can be disjoint, nested, next to each other or overlapping.

A cube~$Q$ is \emph{good} if it is not bad. Denote the collection of all good cubes in~$\Dd_1$ by~$\Dd_1'$-good, and the rest by~$\Dd_1'$-bad $:= \Dd_1 \backslash \Dd_1$-good.

We find that for fixed $\delta, \gamma_1 \in (0,1)$, the probability of being bad is vanishing, when $r \rightarrow \infty$.
\begin{thm}[\text{\cite[Theorem 10.2]{HM12a}}]\label{thm.10.2.HM12}
There exists $\kappa \in (0,1]$ such that for all cube~$Q \in \Dd_1$,
\[\mathbb{P}(Q \in \Dd'_1\textup{-bad}) \ls \delta^{r\gamma_1\kappa}.\]
\end{thm}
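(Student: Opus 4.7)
The plan is to fix a cube $Q \in \Dd_1$ of generation $k$, so that $\ell(Q) = \delta^k$, and argue by a union bound over the generations of the potential witnesses $Q' \in \Dd_1'$. The condition $\ell(Q) \leq \delta^r \ell(Q')$ forces any such $Q'$ to have generation $k - m$ for some $m \geq r$, giving
\[
\mathbb{P}(Q \in \Dd_1'\text{-bad}) \leq \sum_{m \geq r} \mathbb{P}\bigl(\exists\, Q' \in \Dd_1' \text{ of generation } k - m \text{ with } \rho_1(Q, \partial Q') \leq \delta \mathcal{C} \ell(Q)^{\gamma_1} \ell(Q')^{1-\gamma_1}\bigr).
\]
Since $\Dd_1$ and $\Dd_1'$ are built from independent coordinates of the product probability space of Theorem~\ref{thm:random_sys_dyadic_cube}, one may condition on $Q$ (equivalently, on its centre $x_Q$) and treat $\Dd_1'$ as a random grid independent of this data.

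Next, I would reduce each summand to a statement about the single point $x_Q$. Using~\eqref{eq1:Part3} and the quasi-triangle inequality, any $Q' \in \Dd_1'$ satisfies $\rho_1(x_Q, \partial Q') \leq A_0 C_Q \ell(Q) + A_0 \rho_1(Q, \partial Q')$; for $m \geq r$, the ratio $\ell(Q)/\ell(Q')$ is at most $\delta^r$, so for $r$ large the first term is dominated by the tube-thickness above (after adjusting the constant). Hence the bad event at generation $k - m$ is contained in
\[
\bigl\{\, \rho_1(x_Q, \partial Q_{k-m}'(x_Q)) \leq C \ell(Q)^{\gamma_1} \ell(Q')^{1-\gamma_1} \,\bigr\},
\]
where $Q_{k-m}'(x_Q)$ is the unique cube in $\Dd_1'$ of generation $k - m$ containing $x_Q$, and the relative tube-width is
\[
\frac{C \ell(Q)^{\gamma_1} \ell(Q')^{1-\gamma_1}}{\ell(Q')} = C \bigl(\ell(Q)/\ell(Q')\bigr)^{\gamma_1} = C \delta^{m\gamma_1}.
\]

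Finally, I would invoke the point-level probabilistic estimate that for any fixed $x \in X_1$, any generation $l$, and any $\epsilon \in (0,1]$,
\[
\mathbb{P}\bigl(\rho_1(x, \partial Q_l'(x)) \leq \epsilon \ell(Q_l'(x))\bigr) \lesssim \epsilon^{\kappa}
\]
for some $\kappa \in (0,1]$ depending only on $A_0$, $\delta$, and the geometric-doubling constant. This follows by iterating the uniform lower bound $\tau_0$ on the inner-cube transition probability in item~(iii) of Theorem~\ref{thm:random_sys_dyadic_cube}: at each successive scale, with probability at least $\tau_0$ the point $x$ lies in a strictly interior sub-cube, so that $j$ independent iterations keep $x$ geometrically far from $\partial Q_l'(x)$ with probability $\geq 1 - (1 - \tau_0)^j$, and matching $\delta^j \sim \epsilon$ yields the $\epsilon^{\kappa}$ decay. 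Applying this bound with $\epsilon = C \delta^{m\gamma_1}$ and summing the resulting geometric series gives
\[
\mathbb{P}(Q \in \Dd_1'\text{-bad}) \lesssim \sum_{m \geq r} \delta^{m\gamma_1 \kappa} \lesssim \delta^{r\gamma_1 \kappa},
\]
as required. The main obstacle is this point-level $\epsilon^{\kappa}$ estimate in the quasimetric setting: one must use geometric doubling to control the $O(1)$ reference points whose random placements could push $\partial Q_l'(x)$ into the thin annulus around $x$, and in general one only recovers some $\kappa \in (0,1]$ rather than $\kappa = 1$ as in the Euclidean case.
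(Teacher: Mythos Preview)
The paper does not supply its own proof of this statement: Theorem~\ref{thm.10.2.HM12} is quoted directly from \cite[Theorem~10.2]{HM12a} and used as a black box. Your sketch is essentially the argument that appears in that reference --- the union bound over generations $m \geq r$, the reduction from the cube $Q$ to its centre $x_Q$ via the quasi-triangle inequality, the point-level boundary estimate $\mathbb{P}(\rho_1(x,\partial Q'_l(x)) \leq \epsilon\,\ell(Q'_l(x))) \lesssim \epsilon^{\kappa}$ obtained by iterating the interior-probability bound (iii) of Theorem~\ref{thm:random_sys_dyadic_cube}, and the geometric summation --- so there is nothing to contrast.

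One minor remark on your second step: the comparison $A_0 C_Q \ell(Q) \lesssim \ell(Q)^{\gamma_1}\ell(Q')^{1-\gamma_1}$ holds for \emph{all} $m \geq 0$ (since $\gamma_1 < 1$ gives $\delta^m \leq \delta^{m\gamma_1}$), not only for $r$ large; you need not invoke largeness of $r$ at that point.
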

The badness of cubes in~$X_2$ is defined analogously.

%%%%%%%%%%%%%%%%%%%%%%%%%%%%%%%%%%%%%%%%%%%%%%%
\subsection{Haar functions on $(\mathcal{X},\rho,\mu)$}\label{subsec:Haar_func}
As mentioned in the Introduction, we are going to use the technique of representing functions in terms of Haar functions. We are going to use the construction of Haar functions, that is presented in~\cite[Section~4]{KLPW16}; see also the references therein.

Let~$(X,\rho,\mu)$ be a (one-parameter) quasimetric space equipped with a positive Borel measure.
Note that as in~\cite{KLPW16} they also follow the same construction of dyadic cubes of~\cite{HK12}.
Let~$\Dd$ be one of the dyadic grids on~$X$.
Let~$Q$ be a dyadic cube in~$\Dd^k$.
For~$f \in L^2_{\textup{loc}}(\mu)$ set
\[E_Qf := \langle f\rangle_Q \chi_Q, \qquad \Delta_Qf := \sum_{Q' \in \text{ch}(Q)}(\langle f\rangle_{Q'} - \langle f\rangle_Q)\chi_{Q'},\]
where ch$(Q) = \{Q_j: j = 1,\ldots, M_Q\}$ is the collection of dyadic children of~$Q$, and~$M_Q := \# \text{ch}(Q) = \#\{Q_j \in \Dd^{k+1}: Q_j \subset Q\}$, and~$\langle f\rangle_Q = \mu_1(Q)^{-1}\int_{Q} f \,d\mu$.

For all~$m \in \Z$ we have the orthogonal decomposition
\[f = \sum_{Q \in \Dd^m}E_Qf + \sum_{k \geq m} \sum_{Q \in \Dd^k} \Delta_Qf.\]
We index the children of~$Q$ by~$u = 1,2,\ldots, M_Q$ in such a way that
\[\mu_1(\widehat{Q}_u) \geq [1-(u-1)M_Q^{-1}]\mu_1(Q), \text{ where } \quad \widehat{Q}_u = \bigcup_{j=u}^{M_Q}Q_j.\]
We then continue to decompose
\[E_Qf = \langle f, h_0^Q\rangle h_0^Q \quad \text{and}
\quad \Delta_Qf = \sum_{u=1}^{M_Q-1} \langle f, h_u^Q\rangle h_u^Q.\]
Here~$h_0^Q := \mu_1(Q)^{-1/2}\chi_Q$ and~$h_u^Q := a_u\chi_{Q_u} - b_u\chi_{\widehat{Q}_{u+1}}$, where~$u = 1,\ldots,M_Q-1,$
\[a_u := \frac{\mu_1(\widehat{Q}_{u+1})^{1/2}}{\mu_1(Q_u)^{1/2}\mu_1(\widehat{Q}_u)^{1/2}}
\quad \text{and} \quad b_u := \frac{\mu_1(Q_u)^{1/2}}{\mu_1(\widehat{Q}_u)^{1/2}\mu_1(\widehat{Q}_{u+1})^{1/2}}.\]
\cite[Theorem 4.2]{KLPW16} lists the properties of~$h_u^Q$.

\begin{thm}[\text{\cite[Theorem 4.2]{KLPW16}}]
\label{thm.4.2_KLPW}
The Haar functions~$h_u^Q$, $Q \in \Dd$, $u \in \{1,\ldots, M_Q -1\}$, have the following properties:\\
\indent \textup{(i)} $h_u^Q$ is a simple Borel-measurable real function on~$X$;\\
\indent \textup{(ii)} $h_u^Q$ is supported on~$Q$;\\
\indent \textup{(iii)} $h_u^Q$ is constant of each $Q' \in \textup{ch}(Q)$;\\
\indent \textup{(iv)} $\int h_u^Q \,d\mu = 0$ (cancellation);\\
\indent \textup{(v)} $\langle h_u^Q, h_{u'}^Q \rangle= 0$ for $u \neq u'$, $u, u' \in \{1,\ldots, M_Q -1\}$;\\
\indent \textup{(vi)} the collection
\[\{\mu(Q)^{-1/2}\chi_Q\} \cup \{h_u^Q: u = 1,\ldots, M_Q -1\}\]
is an orthogonal basis for the vector space $V(Q)$ of all functions on~$Q$ that are constant on each sub-cube $Q' \in \textup{ch}(Q)$;\\
\indent \textup{(vii)} for $u \in \{1,\ldots, M_Q -1\}$, if~$h_u^Q \neq 0$ then for each $Q_u \in \textup{ch}(Q)$
\[\|h_u^Q\|_{L^p(\mu)} \simeq \mu(Q)^{\frac{1}{p} - \frac{1}{2}} \quad \text{for } 1 \leq p \leq \infty;\]
and \\
\indent \textup{(viii)} \qquad \qquad \qquad $\|h_u^Q\|_{L^1(\mu)} \cdot \|h_u^Q\|_{L^{\infty}(\mu)} \sim 1$.
\end{thm}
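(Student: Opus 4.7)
The theorem collects eight properties of the Haar functions $h_u^Q$, so my plan is to verify them essentially in the order listed, exploiting the nested structure of the children $Q_1,\dots,Q_{M_Q}$ together with the key monotonicity $\mu(\widehat{Q}_u) \geq [1-(u-1)M_Q^{-1}]\mu(Q)$ used to order the children. Properties (i)--(iii) are immediate from the definition: $h_u^Q = a_u\chi_{Q_u} - b_u\chi_{\widehat{Q}_{u+1}}$ is a finite linear combination of characteristic functions of Borel sets, all of which lie inside $Q$ and are unions of children of $Q$, so $h_u^Q$ is simple, Borel-measurable, supported on $Q$, and constant on each element of $\mathrm{ch}(Q)$.

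Next I would verify the cancellation (iv) by a direct algebraic computation:
\[
\int h_u^Q\,d\mu = a_u\,\mu(Q_u) - b_u\,\mu(\widehat{Q}_{u+1}) = \frac{\mu(Q_u)^{1/2}\mu(\widehat{Q}_{u+1})^{1/2}}{\mu(\widehat{Q}_u)^{1/2}} - \frac{\mu(Q_u)^{1/2}\mu(\widehat{Q}_{u+1})^{1/2}}{\mu(\widehat{Q}_u)^{1/2}} = 0,
\]
using $\mu(\widehat{Q}_u) = \mu(Q_u) + \mu(\widehat{Q}_{u+1})$ only implicitly through the chosen normalization. For orthogonality (v), say $u<u'$: then $\mathrm{supp}\,h_{u'}^Q \subset \widehat{Q}_{u'} \subset \widehat{Q}_{u+1}$, so $h_{u'}^Q$ vanishes on $Q_u$, while on $\widehat{Q}_{u+1}$ the function $h_u^Q$ equals the constant $-b_u$. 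Hence
\[
\langle h_u^Q, h_{u'}^Q\rangle = -b_u \int_{\widehat{Q}_{u+1}} h_{u'}^Q\,d\mu = -b_u \int h_{u'}^Q\,d\mu = 0
\]
by (iv). Property (vi) then follows from dimension counting: $V(Q)$ has dimension $M_Q$, the family $\{h_0^Q\}\cup\{h_u^Q: u=1,\dots,M_Q-1\}$ contains $M_Q$ pairwise orthogonal elements (with $h_0^Q$ orthogonal to each $h_u^Q$ since $\int h_u^Q\,d\mu=0$), so it is an orthogonal basis.

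For the $L^p$ bounds (vii), I would compute
\[
\|h_u^Q\|_{L^p(\mu)}^p = a_u^p \mu(Q_u) + b_u^p \mu(\widehat{Q}_{u+1}) = \mu(Q_u)^{1-p/2}\left(\frac{\mu(\widehat{Q}_{u+1})}{\mu(\widehat{Q}_u)}\right)^{p/2} + \mu(\widehat{Q}_{u+1})^{1-p/2}\left(\frac{\mu(Q_u)}{\mu(\widehat{Q}_u)}\right)^{p/2},
\]
and use the chosen ordering of children, which guarantees $\mu(\widehat{Q}_u),\mu(\widehat{Q}_{u+1}) \simeq \mu(Q)$ whenever $h_u^Q \neq 0$ (the non-vanishing is the condition that both $Q_u$ and $\widehat{Q}_{u+1}$ have positive measure). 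A short case analysis then yields $\|h_u^Q\|_{L^p(\mu)} \simeq \mu(Q)^{1/p-1/2}$, and specializing to $p=1$ and $p=\infty$ gives (viii). The main technical obstacle, and the reason the ordering of the children by decreasing $\mu(\widehat{Q}_u)$ is forced on us, is controlling the $L^p$ norms uniformly: without the lower bound $\mu(\widehat{Q}_u) \gtrsim \mu(Q)$ provided by the ordering, the normalizing constants $a_u,b_u$ could blow up and the equivalence in (vii) could fail. Everything else is bookkeeping once this ordering is in place.
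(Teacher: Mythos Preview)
The paper does not prove this theorem at all: it is quoted verbatim as \cite[Theorem~4.2]{KLPW16} and used as a black box, so there is no ``paper's own proof'' to compare against. Your argument is correct and is essentially the standard verification one finds in the original reference; the only point worth flagging is that in (vii) you should be a bit more explicit about why the ordering gives $\mu(\widehat{Q}_{u+1}) \simeq \mu(Q)$ (not just $\mu(\widehat{Q}_u)$), and how the individual child measure $\mu(Q_u)$ is handled in the case analysis, since $\mu(Q_u)$ need not be comparable to $\mu(Q)$ in a non-doubling setting---but the two-term sum still yields the claimed equivalence.
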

In addition to the properties listed above, using property~(vii) and H\"{o}lder's inequality, for all Haar functions $h_{u_1}^{Q_1}, h_{u_2}^{Q_2}$, where $Q_1, Q_2 \in \Dd^k$, $u_1 \in \{1,\ldots, M_{Q_1} -1\}$, $u_2 \in \{1,\ldots, M_{Q_2} -1\}$ we have
\begin{equation}\label{eq1:pro_Haar}
  \|h_{u_1}^{Q_1} \otimes h_{u_2}^{Q_2}\|_{L^2(\mu)}
  = \|h_{u_1}^{Q_1}\|_{L^2(\mu)}
  \|h_{u_2}^{Q_2}\|_{L^2(\mu)}
  \sim 1.
\end{equation}
Moreover, for all $ Q' \in \textup{ch}(Q)$, using H\"{o}lder's inequality we have
\begin{equation}\label{eq2:pro_Haar}
  \langle h_u^Q \rangle_{Q'}
  = \frac{1}{\mu(Q')}\int_{Q'} h_u^Q \,d\mu
  \leq \frac{1}{\mu(Q')} \|h_u^Q\|_{L^2(\mu)} \mu(Q')^{1/2} \ls \mu(Q')^{-1/2}.
\end{equation}

Let $(\mathcal{X},\rho,\mu)$ be a non-homogeneous bi-parameter quasimetric space.
Denote~$\Dd_1, \Dd_1' \in X_1$ and~$\Dd_2, \Dd_2' \in X_2$ be four independent random systems of dyadic cubes (see Theorem~\ref{thm:random_sys_dyadic_cube}, Section~\ref{sec:dyadiccubes}).
For each~$i = 1,2$, let~$Q_i \in \Dd_i$, $Q'_i \in \Dd'_i$, $u_i \in \{0,\ldots, M_{Q_i} -1\}$ and $u'_i \in \{0,\ldots, M_{Q'_i} -1\}$.
We denote the Haar functions with respect to the dyadic grids $\Dd_i,\Dd'_i$ by $h_{u_i}^{Q_i}, h_{u'_i}^{Q'_i}$, respectively.

%%%%%%%%%%%%%%%%%%%%%%%%%%%%%%%%%%%%

%%%%%%%%%%%%%%%%%%%%%%%%%%%%%%%%%%%%%%%%%%%%%%%%%%%%%%%%%%%%%
\subsection{Interpretation of~$T(1)$ on $(\mathcal{X},\rho,\mu)$}\label{subsec:intepretation}
Among the criteria mentioned in Theorem~\ref{thm:T1thm_ver1}, four of them are establishing certain bounds on pairings involving~$T$ acting on the constant function~$1$. Therefore, it is necessary to understand how these objects are defined.

Let $(\mathcal{X},\rho,\mu) := (X_1 \times X_2, \rho_1 \times \rho_2, \mu_1 \times \mu_2)$ be a non-homogeneous bi-parameter quasimetric space.
Let $\mathcal{D}_1$ and $\mathcal{D}_2$ be fixed dyadic systems in~$X_1$ and~$X_2$, respectively.
Take~$Q_1 \in \Dd_1$ and~$Q_2 \in \Dd_2$.
Let~$h_{Q_1} := h_{u_1}^{Q_1}$ and~$h_{Q_2} := h_{u_2}^{Q_2}$ be Haar functions as defined in Section~\ref{subsec:Haar_func}.
To interpret the condition~$T(1) \in \bmo_{\textup{prod}}(\mu)$, it is enough to define the pairing~$\langle T(1) , h_{Q_1}\otimes h_{Q_2}\rangle$.
Then by symmetry of our assumptions, such pairings will also be defined when we replace~$T$ by~$T^*$, $T_1$ or~$T_1^*$.

Let~$U \subset X_1$ and~$V \subset X_2$ be arbitrary balls with the property that
$$C_KB(Q_1) := B(x_{Q_1}, C_KC_Q\ell(Q_1)) \subset U  \text{ \,and\, } C_KB(Q_2) := B(x_{Q_2}, C_KC_Q\ell(Q_2)) \subset V,$$
where~$C_K>1$ is the constant depending on the kernel of~$T$ which appears in Assumptions~\ref{assum_2} and~$\ref{assum_4}$, and~$C_Q>1$ is the universal constant which appear in property~\ref{eq1:Part3}.
Note that for $x_1 \in Q_1$ and $y_1 \in U^c$ we have
$$\rho_1(x_{Q_1},y_1) \geq C_KC_Q \ell(Q_1) \geq C_K \rho_1(x_{Q_1},x_1).$$
Similarly, for $x_2 \in Q_2$ and $y_2 \in V^c$ we also have
$\rho_2(x_{Q_2},y_2) \geq C_K \rho_2(x_{Q_2},x_2)$.

Set~$\langle T1, h_{Q_1} \otimes h_{Q_2}\rangle := \sum_{i=1}^{4}A_i(U,V)$ where
\begin{eqnarray*}
  A_1(U,V) &:=& \langle T(\chi_U \otimes \chi_V), h_{Q_1} \otimes h_{Q_2}\rangle, \\
  A_2(U,V) &:=& \langle T(\chi_{U^c} \otimes \chi_V), h_{Q_1} \otimes h_{Q_2}\rangle, \\
  A_3(U,V) &:=& \langle T(\chi_{U} \otimes \chi_{V^c}), h_{Q_1} \otimes h_{Q_2}\rangle, \quad \text{and} \\
  A_4(U,V) &:=& \langle T(\chi_{U^c} \otimes \chi_{V^c}), h_{Q_1} \otimes h_{Q_2}\rangle.
\end{eqnarray*}
Now, we will estimate~$A_i(U,V)$ for each $i = 1,\ldots,4$.
We start with $A_1(U,V)$:
\begin{eqnarray*}
  |A_1(U,V)| &\leq& \|T\| \|\chi_U\|_{L^2(\mu_1)} \|\chi_V\|_{L^2(\mu_2)} \|h_{Q_1}\|_{L^2(\mu_1)} \|h_{Q_2}\|_{L^2(\mu_2)}
  \ls \|T\| \mu_1(U)^{1/2} \mu_2(V)^{1/2}.
\end{eqnarray*}

Next, we consider $A_2(U,V)$. Since $U^c \cap Q_1 = \emptyset$ and $\int h_{Q_1} \,d\mu_1 = 0$, we can use Assumption~\ref{assum_3} to write
\begin{equation*}
  |A_2(U,V)| \leq \int_{U^c} \int_{Q_1}\left| K_{\chi_V,h_{Q_2}}(x_1, y_1) - K_{\chi_V, h_{Q_2}}(x_{Q_1}, y_1)\right| |h_{Q_1}(x_1)|\,d\mu_1(x_1) \,d\mu_1(y_1).
\end{equation*}
Furthermore, by Assumption~\ref{assum_4} we get
\begin{eqnarray*}
 \left| K_{\chi_V,h_{Q_2}}(x_1, y_1) - K_{\chi_V, h_{Q_2}}(x_{Q_1}, y_1)\right|
  \ls \|\chi_V\|_{L^2(\mu_2)}  \|h_{Q_2}\|_{L^2(\mu_2)}
   \frac{\rho_1(x_1,x_{Q_1})^{\al_1}}{\rho_1(x_{Q_1},y_1)^{\al_1}
   \lambda_1(x_{Q_1},\rho_1(x_{Q_1},y_1))}.
\end{eqnarray*}
Thus,
\begin{eqnarray*}
  |A_2(U,V)| \ls \mu_2(V)^{1/2} \int_{Q_1} |h_{Q_1}(x_1)| \,d\mu_1(x_1)  \int_{U^c}  \frac{C_Q^{\al_1}\ell(Q_1)^{\al_1}}
   {\rho_1(x_{Q_1},y_1)^{\al_1} \lambda_1(x_{Q_1},\rho_1(x_{Q_1},y_1))} \,d\mu_1(y_1).
\end{eqnarray*}
By Lemma~\ref{upper_dbl_lem1}, the integral above is bounded by
\begin{align}\label{eq1_inter}
  \ell(Q_1)^{\al_1} \int_{X_1 \backslash B(c_{Q_1}, C_KC_Q\ell(Q_1))}  \frac{\rho_1(x_{Q_1},y_1)^{-\al_1}}
   {\lambda_1(x_{Q_1},\rho_1(x_{Q_1},y_1))}\,d\mu_1(y_1)
   \ls \ell(Q_1)^{\al_1} (C_KC_Q)\ell(Q_1)^{-\al_1} \ls 1.
\end{align}
Therefore,
$|A_2(U,V)| \ls \mu_1(Q_1)^{1/2} \mu_2(V)^{1/2} \ls \mu_1(U)^{1/2} \mu_2(V)^{1/2}$.  Following the same argument, we obtain $|A_3(U,V)| \ls \mu_1(U)^{1/2} \mu_2(V)^{1/2}$.

Now we estimate~$A_4(U,V)$. Since $U^c \cap Q_1 = \emptyset$ and $V^c \cap Q_2 = \emptyset$, we can use Assumption~\ref{assum_1} to write
\begin{eqnarray*}
  |A_4(U,V)|
   \leq  \int_{U^c} \int_{V^c} \int_{Q_1} \int_{Q_2} |K(x,y)|| h_{Q_1}(x_1)|| h_{Q_2}(x_2)| \,d\mu_2(x_2) \,d\mu_1(x_1) \,d\mu_2(y_2) \,d\mu_1(y_1).
\end{eqnarray*}
Using the fact that $\int h_{Q_1} \,d\mu_1 = \int h_{Q_2} \,d\mu_2 = 0$ together with Assumption~\ref{assum_2}, we can replace and estimate $|K(x,y)|$ by
\begin{eqnarray*}
\lefteqn{|K(x,y) - K((x_{Q_1},x_2),y) - K((x_1,x_{Q_2}),y) + K((x_{Q_1},x_{Q_2})| } \\
   &\ls&  \frac{\rho_1(x_1,x_{Q_1})^{\al_1}}{\rho_1(x_{Q_1},y_1)^{\al_1}
   \lambda_1(x_{Q_1},\rho_1(x_{Q_1},y_1))}  \frac{\rho_2(x_2,x_{Q_2})^{\al_2}}{\rho_2(x_{Q_2},y_2)^{\al_2}
   \lambda_2(x_{Q_2},\rho_2(x_{Q_2},y_2))}.
\end{eqnarray*}
Now we have
\begin{eqnarray*}
  |A_4(U,V)|
  &\leq& \|h_{Q_1}\|_{L^2(\mu_1)} \|h_{Q_2}\|_{L^2(\mu_2)}
  \mu_1(Q_1)^{1/2} \mu_2(Q_2)^{1/2}\\
  &&  \times \int_{U^c}  \frac{C_Q^{\al_1}\ell(Q_1)^{\al_1}}
   {\rho_1(x_{Q_1},y_1)^{\al_1} \lambda_1(x_{Q_1},\rho_1(x_{Q_1},y_1))} \,d\mu_1(y_1) \\
   && \times \int_{V^c}  \frac{C_Q^{\al_2}\ell(Q_2)^{\al_2}}
   {\rho_2(x_{Q_2},y_2)^{\al_2} \lambda_2(x_{Q_2},\rho_2(x_{Q_2},y_2))} \,d\mu_1(y_2).
\end{eqnarray*}
As shown in~\eqref{eq1_inter}, the first integral above is controlled by~$1$. Similarly, the second integral above is also controlled~$1$.  This implies that $|A_4(U,V)| \ls \mu_1(Q_1)^{1/2} \mu_2(Q_2)^{1/2} \leq \mu_1(U)^{1/2} \mu_2(V)^{1/2}$.

Hence,~$A_i(U,V)$ converges for all $i = 1, \ldots, 4$, and so
$\langle T1,h_{Q_1} \otimes h_{Q_2} \rangle$ is well defined.

In Lemma~\ref{interlem.1} below, we will show that the definition of~$A_i(U,V)$ is in fact independent of the choice of the balls~$U$ and~$V$.
\begin{lem}\label{interlem.1}
Suppose $(\mathcal{X},\rho,\mu) := (X_1 \times X_2, \rho_1 \times \rho_2, \mu_1 \times \mu_2)$ is a non-homogeneous bi-parameter quasimetric space.
Let $\mathcal{D}_1$ and $\mathcal{D}_2$ be fixed dyadic systems in~$X_1$ and~$X_2$, respectively.
Take~$Q_1 \in \Dd_1$ and~$Q_2 \in \Dd_2$.
Let~$U \subset X_1$ and~$V \subset X_2$ be arbitrary balls with the property that~$C_KB(Q_1) \subset U$ and~$C_KB(Q_2) \subset V$. There holds that
\[\sum_{i=1}^{4} A_i(U,V) = \sum_{i=1}^{4} A_i\left(C_KB(Q_1),C_KB(Q_2)\right).\]
\end{lem}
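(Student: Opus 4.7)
The plan is to reduce the claimed identity to an algebraic cancellation of cross terms. Set $U_0 := C_K B(Q_1)$ and $V_0 := C_K B(Q_2)$, so that $U_0 \subset U$ and $V_0 \subset V$ by hypothesis. The starting observation is the pair of disjoint-set decompositions
\[
\chi_U = \chi_{U_0} + \chi_{U \setminus U_0}, \qquad \chi_{U^c} = \chi_{U_0^c} - \chi_{U \setminus U_0},
\]
together with their counterparts in the second variable. Substituting these into each of $\chi_U \otimes \chi_V$, $\chi_{U^c}\otimes \chi_V$, $\chi_U\otimes \chi_{V^c}$, and $\chi_{U^c}\otimes\chi_{V^c}$, and expanding via the bilinearity of the pairing $\langle T(\,\cdot\otimes\cdot\,), h_{Q_1}\otimes h_{Q_2}\rangle$, I would rewrite each $A_i(U,V)$ as $A_i(U_0,V_0)$ plus three cross terms involving the rings $U\setminus U_0$ and $V\setminus V_0$ (paired against $\chi_{V_0}$, $\chi_{V_0^c}$, $\chi_{V\setminus V_0}$ and their analogues on the first factor).

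Summing the four expansions, the cross terms reduce to five distinct pairings against $h_{Q_1} \otimes h_{Q_2}$, namely the arguments
\[
\chi_{U\setminus U_0}\otimes \chi_{V_0}, \quad \chi_{U_0}\otimes \chi_{V\setminus V_0}, \quad \chi_{U\setminus U_0}\otimes \chi_{V\setminus V_0}, \quad \chi_{U_0^c}\otimes \chi_{V\setminus V_0}, \quad \chi_{U\setminus U_0}\otimes \chi_{V_0^c}.
\]
A direct tally of signs — the first two each receive $+1$ from one $A_i$ and $-1$ from another; the double-ring term $\chi_{U\setminus U_0}\otimes\chi_{V\setminus V_0}$ receives $+1-1-1+1$ from $A_1,\ldots,A_4$; and each of the last two receives $+1-1$ — shows that every cross term contributes zero. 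This is nothing but the algebraic identity $(\chi_{U_0}+\chi_{U\setminus U_0})+(\chi_{U_0^c}-\chi_{U\setminus U_0}) = \chi_{U_0}+\chi_{U_0^c}$ (and its analogue in the second variable) distributed inside the bilinear pairing.

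The only nontrivial step, and the main obstacle, is to ensure that each of these cross terms is an honest scalar, so that the bilinear rearrangement is meaningful term by term rather than only formally. The two pairings with one complementary factor, $\chi_{U_0^c}\otimes \chi_{V\setminus V_0}$ and $\chi_{U\setminus U_0}\otimes \chi_{V_0^c}$, have the complementary factor supported away from $Q_1 = \supp h_{Q_1}$ or $Q_2 = \supp h_{Q_2}$, so Assumption~\ref{assum_3} reduces them to one-parameter integrals of a $K_{f_2,g_2}$- or $K_{f_1,g_1}$-kernel; the zero mean of $h_{Q_1}$ (or $h_{Q_2}$) combined with the H\"older decay of Assumption~\ref{assum_4} then yields absolute convergence by exactly the computation already used to bound $A_2(U,V)$ and $A_3(U,V)$. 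The three cross terms whose factors are both compactly supported are handled either by the same partial-kernel argument (when one factor is supported away from $Q_1$ or $Q_2$, as is automatic for $\chi_{U\setminus U_0}$ and $\chi_{V\setminus V_0}$) or directly by the a priori interpretation of $T$, as for $A_1(U,V)$. Once each cross term is recognized as a finite scalar, the sign cancellation closes the proof and gives $\sum_{i=1}^4 A_i(U,V) = \sum_{i=1}^4 A_i(U_0,V_0)$.
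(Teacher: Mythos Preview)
Your proposal is correct and follows essentially the same approach as the paper: both proofs use the decompositions $\chi_{U_0^c} = \chi_{U\setminus U_0} + \chi_{U^c}$ (equivalently, your $\chi_{U^c} = \chi_{U_0^c} - \chi_{U\setminus U_0}$) and its analogue in the second variable, expand via bilinearity, and verify that all ring terms cancel in the four-term sum. The paper organizes the computation by expanding the differences $A_i(U_0,V_0) - A_i(U,V)$ and showing $\sum_{i=2}^4 [A_i(U_0,V_0)-A_i(U,V)] = -A_1(U_0,V_0)+A_1(U,V)$, whereas you expand each $A_i(U,V)$ separately and tally signs; the content is identical, and your explicit remark on the finiteness of each cross term (via Assumptions~\ref{assum_3}--\ref{assum_4} and the a priori bound, exactly as in the preceding estimates of $A_2,A_3,A_4$) is a welcome clarification that the paper leaves implicit.
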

\begin{proof}
We will consider $A_i\left(C_KB(Q_1),C_KB(Q_2)\right) - A_i(U,V)$, $i = 1,\ldots, 4$.
First, we have
\begin{eqnarray*}
  \lefteqn{A_1\left(C_KB(Q_1),C_KB(Q_2)\right) - A_1(U,V)}\\
&=& \langle T(\chi_{C_KB(Q_1)} \otimes \chi_{C_KB(Q_2)}), h_{Q_1} \otimes h_{Q_2}\rangle
-  \langle T(\chi_U \otimes \chi_V), h_{Q_1} \otimes h_{Q_2}\rangle.
\end{eqnarray*}
  Next, consider
\begin{eqnarray*}
  \lefteqn{A_2\left(C_KB(Q_1),C_KB(Q_2)\right) - A_2(U,V)}\\
   &=& \langle T(\chi_{C_KB(Q_1)^c} \otimes \chi_{C_KB(Q_2)}), h_{Q_1} \otimes h_{Q_2}\rangle
   - \langle T(\chi_{U^c} \otimes \chi_V), h_{Q_1} \otimes h_{Q_2}\rangle  \\
  &=& \langle T(\chi_{U \backslash C_KB(Q_1)} \otimes \chi_{C_KB(Q_2)}), h_{Q_1} \otimes h_{Q_2}\rangle
   + \langle T(\chi_{U^c} \otimes\chi_{C_KB(Q_2)}), h_{Q_1} \otimes h_{Q_2}\rangle \\
  && - \langle T(\chi_{U^c} \otimes \chi_{V \backslash C_KB(Q_2)}), h_{Q_1} \otimes h_{Q_2}\rangle
  -  \langle T(\chi_{U^c} \otimes \chi_{C_KB(Q_2)}), h_{Q_1} \otimes h_{Q_2}\rangle \\
  &=&  \langle T(\chi_{U \backslash C_KB(Q_1)} \otimes \chi_{C_KB(Q_2)}), h_{Q_1} \otimes h_{Q_2}\rangle
  - \langle T(\chi_{U^c} \otimes \chi_{V \backslash C_KB(Q_2)}), h_{Q_1} \otimes h_{Q_2}\rangle.
\end{eqnarray*}
Similarly, we see that
\begin{eqnarray*}
  \lefteqn{A_3\left(C_KB(Q_1),C_KB(Q_2)\right) - A_3(U,V)}\\
  &=&  \langle T(\chi_{C_KB(Q_1)} \otimes \chi_{V \backslash C_KB(Q_2)}), h_{Q_1} \otimes h_{Q_2}\rangle
  - \langle T(\chi_{U \backslash C_KB(Q_1)} \otimes \chi_{V^c}), h_{Q_1} \otimes h_{Q_2}\rangle.
\end{eqnarray*}
Now, using the decomposition
\[\int\limits_{C_KB(Q_1)^c} \int\limits_{C_KB(Q_2)^c}
= \int\limits_{U \backslash C_KB(Q_1)} \int\limits_{V \backslash C_KB(Q_2)}
+ \int\limits_{U \backslash C_KB(Q_1)} \int\limits_{V^c}
+ \int\limits_{U^c} \int\limits_{V \backslash C_KB(Q_2)}
+ \int\limits_{U^c} \int\limits_{V^c}\]
we have
\begin{eqnarray*}
  \lefteqn{A_4\left(C_KB(Q_1),C_KB(Q_2)\right) - A_4(U,V)}\\
  &=& \langle T(\chi_{C_KB(Q_1)^c} \otimes \chi_{C_KB(Q_2)^c}), h_{Q_1} \otimes h_{Q_2}\rangle
  - \langle T(\chi_{U^c} \otimes \chi_{V^c}), h_{Q_1} \otimes h_{Q_2}\rangle \\
  &=& \langle T(\chi_{U \backslash C_KB(Q_1)} \otimes \chi_{V \backslash C_KB(Q_2)}), h_{Q_1} \otimes h_{Q_2}\rangle \\
  &&+ \,\langle T(\chi_{U \backslash C_KB(Q_1)} \otimes \chi_{V^c}), h_{Q_1} \otimes h_{Q_2}\rangle
   + \langle T(\chi_{U^c} \otimes \chi_{V \backslash C_KB(Q_2)}), h_{Q_1} \otimes h_{Q_2}\rangle.
\end{eqnarray*}
Combining all these terms together we obtain
\begin{eqnarray*}
  \lefteqn{\sum_{i=2}^{4} \left[A_i\left(C_KB(Q_1),C_KB(Q_2)\right) - A_i(U,V)\right]} \\
  &=& \langle T(\chi_{U \backslash C_KB(Q_1)} \otimes \chi_{C_KB(Q_2)}), h_{Q_1} \otimes h_{Q_2}\rangle \\
  &&+ \, \langle T(\chi_{C_KB(Q_1)} \otimes \chi_{V \backslash C_KB(Q_2)}), h_{Q_1} \otimes h_{Q_2}\rangle \\
  &&+ \, \langle T(\chi_{U \backslash C_KB(Q_1)} \otimes \chi_{V \backslash C_KB(Q_2)}), h_{Q_1} \otimes h_{Q_2}\rangle \\
  &=& \langle T(\chi_{U} \otimes \chi_{C_KB(Q_2)}), h_{Q_1} \otimes h_{Q_2}\rangle
  - \langle T(\chi_{C_KB(Q_1)} \otimes \chi_{C_KB(Q_2)}), h_{Q_1} \otimes h_{Q_2}\rangle \\
  && +\, \langle T(\chi_{U} \otimes \chi_{V}), h_{Q_1} \otimes h_{Q_2}\rangle
  - \langle T(\chi_{U} \otimes \chi_{C_KB(Q_2)}), h_{Q_1} \otimes h_{Q_2}\rangle \\
  &=& -A_1\left(C_KB(Q_1),C_KB(Q_2)\right) + A_1(U,V).
\end{eqnarray*}
This completes the proof of Lemma~\ref{interlem.1}.
\end{proof}
%%%%%%%%%%%%%%%%%%%%%%%%%%%%%%%%%%%%%%%%%%%%%%%%%%%%%%%%%%%

\section{The Hardy, $\bmo$ spaces on $(\mathcal{X},\rho,\mu)$ and Duality}\label{subsec:product_BMO}

In this section, we will introduce the $\bmo$ and Hardy spaces, and discuss their duality in non-homogeneous settings. First, when talking about the one-parameter setting ($X_1$ or $X_2$), we use the families of spaces~$\bmo_{\kappa}^p$ and $H^{1,p}$, where $1 \leq p <\infty$, $\kappa >1$ as defined in Definitions~\ref{defn: BMO_k_p} and~\ref{defn:hardy} below.
Second, when talking about the bi-parameter setting, we use the $\bmo_{\textup{prod}}$ and $\hardy$ spaces defined in Definitions~\ref{defn:BMO_prod} and~\ref{defn:Hardy_prod} below.
We formulate our main theorem with respect to the $\bmo_{\textup{prod}}$ space. The $\bmo_{\kappa}^p$ space is used as a tool in the proof of our main theorem.

We define the $\bmo_{\kappa}^p$ and $H^{1,p}$ spaces on the first factor~$X_1$. The definition on the second factor~$X_2$ is analogous.
Please refer to~\cite[Section~1.1]{NTV03} and~\cite[Chapter~7]{YYH13} for further details related to these spaces.
\begin{defn}\label{defn: BMO_k_p}
  Given $p \geq 1$ and $\kappa >1$,
  we say that~$b \in L^1_{\text{loc}}(\mu_1)$ belongs to $\bmo_{\kappa}^{p}(\mu_1)$, if for each ball $B \subset X_1$ there exists a constant~$b_B$ such that
  \[\bigg(\int_B |b - b_B|^p \,d\mu_1\bigg)^{1/p} \leq L \mu_1(\kappa B)^{1/p},\]
where the constant~$L$ does not depend on~$B$.
The best constant~$C$ is called the $\bmo_{\kappa}^{p}(\mu_1)$-norm of~$b$, and is denoted by~$\|b\|_{\bmo_{\kappa}^{p}(\mu_1)}$.
\end{defn}

Given $p \geq 1$, a function~$a$ is called \emph{$p$-atom} associated with a ball~$B$ if $\supp a \subset B$, $\int_{B} a \,d\mu_1 = 0$ and $\|a\|_{L^p(\mu_1)} \leq \mu_1(B)^{1/p-1}$.

\begin{defn}\label{defn:hardy}
A function~$f \in L^1(\mu_1)$ is said to belong to the \emph{Hardy space} $H^1(\mu_1) = H^{1,p}(\mu_1)$, $p \geq 1$ if there exist $p$-atoms~$\{a_i\}_{i \in \N}$ such that
\[f = \sum_{i=1}^{\infty} \lambda_i a_i \quad \text{and}\quad
\sum_{i=1}^{\infty}|\lambda_i| < \infty.\]
\end{defn}
By duality, to show that a function~$b \in \bmo_{\kappa}^{p}(\mu_1)$, it is sufficient to show that there exists a constant such that $|\langle b,a\rangle| \leq C$ for all $p$-atoms of the Hardy space $H^{1,p}(\mu_1)$ \cite[Chapter~7]{YYH13}.

Let $(\mathcal{X},\rho,\mu)$ be a non-homogeneous bi-parameter quasimetric space.
 Let~$\Dd_1, \Dd_1' \in X_1$ and~$\Dd_2, \Dd_2' \in X_2$ be four independent random systems of dyadic cubes.
 Denote $\Dd = \Dd_1 \times \Dd_2$ and $\Dd' = \Dd_1' \times \Dd_2'$.
 A \emph{rectangle}~$R = Q_1 \times Q_2$ is in~$\Dd$ if $Q_1 \in \Dd_1$ and~$Q_2 \in \Dd_2$.
 Similarly, $R' = Q'_1 \times Q'_2$ is in~$\Dd'$ if $Q'_1 \in \Dd'_1$ and~$Q'_2 \in \Dd'_2$.
 For each $R'= Q'_1 \times Q'_2 \in \Dd'$, define
 \[\mathcal{C}_{R'} := \{R \in \Dd'\text{-good}: R \subset R', \text{gen}(R)=\text{gen}(R')+(r,r)\}.\]
  Here~gen$(R):= (\text{gen}(Q_1), \text{gen}(Q_2))$. The rectangle~$R$ is \emph{good ($\Dd'$-good)} means~$Q_1$ is~$\Dd_1'$-good and~$Q_2$ is~$\Dd_2'$-good.
  For each $R = Q_1 \times Q_2 \in \mathcal{C}_{R'}$, denote $S(R) := R'$. That is, $S(Q_1) = Q'_1$, $S(Q_2) = Q'_2$ and $S(Q_1) \times S(Q_2) = S(R) = R'$.
  Below, we define the \emph{dyadic product~$\bmo$} space~$\bmo_{\textup{prod}}(\mu)$ and the \emph{dyadic product Hardy} space~$\hardy$.
\begin{defn}\label{defn:BMO_prod}
Given $b \in L^1_{\text{loc}}(\mu)$,
we say~$b \in \bmo_{\textup{prod}}(\mu)$ if for every~$\Dd = \Dd_1 \times \Dd_2$, $\Dd' = \Dd_1' \times \Dd_2'$ there holds that
\[\left(
\sum_{\substack{R=Q_1 \times Q_2 \\R \text{ good}, R \subset \Om }}
\sum_{u_1}\sum_{u_2}
|\langle b, h_{u_1}^{Q_1} \otimes h_{u_2}^{Q_2}\rangle|^2\right)^{1/2}
 =\left(
\sum_{\substack{R' \in \Dd' \\ R' \subset \Omega}}
\sum_{R \in \mathcal{C}_{R'}}
|\langle b, h_R\rangle|^2\right)^{1/2}
\leq L\mu(\Omega)^{1/2}\]
for all sets~$\Omega \subset X_1 \times X_2$ such that~$\mu(\Omega) < \infty$ and such that for all~$x \in \Omega$, there exists~$R' \in \Dd'$ such that~$x \in R' \subset \Omega$. The best constant~$L$ is denoted by~$\|b\|_{\bmo_{\textup{prod}}(\mu)}$.
\end{defn}
Definition~\ref{defn:BMO_prod} is motivated by \cite[Definition 2.11]{HM14} of the $\bmo$ space $\bmo_{\textup{prod}}$ on product Euclidean spaces~$\R^n \times \R^m$, and by \cite[Definition~5.7]{KLPW16} of the $\bmo$ space $\bmo_{d,d}$ on product spaces of homogeneous type.
Definition~\ref{defn:BMO_prod} is a natural generalisation of \cite[Definition~2.11]{HM14}. In comparing to \cite[Definition~5.7]{KLPW16}, the only difference is that in \cite[Definition~5.7]{KLPW16}, they sum over all $R \in \Dd$, not just $R \in \Dd'$-good.

\begin{defn}\label{defn:Hardy_prod}
Given $\varphi\in L^1(\mu)$, we say $\varphi \in \hardy$ if for every $\Dd = \Dd_1 \times \Dd_2$ we have
$  S_{\Dd_1 \Dd_2}\varphi \in L^1(\mu)$,
where $S_{\Dd_1 \Dd_2}\varphi$ is
 the \emph{square function} defined by
\begin{align*}
  S_{\Dd_1 \Dd_2}\varphi(x_1,x_2) & := \bigg(
  \sum_{\substack{Q_1 \in \Dd_1\\ Q_1{\text{ good}}}}
  \sum_{\substack{Q_2 \in \Dd_2 \\ Q_2{\text{ good}}}}
  \sum_{u_1} \sum_{u_2}
  |\langle \varphi, h_{u_1}^{Q_1} \otimes h_{u_2}^{Q_2}\rangle|^2
  \frac{\chi_{S(Q_1)}(x_1) \otimes \chi_{S(Q_2)}(x_2)}{\mu_1(S(Q_1))\mu_2(S(Q_2))}
  \bigg)^{1/2} \\
  & = \bigg( \sum_{R' \in \Dd'} \sum_{R \in \mathcal{C}_{R'}}
  |\langle \varphi, h_R\rangle|^2 \frac{\chi_{R'}}{\mu(R')}
  \bigg)^{1/2}.
\end{align*}
Moreover, we define
$$\|\varphi\|_{\hardy} := \|S_{\Dd_1 \Dd_2}\varphi\|_{L^1(\mu)}.$$
\end{defn}
Definition~\ref{defn:Hardy_prod} is motivated by \cite[Definition~7.1]{HM14} of the square functions $S_{\Dd_n \Dd_m}$ on product Euclidean spaces~$\R^n \times \R^m$, and by \cite[Definition~5.6]{KLPW16} of the dyadic Hardy space $H^1_{d,d}$ on product spaces of homogeneous type.
Definition~\ref{defn:Hardy_prod} is a natural generalisation of \cite[Definition~7.1]{HM14}. In comparing to \cite[Definition~5.6]{KLPW16}, the only difference is that in \cite[Definition~5.6]{KLPW16}, the cubes $S(Q_1)$ and~$S(Q_2)$ are replaced by $Q_1$ and~$Q_2$.

Then we have the following argument.
\begin{thm}\label{thm:product_duality}
The dual of $\hardy$ is $\bmo_{\textup{prod}}(\mu)$.
\end{thm}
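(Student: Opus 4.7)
The plan is to establish the duality in the two standard directions by adapting the Chang--Fefferman template to the non-homogeneous bi-parameter setting, using the random dyadic grids and Haar bases introduced above.

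\emph{Embedding $\bmo_{\textup{prod}}(\mu) \hookrightarrow (\hardy)^*$.} Given $b \in \bmo_{\textup{prod}}(\mu)$ and $\varphi$ in a dense subspace of $\hardy$ (say, finite linear combinations of tensor products of good Haar functions), I would define
\[
\langle b, \varphi \rangle := \sum_{R \text{ good}} \langle b, h_R\rangle \, \langle \varphi, h_R\rangle
\]
and show this is absolutely convergent with norm controlled by $\|b\|_{\bmo_{\textup{prod}}(\mu)} \|\varphi\|_{\hardy}$ via a level-set decomposition. Set $\Omega_k := \{x : S_{\Dd_1\Dd_2}\varphi(x) > 2^k\}$ and let $\widetilde{\Omega}_k := \{x : M_s(\chi_{\Omega_k})(x) > 1/2\}$, where $M_s$ is a bi-parameter strong dyadic maximal function; the weak-type bound for $M_s$ gives $\mu(\widetilde{\Omega}_k) \lesssim \mu(\Omega_k)$. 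I would partition the good rectangles into classes $\mathcal{B}_k$ consisting of those $R$ with $\mu(R \cap \Omega_k) > \mu(R)/2$ and $\mu(R \cap \Omega_{k+1}) \leq \mu(R)/2$, so that each such $R$ lies in $\widetilde{\Omega}_k$ while still retaining significant mass in $\Omega_{k+1}^c$. Cauchy--Schwarz on each $\mathcal{B}_k$ together with the testing condition of Definition~\ref{defn:BMO_prod} applied to $\widetilde{\Omega}_k$ yields
\[
\sum_{R \in \mathcal{B}_k} |\langle b, h_R\rangle \langle \varphi, h_R\rangle| \lesssim \|b\|_{\bmo_{\textup{prod}}(\mu)} \, \mu(\widetilde{\Omega}_k)^{1/2} \Big(\sum_{R \in \mathcal{B}_k}|\langle \varphi, h_R\rangle|^2\Big)^{1/2},
\]
and the second factor is dominated by $2^{k+1}\mu(\widetilde{\Omega}_k)^{1/2}$ using the pointwise lower bound on $S_{\Dd_1\Dd_2}\varphi$ on the portion of $R$ lying outside $\Omega_{k+1}$. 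Summing the resulting geometric series in $k$ gives the bound by $\|b\|_{\bmo_{\textup{prod}}(\mu)} \|S_{\Dd_1\Dd_2}\varphi\|_{L^1(\mu)} = \|b\|_{\bmo_{\textup{prod}}(\mu)} \|\varphi\|_{\hardy}$.

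\emph{Embedding $(\hardy)^* \hookrightarrow \bmo_{\textup{prod}}(\mu)$.} Conversely, given $L \in (\hardy)^*$, the square function definition realises $\hardy$ isometrically as a closed subspace of $L^1(\mu; \ell^2)$ through the Haar coefficient map $\varphi \mapsto \big\{\langle \varphi, h_R\rangle \chi_{S(R)}/\mu(S(R))^{1/2}\big\}_{R \text{ good}}$. Pushing $L$ through this embedding and applying Hahn--Banach produces an $L^\infty(\mu; \ell^2)$ representative, from which I would read off coefficients $b_R := L(h_R)$ and assemble $b := \sum_{R \text{ good}} b_R h_R$ as a distribution on the span of good Haar products. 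To verify $b \in \bmo_{\textup{prod}}(\mu)$, I would fix an admissible open set $\Omega$ as in Definition~\ref{defn:BMO_prod}, choose coefficients $\{c_R\}_{R \subset \Omega,\, R \text{ good}}$ with $\sum |c_R|^2 = 1$, and test against the atom $\varphi_\Omega := \mu(\Omega)^{-1/2}\sum c_R h_R$. Orthogonality of the Haar basis gives $\|S_{\Dd_1\Dd_2}\varphi_\Omega\|_{L^2(\mu)} \lesssim \mu(\Omega)^{-1/2}$ supported in $\Omega$, hence $\|\varphi_\Omega\|_{\hardy} \lesssim 1$ by Cauchy--Schwarz, and $L(\varphi_\Omega) = \mu(\Omega)^{-1/2}\sum c_R \langle b, h_R\rangle$; taking the supremum over $\{c_R\}$ recovers precisely the $\bmo_{\textup{prod}}(\mu)$ testing inequality with constant $\|L\|$.

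\emph{Main obstacle.} The chief difficulty I foresee lies in the level-set step of the first inclusion: on a non-homogeneous product quasimetric space the measure need not be doubling, so the usual bi-parameter strong maximal inequality is not available off-the-shelf. I would address this via the upper-doubling dominating function together with the dyadic nature of $M_s$ and the geometric doubling of the grids, and by exploiting the fact that Definition~\ref{defn:BMO_prod} only requires testing against sets $\Omega$ such that every point of $\Omega$ lies in some $R' \in \Dd'$ with $R' \subset \Omega$ --- a structural property shared by the enlarged level sets $\widetilde{\Omega}_k$. A secondary technical point, to be handled by averaging over the random grids in the sense of Theorem~\ref{thm:random_sys_dyadic_cube}, is the density of finite good-Haar sums in $\hardy$, which justifies the initial bilinear pairing used above.
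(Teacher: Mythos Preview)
Your plan is correct and closely mirrors the paper's argument. The paper packages both directions through auxiliary sequence spaces $s^1$ and $t^1$ (modelling $\hardy$ and $\bmo_{\textup{prod}}(\mu)$ respectively), proves $(s^1)'=t^1$ by exactly the level-set argument you describe, and then transfers the duality to the function spaces via lifting and projection operators $T_L,T_P$. Your direct approach without the sequence-space detour is equivalent; the paper's packaging mainly streamlines the converse direction, where instead of Hahn--Banach into $L^\infty(\mu;\ell^2)$ one simply composes the functional with $T_P$, invokes $(s^1)'=t^1$, and reads off $b=T_P(t)$.

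Your ``main obstacle'' has a clean resolution that you should use rather than the upper-doubling dominating function. Define the strong dyadic maximal function with the dilated normalisation
\[
M_{\Dd'}f(x) := \sup_{R'\in\Dd',\,R'\ni x}\ \frac{1}{\mu(5R')}\int_{R'}|f|\,d\mu,
\]
so that pointwise $M_{\Dd'}f \le M_1 M_2 f$, where $M_i$ is the one-parameter dyadic maximal operator on $(X_i,\mu_i)$ with the same $1/\mu_i(5Q)$ normalisation. Each $M_i$ is bounded on $L^2(\mu_i)$ for an \emph{arbitrary} Borel measure (the $5$-dilation makes the dyadic cubes behave like a sparse family; no doubling is used), hence $M_{\Dd'}$ is bounded on $L^2(\mu)$. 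This gives $\mu(\widetilde\Omega_k)\lesssim\mu(\Omega_k)$ directly, and since $\widetilde\Omega_k$ is a union of rectangles $R'\in\Dd'$ it automatically satisfies the admissibility condition in Definition~\ref{defn:BMO_prod}. One small correction: in your converse step the test atom should be built from good $R$ with $S(R)\subset\Omega$ (not merely $R\subset\Omega$), since the square function localises to $S(R)$; with that adjustment the support and $H^1$-norm computations go through as you wrote.
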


\begin{proof}
To prove Theorem~\ref{thm:product_duality}, we first introduce the product sequence spaces~$s^1$ and~$t^1$, and show their duality: $(s^1)' = t^1$. These two spaces are models for~$\hardy$ and $\bmo_{\textup{prod}}(\mu)$, respectively. Then, we use the lifting - projection argument to show that $\hardy$ can be lifted to~$s^1$ and~$s^1$ can be projected to~$\hardy$, and that the combination of the lifting and projection operators equals the identity operator on~$\hardy$. We also obtain similar results for~$\bmo_{\textup{prod}}(\mu)$ and~$t^1$. Hence, Theorem~\ref{thm:product_duality} follows. The idea of using product sequence spaces and the lifting - projection argument has been used previously in~\cite{HLL15, HLLu13} to prove duality results in other settings.

Let us define~$s^1$ and~$t^1$ and their properties.

The \emph{product sequence space~$s^1$} is defined as the collection of all sequences $s = \{s_{Q_1,u_1,Q_2,u_2}\} = \{s_R\}$ of complex numbers such that
\begin{align}\label{eq:s1}
  \|s\|_{s^1} & := \bigg\|
  \bigg(
  \sum_{\substack{Q_1 \in \Dd_1\\ Q_1{\text{ good}}}}
  \sum_{\substack{Q_2 \in \Dd_2 \\ Q_2{\text{ good}}}}
  \sum_{u_1} \sum_{u_2}
  |s_{Q_1,u_1,Q_2,u_2}|^2
  \frac{\chi_{S(Q_1)}(x_1) \otimes \chi_{S(Q_2)}(x_2)}{\mu_1(S(Q_1))\mu_2(S(Q_2))}
  \bigg)^{1/2}\bigg\|_{L^1(\mu)} \noz\\
  & = \bigg\|\bigg( \sum_{R' \in \Dd'} \sum_{R \in \mathcal{C}_{R'}}
  |s_R|^2 \frac{\chi_{R'}}{\mu(R')}
  \bigg)^{1/2}\bigg\|_{L^1(\mu)} < \infty.
\end{align}

The \emph{product sequence space~$t^1$ }is defined as the collection of all sequences $t = \{t_{Q_1,u_1,Q_2,u_2}\} = \{t_R\}$ of complex numbers such that
\begin{align}\label{eq:t1}
  \|t\|_{t^1}  := \sup_{\Om}
  \bigg(\frac{1}{\mu(\Om)}
  \sum_{\substack{R=Q_1 \times Q_2 \\R \text{ good}, R \subset \Om }}
  \sum_{u_1} \sum_{u_2}
  |t_{Q_1,u_1,Q_2,u_2}|^2\bigg)^{1/2} =  \sup_{\Om}\bigg(\frac{1}{\mu(\Om)}
\sum_{\substack{R' \subset \Dd' \\ R' \in \Om}} \sum_{R \in \mathcal{C}_{R'}}
  |t_R|^2 \bigg)^{1/2} <\infty,
\end{align}
where the supremum is taken over all open sets~$\Om \in \mathcal{X}$ with finite measure.

The duality result of these two sequences is stated in Theorem~\ref{thm:s1_t1_dual}.
\begin{thm}\label{thm:s1_t1_dual}
The dual of~$s^1$ is $t^1$: $(s^1)' = t^1$.
\end{thm}
\begin{proof}
  First, we prove that for each $t \in t^1$, if for all $s \in s^1$ we have
  \begin{equation}\label{eq1:s1t1dual}
    L(s) = \langle s,t\rangle
    := \sum_{R' \in \Dd'} \sum_{R \in \mathcal{C}_{R'}} s_R \cdot \bar{t}_R,
  \end{equation}
  then there exists a constant~$C >0$ such that
  \[|L(s)| \leq C \|s\|_{s^1} \|t\|_{t^1}.\]
  To see this, let us recall
  the strong maximal function $M_{\Dd'}$ with respect~$\Dd'$, being defined as
  \[M_{\Dd'}f(x) := \sup_{\substack{R' \in \Dd' \\ R' \ni x}} \frac{1}{\mu(5R')} \int_{R'} |f(y)| \, d\mu(y).\]
  We claim that this $M_{\Dd'}$ is bounded on $L^2(\mu)$. In fact, it is easy to see that
  $$ M_{\Dd'}(f)(x_1,x_2)\leq M_1 M_2(f)(x_1,x_2),$$
  where $X_i$ is the \emph{Hardy--Littlewood maximal function} on $(X_i,\rho_i,\mu_i)$ for $i=1,2$, given by
  $$ M_i(h)(x) =   \sup_{\substack{Q \in \Dd_i \\ Q \ni x}} \frac{1}{\mu_i(5Q)} \int_Q |f(y)| \, d\mu_i(y),\quad x\in X_i. $$
  Since each  $ M_i$ is bounded on $L^2(X_i,\mu_i)$, we get that $M_{\Dd'}$ is bounded on $L^2(\mu)$.

  Now, for each $k \in \Z$, we define the following sets
  \begin{align*}
    \Omega_k &  := \left\{x \in \mathcal{X}: \bigg( \sum_{R' \in \Dd'} \sum_{R \in \mathcal{C}_{R'}}
  |s_R|^2 \frac{\chi_{R'}}{\mu(R')}
  \bigg)^{1/2} > 2^k\right\}, \\
    \mathcal{R}'_k &:= \left\{ R' \in \Dd': \mu(R' \cap \Omega_k) > \frac{1}{2} \mu(R') \text{ and } \mu(R' \cap \Omega_{k+1}) \leq \frac{1}{2} \mu(R')\right\}, \quad \text{and} \\
    \widetilde{\Om}_{k} & := \left\{ x \in \mathcal{X}: M_{\Dd'}(\chi_{\Om_k}) > \frac{1}{2}\right\}.
  \end{align*}
We have the following properties\\
  \indent (i) each rectangle~$R' \in \Dd'$ belongs to exactly one set~$\mathcal{R}'_k$; \\
  \indent (ii) if~$R' \in \mathcal{R}'_k$, then $R' \subset \widetilde{\Om}_{k}$, and so $\cup_{R' \in \mathcal{R}'_k} R' \subset \widetilde{\Om}_{k}$ and $\mathcal{R}'_k \subset \{R: R \subset \widetilde{\Om}_{k}\}$;\\
  \indent (iii) if $R' \in \mathcal{R}'_k$, then $\mu((\widetilde{\Om}_{k} \backslash \Om_{k+1})\cap R') \geq \frac{\mu(R')}{2}$; and\\
  \indent (iv) there exists a positive constant $C$ such that $\mu(\widetilde{\Om}_{k}) \leq C\mu(\Om_k)$ for all $k$, since $M_{\Dd'}$ is bounded on $L^2(\mu)$.

  Now using property~(i) we can write
  \begin{align}\label{eq4a:dual_prod}
    |L(s)| & = |\sum_{R' \in \Dd'} \sum_{R \in \mathcal{C}_{R'}}
    s_R \cdot \bar{t}_R|
     \leq \sum_{R' \in \Dd'} \sum_{R \in \mathcal{C}_{R'}}  |s_R \cdot \bar{t}_R|
     = \sum_{k \in \Z} \sum_{R' \in \mathcal{R}'_k} \sum_{R \in \mathcal{C}_{R'}}  |s_R \cdot \bar{t}_R|.
  \end{align}
  By H\"{o}lder's inequality we have
  \begin{equation}\label{eq1a:dual_prod}
    \sum_{R' \in \mathcal{R}'_k} \sum_{R \in \mathcal{C}_{R'}}
    |s_R \cdot \bar{t}_R|
    \leq \Big(  \sum_{R' \in \mathcal{R}'_k} \sum_{R \in \mathcal{C}_{R'}}  |s_R|^2\Big)^{1/2}
    \Big(  \sum_{R' \in \mathcal{R}'_k} \sum_{R \in \mathcal{C}_{R'}}  |t_R|^2\Big)^{1/2}.
  \end{equation}
  Using property~(ii), the definition of~$\|t\|_{t^1}$ in~\eqref{eq:t1} and property~(i), the second term in the right-hand side of~\eqref{eq1a:dual_prod} is bounded by
  \begin{align} \label{eq2a:dual_prod}
   \Big(  \sum_{R' \in \mathcal{R}'_k} \sum_{R \in \mathcal{C}_{R'}}  |t_R|^2\Big)^{1/2}
   & \leq \mu(\widetilde{\Om}_k)^{1/2} \|t\|_{t^1}
    \ls  \mu(\Om_k)^{1/2} \|t\|_{t^1}.
  \end{align}
  Using property~(iii) and the definition of~$\Om_k$, we can estimate the first term in the right-hand side of~\eqref{eq1a:dual_prod} as
  \begin{align}\label{eq3a:dual_prod}
      &\sum_{R' \in \mathcal{R}'_k} \sum_{R \in \mathcal{C}_{R'}}  |s_R |^2\nonumber\\
      & \leq 2  \sum_{R' \in \mathcal{R}'_k} \sum_{R \in \mathcal{C}_{R'}}  |s_R|^2
      \frac{\mu((\widetilde{\Om}_{k} \backslash \Om_{k+1})\cap R')}{\mu(R')}
       \leq 2 \int_{\widetilde{\Om}_{k} \backslash \Om_{k+1}}
       \sum_{R' \in \mathcal{R}'_k} \sum_{R \in \mathcal{C}_{R'}}
       |s_R |^2
       \frac{\chi_{R'}}{\mu(R')} \,d\mu \noz\\
      & \leq  2 \int_{\widetilde{\Om}_{k} \backslash \Om_{k+1}}
      2^{2(k+1)} \,d\mu \leq 2 \cdot 2^{2(k+1)} \mu(\widetilde{\Om}_k)\nonumber
      \\
      &\ls 2^{2k} \mu(\Om_k),
  \end{align}
  where the last inequality follows from property~(iv).

Combining~\eqref{eq4a:dual_prod}--\eqref{eq3a:dual_prod} we obtain
  \begin{align*}
       |L(s)| & \leq C\sum_{k \in \Z}
       2^k \mu(\Om_k)^{1/2}
        \mu(\Om_k)^{1/2} \|t\|_{t^1}
        = C\sum_{k \in \Z} 2^k \mu(\Om_k) \|t\|_{t^1} \leq  C\|s\|_{s^1} \|t\|_{t^1}, \text{ as required}.
  \end{align*}

Second, we prove that for all $L \in (s^1)'$, there exists $t \in t^1$ with $\|t\|_{t^1} \leq \|L\|$ such that for all $s \in s^1$ we have
  \[
  L(s) = \sum_{R' \in \Dd'} \sum_{R \in \mathcal{C}_{R'}} s_R \cdot \bar{t}_R.
  \]
Let $s^i_{Q_1,u_1,Q_2,u_2} = s^i_R$ be the sequence that equals 1 when $R = R_i \in \mathcal{C}_{R'_i}$, $R'_i \in \Dd'$ and equals 0 everywhere else.
In other words, $s^i_R$ equals 1 at the $(Q_1,u_1,Q_2,u_2)$ entry and equals 0 everywhere else.
Notice that $\|s^i_R\|_{s^1} = 1$.
For all~$s \in s^1$ we have
\[
s = \sum_{R' \in \Dd'} \sum_{R \in \mathcal{C}_{R'}} s_R \cdot s^i_R,
\]
where the limit holds in the norm of $s^1$.
For each $L \in (s^1)'$, let $\bar{t}_R := L(s^i_R)$.
Since~$L$ is a bounded linear functional on~$s^1$, we have
\begin{align*}
  L(s) = L(\sum_{R' \in \Dd'} \sum_{R \in \mathcal{C}_{R'}} s_R \cdot s^i_R) & =\sum_{R' \in \Dd'} \sum_{R \in \mathcal{C}_{R'}} s_R \cdot L(s^i_R)
  = \sum_{R' \in \Dd'} \sum_{R \in \mathcal{C}_{R'}} s_R \cdot \bar{t}_R.
\end{align*}
Let $t := \{t_R\}$. Then we only have to check that $\|t\|_{t^1} \leq \|L\|$.

For each fixed open set~$\Om \subset \mathcal{X}$ with finite measure, we define a new measure $\bar{\mu}$ such that $\bar{\mu}(R) = \frac{\mu(R)}{\mu(\Om)}$ when $R \subset \Om$ and $\bar{\mu}(R) = 0$ when $R \nsubseteq \Om$.
Let $\ell^2(\bar{\mu})$ be a sequence space such that when $s \in \ell^2(\bar{\mu})$ we have
\[
\|s\|_{\ell^2(\bar{\mu})}
:= \bigg(\sum_{\substack{R' \in \Dd'\\ R' \subset \Om}} \sum_{R \in \mathcal{C}_{R'}} |s_R|^2 \frac{\mu(R')}{\mu(\Om)} \bigg)^{1/2} < \infty.
\]
Notice that $(\ell^2(\bar{\mu}))' = \ell^2(\bar{\mu})$. Then
\begin{align*}
  &\bigg(\frac{1}{\mu(\Om)} \sum_{\substack{R' \in \Dd'\\ R' \subset \Om}} \sum_{R \in \mathcal{C}_{R'}} |t_R|^2 \bigg)^{1/2}\\
  & = \|\{\mu(R')^{-1/2} t_R\}\|_{\ell^2(\bar{\mu})}
   = \sup_{s: \|s\|_{\ell^2(\bar{\mu})} \leq 1 }
  \bigg|\sum_{\substack{R' \in \Dd'\\ R' \subset \Om}} \sum_{R \in \mathcal{C}_{R'}}  \mu(R')^{-1/2} t_R s_R \frac{\mu(R')}{\mu(\Om)} \bigg| \\
  & = \sup_{s: \|s\|_{\ell^2(\bar{\mu})} \leq 1 }
  \bigg| L\bigg(\Big\{s_R \frac{\mu(R')^{1/2}}{\mu(\Om)}\Big\}_{R: R \in \mathcal{C}_{R'},R' \in \Dd, R' \subset \Om}
   \bigg)\bigg|\\
  & \leq \sup_{s: \|s\|_{\ell^2(\bar{\mu})} \leq 1 } \|L\|
  \bigg\| \Big\{s_R \frac{\mu(R')^{1/2}}{\mu(\Om)}\Big\}_{R: R \in \mathcal{C}_{R'},R' \in \Dd, R' \subset \Om} \bigg\|_{s^1}.
\end{align*}
By the definition of $\|s\|_{s^1}$ and H\"{o}lder's inequality we have
\begin{align*}
 &\hspace{-0.5cm}  \bigg\| \Big\{s_R \frac{\mu(R')^{1/2}}{\mu(\Om)}\Big\}_{R: R \in \mathcal{C}_{R'},R' \in \Dd, R' \subset \Om} \bigg\|_{s^1}
  = \bigg\|\sum_{\substack{R' \in \Dd'\\ R' \subset \Om}} \sum_{R \in \mathcal{C}_{R'}}
 \Big|s_R \frac{\mu(R')^{1/2}}{\mu(\Om)} \Big|^2 \frac{\chi_{R'}}{\mu(R')}
  \bigg\|_{L^1(\mu)} \\
 & \leq \bigg(\sum_{\substack{R' \in \Dd'\\ R' \subset \Om}} \sum_{R \in \mathcal{C}_{R'}}
 |s_R|^2 \frac{\mu(R')}{\mu(\Om)}\bigg)^{1/2} = \|s\|_{\ell^2(\bar{\mu})}.
\end{align*}
Hence
\[
\bigg(\frac{1}{\mu(\Om)} \sum_{\substack{R' \in \Dd'\\ R' \in \Om}} \sum_{R \in \mathcal{C}_{R'}} |t_R|^2 \bigg)^{1/2}
\leq  \sup_{s: \|s\|_{\ell^2(\bar{\mu})} \leq 1 } \|L\| \, \|s\|_{\ell^2(\bar{\mu})}
\leq \|L\|.
\]
Since this is true for all sets $\Om \subset \mathcal{X}$ with finite measure, $\|t\|_{t^1} \leq \|L\|$.
This completes the proof of Theorem~\ref{thm:s1_t1_dual}.
\end{proof}

Before discussing the lifting - projection argument, let us introduce the sequence space~$s^2$, which is a model for function space~$L^2(\mu)$.
We define~$s^2$ to be the space consisting of the sequences $s:= \{s_{Q_1,u_1,Q_2,u_2}\}=\{s_R\}$ of complex numbers with
\begin{align*}
    \|s\|_{s^2} & := \bigg\|
  \bigg(
  \sum_{\substack{Q_1 \in \Dd_1\\ Q_1{\text{ good}}}}
  \sum_{\substack{Q_2 \in \Dd_2 \\ Q_2{\text{ good}}}}
  \sum_{u_1} \sum_{u_2}
  |s_{Q_1,u_1,Q_2,u_2}|^2
  \frac{\chi_{S(Q_1)}(x_1) \otimes \chi_{S(Q_2)}(x_2)}{\mu_1(S(Q_1))\mu_2(S(Q_2))}
  \bigg)^{1/2}\bigg\|_{L^2(\mu)} \noz\\
  & = \bigg\|\bigg( \sum_{R' \in \Dd'} \sum_{R \in \mathcal{C}_{R'}}
  |s_R|^2 \frac{\chi_{R'}}{\mu(R')}
  \bigg)^{1/2}\bigg\|_{L^2(\mu)} < \infty.
\end{align*}
We note that the sets $s^1 \cap s^2$ is dense in~$s^1$ in terms of the~$s^1$ norm~\cite{KLPW16}.

Now, we introduce the \emph{lifting operator}~$T_L$ and \emph{projection operator}~$T_P$ and their properties.
\begin{defn}
  Given functions $\varphi \in L^2(\mu)$, we define the \emph{lifting operator}~$T_L$ by
  \[
  T_L(\varphi) := \{\langle \varphi, h_{u_1}^{Q_1} \otimes h_{u_2}^{Q_2}\rangle\}_{Q_1,u_1,Q_2,u_2}
  = \{\langle \varphi, h_R\rangle\}_{R \in \mathcal{C}_{R'}, R' \in \Dd'}.
  \]
\end{defn}

\begin{defn}
  Given sequences $s = \{s_{Q_1,u_1,Q_2,u_2}\} = \{s_R\} $, we define the \emph{projection operator}~$T_P$ by
  \[
  T_P(s) :=  \sum_{\substack{Q_1 \in \Dd_1\\ Q_1{\text{ good}}}}
  \sum_{\substack{Q_2 \in \Dd_2 \\ Q_2{\text{ good}}}}
  \sum_{u_1} \sum_{u_2}
  s_{Q_1,u_1,Q_2,u_2}
  h_{u_1}^{Q_1} \otimes h_{u_2}^{Q_2}
  = \sum_{R' \in \Dd'} \sum_{R \in \mathcal{C}_{R'}} s_R h_R.
  \]
\end{defn}
Notice that for all $\varphi \in L^2(\mu)$ we have $\varphi = T_P \circ T_L(\varphi)$ due to by the Haar expansion of~$\varphi$.

The following four propositions are obtained from the definitions of Hardy  spaces~$\hardy$ and BMO space $\bmo_{\textup{prod}}(\mu)$ and the sequence spaces~$s^1$, $s^2$ and~$t^1$.
\begin{prop}\label{prop2: dual prod}
  For all $\varphi \in L^2(\mu) \cap \hardy$, we have
  $
  \|T_L(\varphi)\|_{s^1} = \|\varphi\|_{\hardy}.
  $
\end{prop}
\begin{proof}
  \begin{align*}
    \|T_L(\varphi)\|_{s^1}
    & = \bigg\| \bigg( \sum_{R' \in \Dd'} \sum_{R \in \mathcal{C}_{R'}}
  |\langle \varphi, h_R\rangle|^2 \frac{\chi_{R'}}{\mu(R')}
  \bigg)^{1/2}\bigg\|_{L^1(\mu)}
   = \|S_{\Dd} (\p)\|_{L^1(\mu)} = \|\p\|_{\hardy}.
  \end{align*}
\end{proof}

\begin{prop}\label{prop3: dual prod}
  For all $b \in \bmo_{\textup{prod}}(\mu)$, we have
  $
  \|T_L(b)\|_{t^1} = \|b\|_{\bmo_{\textup{prod}}(\mu)}.
  $
\end{prop}
\begin{proof}
  \begin{align*}
    \|T_L(b)\|_{t^1}
     & = \sup_{\Om}\bigg(\frac{1}{\mu(\Om)}
\sum_{\substack{R' \in \Dd'  \\ R' \subset \Om}} \sum_{R \in \mathcal{C}_{R'}}
  |\langle b, h_R\rangle|^2 \bigg)^{1/2}
  =  \|b\|_{\bmo_{\textup{prod}}(\mu)}.
  \end{align*}
\end{proof}

\begin{prop}\label{prop4: dual prod}
  For all $s \in s^1 \cap s^2$, we have
  $
  \|T_P(s)\|_{\hardy} \ls \|s\|_{s^1}.
  $
\end{prop}
\begin{proof}
  \begin{align*}
    \|T_P(s)\|_{\hardy}
    & = \|S_{\Dd}(T_P(s))\|_{L^1(\mu)}
     = \bigg\| \bigg( \sum_{R' \in \Dd'} \sum_{R \in \mathcal{C}_{R'}}
   \Big| \Big \langle \sum_{\widetilde{R}' \in \Dd'}
   \sum_{\widetilde{R} \in \mathcal{C}_{\widetilde{R}'}}
   s_{\widetilde{R}} h_{\widetilde{R}}, h_R \Big\rangle \Big|^2 \frac{\chi_{R'}}{\mu(R')}
  \bigg)^{1/2}\bigg\|_{L^1(\mu)} \\
  & = \bigg\| \bigg( \sum_{R' \in \Dd'} \sum_{R \in \mathcal{C}_{R'}}
   \Big| \sum_{\widetilde{R}' \in \Dd'}
   \sum_{\widetilde{R} \in \mathcal{C}_{\widetilde{R}'}}
   \langle
   s_{\widetilde{R}} h_{\widetilde{R}}, h_R \rangle \Big|^2 \frac{\chi_{R'}}{\mu(R')}
  \bigg)^{1/2}\bigg\|_{L^1(\mu)} \\
  & = \bigg\| \bigg( \sum_{R' \in \Dd'} \sum_{R \in \mathcal{C}_{R'}}
  |\langle s_{R}h_R, h_R\rangle|^2 \frac{\chi_{R'}}{\mu(R')}
  \bigg)^{1/2}\bigg\|_{L^1(\mu)} \\
  & \leq \bigg\| \bigg( \sum_{R' \in \Dd'} \sum_{R \in \mathcal{C}_{R'}}
  \|s_{R}h_R\|_{L^2(\mu)}^2 \|h_R\|_{L^2}^2 \frac{\chi_{R'}}{\mu(R')}
  \bigg)^{1/2}\bigg\|_{L^1(\mu)} \\
  & \ls \bigg\| \bigg( \sum_{R' \in \Dd'} \sum_{R \in \mathcal{C}_{R'}}
  |s_{R}|^2  \frac{\chi_{R'}}{\mu(R')}
  \bigg)^{1/2}\bigg\|_{L^1(\mu)} = \|s\|_{s^1}.
  \end{align*}
Note that $\langle h_{\widetilde{R}}, h_R\rangle = 1$ when $R = \widetilde{R}$ and equals 0 otherwise, $\|h_R\|_{L^2(\mu)} \ls 1$. Also, since $\{s_R\}$ is a sequence of numbers, $\|s_{R}\|_{L^2(\mu)} = |s_R|$.
\end{proof}

\begin{prop}\label{prop5: dual prod}
  For all $t \in t^1 \cap s^2$, we have
  $
  \|T_P(t)\|_{\bmo_{\textup{prod}}(\mu)} \ls \|t\|_{t^1}.
  $
\end{prop}
\begin{proof}By definition we have
  \begin{align*}
    \|T_P(t)\|_{\bmo_{\textup{prod}}(\mu)}
     & = \sup_{\Om}\bigg(\frac{1}{\mu(\Om)}
\sum_{\substack{R' \in \Dd' \\ R' \subset \Om}} \sum_{R \in \mathcal{C}_{R'}}
  |\langle t_R, h_R\rangle|^2 \bigg)^{1/2} \\
 &\leq \sup_{\Om}\bigg(\frac{1}{\mu(\Om)}
\sum_{\substack{R' \in \Dd' \\R' \subset \Om}} \sum_{R \in \mathcal{C}_{R'}}
  |t_R|^2 \bigg)^{1/2}
  = \|t\|_{t^1}.  \qedhere
  \end{align*}
\end{proof}

The following theorem completes the proof of Theorem~\ref{thm:product_duality}.
\begin{thm}\label{thm:product duality 2}
For each $b \in \bmo_{\textup{prod}}(\mu)$, the linear functional given by
\begin{equation}\label{eq5:dual_prod}
\mathcal{L}(\p) = \langle \p, b\rangle := \int_{\mathcal{X}} \p(x)b(x) \,d\mu(x),
\end{equation}
initially defined on $\hardy \cap L^2(\mu)$, has a unique bounded extension to $\hardy$ with
\[
\|\mathcal{L}\| \leq C \|b\|_{\bmo_{\textup{prod}(\mu)}}.
\]
Conversely, every bounded linear functional~$\mathcal{L}$ on $\hardy \cap L^2(\mu)$ can be realised in the form of~\eqref{eq5:dual_prod}. That is, there exists $b \in \bmo_{\textup{prod}(\mu)}$ such that~\eqref{eq5:dual_prod} holds for all $\p \in \hardy \cap L^2(\mu)$ and
\[
\|b\|_{\bmo_{\textup{prod}(\mu)}} \leq C\|\mathcal{L}\|.
\]
\end{thm}
\begin{proof}
  Suppose $b \in \bmo_{\textup{prod}}(\mu)$ and we define the linear functional as in~\eqref{eq5:dual_prod} for $\p \in \hardy \cap L^2(\mu)$. Then by Haar expansion of~$\p$, Propositions~\ref{prop2: dual prod} and~\ref{prop3: dual prod} we have
  \begin{align*}
    |\mathcal{L}(\p)|
    & = |\int_{\mathcal{X}} b(x) \sum_{R' \in \Dd'} \sum_{R \in \mathcal{C}_{R'}} \langle \p, h_R\rangle h_R(x) \,d\mu(x)|
     = |\sum_{R' \in \Dd'} \sum_{R \in \mathcal{C}_{R'}}
    \langle \p, h_R\rangle
    \langle b, h_R\rangle| \\
    & \leq \|\{\langle \p, h_R\rangle\}\|_{s^1}
    \|\{\langle b, h_R\rangle\}\|_{t^1}
     = \|T_L(\p)\|_{s^1}  \|T_L(b)\|_{t^1} \\
    & \leq C \|\p\|_{\hardy}   \|b\|_{\bmo_{\textup{prod}}(\mu)}.
  \end{align*}
This implies that~$\mathcal{L}$ is a bounded linear functional on $\hardy \cap L^2(\mu)$ and hence has a unique bounded extension to~$\hardy$ with $\|\mathcal{L}\| \leq C \|b\|_{\bmo_{\textup{prod}}(\mu)}$ since $\hardy \cap L^2(\mu)$ is dense in~$\hardy$.

Conversely, let $\mathcal{L}$ be a bounded linear operator on $\hardy \cap L^2(\mu)$, and define $\mathcal{L}_1 := \mathcal{L} \circ T_P$, thus $\mathcal{L}_1$ is a bounded linear functional on $s^1 \cap s^2$.
Recall that $\p = T_P \circ T_L(\p)$ for all $\p \in L^2(\mu)$.
Hence we have
\begin{equation}\label{eq6:dual_prod}
  \mathcal{L}(\p) = \mathcal{L}(T_P \circ T_L(\p)) = \mathcal{L}_1(T_L(\p)).
\end{equation}
Since $(s^1)' = t^1$, there exists $t \in t^1$ with $\|t\|_{t^1} \leq \|\mathcal{L}\|$ such that $\mathcal{L}_1(s) = \langle t,s\rangle$ for all $s \in s^1$.
Hence we have
\begin{align}\label{eq7:dual_prod}
  \mathcal{L}_1(T_L(\p))
  & = \laz t, T_L(\p) \raz
   = \sum_{R' \in \Dd'} \sum_{R \in \mathcal{C}_{R'}} t_R \laz \p , h_R \raz
   = \laz \p, \sum_{R' \in \Dd'} \sum_{R \in \mathcal{C}_{R'}} t_R h_R \raz
   = \laz \p, T_P(t)\raz.
\end{align}
From~\eqref{eq6:dual_prod} and~\eqref{eq7:dual_prod} we have
\[
\mathcal{L}(\p) = \laz \p, T_P(t)\raz.
\]
Moreover, by Proposition~\ref{prop5: dual prod} we have
$
\|T_P(t)\|_{\bmo_{\textup{prod}}(\mu)} \ls \|t\|_{t^1} \leq \|\mathcal{L}\|,
$
as required.
\end{proof}
Hence, Theorem~\ref{thm:product_duality} is established.
\end{proof}

The following proposition is immediate from Theorem~\ref{thm:product duality 2}. It will be used in the proof of Proposition~\ref{prop1:mixed_para}.
\begin{prop}\label{prop1:dual_prod}
  Given~$b \in \bmo_{\textup{prod}}(\mu)$ and dyadic grids $\Dd_1$, $\Dd'_1$, $\Dd_2$, $\Dd'_2$, let~$\varphi$ be a function which is of the form
  \begin{align*}
  \varphi &=   \sum_{Q_1 \in \Dd_1{\text{good}}}
  \sum_{Q_2 \in \Dd_2{\text{good}}}
  \sum_{u_1} \sum_{u_2}
  \lambda_{u_1 u_2}^{Q_1 Q_2}
   h_{u_1}^{Q_1} \otimes h_{u_2}^{Q_2}
   := \sum_{R' \in \Dd'} \sum_{R \in \mathcal{C}_R}
  \lambda_R h_R,
  \end{align*}
  where~$\{\lambda_{u_1 u_2}^{Q_1 Q_2}\}$ is a sequence of real numbers.
  Then there holds that
  \[|\langle b,\varphi\rangle| \ls \|b\|_{\bmo_{\textup{prod}(\mu)}} \|S_{\Dd_1 \Dd_2} \varphi\|_{L^1(\mu)}.\]
\end{prop}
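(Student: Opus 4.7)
The plan is to invoke the duality $(\hardy)' = \bmo_{\textup{prod}}(\mu)$ established in Theorem~\ref{thm:product duality 2}. The first key observation is that, because the collection $\{h^{Q_1}_{u_1} \otimes h^{Q_2}_{u_2}\}$ is orthogonal in $L^2(\mu)$ with $\|h^{Q_1}_{u_1} \otimes h^{Q_2}_{u_2}\|_{L^2(\mu)} \sim 1$ (see~\eqref{eq1:pro_Haar}), the given coefficients $\lambda_R$ agree (up to the harmless constants absorbed into $\ls$) with the Haar coefficients $\langle \varphi, h_R\rangle$ of $\varphi$. Consequently, Definition~\ref{defn:Hardy_prod} gives
\[\|\varphi\|_{\hardy} = \|S_{\Dd_1 \Dd_2}\varphi\|_{L^1(\mu)}.\]

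With this identification in hand, I would present the argument most cleanly through the lifting--projection framework built in the proof of Theorem~\ref{thm:product_duality}. Setting $s = \{s_R\} := \{\lambda_R\}$, we have $\varphi = T_P(s)$ and, by the very definition of $\|\cdot\|_{s^1}$ in~\eqref{eq:s1},
\[\|s\|_{s^1} = \|S_{\Dd_1\Dd_2}\varphi\|_{L^1(\mu)}.\]
Using the Haar expansion of $\varphi$ and pairing with $b$ term by term yields
\[\langle b,\varphi\rangle = \sum_{R' \in \Dd'}\sum_{R \in \mathcal{C}_{R'}} \lambda_R\,\langle b, h_R\rangle = \langle s, T_L(b)\rangle.\]
The duality $(s^1)' = t^1$ from Theorem~\ref{thm:s1_t1_dual}, combined with Proposition~\ref{prop3: dual prod} which gives $\|T_L(b)\|_{t^1} = \|b\|_{\bmo_{\textup{prod}}(\mu)}$, then immediately produces
\[|\langle b,\varphi\rangle| \leq \|s\|_{s^1}\,\|T_L(b)\|_{t^1} \ls \|b\|_{\bmo_{\textup{prod}}(\mu)}\,\|S_{\Dd_1\Dd_2}\varphi\|_{L^1(\mu)},\]
which is the desired bound.

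The only technicality worth flagging is the convergence of the displayed series defining $\langle b,\varphi\rangle$ and $\langle s, T_L(b)\rangle$; this is not really an obstacle, since in the intended application $\varphi$ is a finite linear combination of Haar functions (so lies in $L^2(\mu) \cap \hardy$ trivially), and in general one can reduce to this case by truncating to $\varphi_N := \sum_{|R| \leq N} \lambda_R h_R$, applying the estimate uniformly in $N$, and passing to the limit using monotone convergence in the definition of the square function together with the bounded extension from $\hardy \cap L^2(\mu)$ to $\hardy$ supplied by Theorem~\ref{thm:product duality 2}. In short, the proposition is essentially a repackaging of the already-proved duality once the identification of the $\lambda_R$ as Haar coefficients is noted.
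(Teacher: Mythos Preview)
Your proof is correct and follows precisely the route the paper intends: the paper states that the proposition is ``immediate from Theorem~\ref{thm:product duality 2}'', and your argument is exactly an unpacking of that immediacy via the same lifting--projection machinery and the sequence-space duality $(s^1)'=t^1$ used in the proof of Theorem~\ref{thm:product duality 2}, together with the definitional identity $\|\varphi\|_{\hardy}=\|S_{\Dd_1\Dd_2}\varphi\|_{L^1(\mu)}$.
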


%%%%%%%%%%%%%%%%%%%%%%
\section{Strategy of the Proof of Theorem~\ref{thm:T1thm_ver1}}\label{sec:strategy}
In this section, we present the strategy of proof of Theorem~\ref{thm:T1thm_ver1}, namely a $T(1)$ type theorem on non-homogeneous bi-parameter quasimetric spaces~$(\mathcal{X},\rho,\mu)$, under the \emph{a priori} boundedness assumption that~$\|T\| < \infty$.
The proof of Theorem~\ref{thm:T1thm_ver1} follows the same lines as that in~\cite{HM12a} and~\cite{HM14}, which introduce non-homogeneous $T(1)$ Theorems on metric spaces~$(X,d,\mu)$ and on bi-parameter Euclidean spaces~$\R^n \times \R^m$.
For the reader's convenience, we point out the differences between our notation and that in \cite{HM14} in Table~\ref{tab:notation} below.
\begin{table}[H]
\caption{ Differences between our notation and that in~\cite{HM14}.}\label{tab:notation}
\begin{center}
\begin{tabular}{|l| l| l |}
  \hline
   & Our notation & Notation in~\cite{HM14} \\ \hline
  Bi-parameter spaces & $X_1 \times X_2$ & $\R^n \times \R^m$  \\ \hline
  Dyadic grids & $\Dd_1$, $\Dd'_1$ in $X_1$,  & $\Dd_n$, $\Dd'_n$ in $\R^n$,\\
                & $\Dd_2$, $\Dd'_2$ in $X_2$  & $\Dd_m$, $\Dd'_m$ in $\R^m$\\ \hline
  Dyadic cubes &$Q_1 \in \Dd_1$, $Q'_1 \in \Dd'_1$,  & $I_1 \in \Dd_n$, $I_2, \in \Dd'_n$   \\
                &$Q_2 \in \Dd_2$, $Q'_2 \in \Dd'_2$  & $J_1 \in \Dd_n$, $J_2 \in \Dd'_m$   \\
  \hline
\end{tabular}
\end{center}
\end{table}

%%%%%%%%%%%%%%%%%%%%%%%%%%%%%%%%%%%%%%%%%%%%%%%%%%%%%%%%%%%%%%%%%%%%%%%%%%%%%%%%

In Section~\ref{subsec:initial_reduce}, we explain the initial reductions of the proof.
In Section~\ref{subsec:outline}, the outline of the main part of the proof is given.

%
%%%%%%%%%%%%%%%%%%%%%%%%%%%%%%%%%%%%%%%%%%%%%%%%%%%%%%%%%%%%

\subsection{Initial reductions}\label{subsec:initial_reduce}
Let $(\mathcal{X},\rho,\mu) := (X_1 \times X_2, \rho_1 \times \rho_2, \mu_1 \times \mu_2)$ be a non-homogeneous bi-parameter quasimetric space.
Let~$\Dd_1, \Dd_1'$ on $X_1$ and~$\Dd_2, \Dd_2'$ on $X_2$ be four independent random systems of dyadic cubes, which are defined from a probability space~$(\Omega, \mathbb{P})$ (see Theorem~\ref{thm:random_sys_dyadic_cube}, Section~\ref{sec:dyadiccubes}).
For each~$i = 1,2$, let~$Q_i \in \Dd_i$, $Q'_i \in \Dd'_i$, $u_i \in \{1,\ldots, M_{Q_i} -1\}$ and $u'_i \in \{1,\ldots, M_{Q'_i} -1\}$,
where $M_{Q_i}$ and $M_{Q'_i}$ are the number of children of the cubes~$Q_i$ and~$Q'_i$, respectively.
We denote the Haar functions with respect to the dyadic grids $\Dd_i,\Dd'_i$ by $h_{u_i}^{Q_i}, h_{u'_i}^{Q'_i}$, respectively.

Let $T: L^2(\mu) \rightarrow L^2(\mu)$ be a bi-parameter singular integral operator on~$\mathcal{X}$ defined as in Section~\ref{subsec:assume}, such that $\|T\| := \|T\|_{L^2(\mu) \rightarrow L^2(\mu)} < \infty$.
For each $x = x_1 \times x_2 \in \mathcal{X}$, $m \in \Z$ and~$\delta \in (0,1)$ satisfying $96A_0^6 \delta \leq 1$, define
\[
\mathcal{E} = \mathcal{E}(x,m,\delta)
:= \{f \in \mathcal{X}: \|f\|_{L^2(\mu)} = 1, \supp f \subset R_{x,m,\delta}\},
\]
where $R_{x,m,\delta} := Q_{1,x_1,m,\delta} \times Q_{2,x_2,m,\delta}$,
with~$Q_{i,x,m,\delta}$ being dyadic cubes in~$X_i$, centred at~$x_i$ and at level~$\delta^m$, $i = 1,2$.
Choose~$f$, $g \in \mathcal{E}$ so that~$0.7\|T\| \leq |\langle Tf,g\rangle|$.

For each integer $k >m$ we define
\begin{eqnarray*}
  E_kf &:=& \sum_{\substack{Q_1 \in \Dd_1 \\ k>\text{gen}(Q_1) \geq m}} \sum_{u_1=1}^{M_{Q_1}-1}
   h_{u_1}^{Q_1} \otimes \langle f,h_{u_1}^{Q_1} \rangle_1
   + \sum_{\substack{Q_1 \in \Dd_1 \\ \text{gen}(Q_1) = m}}
   h_{0}^{Q_1} \otimes \langle f,h_{0}^{Q_1} \rangle_1\\
   &=:&  E_{k,u} f +E_{k,0} f, \\
  E'_kg &:=& \sum_{\substack{Q'_1 \in \Dd'_1 \\ k>\text{gen}(Q'_1) \geq m}}
  \sum_{u'_1=1}^{M_{Q'_1}-1}
  h_{u'_1}^{Q'_1} \otimes \langle g,h_{u'_1}^{Q'_1} \rangle_1
  + \sum_{\substack{Q'_1 \in \Dd'_1 \\ \text{gen}(Q'_1) = m}}
   h_{0}^{Q'_1} \otimes \langle g,h_{0}^{Q'_1} \rangle_1, \\
   &=:&  E'_{k,u} g +E'_{k,0} g, \\
  \widetilde{E}_kf &:=& \sum_{\substack{Q_2 \in \Dd_2 \\ k>\text{gen}(Q_2) \geq m}}
  \sum_{u_2=1}^{M_{Q_2}-1}
  h_{u_2}^{Q_2} \otimes \langle f,h_{u_2}^{Q_2} \rangle_2
   + \sum_{\substack{Q_2 \in \Dd_2 \\ \text{gen}(Q_2) = m}}
   h_{0}^{Q_2} \otimes \langle f,h_{0}^{Q_2} \rangle_2, \quad \text{and} \\
   &=:& \widetilde{E}_{k,u} f +\widetilde{E}_{k,0} f, \\
  \widetilde{E}'_kg &:=& \sum_{\substack{Q'_2 \in \Dd'_2 \\ k>\text{gen}(Q'_2) \geq m}}
  \sum_{u'_2=1}^{M_{Q'_2}-1}
   h_{u'_2}^{Q'_2} \otimes \langle g,h_{u'_2}^{Q'_2} \rangle_2
   + \sum_{\substack{Q'_2 \in \Dd'_2 \\ \text{gen}(Q'_2) = m}}
   h_{0}^{Q'_2} \otimes \langle g,h_{0}^{Q'_2} \rangle_2\\
   &=:& \widetilde{E}'_{k,u} g+\widetilde{E}'_{k,0} g.
\end{eqnarray*}
Here $\langle f,h_{u_1}^{Q_1} \rangle_1(y_2) = \langle f(\cdot,y_2),h_{u_1}^{Q_1} \rangle = \int f(y_1,y_2)h_{u_1}^{Q_1}(y_1)\,d\mu_1(y_1)$, and $\text{gen}(Q)$ means the generation of the cube~$Q$.

We recall the definitions of good and bad cubes in Section~\ref{subsec:good_bad_cube}. We fix the parameters $\gamma_i$, $\al_i$ and $t_i$, where $i = 1,2$, which are used to defined good and bad cubes.

We also denote $E_{k,u} = E_{k,u,\text{good}}+ E_{k,u,\text{bad}}$, where
\begin{align*}
E_{k,u,\text{good}} f&:=\sum_{\substack{Q_1 \in \Dd_1, \text{good} \\ k>\text{gen}(Q_1) \geq m}} \sum_{u_1=1}^{M_{Q_1}-1}
   h_{u_1}^{Q_1} \otimes \langle f,h_{u_1}^{Q_1} \rangle_1;\\
E_{k,u,\text{bad}} f&:=\sum_{\substack{Q_1 \in \Dd_1, \text{bad} \\ k>\text{gen}(Q_1) \geq m}} \sum_{u_1=1}^{M_{Q_1}-1}
   h_{u_1}^{Q_1} \otimes \langle f,h_{u_1}^{Q_1} \rangle_1.
\end{align*}
We perform a similar split for other terms~$E_{k,0}f$, $\ldots$, $\widetilde{E}'_{k,0}g$ above, and the terms $E_{k,0,\text{good}} f$, $E_{k,0,\text{bad}} f$,$\ldots$, $\widetilde{E}'_{k,0,\text{good}} g$, $\widetilde{E}'_{k,0,\text{bad}} g$ are defined analogously to~$E_{k,u,\text{good}}f$ and~$E_{k,u,\text{bad}}f$ above.

Note that the operator~$E_k$ and $E'_k$ in fact are the inhomogeneous Haar expansions with respect to generation~$k$ in the first variable of functions~$f$ and~$g$, respectively.
Similarly,
 $\widetilde{E}_k$ and $\widetilde{E}'_k$ are the inhomogeneous Haar expansions with respect to generation~$k$ in the second variable of functions~$f$ and~$g$, respectively.
Thus, they satisfy the following properties.
\begin{enumerate}
  \item[(1)] $E_k$, $E'_k$, $\widetilde{E}_k$ and $\widetilde{E}'_k$ are linear operators,
  \item[(2)] $f = \lim_{k \rightarrow \infty}E_kf = \lim_{k \rightarrow \infty}\widetilde{E}_kf$ on~$L^2(\mu)$,
  \item[(3)] $\|E_kf\|_{L^2(\mu)}, \|\widetilde{E}_kf\|_{L^2(\mu)} \leq \|f\|_{L^2(\mu)} = 1$, and
  \item[(4)] $E_k\widetilde{E}_kf = \widetilde{E}_kE_kf$.
\end{enumerate}

We write
\begin{eqnarray*}
  \langle Tf,g\rangle &=& \langle T(f-E_kf),g\rangle + \langle T(E_kf), g-E'_kg\rangle + \langle T(E_kf - \widetilde{E}_kE_kf), E'_kg\rangle \\
  &&+ \langle T(\widetilde{E}_kE_kf) , E'_kg - \widetilde{E}'_kE'_kg\rangle
+\langle T(\widetilde{E}_kE_kf),\widetilde{E}'_kE'_kg\rangle.
\end{eqnarray*}
More briefly,
\[\langle Tf,g\rangle = \langle T(\widetilde{E}_kE_kf),\widetilde{E}'_kE'_kg\rangle + \e_k(\Dd), \quad
\text{where~$\Dd = (\Dd_1, \Dd'_1, \Dd_2, \Dd'_2)$}.\]
Using the fact that~$\|T\| < \infty$ and property~(2) listed above we obtain
$\lim_{k \rightarrow \infty} \e_k(\Dd) = 0.$
Then, using the Dominated Convergence Theorem, we deduce that
\begin{equation}\label{eq2:Part3}
\langle Tf,g\rangle = \lim_{k \rightarrow \infty} \E \langle T(\widetilde{E}_kE_kf),\widetilde{E}'_kE'_kg\rangle.
\end{equation}
Here $ \E := \E_{\Dd} = \E_{(\Dd_1, \Dd'_1, \Dd_2, \Dd'_2)} = \E_{\Dd_1} \E_{\Dd'_1} \E_{\Dd_2} \E_{\Dd'_2}$ is the mathematical expectation taken over all the random dyadic lattices $\Dd_1$, $\Dd'_1$, $\Dd_2$ and $\Dd'_2$ constructed above.
Note that
\begin{eqnarray}\label{initial_eq5}
\widetilde{E}_kE_kf = \widetilde{E}_{k,u} E_{k,u} f +\widetilde{E}_{k,0} E_{k,u} f+\widetilde{E}_{k,u} E_{k,0} f+
\widetilde{E}_{k,0} E_{k,0} f.
\end{eqnarray}
We continue to decompose $\widetilde{E}_kE_kf$ into smaller sums as follows.

Denote~$f_k := \widetilde{E}_kE_kf$.
Let $f_k = f_{k,\text{good}} + f_{k,\text{bad}}$ where~$f_{k,\text{good}}$ contain the sums over~$Q_1$ and~$Q_2$ where both cubes are good, and~$f_{k,\text{bad}}$ contain the sums over~$Q_1$ and~$Q_2$ where at least one cube is bad:
\begin{eqnarray*}
  f_{k,\text{good}} := \widetilde E_{k,u,\text{good}} E_{k,u,\text{good}}f +\widetilde E_{k,0,\text{good}} E_{k,u,\text{good}}f + \widetilde E_{k,u,\text{good}} E_{k,0,\text{good}}f +\widetilde E_{k,0,\text{good}} E_{k,0,\text{good}}f.
\end{eqnarray*}
and
\begin{align*}
  f_{k,\text{bad}} :=\,&
   \widetilde E_{k,u,\text{good}} E_{k,u,\text{bad}}f
   +\widetilde E_{k,0,\text{good}} E_{k,u,\text{bad}}f
   + \widetilde E_{k,u,\text{good}} E_{k,0,\text{bad}}f
   +\widetilde E_{k,0,\text{good}} E_{k,0,\text{bad}}f\\
   &+ \, \widetilde E_{k,u,\text{bad}} E_{k,u}f
   +\widetilde E_{k,0,\text{bad}} E_{k,u}f
   + \widetilde E_{k,u,\text{bad}} E_{k,0}f
   +\widetilde E_{k,0,\text{bad}} E_{k,0}f.
\end{align*}

Notice that
\begin{align*}%\label{initial_eq2}
  &\|f_k\|_{L^2(\mu)} = \|\widetilde{E}_kE_kf\|_{L^2(\mu)}
  \leq \|E_kf\|_{L^2(\mu)} \leq \|f\|_{L^2(\mu)}, \quad \text{and} \\
  &\|f_k\|_{L^2(\mu)} \leq \|f_{k,\text{good}}\|_{L^2(\mu)}  + \|f_{k,\text{bad}}\|_{L^2(\mu)}.
\end{align*}
Since $\|f\|_{L^2(\mu)} = \|g\|_{L^2(\mu)} = 1$, we have
\begin{eqnarray}%\label{initial_eq3}
 && \|f_{k,\text{good}}\|_{L^2(\mu)}  \leq \|f\|_{L^2(\mu)} = 1, \quad \text{and}  \label{initial_eq2} \\
 && \|f_{k,\text{bad}}\|_{L^2(\mu)} \leq \|f\|_{L^2(\mu)} = 1. \label{initial_eq3}
\end{eqnarray}

Similarly, denote
$g_k := \widetilde{E}'_kE'_kg = g_{k,\text{good}} + g_{k,\text{bad}}.$
We now write
\begin{align}\label{eq3:Part3}
\langle T(\widetilde{E}_kE_kf),\widetilde{E}'_kE'_kg\rangle
&= \langle Tf_k, g_k\rangle
= \langle Tf_{k,\text{good}}, g_{k,\text{good}}\rangle
+ \langle Tf_{k,\text{good}}, g_{k,\text{bad}}\rangle
+ \langle Tf_{k,\text{bad}}, g_{k}\rangle.
\end{align}
Using~\eqref{eq2:Part3}, \eqref{initial_eq2}, \eqref{initial_eq3} and~\eqref{eq3:Part3} we have
\begin{eqnarray*}
  |\langle Tf,g\rangle| &=& \lim_{k \rightarrow \infty} |\E \langle T(\widetilde{E}_kE_kf),\widetilde{E}'_kE'_kg\rangle|
   = \lim_{k \rightarrow \infty} |\E \langle Tf_k, g_k\rangle| \\
   &\leq&  \lim_{k \rightarrow \infty} \left( \E |\langle Tf_{k,\text{good}}, g_{k,\text{good}}\rangle|
   + \E |\langle Tf_{k,\text{good}}, g_{k,\text{bad}}\rangle|
   + \E |\langle Tf_{k,\text{bad}}, g_{k}\rangle|\right) \\
   &\leq&  \lim_{k \rightarrow \infty}  \left(\E |\langle Tf_{k,\text{good}}, g_{k,\text{good}}\rangle|
   +  \|T\| \E \|g_{k,\text{bad}}\|_{L^2(\mu)}
   +  \|T\| \E \|f_{k,\text{bad}}\|_{L^2(\mu)}\right).
\end{eqnarray*}
Now, we will estimate $\E \|f_{k,\text{bad}}\|_{L^2(\mu)}$.
We start with the first term $\widetilde E_{k,u,\text{good}} E_{k,u,\text{bad}}f$ in the definition of~$f_{k,\text{bad}}$.
Using the property~\eqref{eq1:pro_Haar} of Haar functions, we have
\begin{eqnarray*}
 &&\hspace{-1.5cm} \|\widetilde E_{k,u,\text{good}} E_{k,u,\text{bad}}f\|_{L^2(\mu)}\\
  &\ls&  \bigg( \sum_{\substack{Q_1 \in \Dd_1,\text{good} \\k> \text{gen}(Q_1) \geq m}} \,
\sum_{\substack{Q_2 \in \Dd_2,\text{bad} \\ k>\text{gen}(Q_2) \geq m}}
\sum_{u_1=1}^{M_{Q_1}-1}
\sum_{u_2=1}^{M_{Q_2}-1}
|\langle f, h_{u_1}^{Q_1} \otimes h_{u_2}^{Q_2}\rangle|^2\bigg)^{1/2} \\
  &\leq& \bigg( \sum_{\substack{Q_1 \in \Dd_1 \\k> \text{gen}(Q_1) \geq m}} \,
\sum_{\substack{Q_2 \in \Dd_2 \\ k>\text{gen}(Q_2) \geq m}}
\sum_{u_1=1}^{M_{Q_1}-1}
\sum_{u_2=1}^{M_{Q_2}-1}
\chi_{\text{bad}}(Q_2)
|\langle f, h_{u_1}^{Q_1} \otimes h_{u_2}^{Q_2}\rangle|^2\bigg)^{1/2},
\end{eqnarray*}
where  the function $\chi_{\text{bad}}(Q_2)$ is defined by
\[ \chi_{\text{bad}}(Q_2) := \left\{
      \begin{array}{l l}
        1, & \quad \text{if $Q_2$ is bad;}\\
        0, & \quad \text{if $Q_2$ is good.}
      \end{array} \right.\]
Hence,
\begin{eqnarray*}
  \lefteqn{\E \|\widetilde E_{k,u,\text{good}} E_{k,u,\text{bad}}f\|_{L^2(\mu)}}\\
  &=& \E_{(\Dd_1,\Dd'_1,\Dd_2,\Dd'_2)} \|\widetilde E_{k,u,\text{good}} E_{k,u,\text{bad}}f\|_{L^2(\mu)} \\
  &=&
 \E_{(\Dd_1,\Dd'_1,\Dd_2)} \bigg( \sum_{\substack{Q_1 \in \Dd_1 \\k> \text{gen}(Q_1) \geq m}} \,
\sum_{\substack{Q_2 \in \Dd_2 \\ k>\text{gen}(Q_2) \geq m}}
\sum_{u_1=1}^{M_{Q_1}-1}
\sum_{u_2=1}^{M_{Q_2}-1}
\Pb(Q_2 \in \Dd'_2\text{-bad})
|\langle f, h_{u_1}^{Q_1} \otimes h_{u_2}^{Q_2}\rangle|^2\bigg)^{1/2} \\
&\ls& c(r),
\end{eqnarray*}
where the last inequality follows from Theorem~\ref{thm.10.2.HM12} with $c(r) \sim \delta^{r\gamma_2\kappa}$, where $\delta \in (0,1)$, $\gamma_2 \in (0,1)$ and $\kappa \in (0,1]$.
It is clear that~$c(r) \rightarrow 0$, when~$r \rightarrow \infty$.
Recall that $ \Dd'_1\text{-bad}$ and $\Dd'_2\text{-bad}$ are collections of all bad cubes $Q_1 \in \Dd_1$ and $Q_2 \in \Dd_2$, respectively (see Section~\ref{subsec:good_bad_cube}).
Here~$\Pb$ is the probability measure on the collection~$\Omega$ of random dyadic grids.

Following similar calculations, we obtain the same result for other terms in the definition of~$f_{k,\text{bad}}$. Then we obtain the bound
\begin{eqnarray*}
\E  \|f_{k,\text{bad}}\|_{L^2(\mu)} \ls c(r).
\end{eqnarray*}

The same result holds for $\E  \|g_{k,\text{bad}}\|_{L^2(\mu)}$.
Fixing~$r$ to be sufficiently large, we have shown that
\begin{equation}\label{Initialeq.1}
0.7\|T\| \leq |\langle Tf,g\rangle| \leq \lim_{k \rightarrow \infty} |\E\langle Tf_{k,\text{good}}, g_{k,\text{good}}\rangle| + 0.1\|T\|.
\end{equation}
Now our goal is to show that $ \lim_{k \rightarrow \infty} |\E\langle Tf_{k,\text{good}}, g_{k,\text{good}}\rangle| \leq C + 0.1\|T\|$. In fact, most of the rest of this paper is to show this.
%%%%%%%%%%%%%%%%%%%%%%%%%%%%%%%%%%%%%%%%%%%%%%%%%%%%%%%%%%%%
\subsection{Outline of the core of the proof}\label{subsec:outline}
Notice that $|\E\langle Tf_{k,\text{good}}, g_{k,\text{good}}\rangle|$
can be split into 16 terms, and
our proof is reduced to showing that
\begin{eqnarray}\label{coreeq.1}
  |\E\langle Tf_{k,\text{good}}, g_{k,\text{good}}\rangle|
 &=&
 |\E \langle T( \widetilde E_{k,u,\text{good}} E_{k,u,\text{good}}f),  \widetilde E'_{k,u,\text{good}} E'_{k,u,\text{good}}g\rangle \noz \\
 &&\,+ \cdots +
 \E \langle T( \widetilde E_{k,0,\text{good}} E_{k,0,\text{good}}f),  \widetilde E'_{k,0,\text{good}} E'_{k,0,\text{good}}g\rangle| \noz \\
  &=&
  \Bigg|\E \sum_{\substack{Q_1,Q_2 \text{ good } \\ k>\text{gen}(Q_1), \text{gen}(Q_2) \geq m }} \,
  \sum_{\substack{Q'_1,Q'_2 \text{ good} \\ k>\text{gen}(Q'_1), \text{gen}(Q'_2) \geq m }}
  \, \sum_{\substack{u_1,u_2 \\u'_1,u'_2}}
  \langle f, h_{u_1}^{Q_1} \otimes h_{u_2}^{Q_2}\rangle \noz\\
  &&\quad\times \langle g, h_{u'_1}^{Q'_1} \otimes h_{u'_2}^{Q'_2}\rangle
  \langle T(h_{u_1}^{Q_1} \otimes h_{u_2}^{Q_2}), h_{u'_1}^{Q'_1} \otimes h_{u'_2}^{Q'_2}\rangle  \noz\\
  && \,+ \cdots +   \E \sum_{\substack{Q_1,Q_2 \text{ good } \\ \text{gen}(Q_1)= \text{gen}(Q_2) = m }} \,
  \sum_{\substack{Q'_1,Q'_2 \text{ good} \\ \text{gen}(Q'_1)=\text{gen}(Q'_2) = m }}
  \langle f, h_{0}^{Q_1} \otimes h_{0}^{Q_2}\rangle \noz\\
  &&\quad\times \langle g, h_{0}^{Q'_1} \otimes h_{0}^{Q'_2}\rangle
  \langle T(h_{0}^{Q_1} \otimes h_{0}^{Q_2}), h_{0}^{Q'_1} \otimes h_{0}^{Q'_2}\rangle \Bigg| \noz\\
    &\leq& (C + 0.1\|T\|) \|f\|_{L^2(\mu)} \|g\|_{L^2(\mu)},
\end{eqnarray}
where~$C$ is a constant depending only on the assumptions and the $\bmo_{\textup{prod}}(\mu)$ norms of the four $S(1)$, $S \in \{T,T^*,T_1,T^*_1\}$, but independent of the \emph{a priori} bound.

Combining with~\eqref{Initialeq.1}, we get
$0.6\|T\| \leq C+ 0.1\|T\|,$
which implies
$\|T\| \leq 2C.$

Recall that we have already fixed $m$. Now we also fix~$k$. Note that the restriction $k > \text{ gen}(Q_1) \geq m$ is equivalent to $\delta^k < \ell(Q_1) \leq \delta^m$. The same holds for $Q'_1$, $Q_2$ and~$Q'_2$. We will suppress this restriction from the sum.

We perform the splitting
\begin{eqnarray}\label{coreeq.3}
  \sum_{Q_1,Q'_1} \sum_{Q_2,Q'_2}
  &=& \sum_{\ell(Q_1) \leq \ell(Q_1')} \sum_{\ell(Q_2) \leq \ell(Q_2')}
  + \sum_{\ell(Q_1) \leq \ell(Q_1')} \sum_{\ell(Q_2) > \ell(Q_2')} \noz \\
  &&+ \sum_{\ell(Q_1) > \ell(Q_1')} \sum_{\ell(Q_2) \leq \ell(Q_2')}
  + \sum_{\ell(Q_1) > \ell(Q_1')} \sum_{\ell(Q_2) > \ell(Q_2')}.
\end{eqnarray}
We have to deal with $|\E\langle Tf_{k,\text{good}}, g_{k,\text{good}}\rangle|$ in~\eqref{coreeq.1} corresponding to each of the four subseries on the right-hand side of~\eqref{coreeq.3}. This is where the conditions on $T(1)$, $\widetilde{T}(1)$, $\widetilde{T}^*(1)$ and $T^*(1)$, respectively, are needed.

We mainly concentrate on the first subseries when $\ell(Q_1) \leq \ell(Q_1')$ and~$\ell(Q_2) \leq \ell(Q_2')$, and the second subseries $\ell(Q_1) \leq \ell(Q_1')$ and~$\ell(Q_2) > \ell(Q_2')$. The remaining two subseries are completely symmetric to one of these first two.

The calculation involving the first subseries is also similar to that of the second subseries. The only difference is that the paraproduct $\Pi_{a}^{u_2} \omega$ has different forms in two cases.
When $\ell(Q_1) \leq \ell(Q_1')$ and~$\ell(Q_2) \leq \ell(Q_2')$, we call  $\Pi_{a}^{u_2} \omega$ \emph{full paraproduct}.
When $\ell(Q_1) \leq \ell(Q_1')$ and~$\ell(Q_2) > \ell(Q_2')$, we call  $\Pi_{a}^{u_2} \omega$ \emph{mixed paraproduct}.

From now on, we focus on the summation when $\ell(Q_1) \leq \ell(Q_1')$ and~$\ell(Q_2) \leq \ell(Q_2')$.
The exception related to the mixed paraproduct will be handled explicitly in Section~\ref{sec:mix_para}.

We perform the splitting
\[\sum_{\ell(Q_1) \leq \ell(Q'_1)}
=  \sum_{\substack{Q_1, Q'_1 \\ \text{ separated } }}
+ \sum_{\substack{Q_1, Q'_1 \\ \text{ nested } }}
+ \sum_{\substack{Q_1, Q'_1 \\ \text{ adjacent } }} \]
where
\begin{eqnarray*}
 \sum_{\substack{Q_1, Q'_1 \\ \text{ separated } }}
 &:=& \sum_{\substack{\ell(Q_1) \leq \ell(Q'_1) \\ \rho_1(Q_1,Q'_1) > \mathcal{C}\ell(Q_1)^{\gamma_1}\ell(Q'_1)^{1-\gamma_1} }}, \quad
\sum_{\substack{Q_1, Q'_1 \\ \text{ nested } }}
:= \sum_{\substack{\ell(Q_1) < \delta^r \ell(Q'_1) \\ \rho_1(Q_1,Q'_1) \leq \mathcal{C}\ell(Q_1)^{\gamma_1}\ell(Q'_1)^{1-\gamma_1} }}  \\
&&\quad \text{and} \sum_{\substack{Q_1, Q'_1 \\ \text{ adjacent } }}
:= \sum_{\substack{\delta^r \ell(Q'_1) \leq \ell(Q_1) \leq \ell(Q'_1)  \\ \rho_1(Q_1,Q'_1)\leq \mathcal{C}\ell(Q_1)^{\gamma_1}\ell(Q'_1)^{1-\gamma_1} }}.
\end{eqnarray*}
Here $\mathcal{C} = 2A_0C_QC_K$, $A_0$ is the quasitriangle constant of~$\rho_1$, $C_Q$ is from equation~\eqref{a5} and $C_K$ is from the kernel estimates in Assumptions~\ref{assum_2} and~\ref{assum_4}; $\delta \in (0,1)$ satisfies $96A_0^6 \delta \leq 1$; and $\gamma_1 \in (0,1)$ is from Definition~\ref{def:bad cube} of bad cubes.
Figure~\ref{fig:split} presents the relative positions of separated cubes, nested cubes and adjacent cubes on the real line for a representative idea of how this would work in a more general metric space.

It is clear from the definition of \emph{separated} cubes that if $Q_1$ and $Q'_1$ are separated, then $Q_1 \cap Q'_1 = \emptyset$.

The term \emph{nested} makes sense due to the fact that~$Q_1$ is good.
In the nested term, the first condition $\ell(Q_1) < \delta^r \ell(Q'_1)$ means that~$Q_1$ is quite small compared to~$Q'_1$, and the second condition $\rho_1(Q_1,Q'_1) \leq \mathcal{C}\ell(Q_1)^{\gamma_1}\ell(Q'_1)^{1-\gamma_1}$
means that $Q_1$ and~$Q'_1$ are not too far from each other, but also cannot be too close to each other since~$Q_1$ is good.
Moreover, as~$Q_1$ is good and $\ell(Q_1) < \delta^r \ell(Q'_1)$, we have
\begin{equation}\label{coreeq.2}
  \rho_1(Q_1,\partial Q'_1) > \mathcal{C}\ell(Q_1)^{\gamma_1}\ell(Q'_1)^{1-\gamma_1}.
\end{equation}
If $Q_1 \cap Q'_1 \neq \emptyset$, then $\rho_1(Q_1,\partial Q'_1) = 0$, so~\eqref{coreeq.2} does not hold.
If $Q_1 \cap Q'_1 =\emptyset$, then~\eqref{coreeq.2} also fails, as it contradicts the second property of nested cubes.
Therefore, $Q_1$ and~$Q'_1$ are nested actually means that $Q_1 \subset Q'_1$.

Furthermore, the condition $\ell(Q_1) < \delta^r \ell(Q'_1)$ implies that
$\ell(Q_1) \leq \delta^r \ell(Q')$ for all~$Q' \in \text{ch}(Q'_1)$.
Since~$Q_1$ is good, then there exists a child~$Q' \in \text{ch}(Q'_1)$ such that
$\rho_1(Q_1,Q'^c) > \delta \mathcal{C}\ell(Q_1)^{\gamma_1}\ell(Q')^{1-\gamma_1}$.
Hence $Q_1 \subset Q' \subset Q'_1$.

The cases involving \emph{adjacent} cubes are a bit more complicated. This is because two adjacent cubes can be contained in each other, overlapping or disjoint.
\begin{figure}[H]
  \begin{subfigure}[b]{0.32\textwidth}
    \includegraphics[width=\textwidth]{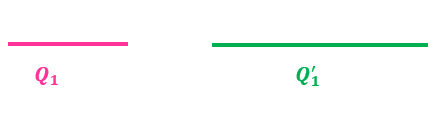}
    \caption{Separated cubes}
    \label{fig:Sep}
  \end{subfigure}
  \begin{subfigure}[b]{0.32\textwidth}
    \includegraphics[width=\textwidth]{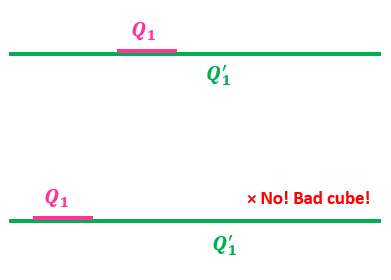}
    \caption{Nested cubes}
    \label{fig:In}
  \end{subfigure}
    \begin{subfigure}[b]{0.32\textwidth}
    \includegraphics[width=\textwidth]{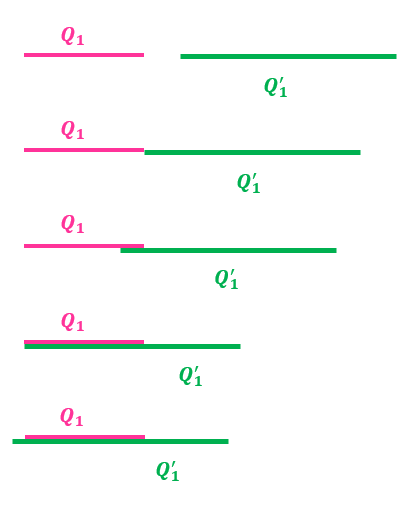}
    \caption{Adjacent cubes}
    \label{fig:Adj}
  \end{subfigure}
  \captionsetup{width=0.8\textwidth}
\caption{Relative positions of separated, nested and adjacent cubes.}
\label{fig:split}
\end{figure}

We preform a similar splitting in the summation~$\ell(Q_2) \leq \ell(Q'_2)$.
Thus, the whole summation is split into nine cases as shown in Table~\ref{tab:case} below. We explicitly deal with the following six cases. Each of the remaining three cases is symmetric to one of these.
\begin{table}[h]
\caption{Splitting of cubes.}\label{tab:case}
\begin{center}
\begin{tabular}{l c c c}
  \hline
  $(Q_1,Q'_1)\backslash (Q_2,Q'_2)$ & \hspace{1cm} separated & \hspace{1cm} adjacent &  \hspace{1cm} nested  \\ \hline
  separated & \hspace{1cm} case~1 & \hspace{1cm} case~2 & \hspace{1cm} case~3 \\
  adjacent &  & \hspace{1cm} case~4 & \hspace{1cm} case~5 \\
  nested &  &  & \hspace{1cm} case~6  \\
  \hline
\end{tabular}
\end{center}
\end{table}

%%%%%%%%%%%%%%%%%%%%%%%%%%%%%%%%%%%%%%%%%%%%%%%%%%%%%%%%%%%%%%%%%%%%%%%%%%%%%%%%%
\section{Separated Cubes}\label{sec:separated}
In this section, we deal with the cases involving separated cubes.
In Section~\ref{subsec:sep_sep}, we consider the \emph{Sep/Sep} case when $Q_1, Q'_1$ are separated and $Q_2, Q'_2$ are separated.
In Section~\ref{subsec:sep_in} we consider the \emph{Sep/Nes} case when $Q_1, Q'_1$ are separated and $Q_2, Q'_2$ are nested.
In Section~\ref{subsec:sep_adj} we consider the \emph{Sep/Adj} case when $Q_1, Q'_1$ are separated and $Q_2, Q'_2$ are adjacent.

\subsection{Separated/Separated cubes}\label{subsec:sep_sep}
In this section, we consider the \emph{Sep/Sep} case when $Q_1, Q'_1$ are separated and $Q_2, Q'_2$ are separated. That is,
\begin{eqnarray*}
  &&\ell(Q_1) \leq \ell(Q'_1), \quad \rho_1(Q_1,Q'_1) > \mathcal{C}\ell(Q_1)^{\gamma_1}\ell(Q'_1)^{1-\gamma_1}, \quad \text{and} \\
  &&\ell(Q_2) \leq \ell(Q'_2), \quad \rho_2(Q_2,Q'_2) > \mathcal{C}\ell(Q_2)^{\gamma_2}\ell(Q'_2)^{1-\gamma_2},
\end{eqnarray*}
where $\mathcal{C}:= 2A_0C_QC_K$.

Recall that we need to estimate~\eqref{coreeq.1}, where $|\E\langle Tf_{k,\text{good}}, g_{k,\text{good}}\rangle|$ can be split into 16 terms.
In this \emph{Sep/Sep} case, the terms involving~$h_0^{Q_1}$ or~$h_0^{Q_2}$ will disappear.
This is because for example, the terms involving~$h_0^{Q_1}$ is summed over cubes~$Q_1$ such that~$\text{gen}(Q_1) = m$.
This implies~$\delta^m = \ell(Q_1) \leq \ell(Q'_1) \leq \delta^m$, and so~$\ell(Q_1) = \ell(Q'_1) = \delta^m$.
Thus
\[\rho_1(Q_1,Q'_1) >
  \mathcal{C}\ell(Q_1)^{\gamma_1}\ell(Q'_1)^{1-\gamma_1}
  \geq 2A_0 C_Q\ell(Q_1)
  \geq 2A_0C_Q\delta^m.
 \]
This implies $Q_1 \cap R_{x,m,\delta} = \emptyset$ or~$Q'_1 \cap R_{x,m,\delta} = \emptyset$,
where~$R_{x,m,\delta}$ is the support of the functions~$f$ and~$g$
and so
\[|\langle f, h_{0}^{Q_1} \otimes h_{u_2}^{Q_2}\rangle|
  |\langle g, h_{u'_1}^{Q'_1} \otimes h_{u'_2}^{Q'_2}\rangle| =0.\]

Hence, we are left to bound the expression
\begin{align}\label{sepsepeq.3}
  &\hspace{-0.5cm}\sum_{\substack{Q_1,Q'_1  \\ \text{separated} }} \,
  \sum_{\substack{Q_2,Q'_2\\ \text{separated} }} \,
  \sum_{\substack{u_1,u'_1 \\u_2,u'_2}} \,
  \langle f, h_{u_1}^{Q_1} \otimes h_{u_2}^{Q_2}\rangle
  \langle g, h_{u'_1}^{Q'_1} \otimes h_{u'_2}^{Q'_2}\rangle
  \langle T(h_{u_1}^{Q_1} \otimes h_{u_2}^{Q_2}), h_{u'_1}^{Q'_1} \otimes h_{u'_2}^{Q'_2}\rangle  \noz\\
  &+   \sum_{\substack{Q_1,Q'_1  \\ \text{separated} }} \,
  \sum_{\substack{Q_2,Q'_2 \\ \text{ separated} \\ \text{gen}(Q'_2) = m}} \,
  \sum_{\substack{u_1,u'_1 \\u_2}} \,
  \langle f, h_{u_1}^{Q_1} \otimes h_{u_2}^{Q_2}\rangle
  \langle g, h_{u'_1}^{Q'_1} \otimes h_{0}^{Q'_2}\rangle
  \langle T(h_{u_1}^{Q_1} \otimes h_{u_2}^{Q_2}), h_{u'_1}^{Q'_1} \otimes h_{0}^{Q'_2}\rangle \noz\\
  &+    \sum_{\substack{Q_1,Q'_1  \\ \text{separated} \\ \text{gen}(Q'_1) = m }} \,
  \sum_{\substack{Q_2,Q'_2\\ \text{separated}  }} \,
  \sum_{\substack{u_1\\u_2,u'_2}} \,
  \langle f, h_{u_1}^{Q_1} \otimes h_{u_2}^{Q_2}\rangle
  \langle g, h_{0}^{Q'_1} \otimes h_{u'_2}^{Q'_2}\rangle
  \langle T(h_{u_1}^{Q_1} \otimes h_{u_2}^{Q_2}), h_{0}^{Q'_1} \otimes h_{u'_2}^{Q'_2}\rangle \noz\\
  &+  \sum_{\substack{Q_1,Q'_1  \\ \text{separated} \\ \text{gen}(Q'_1) = m }} \,
  \sum_{\substack{Q_2,Q'_2\\ \text{separated} \\ \text{gen}(Q'_2) = m }} \,
  \sum_{\substack{u_1 \\u_2}} \,
  \langle f, h_{u_1}^{Q_1} \otimes h_{u_2}^{Q_2}\rangle
  \langle g, h_{0}^{Q'_1} \otimes h_{0}^{Q'_2}\rangle
  \langle T(h_{u_1}^{Q_1} \otimes h_{u_2}^{Q_2}), h_{0}^{Q'_1} \otimes h_{0}^{Q'_2}\rangle.
\end{align}

We will estimate the first sum in~\eqref{sepsepeq.3}. The remaining three sums can be handled analogously.
Our proof requires the use of Lemma~\ref{lem.sepsep}, which is also the main result in this section.
For $i = 1,2$ we define $D(Q_i,Q'_i) := \ell(Q_i) + \ell(Q'_i) + \rho_i(Q_i,Q'_i)$.
\begin{lem}\label{lem.sepsep}
Let~$Q_1 \in \Dd_1, Q'_1 \in \Dd'_1, Q_2 \in \Dd_2, Q'_2 \in \Dd'_2$ be such that $Q_1$ and~$Q'_1$ are separated and $Q_2$ and~$Q'_2$ are separated.
Let $f_{Q_1}$, $f_{Q'_1}$, $f_{Q_2}$, $f_{Q'_2}$ be~$L^2(\mu)$ functions supported on cubes $Q_1$, $Q'_1$, $Q_2$, $Q'_2$, respectively. Also assume that
\[\|f_{Q_1}\|_{L^2(\mu_1)} = \|f_{Q'_1}\|_{L^2(\mu_1)}
= \|f_{Q_2}\|_{L^2(\mu_2)} = \|f_{Q'_2}\|_{L^2(\mu_2)} = 1\]
and $\int f_{Q_1}\,d\mu_1 = \int f_{Q_2}\,d\mu_2 = 0$.
Then there holds that
\[|\langle T(f_{Q_1} \otimes f_{Q_2}), g_{Q'_1} \otimes g_{Q'_2}\rangle|
\lesssim A_{Q_1Q'_1}^{\textup{sep}}  A_{Q_2Q'_2}^{\textup{sep}},\]
where
\[A_{Q_1Q'_1}^{\textup{sep}} := \frac{\ell(Q_1)^{\al_1/2}\ell(Q'_1)^{\al_1/2}}{D(Q_1,Q'_1)^{\al_1}
\sup_{z \in Q_1}\lambda_1(z,\rho_1(Q_1,Q'_1))} \mu_1(Q_1)^{1/2}\mu_1(Q'_1)^{1/2}\]
and
\[A_{Q_2Q'_2}^{\textup{sep}} := \frac{\ell(Q_2)^{\al_2/2}\ell(Q'_2)^{\al_2/2}}{D(Q_2,Q'_2)^{\al_2}
\sup_{z \in Q_2}\lambda_2(z,\rho_2(Q_2,Q'_2))} \mu_2(Q_2)^{1/2}\mu_1(Q'_2)^{1/2}.\]
\end{lem}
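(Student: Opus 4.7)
The plan is to realize Lemma~\ref{lem.sepsep} as a direct application of Lemma~\ref{lem1:property_kernel}, with $\phi_i=f_{Q_i}$ and $\theta_i=g_{Q_i'}$. Since the sets of integration are separated in both coordinates, we may use the full kernel representation of Assumption~\ref{assum_1}; the mean-zero hypotheses on $f_{Q_1},f_{Q_2}$ are exactly what is needed to insert the four-term H\"older-type differences of the kernel $K$ (Assumption~\ref{assum_2}).

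The first step is geometric: I must check the hypothesis $\rho_1(y_1,z)\le \rho_1(x_1,z)/C_K$ for $x_1\in Q_1'$ and $y_1,z\in Q_1$ (and the analogous bound in the second variable). By \eqref{eq1:Part3} and the quasitriangle inequality $\rho_1(y_1,z)\le 2A_0 C_Q\ell(Q_1)$, while separation gives $\rho_1(x_1,z)\ge \rho_1(Q_1,Q_1')>2A_0C_QC_K\,\ell(Q_1)^{\gamma_1}\ell(Q_1')^{1-\gamma_1}\ge 2A_0C_QC_K\,\ell(Q_1)$ because $\ell(Q_1)\le\ell(Q_1')$. Dividing yields the required inequality; the parameter-$2$ case is identical.

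The second step plugs these bounds into the conclusion of Lemma~\ref{lem1:property_kernel}. For $x_1\in Q_1'$ and $z\in Q_1$, the quasitriangle inequality and the fact that $\ell(Q_1)\le\ell(Q_1')$ give $\rho_1(x_1,z)\sim D(Q_1,Q_1')$ with implicit constants depending only on $A_0,C_Q$; monotonicity of $r\mapsto\lambda_1(z,r)$ together with the doubling property of $\lambda_1$ then yields
\[
\frac{1}{\lambda_1(z,\rho_1(x_1,z))}\lesssim \frac{1}{\lambda_1(z,\rho_1(Q_1,Q_1'))}\le \frac{1}{\sup_{z\in Q_1}\lambda_1(z,\rho_1(Q_1,Q_1'))}\cdot C,
\]
so the kernel factor can be pulled out of the $x_1$-integral. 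The remaining integral $\int_{Q_1'}|g_{Q_1'}|\,d\mu_1$ is controlled by Cauchy--Schwarz by $\mu_1(Q_1')^{1/2}\|g_{Q_1'}\|_{L^2(\mu_1)}=\mu_1(Q_1')^{1/2}$, and analogously in the second coordinate.

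Putting the two coordinates together gives
\[
|\langle T(f_{Q_1}\otimes f_{Q_2}),g_{Q_1'}\otimes g_{Q_2'}\rangle|\lesssim \prod_{i=1,2}\frac{\ell(Q_i)^{\al_i}\,\mu_i(Q_i)^{1/2}\mu_i(Q_i')^{1/2}}{D(Q_i,Q_i')^{\al_i}\,\sup_{z\in Q_i}\lambda_i(z,\rho_i(Q_i,Q_i'))},
\]
and the harmless inequality $\ell(Q_i)^{\al_i}\le \ell(Q_i)^{\al_i/2}\ell(Q_i')^{\al_i/2}$ (valid because $\ell(Q_i)\le\ell(Q_i')$) converts the right-hand side into $A_{Q_1Q_1'}^{\textup{sep}}A_{Q_2Q_2'}^{\textup{sep}}$. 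The only mildly delicate point, and the one I would double-check carefully, is the comparability $\rho_1(x_1,z)\sim D(Q_1,Q_1')$: the upper bound is straightforward from the quasitriangle inequality, while the lower bound uses both $\rho_1(x_1,z)\ge \rho_1(Q_1,Q_1')$ and the comparison of $\rho_1(Q_1,Q_1')$ with the side lengths forced by separation, so that the $\ell(Q_1')$ contribution in $D(Q_1,Q_1')$ is actually absorbed.
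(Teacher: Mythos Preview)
Your overall strategy---apply Lemma~\ref{lem1:property_kernel} and then simplify the resulting integrals---is exactly the paper's, and your verification of the H\"older hypothesis $\rho_1(y_1,z)\le\rho_1(x_1,z)/C_K$ is correct. The gap is precisely at the step you flagged as ``mildly delicate'': the comparability $\rho_1(x_1,z)\sim D(Q_1,Q_1')$ is \emph{false} in general. Only the upper bound $\rho_1(x_1,z)\lesssim D(Q_1,Q_1')$ holds. For the lower bound you would need $\rho_1(Q_1,Q_1')\gtrsim \ell(Q_1')$, but separation only guarantees $\rho_1(Q_1,Q_1')>\mathcal C\,\ell(Q_1)^{\gamma_1}\ell(Q_1')^{1-\gamma_1}$ with $\gamma_1>0$, and when $\ell(Q_1)\ll\ell(Q_1')$ this can be arbitrarily small compared to $\ell(Q_1')$. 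In that regime a point $x_1\in Q_1'$ nearest to $Q_1$ has $\rho_1(x_1,z)\sim\rho_1(Q_1,Q_1')\ll D(Q_1,Q_1')$, so your displayed intermediate bound with $\ell(Q_i)^{\alpha_i}$ in the numerator and $D(Q_i,Q_i')^{\alpha_i}$ in the denominator simply does not follow.

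What the paper does instead is prove the auxiliary estimate (Lemma~\ref{lem1:sepsep})
\[
\frac{\ell(Q_1)^{\alpha_1}}{\rho_1(Q_1,Q_1')^{\alpha_1}\lambda_1(z,\rho_1(Q_1,Q_1'))}
\lesssim
\frac{\ell(Q_1)^{\alpha_1/2}\ell(Q_1')^{\alpha_1/2}}{D(Q_1,Q_1')^{\alpha_1}\lambda_1(z,D(Q_1,Q_1'))},
\]
by splitting into the cases $\rho_1(Q_1,Q_1')\ge\ell(Q_1')$ (where your argument essentially works) and $\rho_1(Q_1,Q_1')<\ell(Q_1')$. In the second case the passage from $\ell(Q_1)^{\alpha_1}$ to $\ell(Q_1)^{\alpha_1/2}\ell(Q_1')^{\alpha_1/2}$ is \emph{not} the free inequality $\ell(Q_1)\le\ell(Q_1')$; it costs exactly the loss in replacing $\rho_1(Q_1,Q_1')$ by $D(Q_1,Q_1')\sim\ell(Q_1')$, and this cost is paid by exploiting the doubling of $\lambda_1$ together with the specific choice $\gamma_1=\alpha_1/(2(\alpha_1+t_1))$, which gives the identity $\alpha_1\gamma_1+t_1\gamma_1=\alpha_1/2$. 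That arithmetic is the missing idea in your argument.
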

\begin{proof}
 Since $Q_1$ and $Q'_1$ are separated, for $y_1, z \in Q_1$ and $x_1 \in Q'_1$ we have
  \begin{equation}\label{eq2.sepsep}
  \rho_1(y_1,z) \leq C_Q\ell(Q_1) \leq 2A_0C_Q \ell(Q_1)^{\gamma_1}\ell(Q'_1)^{1-\gamma_1} \leq \frac{\rho_1(Q_1,Q'_1)}{C_K} \leq \frac{\rho_1(x_1,z)}{C_K}.
  \end{equation}
  Similarly, for $y_2,w \in Q_2$ and $x_2 \in Q'_2$ we have
  \[\rho_2(y_2,w) \leq \frac{\rho_2(x_2,w)}{C_K}.\]
Applying Lemma~\ref{lem1:property_kernel} to
$\phi_1 = f_{Q_1}$, $\phi_2 = f_{Q_2}$, $\theta_1 = g_{Q'_1}$ and $\theta_2 = g_{Q'_2}$ we have
\begin{eqnarray}\label{eq2:sepsep_a}
  |\langle T(f_{Q_1} \otimes f_{Q_2}), g_{Q'_1} \otimes g_{Q'_2}\rangle|
   &\ls& \|f_{Q_1}\|_{L^2(\mu_1)}  \|f_{Q_2}\|_{L^2(\mu_2)}
   \mu_1(Q_1)^{1/2} \mu_2(Q_2)^{1/2} \noz\\
   && \times
    \int_{Q'_1}    \frac{C_Q^{\al_1}\ell(Q_1)^{\al_1}}
    {\rho_1(x_1,z)^{\al_1}\lambda_1(z,\rho_1(x_1,z))} |g_{Q'_1}(x_1)| \,d\mu_1(x_1) \noz\\
    && \times
  \int_{Q'_2} \frac{C_Q^{\al_2}\ell(Q_2)^{\al_2}}
  {\rho_2(x_2,w)^{\al_2}\lambda_2(w,\rho_2(x_2,w))}  |g_{Q'_2}(x_2)| \,d\mu_2(x_2).
\end{eqnarray}

 Lemma~\ref{lem1:sepsep} below will be used to estimate the two integrals above.
  \begin{lem}\label{lem1:sepsep}
    Let~$Q_1 \in \Dd_1, Q'_1 \in \Dd'_1, Q_2 \in \Dd_2, Q'_2 \in \Dd'_2$ be such that $Q_1$ and~$Q'_1$ are separated and $Q_2$ and~$Q'_2$ are separated.
    Suppose $z \in Q_1$ and $w \in Q_2$.
    Then the following hold
    \begin{equation}\label{eq3:sepsep}
    \frac{\ell(Q_1)^{\al_1}}{\rho_1(Q_1,Q'_1)^{\al_1}\lambda_1(z,\rho_1(Q_1,Q'_1))} \ls  \frac{\ell(Q_1)^{\al_1/2}\ell(Q'_1)^{\al_1/2}}{D(Q_1,Q'_1)^{\al_1}
\lambda_1(z,D(Q_1,Q'_1))},
    \end{equation}
and
\begin{equation}\label{eq4:sepsep}
\frac{\ell(Q_2)^{\al_2}}{\rho_2(Q_2,Q'_2)^{\al_2}\lambda_2(w,\rho_2(Q_2,Q'_2))} \ls  \frac{\ell(Q_2)^{\al_2/2}\ell(Q'_2)^{\al_2/2}}{D(Q_2,Q'_2)^{\al_2}
\lambda_2(w,D(Q_2,Q'_2))}.
\end{equation}
  \end{lem}

\noindent\emph{Proof of Lemma~\ref{lem1:sepsep}.}
  Consider the case~$\rho_1(Q_1,Q'_1) \geq \ell(Q'_1)$, where we have
  \begin{eqnarray*}
    D(Q_1,Q'_1) &=& \ell(Q_1)+ \ell(Q'_1) + \rho_1(Q_1,Q'_1)
    \leq  \ell(Q_1)+ \rho_1(Q_1,Q'_1) + \rho_1(Q_1,Q'_1) \\
    &\leq&  \ell(Q'_1)+ 2\rho_1(Q_1,Q'_1)
    \leq 3\rho_1(Q_1,Q'_1).
  \end{eqnarray*}
Since $\lambda_1$ is non-decreasing with respect to its second variable and is doubling,
  \[\lambda_1(z,\rho_1(Q_1,Q'_1))\geq \lambda_1\bigg(z,\frac{D(Q_1,Q'_1)}{3}\bigg) \geq C_{\lambda_1}^{-(\log_2 3 +1)}\lambda_1(z,D(Q_1,Q'_1)). \]
Hence when $\rho_1(Q_1,Q'_1) \geq \ell(Q'_1)$ we have
\begin{equation}\label{sepsepeq.1}
   \frac{\ell(Q_1)^{\al_1}}{\rho_1(Q_1,Q'_1)^{\al_1}\lambda_1(z,\rho_1(Q_1,Q'_1))} \ls  \frac{\ell(Q_1)^{\al_1/2}\ell(Q'_1)^{\al_1/2}}{D(Q_1,Q'_1)^{\al_1}
   \lambda_1(z,D(Q_1,Q'_1))}.
\end{equation}
Consider the case~$\rho_1(Q_1,Q'_1) < \ell(Q'_1)$.
Here
\begin{equation*}
  D(Q_1,Q'_1) = \ell(Q_1)+ \ell(Q'_1) + \rho_1(Q_1,Q'_1)
   < \ell(Q'_1) + \ell(Q'_1) + \ell(Q'_1) = 3\ell(Q'_1).
\end{equation*}
Again, since $\lambda_1$ is non-decreasing and is doubling,
\begin{eqnarray*}
  \lambda_1(z, \rho_1(Q_1,Q'_1))
  &>& \lambda_1(z,\mathcal{C}\ell(Q_1)^{\gamma_1}\ell(Q'_1)^{1-\gamma_1})
  > \lambda_1(z,\ell(Q_1)^{\gamma_1}\ell(Q'_1)^{1-\gamma_1})\\
  &\gs& C_{\lambda_1}^{-\gamma_1\log_2\frac{\ell(Q'_1}{\ell(Q_1)})}  \lambda_1(z,\ell(Q'_1))
  = \bigg(\frac{\ell(Q_1)}{\ell(Q'_1)}\bigg)^{t_1\gamma_1}\lambda_1(z,\ell(Q'_1)).
\end{eqnarray*}
The last equality above is due to the facts that $C_{\lambda_1} \geq 1$ and~$t_1 := \log_2 C_{\lambda_1}$, which imply
\[C_{\lambda_1}^{-\gamma_1\log_2\frac{\ell(Q'_1)}{\ell(Q_1)}}
= 2^{-t_1\gamma_1\log_2\frac{\ell(Q'_1)}{\ell(Q_1)}}
= \bigg(\frac{\ell(Q_1)}{\ell(Q'_1)}\bigg)^{t_1\gamma_1}.\]
Combining with the fact that~$\al_1\gamma_1 + t_1\gamma_1 = \al_1/2$ we have
\begin{eqnarray}\label{sepsepeq.2}
   \frac{\ell(Q_1)^{\al_1}}
   {\rho_1(Q_1,Q'_1)^{\al_1}\lambda_1(z,\rho_1(Q_1,Q'_1))}
   &\ls&\frac{\ell(Q_1)^{\al_1}}{\mathcal{C}^{\al_1}\ell(Q_1)^{\al_1\gamma_1}
   \ell(Q'_1)^{(1-\gamma_1)\al_1} \ell(Q_1)^{t_1\gamma_1} \ell(Q'_1)^{-t_1\gamma_1}\lambda_1(z,\ell(Q'_1))} \noz\\
   &=&\frac{\ell(Q_1)^{\al_1-(\al_1\gamma_1+t_1\gamma_1)}
   \ell(Q'_1)^{\al_1\gamma_1+t_1\gamma_1}} {\mathcal{C}^{\al_1}\ell(Q'_1)^{\al_1}\lambda_1(z,\ell(Q'_1))} \noz\\
   &=&\frac{\ell(Q_1)^{\al_1/2}\ell(Q'_1)^{\al_1/2}}
   {\mathcal{C}^{\al_1}\ell(Q'_1)^{\al_1}\lambda_1(z,\ell(Q'_1))} \noz \\
   &\ls&\frac{\ell(Q_1)^{\al_1/2}\ell(Q'_1)^{\al_1/2}}
   {D(Q_1,Q'_1)^{\al_1}\lambda_1(z,D(Q_1,Q'_1))}.
\end{eqnarray}
From~\eqref{sepsepeq.1} and~\eqref{sepsepeq.2}, in both cases we have
\[ \frac{\ell(Q_1)^{\al_1}}{\rho_1(Q_1,Q'_1)^{\al_1}\lambda_1(z,\rho_1(Q_1,Q'_1))} \ls  \frac{\ell(Q_1)^{\al_1/2}\ell(Q'_1)^{\al_1/2}}{D(Q_1,Q'_1)^{\al_1}
\lambda_1(z,D(Q_1,Q'_1))}.\]
Following the same procedure we also obtain
\[ \frac{\ell(Q_2)^{\al_2}}{\rho_2(Q_2,Q'_2)^{\al_2}\lambda_2(w,\rho_2(Q_2,Q'_2))} \ls  \frac{\ell(Q_2)^{\al_2/2}\ell(Q'_2)^{\al_2/2}}{D(Q_2,Q'_2)^{\al_2}
\lambda_2(w,D(Q_2,Q'_2))}.
\]
This completes the proof of Lemma~\ref{lem1:sepsep}.
\(\hfill \Box\)

Using Lemma~\ref{lem1:sepsep} we have
\begin{align*}
\int_{Q'_1}    \frac{C_Q^{\al_1}\ell(Q_1)^{\al_1} |g_{Q'_1}(x_1)| }
    {\rho_1(x_1,z)^{\al_1}\lambda_1(z,\rho_1(x_1,z))} \,d\mu_1(x_1)
 \ls \frac{\ell(Q_1)^{\al_1/2}\ell(Q'_1)^{\al_1/2}}{D(Q_1,Q'_1)^{\al_1}
\lambda_1(z,D(Q_1,Q'_1))} \mu_1(Q'_1)^{1/2}
\ls\frac{A_{Q_1Q'_1}^{\textup{sep}}}{\mu_1(Q_1)^{1/2}},
\end{align*}
and
\begin{align*}
 \int_{Q'_2} \frac{C_Q^{\al_2}\ell(Q_2)^{\al_2}|g_{Q'_2}(x_2)| }
  {\rho_2(x_2,w)^{\al_2}\lambda_2(w,\rho_2(x_2,w))}  \,d\mu_2(x_2)
  \ls \frac{\ell(Q_2)^{\al_2/2}\ell(Q'_2)^{\al_2/2}}{D(Q_2,Q'_2)^{\al_2}
\lambda_2(w,D(Q_2,Q'_2))} \mu_2(Q'_2)^{1/2}
\ls\frac{A_{Q_2Q'_2}^{\textup{sep}}}{\mu_2(Q_2)^{1/2}}.
\end{align*}
Thus, \eqref{eq2:sepsep_a} becomes
\[ |\langle T(f_{Q_1} \otimes f_{Q_2}, g_{Q'_1} \otimes g_{Q'_2})\rangle|
\ls  A_{Q_1Q'_1}^{\textup{sep}}  A_{Q_2Q'_2}^{\textup{sep}},
\]
as required.
\end{proof}\qedhere

The following proposition estimates the sum involving $A_{Q_1Q'_1}^{\textup{sep}}$.
\begin{prop}\label{sepseppop.1}
 For each~$Q_1 \in \Dd_1$ and~$Q'_1 \in \Dd'_1$, given nonnegative constants~$a_{Q_1}, b_{Q'_1} \geq 0$, we have
\[\sum_{\substack{Q_1,Q'_1 \\ \textup{separated}}} A_{Q_1,Q'_1}^{\textup{sep}}a_{Q_1} b_{Q'_1} \ls \bigg(\sum_{Q_1}a_{Q_1}^2\bigg)^{1/2} \bigg(\sum_{Q'_1}b_{Q'_1}^2\bigg)^{1/2}.\]
In fact,
\[\left(\sum_{Q'_1}\left[\sum_{Q_1: \ell(Q_1) \leq \ell(Q'_1)}A_{Q_1,Q'_1}^{\textup{sep}}a_{Q_1} \right]^2\right)^{1/2} \ls \bigg(\sum_{Q_1} a_{Q_1}^2\bigg)^{1/2}.\]
\end{prop}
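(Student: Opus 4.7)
The plan is to derive the first inequality from the second via Cauchy--Schwarz in $Q'_1$, so I focus on the second (stronger) inequality. That inequality asserts $\ell^2$-boundedness of the matrix $(A^{\textup{sep}}_{Q_1,Q'_1})$ restricted to separated pairs with $\ell(Q_1) \leq \ell(Q'_1)$, and I would establish it via Schur's test with weights $\phi(Q_1) = \mu_1(Q_1)^{1/2}$ and $\psi(Q'_1) = \mu_1(Q'_1)^{1/2}$. After dividing by $\mu_1(Q_1)^{1/2}$ (respectively $\mu_1(Q'_1)^{1/2}$), Schur's test reduces the proof to the two symmetric estimates
\[\sup_{Q'_1} \sum_{\substack{Q_1 \textup{ sep from } Q'_1 \\ \ell(Q_1) \leq \ell(Q'_1)}} \frac{\ell(Q_1)^{\al_1/2}\ell(Q'_1)^{\al_1/2}\mu_1(Q_1)}{D(Q_1,Q'_1)^{\al_1} \sup_{z \in Q_1}\lambda_1(z,\rho_1(Q_1,Q'_1))} \ls 1,\]
and its mirror image with the roles of $Q_1$ and $Q'_1$ interchanged; I would prove the two in parallel.

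To estimate the displayed sum I would parametrise $Q_1$ by the generation offset $\ell(Q_1) = \delta^n\ell(Q'_1)$ for $n \geq 0$, and by the dyadic annulus $\rho_1(Q_1,Q'_1) \in [2^m\mathcal{C}\delta^{n\gamma_1}\ell(Q'_1),\ 2^{m+1}\mathcal{C}\delta^{n\gamma_1}\ell(Q'_1))$ for $m \geq 0$ (this already encodes the separation condition). The geometry splits into two regimes according to whether $2^m\delta^{n\gamma_1}$ exceeds $1$ or not. In the \emph{far regime} $2^m\delta^{n\gamma_1} > 1$, we have $D(Q_1,Q'_1) \sim \rho_1(Q_1,Q'_1)$; the symmetry property~\eqref{eq1:up_dbl} gives $\sup_{z \in Q_1}\lambda_1(z,\rho_1) \sim \lambda_1(z_{Q'_1},\rho_1)$, and the measure-covering bound $\sum_{Q_1 \in \textup{shell}}\mu_1(Q_1) \leq \mu_1(B(z_{Q'_1},cD)) \leq \lambda_1(z_{Q'_1},cD)$ together with doubling cancels both the measure sum and the $\lambda_1$-denominator. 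What remains is the prefactor $\ell(Q_1)^{\al_1/2}\ell(Q'_1)^{\al_1/2}/D^{\al_1} = 2^{-m\al_1}\delta^{n\gamma_1 t_1}$ (using the identity $\al_1/2 - \gamma_1\al_1 = \gamma_1 t_1$ that follows from the definition of $\gamma_1$), which is geometrically summable in both $m$ and $n$.

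The main obstacle is the \emph{close regime} $2^m\delta^{n\gamma_1} \leq 1$, where $\rho_1 \ll \ell(Q'_1) \sim D$, so $\sup_{z \in Q_1}\lambda_1(z,\rho_1)$ may be much smaller than $\lambda_1(z_{Q'_1},D)$ and $A^{\textup{sep}}$ is potentially much larger than its ``$D$-analogue'' from Lemma~\ref{lem1:sepsep}. I would handle this by combining two ingredients. First, reverse doubling of $\lambda_1$ (the inequality $\lambda_1(z,r_2) \leq C_{\lambda_1}^{\log_2(r_2/r_1)+1}\lambda_1(z,r_1)$ for $r_1 \leq r_2$) together with~\eqref{eq1:up_dbl} gives the lower bound $\sup_{z \in Q_1}\lambda_1(z,\rho_1) \gs (\rho_1/\ell(Q'_1))^{t_1}\mu_1(Q'_1) \sim (2^m\delta^{n\gamma_1})^{t_1}\mu_1(Q'_1)$. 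Second, the total measure $\sum_{Q_1 \in \textup{shell}}\mu_1(Q_1)$ in the close regime is controlled by $\mu_1(B(z_{Q'_1},c\ell(Q'_1))) \ls \lambda_1(z_{Q'_1},\ell(Q'_1))$, a single ball of radius $\ell(Q'_1)$. Combined with the smoothness factor $\delta^{n\al_1/2}$ coming from $\ell(Q_1)^{\al_1/2}\ell(Q'_1)^{\al_1/2}/\ell(Q'_1)^{\al_1}$, this produces the $(n,m)$-shell bound
\[\delta^{n\al_1/2}(2^m\delta^{n\gamma_1})^{-t_1} = \delta^{n(\al_1/2 - \gamma_1 t_1)}\,2^{-mt_1} = \delta^{n\gamma_1\al_1}\,2^{-mt_1}.\]
The calibration $\gamma_1 = \al_1/(2(\al_1+t_1))$ is exactly what makes the exponent $\al_1/2 - \gamma_1 t_1 = \gamma_1\al_1$ strictly positive, and summing geometrically over $m \in [0, n\gamma_1\log_2(1/\delta)]$ and $n \geq 0$ yields the uniform bound demanded by Schur's test. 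The symmetric bound is established by the same decomposition after swapping $Q_1$ and $Q'_1$, and this completes the proof.
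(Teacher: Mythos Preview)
Your approach via Schur's test with the weights $\mu_1(Q_1)^{1/2}$, $\mu_1(Q'_1)^{1/2}$ and a scale/annulus decomposition is exactly the method of the references the paper invokes (\cite[Proposition~6.3]{HM12a} and \cite[Theorem~6.5]{NTV03}); the paper itself gives no independent argument and simply observes that the cited proof is insensitive to the quasitriangle constant.

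There is one genuine slip in your close-regime computation. You lower-bound $\sup_{z\in Q_1}\lambda_1(z,\rho_1)$ by $(\rho_1/\ell(Q'_1))^{t_1}\mu_1(Q'_1)$, then upper-bound $\sum_{Q_1\in\text{shell}}\mu_1(Q_1)$ by $\lambda_1(z_{Q'_1},\ell(Q'_1))$, and claim these cancel to give $\delta^{n\gamma_1\al_1}2^{-mt_1}$. They do not: the quotient $\lambda_1(z_{Q'_1},\ell(Q'_1))/\mu_1(Q'_1)$ is in general unbounded in the non-doubling setting. The fix is simple and is what the cited proofs actually do: in the lower bound use $\lambda_1$ rather than $\mu_1$. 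Reverse doubling together with the symmetry~\eqref{eq2:up_dbl} (valid here since $\rho_1(z,z_{Q'_1})\ls\ell(Q'_1)$ in the close regime) gives
\[
\sup_{z\in Q_1}\lambda_1(z,\rho_1)\gs (\rho_1/\ell(Q'_1))^{t_1}\,\lambda_1(z_{Q'_1},\ell(Q'_1)),
\]
and now the $\lambda_1(z_{Q'_1},\ell(Q'_1))$ in numerator and denominator cancel, yielding exactly the shell bound $\delta^{n\gamma_1\al_1}2^{-mt_1}$ you state. With this correction your argument is complete and matches the referenced proofs.
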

\begin{proof}
  The proof of Proposition~\ref{sepseppop.1} is a generalisation of Proposition 6.3 in \cite{HM12a}  from non-homogeneous metric spaces to non-homogeneous quasimetric spaces. The proof given there goes through unchanged, as it does not depend on the quasitriangle inequality of quasimetric~$\rho_1$.

  There is also an analogue of Proposition~\ref{sepseppop.1} in non-homogeneous Euclidean spaces stated as Theorem~6.5 in~\cite{NTV03}.
\end{proof}

Now, applying Lemma~\ref{lem.sepsep} to $f_{Q_1} = h_{u_1}^{Q_1}$, $f_{Q_2} = h_{u_2}^{Q_2}$, $f_{Q'_1} = h_{u'_1}^{Q'_1}$, $f_{Q'_2} = h_{u'_2}^{Q'_2}$,
and using Proposition~\ref{sepseppop.1}, the first sum in~\eqref{sepsepeq.3} is bounded by
\begin{eqnarray*}
\lefteqn{\sum_{\ell(Q_1) \leq \ell(Q'_1)} \sum_{\ell(Q_2) \leq \ell(Q'_2)}
\sum_{\substack{u_1,u'_1 \\u_2,u'_2}}
  A_{Q_1,Q'_1}^{\text{sep}} A_{Q_2,Q'_2}^{\text{sep}}
   |\langle f, h_{u_1}^{Q_1} \otimes h_{u_2}^{Q_2}\rangle|
  |\langle g, h_{u'_1}^{Q'_1} \otimes h_{u'_2}^{Q'_2}\rangle|}\\
   &\ls&  \sum_{\ell(Q_1) \leq \ell(Q'_1)}\sum_{u_1,u'_1} A_{Q_1,Q'_1}^{\text{sep}}
   \Big(\sum_{Q_2}\sum_{u_2}|\langle f, h_{u_1}^{Q_1} \otimes h_{u_2}^{Q_2}\rangle|^2\Big)^{1/2}
   \Big(\sum_{Q'_2}\sum_{u'_2}|\langle g, h_{u'_1}^{Q'_1} \otimes h_{u'_2}^{Q'_2}\rangle|^2\Big)^{1/2}\\
  &\ls&\Big(\sum_{Q_1}\sum_{u_1}\sum_{Q_2}\sum_{u_2}|\langle f, h_{u_1}^{Q_1} \otimes h_{u_2}^{Q_2}\rangle|^2\Big)^{1/2}
   \Big(\sum_{Q'_1}\sum_{u'_1}\sum_{Q'_2}\sum_{u'_2}|\langle g, h_{u'_1}^{Q'_1} \otimes h_{u'_2}^{Q'_2}\rangle|^2\Big)^{1/2}\\
   &\ls& \|f\|_{L^2(\mu)} \|g\|_{L^2(\mu)}.
\end{eqnarray*}
Similarly, we obtain the same result for other sums in~\eqref{sepsepeq.3}.
This completes the \emph{Sep/Sep} case.

%%%%%%%%%%%%%%%%%%%%%%%%%%%%%%%%%%%%%%%%%%%%%%%%%%%%%%%%%%%%%%%%%%%%%%%%%%%%%%%%%
\subsection{Separated/Nested cubes}\label{subsec:sep_in}
In this section, we consider the \emph{Sep/Nes} case. That is, we estimate~\eqref{coreeq.1} when $Q_1, Q'_1$ are separated and $Q_2, Q'_2$ are nested:
\begin{eqnarray*}
  && \ell(Q_1) \leq \ell(Q'_1), \quad \rho_1(Q_1,Q'_1) > \mathcal{C}\ell(Q_1)^{\gamma_1}\ell(Q'_1)^{1-\gamma_1}, \quad \text{and} \\
  && \ell(Q_2) < \delta^r \ell(Q'_2), \quad \rho_2(Q_2,Q'_2) \leq \mathcal{C}\ell(Q_2)^{\gamma_2}\ell(Q'_2)^{1-\gamma_2},
\end{eqnarray*}
where $\mathcal{C}:= 2A_0C_QC_K$.
With the same reason as in the \emph{Sep/Sep} case, the terms in~\eqref{coreeq.1} which involves~$h_0^{Q_1}$ vanish. Furthermore,
the terms involving~$h_0^{Q_2}$ also vanish. This is because $\ell(Q_2) = \delta^m$ implies that $\delta^{-r}\ell(Q_2) = \delta^{m-r} < \ell(Q'_2) \leq \delta^{m}$, which is impossible.

We are left to bound
\begin{align}\label{sepnesteq.1}
  &\hspace{-0.5cm}\sum_{\substack{Q_1,Q'_1  \\ \text{separated} }} \,
  \sum_{\substack{Q_2,Q'_2\\ \text{nested} }} \,
  \sum_{\substack{u_1,u'_1 \\u_2,u'_2}} \,
  \langle f, h_{u_1}^{Q_1} \otimes h_{u_2}^{Q_2}\rangle
  \langle g, h_{u'_1}^{Q'_1} \otimes h_{u'_2}^{Q'_2}\rangle
  \langle T(h_{u_1}^{Q_1} \otimes h_{u_2}^{Q_2}), h_{u'_1}^{Q'_1} \otimes h_{u'_2}^{Q'_2}\rangle \noz\\
  &+  \sum_{\substack{Q_1,Q'_1  \\ \text{separated} }} \,
  \sum_{\substack{Q_2,Q'_2 \\ \text{ nested} \\ \text{gen}(Q'_2) = m}} \,
  \sum_{\substack{u_1,u'_1 \\u_2}} \,
  \langle f, h_{u_1}^{Q_1} \otimes h_{u_2}^{Q_2}\rangle
  \langle g, h_{u'_1}^{Q'_1} \otimes h_{0}^{Q'_2}\rangle
  \langle T(h_{u_1}^{Q_1} \otimes h_{u_2}^{Q_2}), h_{u'_1}^{Q'_1} \otimes h_{0}^{Q'_2}\rangle  \noz\\
  &+  \sum_{\substack{Q_1,Q'_1  \\ \text{separated} \\ \text{gen}(Q'_1) = m }} \,
  \sum_{\substack{Q_2,Q'_2\\ \text{nested}  }} \,
  \sum_{\substack{u_1\\u_2,u'_2}} \,
  \langle f, h_{u_1}^{Q_1} \otimes h_{u_2}^{Q_2}\rangle
  \langle g, h_{0}^{Q'_1} \otimes h_{u'_2}^{Q'_2}\rangle
  \langle T(h_{u_1}^{Q_1} \otimes h_{u_2}^{Q_2}), h_{0}^{Q'_1} \otimes h_{u'_2}^{Q'_2}\rangle \noz\\
  &+  \sum_{\substack{Q_1,Q'_1  \\ \text{separated} \\ \text{gen}(Q'_1) = m }} \,
  \sum_{\substack{Q_2,Q'_2\\ \text{nested} \\ \text{gen}(Q'_2) = m }} \,
  \sum_{\substack{u_1 \\u_2}} \,
  \langle f, h_{u_1}^{Q_1} \otimes h_{u_2}^{Q_2}\rangle
  \langle g, h_{0}^{Q'_1} \otimes h_{0}^{Q'_2}\rangle
  \langle T(h_{u_1}^{Q_1} \otimes h_{u_2}^{Q_2}), h_{0}^{Q'_1} \otimes h_{0}^{Q'_2}\rangle .
\end{align}

We will handle the first two terms in~\eqref{sepnesteq.1}. The last two terms can be estimated analogously.
We write~$\langle T(h_{u_1}^{Q_1} \otimes h_{u_2}^{Q_2}), h_{u'_1}^{Q'_1} \otimes h_{u'_2}^{Q'_2}\rangle$ as the sum
\begin{eqnarray*}
  && (\langle T(h_{u_1}^{Q_1} \otimes h_{u_2}^{Q_2}), h_{u'_1}^{Q'_1} \otimes h_{u'_2}^{Q'_2}\rangle
- \langle h_{u'_2}^{Q'_2}\rangle_{Q_2}
\langle T(h_{u_1}^{Q_1} \otimes h_{u_2}^{Q_2}), h_{y'_1}^{Q'_1} \otimes 1\rangle) \\
  && \qquad + \, \langle h_{u'_2}^{Q'_2}\rangle_{Q_2}
\langle T(h_{u_1}^{Q_1} \otimes h_{u_2}^{Q_2}), h_{u'_1}^{Q'_1} \otimes 1\rangle.
\end{eqnarray*}
Here $\langle h \rangle_{Q} := \mu_2(Q)^{-1} \int_{Q} h \,d\mu_2$.
Let~$Q'_{2,1} \in \text{ch}(Q'_2)$ be the child of~$Q'_2$ such that $Q_2 \subset Q'_{2,1}$.
As noted in Section~\ref{subsec:outline}, there is such a child~$Q'_{2,1}$ because~$Q_2$ and~$Q'_2$ are nested and~$Q_2$ is good.
Since~$Q_2 \subset Q'_{2,1}$, and~$h_{u'_2}^{Q'_2}$ is constant on~$Q'_{2,1}$, we have
\[\langle h_{u'_2}^{Q'_2}\rangle_{Q_2}
= \langle h_{u'_2}^{Q'_2}\rangle_{Q'_{2,1}} = C. \]

We first consider the matrix element
\begin{eqnarray*}
  \lefteqn{\langle T(h_{u_1}^{Q_1} \otimes h_{u_2}^{Q_2}), h_{u'_1}^{Q'_1} \otimes h_{u'_2}^{Q'_2}\rangle
- \langle h_{u'_2}^{Q'_2}\rangle_{Q_2}
\langle T(h_{u_1}^{Q_1} \otimes h_{u_2}^{Q_2}), h_{u'_1}^{Q'_1} \otimes 1\rangle} \\
 &=& \sum_{\substack{Q' \in \text{ch}(Q'_2) \\ Q' \subset Q'_2 \backslash Q'_{2,1}}} \langle T(h_{u_1}^{Q_1} \otimes h_{u_2}^{Q_2}), h_{u'_1}^{Q'_1} \otimes h_{u'_2}^{Q'_2} \chi_{Q'}\rangle
+ \langle T(h_{u_1}^{Q_1} \otimes h_{u_2}^{Q_2}), h_{u'_1}^{Q'_1} \otimes h_{u'_2}^{Q'_2} \chi_{Q'_{2,1}}\rangle \\
   &&-  \langle h_{u'_2}^{Q'_2}\rangle_{Q'_{2,1}}
\langle T(h_{u_1}^{Q_1} \otimes h_{u_2}^{Q_2}), h_{u'_1}^{Q'_1} \otimes \chi_{Q^{'c}_{2,1}}\rangle
- \langle h_{u'_2}^{Q'_2}\rangle_{Q'_{2,1}}
\langle T(h_{u_1}^{Q_1} \otimes h_{u_2}^{Q_2}), h_{u'_1}^{Q'_1} \otimes \chi_{Q'_{2,1}}\rangle \\
  &=& \sum_{\substack{Q' \in \text{ch}(Q'_2) \\ Q' \subset Q'_2 \backslash Q'_{2,1}}} \langle T(h_{u_1}^{Q_1} \otimes h_{u_2}^{Q_2}), h_{u'_1}^{Q'_1} \otimes h_{u'_2}^{Q'_2} \chi_{Q'}\rangle
-  \, \langle h_{u'_2}^{Q'_2}\rangle_{Q'_{2,1}}
\langle T(h_{u_1}^{Q_1} \otimes h_{u_2}^{Q_2}), h_{u'_1}^{Q'_1} \otimes \chi_{Q^{'c}_{2,1}}\rangle.
\end{eqnarray*}
These two terms will be estimated in Lemmas~\ref{lem1:sepnest} and~\ref{lem2:sepnest} below. Before that, we would like to mention the following useful claim.
\begin{claim}\label{claim:sep_nest}
Suppose $Q_2$ and~$Q'_2$ are nested, $Q'_{2,1} \in \text{ch}(Q'_2)$ such that $Q_2 \subset Q'_{2,1}$, and $Q' \in \text{ch}(Q'_2)$ such that $Q' \subset Q'_2 \backslash Q'_{2,1}$.
The following properties hold:\\
   \indent (i) for $r$ sufficiently large, $\rho_{2}(Q_2,Q^{'c}_{2,1}) \geq \mathcal{C}\ell(Q_2)$
   and  $\rho_{2}(Q_2,Q') \geq \mathcal{C}\ell(Q_2)$;\\
  \indent (ii)  $\rho_{2}(Q_2,Q^{'c}_{2,1}) \geq \mathcal{C}\ell(Q_2)^{1/2} \ell(Q'_{2,1})^{1/2}$; and\\
  \indent (iii) for all $w \in Q_2$, $B(w,\rho_2(Q_2,Q^{'c}_{2,1})) \subset Q^{'}_{2,1}$.
\end{claim}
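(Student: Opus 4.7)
The plan is to derive all three statements from a single application of the goodness condition of $Q_2$ against the cube $Q'_{2,1} \in \Dd'_2$. First I would verify the technical prerequisite $\ell(Q_2) \leq \delta^r \ell(Q'_{2,1})$: the nested condition gives $\ell(Q_2) < \delta^r \ell(Q'_2)$, and since $\ell(Q'_{2,1}) = \delta \ell(Q'_2)$, this reads $\ell(Q_2) < \delta^{r-1} \ell(Q'_{2,1})$; because both side lengths are discrete powers of $\delta$, the strict inequality upgrades to $\ell(Q_2) \leq \delta^r \ell(Q'_{2,1})$. Since $Q_2$ is good and $Q_2 \subset Q'_{2,1}$, so that $\rho_2(Q_2, Q'_{2,1}) = 0$ holds trivially, the goodness condition forces the complementary inequality
\[
\rho_2(Q_2, X_2 \setminus Q'_{2,1}) > \delta \mathcal{C}\, \ell(Q_2)^{\gamma_2} \ell(Q'_{2,1})^{1-\gamma_2}.
\]
This single estimate drives everything else.

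For part (i), using $\ell(Q'_{2,1}) \geq \delta^{-r} \ell(Q_2)$, the right side above is bounded below by $\delta^{1 - r(1-\gamma_2)} \mathcal{C}\, \ell(Q_2)$, which is at least $\mathcal{C}\, \ell(Q_2)$ once $r \geq 1/(1-\gamma_2)$. The second inequality $\rho_2(Q_2, Q') \geq \mathcal{C}\, \ell(Q_2)$ is then immediate, since any sibling $Q' \in \text{ch}(Q'_2)$ disjoint from $Q'_{2,1}$ lies inside $X_2 \setminus Q'_{2,1}$, so $\rho_2(Q_2, Q') \geq \rho_2(Q_2, X_2 \setminus Q'_{2,1})$. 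For part (ii), combining the same estimate with $\ell(Q_2) \leq \delta^r \ell(Q'_{2,1})$, the target lower bound $\mathcal{C}\, \ell(Q_2)^{1/2} \ell(Q'_{2,1})^{1/2}$ reduces algebraically to $\delta \geq (\ell(Q_2)/\ell(Q'_{2,1}))^{1/2 - \gamma_2}$, which holds provided $r(1/2 - \gamma_2) \geq 1$; note that $\gamma_2 = \alpha_2/(2(\alpha_2 + t_2)) \in (0, 1/2)$, so the exponent $1/2 - \gamma_2$ is strictly positive.

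Part (iii) is a purely metric observation, independent of the goodness condition: if $y \in B(w, \rho_2(Q_2, X_2 \setminus Q'_{2,1}))$ and $y \notin Q'_{2,1}$, then $y \in X_2 \setminus Q'_{2,1}$, and the definition of set distance gives $\rho_2(w, y) \geq \rho_2(Q_2, X_2 \setminus Q'_{2,1})$, contradicting $\rho_2(w, y) < \rho_2(Q_2, X_2 \setminus Q'_{2,1})$.

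I do not anticipate any serious obstacle. The one step needing care is the discrete upgrade from $\ell(Q_2) < \delta^{r-1}\ell(Q'_{2,1})$ to $\ell(Q_2) \leq \delta^r \ell(Q'_{2,1})$, required because the goodness condition is phrased with a weak inequality in its hypothesis. Everything else is bookkeeping once $r$ is fixed large enough that both $r \geq 1/(1-\gamma_2)$ and $r \geq 2/(1-2\gamma_2)$ hold simultaneously, a condition which is consistent with the standing assumption that $r$ is a sufficiently large parameter.
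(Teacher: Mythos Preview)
Your proof is correct and follows the same core strategy as the paper: apply the goodness of $Q_2$ against the child $Q'_{2,1}$ to obtain the key lower bound $\rho_2(Q_2, X_2\setminus Q'_{2,1}) \gtrsim \mathcal{C}\,\ell(Q_2)^{\gamma_2}\ell(Q'_{2,1})^{1-\gamma_2}$, then read off (i)--(iii). Your treatment of the discrete upgrade $\ell(Q_2)<\delta^{r-1}\ell(Q'_{2,1}) \Rightarrow \ell(Q_2)\leq\delta^r\ell(Q'_{2,1})$ and of part (iii) match the paper exactly; for the second inequality in (i) you deduce $\rho_2(Q_2,Q')\geq\rho_2(Q_2,Q_{2,1}^{\prime c})$ from the inclusion $Q'\subset X_2\setminus Q'_{2,1}$, whereas the paper applies goodness of $Q_2$ to the sibling $Q'$ directly---both routes are equally short.

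The one substantive difference is in part (ii). The paper isolates a tiny monotonicity lemma (their Proposition~5.6): for $0<A<B$ and $s\in(0,1/2]$ one has $A^sB^{1-s}\geq A^{1/2}B^{1/2}$, and applies it with $s=\gamma_2$; this needs no further hypothesis on $r$ and works even at the endpoint $\gamma_2=1/2$. You instead absorb the $\delta$ prefactor by exploiting the scale gap $\ell(Q_2)\leq\delta^r\ell(Q'_{2,1})$, at the cost of an extra largeness condition $r\geq 2/(1-2\gamma_2)$ and the standing assumption $\gamma_2<1/2$ strictly. Your argument is perfectly valid under the paper's hypotheses (since $t_2=\log_2 C_{\lambda_2}>0$ in any nontrivial setting), but the paper's route is marginally cleaner and avoids the edge case.
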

\begin{proof}
We start with~(i).
As $Q_2$ and $Q'_2$ are nested, $\ell(Q_2) < \delta^r \ell(Q'_2)$. Thus
$$\ell(Q_2) \leq \delta^{r+1} \ell(Q'_2) = \delta^r \ell(Q'_{2,1}) = \delta^r \ell(Q').$$
As~$Q_2$ is good, $Q_2 \subset Q'_{2,1}$ and $\ell(Q_2) \leq \delta^r \ell(Q'_{2,1})$ we have
\[\rho_{2}(Q_2,Q^{'c}_{2,1}) \geq \mathcal{C}\ell(Q_2)^{\gamma_2}\ell(Q'_{2,1})^{1-\gamma_2}
\geq \mathcal{C}\ell(Q_2) \delta^{-r(1-\gamma_2)},\]
where $\mathcal{C} = 2A_0C_QC_K$.
Similarly, since~$Q_2$ is good, $Q' \cap Q'_{2,1} = \emptyset$ and $\ell(Q_2) \leq \delta^r\ell(Q')$ we have
\[\rho_{2}(Q_2,Q') \geq \mathcal{C}\ell(Q_2)^{\gamma_2}\ell(Q'_{2,1})^{1-\gamma_2}
\geq \mathcal{C}\ell(Q_2) \delta^{-r(1-\gamma_2)}.\]
Choose~$r$ large enough such that $\delta^{-r(1-\gamma_2)} \geq 1$. Then $\rho_{2}(Q_2,Q^{'c}_{2,1}) \geq \mathcal{C}\ell(Q_2)$, as required.

Moreover, applying Proposition~\ref{prop2:sep_nest} below to $A = \ell(Q_2)$, $B = \ell(Q'_{2,1})$ and $s = \gamma_2 = \al_2 / 2(\al_2 + t_2) \leq 1/2$ we also obtain~(ii):
\begin{equation}\label{sepnest.eq.1.1}
\rho_{2}(Q_2,Q^{'c}_{2,1}) \geq \mathcal{C}\ell(Q_2)^{\gamma_2}\ell(Q'_{2,1})^{1-\gamma_2}
\geq \mathcal{C}\ell(Q_2)^{1/2} \ell(Q'_{2,1})^{1/2}.
\end{equation}

\begin{prop}\label{prop2:sep_nest}
If $0 < A < B$ and $s \in (0,1/2]$, then $A^s B^{1-s} \geq A^{1/2} B^{1/2}$.
\end{prop}
\begin{proof}
  Let $k(s) := A^s B^{1-s} = e^{s\log A} e^{(1-s)\log B}$.
  Thus
  \begin{eqnarray*}
    k'(s) = A^s B^{1-s}\log A  - A^s B^{1-s}\log B
    =  (\log A - \log B)  A^s B^{1-s}
    < 0 \quad \text{for } s \in (0,1/2].
  \end{eqnarray*}
  This implies that $k(s)$ is a decreasing function, and so $k(s) \geq k(1/2)$.
\end{proof}

Too see~(iii), suppose $B(w,\rho_2(Q_2,Q^{'c}_{2,1})) \nsubseteq Q'_{2,1}$. Then there exists~$z$ such that $z \in B(w,\rho_2(Q_2,Q^{'c}_{2,1}))$ but $z \notin Q'_{2,1}$.
Since $z \in B(w,\rho_2(Q_2,Q^{'c}_{2,1}))$,  $\rho_2(w,z) < \rho_2(Q_2,Q^{'c}_{2,1})$.
Since $z \in Q^{'c}_{2,1}$, $\rho_2(z,w) \geq \rho_2(Q_2,Q^{'c}_{2,1})$.
Thus, $B(w,\rho_2(Q_2,Q^{'c}_{2,1})) \subset Q^{'c}_{2,1}$ by contradiction.
This completes the proof of Claim~\ref{claim:sep_nest}.
\end{proof}

%%%%%%%%%%%%%%%%%%%%%%%%%%%%%%%%%%%%%%%%%%%%%%%%%%%%%%%%

\begin{lem}\label{lem1:sepnest}
Suppose $Q_1$, $Q'_1$ are separated,
$Q_2$, $Q'_2$ are nested, and $Q'_{2,1} \in \textup{ch}(Q'_2)$ such that $Q_2 \subset Q'_{2,1}$.
There holds
\[\Big|\langle h_{u'_2}^{Q'_2}\rangle_{Q'_{2,1}}
\langle T(h_{u_1}^{Q_1} \otimes h_{u_2}^{Q_2}), h_{u'_1}^{Q'_1} \otimes \chi_{Q^{'c}_{2,1}}\rangle \Big|
\ls A^{\textup{sep}}_{Q_1,Q'_1} A^{\textup{in}}_{Q_2,Q'_2},\]
where $A^{\textup{sep}}_{Q_1,Q'_1}$ is defined as in Lemma~\ref{lem.sepsep} above, and
\[A^{\textup{in}}_{Q_2,Q'_2} := \left(\frac{\ell(Q_2)}{\ell(Q'_2)}\right)^{\al_2/2}
\left( \frac{\mu_2(Q_2)}{\mu_2(Q'_{2,1})}\right)^{1/2}. \]
\end{lem}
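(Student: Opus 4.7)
The plan is to apply Lemma~\ref{lem1:property_kernel} to the pairing $\langle T(h_{u_1}^{Q_1} \otimes h_{u_2}^{Q_2}), h_{u'_1}^{Q'_1} \otimes \chi_{Q^{'c}_{2,1}}\rangle$, using the separation in the first variable and the gap provided by $Q_2 \subset Q'_{2,1}$ in the second. First I would verify the hypotheses of that lemma: the supports $\supp h_{u_1}^{Q_1} \subset Q_1$ and $\supp h_{u'_1}^{Q'_1} \subset Q'_1$ are disjoint (since $Q_1,Q'_1$ are separated); the supports $\supp h_{u_2}^{Q_2} \subset Q_2 \subset Q'_{2,1}$ and $\supp \chi_{Q^{'c}_{2,1}}$ are disjoint; and the mean-zero conditions $\int h_{u_1}^{Q_1}\,d\mu_1 = \int h_{u_2}^{Q_2}\,d\mu_2 = 0$ both hold. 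The quantitative separation $\rho_1(y_1,z) \le \rho_1(x_1,z)/C_K$ for $x_1 \in Q'_1$, $y_1,z \in Q_1$ is exactly the bound~\eqref{eq2.sepsep} from the Sep/Sep case. The analogous bound $\rho_2(y_2,w) \le \rho_2(x_2,w)/C_K$ for $x_2 \in Q^{'c}_{2,1}$, $y_2,w \in Q_2$ follows from $\rho_2(y_2,w) \le C_Q\ell(Q_2)$ combined with $\rho_2(x_2,w) \ge \rho_2(Q_2,Q^{'c}_{2,1}) \ge \mathcal{C}\ell(Q_2)$ from Claim~\ref{claim:sep_nest}(i), remembering that $\mathcal{C} = 2A_0C_QC_K$.

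Having invoked Lemma~\ref{lem1:property_kernel} with $\phi_1 = h_{u_1}^{Q_1}$, $\phi_2 = h_{u_2}^{Q_2}$, $\theta_1 = h_{u'_1}^{Q'_1}$, $\theta_2 = \chi_{Q^{'c}_{2,1}}$, I obtain a product of an $x_1$-integral over $Q'_1$ and an $x_2$-integral over $Q^{'c}_{2,1}$, together with factors $\|h_{u_1}^{Q_1}\|_{L^2(\mu_1)}\|h_{u_2}^{Q_2}\|_{L^2(\mu_2)}\mu_1(Q_1)^{1/2}\mu_2(Q_2)^{1/2} \sim \mu_1(Q_1)^{1/2}\mu_2(Q_2)^{1/2}$ by~\eqref{eq1:pro_Haar}. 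For the $x_1$-integral, the argument from the Sep/Sep case (Lemma~\ref{lem1:sepsep} plus Cauchy--Schwarz applied to $h_{u'_1}^{Q'_1}$) produces exactly $A_{Q_1,Q'_1}^{\textup{sep}}/\mu_1(Q_1)^{1/2}$, so after absorbing the $\mu_1(Q_1)^{1/2}$ factor I recover the first factor $A_{Q_1,Q'_1}^{\textup{sep}}$.

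The second step is the $x_2$-integral. By Claim~\ref{claim:sep_nest}(iii), for each $w \in Q_2$ we have $Q^{'c}_{2,1} \subset X_2 \setminus B(w,\rho_2(Q_2,Q^{'c}_{2,1}))$, so Lemma~\ref{upper_dbl_lem1} gives
\[
\int_{Q^{'c}_{2,1}} \frac{\rho_2(y_2,w)^{\al_2}}{\rho_2(x_2,w)^{\al_2}\lambda_2(w,\rho_2(x_2,w))}\,d\mu_2(x_2) \ls \frac{\ell(Q_2)^{\al_2}}{\rho_2(Q_2,Q^{'c}_{2,1})^{\al_2}}.
\]
Then Claim~\ref{claim:sep_nest}(ii), $\rho_2(Q_2,Q^{'c}_{2,1}) \ge \mathcal{C}\ell(Q_2)^{1/2}\ell(Q'_{2,1})^{1/2}$, upgrades this to $\ls (\ell(Q_2)/\ell(Q'_{2,1}))^{\al_2/2}$, which is comparable to $(\ell(Q_2)/\ell(Q'_2))^{\al_2/2}$ since $\ell(Q'_{2,1}) = \delta\,\ell(Q'_2)$.

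Finally, the constant $|\langle h_{u'_2}^{Q'_2}\rangle_{Q'_{2,1}}|$ is bounded by $\mu_2(Q'_{2,1})^{-1/2}$ via~\eqref{eq2:pro_Haar} (since $h_{u'_2}^{Q'_2}$ is constant on $Q'_{2,1} \in \textup{ch}(Q'_2)$). Combining this with the $\mu_2(Q_2)^{1/2}$ factor already present and the $(\ell(Q_2)/\ell(Q'_2))^{\al_2/2}$ bound above yields precisely $A_{Q_2,Q'_2}^{\textup{in}} = (\ell(Q_2)/\ell(Q'_2))^{\al_2/2}(\mu_2(Q_2)/\mu_2(Q'_{2,1}))^{1/2}$. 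Multiplying by the $A_{Q_1,Q'_1}^{\textup{sep}}$ factor from the first variable completes the estimate. The main obstacle is really bookkeeping: correctly decoupling the two-variable kernel estimate from Lemma~\ref{lem1:property_kernel} and sharpening the one-sided-separation bound on $\rho_2(Q_2,Q^{'c}_{2,1})$ to the geometric-mean form required by $A_{Q_2,Q'_2}^{\textup{in}}$, for which goodness of $Q_2$ (through Claim~\ref{claim:sep_nest}) is essential.
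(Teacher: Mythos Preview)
Your proposal is correct and follows essentially the same route as the paper: verify the separation hypotheses in each variable (via \eqref{eq2.sepsep} and Claim~\ref{claim:sep_nest}(i)), apply Lemma~\ref{lem1:property_kernel} with the same choices of $\phi_i,\theta_i$, bound the $x_1$-integral by $A^{\textup{sep}}_{Q_1,Q'_1}/\mu_1(Q_1)^{1/2}$ via Lemma~\ref{lem1:sepsep}, bound the $x_2$-integral by $(\ell(Q_2)/\ell(Q'_2))^{\al_2/2}$ via Claim~\ref{claim:sep_nest}(ii),(iii) and Lemma~\ref{upper_dbl_lem1}, and absorb $|\langle h_{u'_2}^{Q'_2}\rangle_{Q'_{2,1}}| \ls \mu_2(Q'_{2,1})^{-1/2}$ from~\eqref{eq2:pro_Haar}. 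The paper's argument is the same in every essential step.
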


\begin{proof}
First, using property~\eqref{eq2:pro_Haar} of Haar functions we get
\begin{equation}\label{eq6:sepnest}
\langle h_{u'_2}^{Q'_2}\rangle_{Q'_{2,1}}
\ls \mu_2(Q'_{2,1})^{-1/2}.
\end{equation}

Second, we consider $\langle T(h_{u_1}^{Q_1} \otimes h_{u_2}^{Q_2}), h_{u'_1}^{Q'_1} \otimes \chi_{Q^{'c}_{2,1}}\rangle$.
As $Q_1$ and $Q'_1$ are separated,~\eqref{eq2.sepsep} holds for all $y_1, z \in Q_1$ and $x_1 \in Q'_1$:
 \[\rho_1(y_1,z) \leq C_Q\ell(Q_1) \leq 2A_0C_Q\ell(Q_1)^{\gamma_1}\ell(Q'_1)^{1-\gamma_1}
 < \frac{\rho_1(Q_1,Q'_1)}{C_K} \leq \frac{\rho_1(x_1,z)}{C_K}.\]
Moreover, using property~(i) of Claim~\ref{claim:sep_nest},  for all $y_2, w \in Q_2$, $x_2 \in Q^{'c}_{2,1}$ we have
 \[\rho_2(y_2,w) \leq C_Q \ell(Q_2) \leq \frac{\rho_2(x_2,w)}{C_K}.\]
Since $Q_1 \cap Q'_1 = \emptyset$ and $Q_2 \cap Q^{'c}_{2,1} = \emptyset$,
we can follow a similar calculation as in Lemma~\ref{lem.sepsep} in the \emph{Sep/Sep} case. In particular,
applying Lemma~\ref{lem1:property_kernel} to
$\phi_1 = h^{Q_1}_{u_1}$, $\phi_2 = h^{Q_2}_{u_2}$, $\theta_1 = h^{Q'_1}_{u'_1}$ and $\theta_2 = \chi_{Q^{'c}_{2,1}}$ we have
\begin{eqnarray}\label{eq2:SepIn_a}
  |\langle T(h_{u_1}^{Q_1} \otimes h_{u_2}^{Q_2}), h_{u'_1}^{Q'_1} \otimes \chi_{Q^{'c}_{2,1}}\rangle|
   &\ls& \|h^{Q_1}_{u_1}\|_{L^2(\mu_1)}  \|h^{Q_2}_{u_2}\|_{L^2(\mu_2)}
   \mu_1(Q_1)^{1/2} \mu_2(Q_2)^{1/2} \noz\\
   && \times
    \int_{Q'_1}    \frac{C_Q^{\al_1}\ell(Q_1)^{\al_1}}
    {\rho_1(x_1,z)^{\al_1}\lambda_1(z,\rho_1(x_1,z))} |h^{Q'_1}_{u'_1}(x_1)| \,d\mu_1(x_1) \noz\\
    && \times
  \int_{Q^{'c}_{2,1}} \frac{C_Q^{\al_2}\ell(Q_2)^{\al_2}}
  {\rho_2(x_2,w)^{\al_2}\lambda_2(w,\rho_2(x_2,w))} \,d\mu_2(x_2).
\end{eqnarray}
 Using equation~\eqref{eq3:sepsep} of Lemma~\ref{lem1:sepsep}, the first integral above is bounded by
 \[\frac{\ell(Q_1)^{\al_1/2}\ell(Q'_1)^{\al_1/2}}{D(Q_1,Q'_1)^{\al_1}
\lambda_1(z,D(Q_1,Q'_1))} \mu_1(Q'_1)^{1/2}
\ls \frac{A^{\text{sep}}_{Q_1,Q'_1}}{\mu_1(Q_1)^{1/2}}.\]
 Using property~(iii) of Claim~\ref{claim:sep_nest}, the estimate in Lemma~\ref{upper_dbl_lem1},  property~(ii) of Claim~\ref{claim:sep_nest} and the fact that $\ell(Q^{'}_{2,1}) < \ell(Q'_2)$, the second integral above is bounded by
\begin{align}\label{eq6:sep_nest}
 \ell(Q_2)^{\al_2} \int_{Q^{'c}_{2,1}}  \frac{\rho_2(x_2,w)^{-\al_2}}{\lambda_2(w,\rho_2(x,w))} \,d\mu_2(x_2)
&\leq \ell(Q_2)^{\al_2}\int_{X_2 \backslash B(w,\rho_2(Q_2,Q^{'c}_{2,1}))}  \frac{\rho_2(x_2,w)^{-\al_2}}{\lambda_2(w,\rho_2(x,w))} \,d\mu_2(x_2) \noz \\
&\ls \ell(Q_2)^{\al_2}\rho_2(Q_2, Q^{'c}_{2,1})^{-\al_2} \ls \ell(Q_2)^{\al_2/2} \ell(Q'_2)^{-\al_2/2}.
\end{align}
Thus, \eqref{eq2:SepIn_a} becomes
\begin{eqnarray*}
 |\langle T(h_{u_1}^{Q_1} \otimes h_{u_2}^{Q_2}), h_{u'_1}^{Q'_1} \otimes \chi_{Q^{'c}_{2,1}}\rangle|
   &\ls&  \mu_1(Q_1)^{1/2} \mu_2(Q_2)^{1/2}
    \frac{A^{\text{sep}}_{Q_1,Q'_1}}{\mu_1(Q_1)^{1/2}}
    \ell(Q_2)^{\al_2/2} \ell(Q'_2)^{-\al_2/2}\\
    &=& A^{\textup{sep}}_{Q_1,Q'_1} \left( \frac{\ell(Q_2)}{\ell(Q'_2)}\right)^{\al_2/2} \mu_2(Q_2)^{1/2}.
\end{eqnarray*}
This together with~\eqref{eq6:sepnest} establishes Lemma~\ref{lem1:sepnest}.
\end{proof}

%%%%%%%%%%%%%%%%%%%%%%%%%%%%%%%%%%%%%%%%%%
\begin{lem}\label{lem2:sepnest}
Suppose $Q_1$ and~$Q'_1$ are separated,
$Q_2$ and~$Q'_2$ are nested,
$Q' \in \textup{ch}(Q'_2)$ and~$Q' \subset Q'_2\backslash Q'_{2,1}$. Then
 \[|\langle T(h_{u_1}^{Q_1} \otimes h_{u_2}^{Q_2}), h_{u'_1}^{Q'_1} \otimes h_{u'_2}^{Q'_2} \chi_{Q'}\rangle| \ls  A^{\textup{sep}}_{Q_1,Q'_1} A^{\textup{in}}_{Q_2,Q'_2}.\]
\end{lem}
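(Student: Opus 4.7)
The plan is to argue exactly as in Lemma~\ref{lem1:sepnest}, by applying the two-variable cancellation lemma (Lemma~\ref{lem1:property_kernel}) and then reusing the two pointwise integral estimates already appearing there. The only novelty compared with Lemma~\ref{lem1:sepnest} is that the test function in the second variable is $h_{u'_2}^{Q'_2}\chi_{Q'}$ instead of $\chi_{Q'^{c}_{2,1}}$, so I must absorb the extra factor $\|h_{u'_2}^{Q'_2}\chi_{Q'}\|_{L^\infty}$.

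First I would verify the hypotheses of Lemma~\ref{lem1:property_kernel} with $\phi_1 = h_{u_1}^{Q_1}$, $\phi_2 = h_{u_2}^{Q_2}$, $\theta_1 = h_{u'_1}^{Q'_1}$, and $\theta_2 = h_{u'_2}^{Q'_2}\chi_{Q'}$. The supports of $\phi_1$ and $\theta_1$ are disjoint since $Q_1$ and $Q'_1$ are separated, while the supports of $\phi_2$ and $\theta_2$ are disjoint because $Q_2 \subset Q'_{2,1}$ and $Q' \subset Q'_2\setminus Q'_{2,1}$. The two distance bounds required by the lemma are exactly the ones established in the proof of Lemma~\ref{lem.sepsep} (inequality~\eqref{eq2.sepsep}, for the first variable) and in part~(i) of Claim~\ref{claim:sep_nest} (for the second variable, since $Q' \subset Q'^{c}_{2,1}$). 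The cancellation $\int h_{u_1}^{Q_1}\,d\mu_1 = \int h_{u_2}^{Q_2}\,d\mu_2 = 0$ is immediate from Theorem~\ref{thm.4.2_KLPW}(iv).

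Applying Lemma~\ref{lem1:property_kernel} gives a product of two one-variable integrals against the Haar $L^2$-normalisations $\|h_{u_1}^{Q_1}\|_{L^2}\|h_{u_2}^{Q_2}\|_{L^2} \sim 1$ and the factor $\mu_1(Q_1)^{1/2}\mu_2(Q_2)^{1/2}$. The $x_1$-integral over $Q'_1$ is bounded by $A^{\textup{sep}}_{Q_1,Q'_1}/\mu_1(Q_1)^{1/2}$ exactly as in equations~\eqref{eq3:sepsep} and~\eqref{eq2:SepIn_a} of the previous lemma. For the $x_2$-integral over $Q'$, I would pull out $\|h_{u'_2}^{Q'_2}\|_{L^\infty(\mu_2)} \lesssim \mu_2(Q'_2)^{-1/2}$ using Theorem~\ref{thm.4.2_KLPW}(vii), and then estimate
\[
\ell(Q_2)^{\alpha_2}\int_{Q'}\frac{d\mu_2(x_2)}{\rho_2(x_2,w)^{\alpha_2}\lambda_2(w,\rho_2(x_2,w))} \lesssim \ell(Q_2)^{\alpha_2/2}\ell(Q'_2)^{-\alpha_2/2}
\]
by the very computation in~\eqref{eq6:sep_nest}, since $Q' \subset Q'^{c}_{2,1}$ and $B(w,\rho_2(Q_2,Q'^{c}_{2,1})) \subset Q'_{2,1}$ by part~(iii) of Claim~\ref{claim:sep_nest}, and $\rho_2(Q_2,Q'^{c}_{2,1}) \geq \mathcal{C}\ell(Q_2)^{1/2}\ell(Q'_{2,1})^{1/2}$ by part~(ii).

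Multiplying the pieces together yields
\[
|\langle T(h_{u_1}^{Q_1}\otimes h_{u_2}^{Q_2}),\,h_{u'_1}^{Q'_1}\otimes h_{u'_2}^{Q'_2}\chi_{Q'}\rangle| \lesssim A^{\textup{sep}}_{Q_1,Q'_1}\,\Big(\frac{\ell(Q_2)}{\ell(Q'_2)}\Big)^{\alpha_2/2}\frac{\mu_2(Q_2)^{1/2}}{\mu_2(Q'_2)^{1/2}},
\]
and since $Q'_{2,1}\subset Q'_2$ implies $\mu_2(Q'_2)^{-1/2} \leq \mu_2(Q'_{2,1})^{-1/2}$, the right-hand side is bounded by $A^{\textup{sep}}_{Q_1,Q'_1}A^{\textup{in}}_{Q_2,Q'_2}$, as claimed. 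There is essentially no obstacle here beyond correctly identifying the $L^\infty$-normalisation of $h_{u'_2}^{Q'_2}$; all the geometric work has already been done in Claim~\ref{claim:sep_nest} and Lemma~\ref{lem1:sepnest}.
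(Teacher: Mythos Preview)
Your setup is identical to the paper's: apply Lemma~\ref{lem1:property_kernel} with the same choices of $\phi_i,\theta_i$, and handle the $x_1$-integral exactly as in Lemma~\ref{lem1:sepnest}. The divergence is only in the $x_2$-integral over $Q'$. You pull out $\|h_{u'_2}^{Q'_2}\|_{L^\infty(\mu_2)}\lesssim\mu_2(Q'_2)^{-1/2}$ via Theorem~\ref{thm.4.2_KLPW}(vii) and then reuse~\eqref{eq6:sep_nest} verbatim since $Q'\subset Q'^{c}_{2,1}$. The paper instead first bounds the kernel pointwise on $Q'$ by $(\ell(Q_2)/\ell(Q'_2))^{\alpha_2/2}/\lambda_2(w,\ell(Q'))$ (using that $\rho_2(x_2,w)\gtrsim\ell(Q_2)^{\gamma_2}\ell(Q'_2)^{1-\gamma_2}$ for $x_2\in Q'$ by goodness of $Q_2$), applies Cauchy--Schwarz with only $\|h_{u'_2}^{Q'_2}\|_{L^2}=1$ to get $\int_{Q'}|h_{u'_2}^{Q'_2}|\le\mu_2(Q')^{1/2}$, and finally uses upper doubling to show $\mu_2(Q')^{1/2}/\lambda_2(w,\ell(Q'))\lesssim\mu_2(Q'_2)^{-1/2}\le\mu_2(Q'_{2,1})^{-1/2}$; this is recorded as~\eqref{eq5:SepIn_b}.

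Your route is shorter, but there is a genuine subtlety: the $L^\infty$ estimate in (vii) is quoted from~\cite{KLPW16}, where the measure is doubling, and it can fail for a non-doubling $\mu_2$ (if some child $Q'$ of $Q'_2$ has very small measure, the coefficient $a_{u'_2}$ in the Haar function is $\sim\mu_2(Q')^{-1/2}$, not $\mu_2(Q'_2)^{-1/2}$). The paper's argument sidesteps this by using only the $L^2$ normalisation of the Haar function, which is exact in any setting. If you wanted to repair your argument without the $L^\infty$ bound, note that $h_{u'_2}^{Q'_2}$ is constant on $Q'$, so $\int_{Q'}|h_{u'_2}^{Q'_2}|\cdot(\text{kernel})=|\langle h_{u'_2}^{Q'_2}\rangle_{Q'}|\int_{Q'}(\text{kernel})$; but then~\eqref{eq2:pro_Haar} only gives $|\langle h_{u'_2}^{Q'_2}\rangle_{Q'}|\lesssim\mu_2(Q')^{-1/2}$, which is the wrong direction, and you are forced back to the paper's pointwise-kernel-plus-$\lambda_2$ computation.
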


\begin{proof}
Since $Q_1 \cap Q'_1 = \emptyset$ and $Q_2 \cap Q' = \emptyset$,
we can follow the same proof structure as in Lemma~\ref{lem1:sepnest}.
Specifically, we apply Lemma~\ref{lem1:property_kernel} to
$\phi_1 = h^{Q_1}_{u_1}$, $\phi_2 = h^{Q_2}_{u_2}$, $\theta_1 = h^{Q'_1}_{u'_1}$ and $\theta_2 =h_{u'_2}^{Q'_2} \chi_{Q'}$ to get
\begin{align}\label{eq2:SepIn_b}
  |\langle T(h_{u_1}^{Q_1} \otimes h_{u_2}^{Q_2}), h_{u'_1}^{Q'_1} \otimes h_{u'_2}^{Q'_2} \chi_{Q'}\rangle|
   &\ls \|h^{Q_1}_{u_1}\|_{L^2(\mu_1)}  \|h^{Q_2}_{u_2}\|_{L^2(\mu_2)}
   \mu_1(Q_1)^{1/2} \mu_2(Q_2)^{1/2} \noz\\
   & \times
    \int_{Q'_1}    \frac{C_Q^{\al_1}\ell(Q_1)^{\al_1}}
    {\rho_1(x_1,z)^{\al_1}\lambda_1(z,\rho_1(x_1,z))} |h^{Q'_1}_{u'_1}(x_1)| \,d\mu_1(x_1) \noz\\
    & \times
  \int_{Q'} \frac{C_Q^{\al_2}\ell(Q_2)^{\al_2}}
  {\rho_2(x_2,w)^{\al_2}\lambda_2(w,\rho_2(x_2,w))} |h_{u'_2}^{Q'_2}(x_2)| \,d\mu_2(x_2).
\end{align}
 Again, using equation~\eqref{eq3:sepsep} of Lemma~\ref{lem1:sepsep}, the first integral above is bounded by
 \[\frac{\ell(Q_1)^{\al_1/2}\ell(Q'_1)^{\al_1/2}}{D(Q_1,Q'_1)^{\al_1}
\lambda_1(z,D(Q_1,Q'_1))} \mu_1(Q'_1)^{1/2}
\ls \frac{A^{\text{sep}}_{Q_1,Q'_1}}{\mu_1(Q_1)^{1/2}}.\]

Recall that $Q_2$ is good,  $Q_2 \subset Q'_{2,1}$ and $\ell(Q_2) \leq \delta^r \ell(Q'_{2,1})$. For~$x_2 \in Q'$, $w \in Q_2$ we have
\[\rho_2(x_2,w) \geq \rho_2(Q',Q_2) \geq \rho_2(Q^{'c}_{2,1}, Q_2)
\gs \ell(Q_2)^{\gamma_2}\ell(Q'_{2,1})^{1-\gamma_2}
\sim_{\delta}  \ell(Q_2)^{\gamma_2}\ell(Q'_{2})^{1-\gamma_2}.\]
Also, as $\lambda_2$ is doubling and $t_2 := \log_2 C_{\lambda_2}$, we have
\begin{eqnarray*}
\lambda_2(w,\rho_2(x_2,w)) &\gs& \lambda_2(w,\ell(Q_2)^{\gamma_2}\ell(Q'_{2})^{1-\gamma_2}) \\
&\gs& C_{\lambda_2}^{-\gamma_2 \log_2 \frac{\ell(Q'_2)}{\ell(Q_2)}}\lambda_2(w, \ell(Q'_2))
= \bigg(\frac{\ell(Q'_2)}{\ell(Q_2)} \bigg)^{-t_2\gamma_2}
\lambda_2(w, \ell(Q'_2)).
\end{eqnarray*}
Combining this with the fact that~$\al_2\gamma_2 + t_2\gamma_2 = \al_2/2$ we have
\begin{eqnarray*}
  \frac{ \ell(Q_2)^{\al_2} \rho_2(x_2,w)^{-\al_2}}{\lambda_2(w,\rho_2(x,w))} &\ls& \frac{ \ell(Q_2)^{\al_2} \ell(Q_2)^{-\al_2\gamma_2}\ell(Q'_{2})^{-\al_2(1-\gamma_2)} \ell(Q'_2)^{t_2\gamma_2} \ell(Q_2)^{-t_2\gamma_2}}
  {\lambda_2(w, \ell(Q'_2))}\\
  &=&  \left( \frac{\ell(Q_2)}{\ell(Q'_2)}\right)^{\al_2/2}
     \frac{1}{\lambda_2(w,\ell(Q'))}.
\end{eqnarray*}
Therefore the second integral in~\eqref{eq2:SepIn_b} is bounded by
\[ \int_{Q'} \frac{C_Q^{\al_2}\ell(Q_2)^{\al_2}}
  {\rho_2(x_2,w)^{\al_2}\lambda_2(w,\rho_2(x_2,w))} |h_{u'_2}^{Q'_2}(x_2)| \,d\mu_2(x_2)
  \ls  \left( \frac{\ell(Q_2)}{\ell(Q'_2)}\right)^{\al_2/2}
     \frac{\mu_2(Q')^{1/2}}{\lambda_2(w,\ell(Q'))}.
\]
Furthermore, recall that $\lambda_2$ is doubling and $\lambda_2(x,r) \ls \lambda_2(y,r)$ if $\rho_2(x,y) \leq r$.
Let $z'_2$ be the centre of $Q'_2$. Then $\rho(z'_2,w) \leq C_Q\ell(Q'_2)$ as $w \in Q'_2$. Consider
\begin{align*}
\mu_2(Q') \leq \mu_2(Q'_2)
&\leq \mu_2(B(z'_2,C_Q\ell(Q'_2)))
\leq \lambda_2(z'_2,C_Q\ell(Q'_2))
\ls \lambda_2(w,C_Q\ell(Q'_2))\\
&= \lambda_2(w,C_Q\delta^{-1}\ell(Q'))
\ls C(C_{\lambda_2}, C_Q, \delta)\lambda_2(w,\ell(Q')) .
\end{align*}
Thus
\begin{align*}
\frac{\mu_2(Q')^{1/2}}{\lambda_2(w,\ell(Q'))}
\ls \frac{1}{\lambda_2(z'_{2},C_Q\ell(Q'))^{1/2}}\leq \frac{1}{\mu_2(B((z'_{2},C_Q\ell(Q')))^{1/2}}
\leq \frac{1}{\mu_2(Q'_{2})^{1/2}}
< \frac{1}{\mu_2(Q'_{2,1})^{1/2}}.
\end{align*}
Hence, the second integral in~\eqref{eq2:SepIn_b} is bounded further by
\begin{equation}\label{eq5:SepIn_b}
\int_{Q'} \frac{C_Q^{\al_2}\ell(Q_2)^{\al_2}}
  {\rho_2(x_2,w)^{\al_2}\lambda_2(w,\rho_2(x_2,w))} |h_{u'_2}^{Q'_2}(x_2)| \,d\mu_2(x_2)
  \ls  \left( \frac{\ell(Q_2)}{\ell(Q'_2)}\right)^{\al_2/2}
  \frac{1}{\mu_2(Q'_{2,1})^{1/2}}.
\end{equation}

Now, \eqref{eq2:SepIn_b} becomes
\begin{align*}
 |\langle T(h_{u_1}^{Q_1} \otimes h_{u_2}^{Q_2}), h_{u'_1}^{Q'_1} \otimes  h_{u'_2}^{Q'_2} \chi_{Q'}\rangle|
   &\ls  \mu_1(Q_1)^{1/2} \mu_2(Q_2)^{1/2}
    \frac{A^{\text{sep}}_{Q_1,Q'_1}}{\mu_1(Q_1)^{1/2}}
    \left( \frac{\ell(Q_2)}{\ell(Q'_2)}\right)^{\al_2/2}
  \frac{1}{\mu_2(Q'_{2,1})^{1/2}} \\
  &= A^{\textup{sep}}_{Q_1,Q'_1}  A^{\textup{in}}_{Q_2,Q'_2}.
\end{align*}
This completes the proof of Lemma~\ref{lem2:sepnest}.
\end{proof}

%%%%%%%%%%%%%%%%%%%%%%%%%%%%%%%%%%%%%
The following proposition estimates the sum involving~$A^{\textup{in}}_{Q_2,Q'_2}$.
\begin{prop}\label{prop:A_in}%\textbf{(Proposition 7.3 in \cite{HM12})}
The matrix~$A^{\textup{in}}_{Q_2,Q'_2}$ generates a bounded operator on~$\ell^2$.
That is, for each $Q_2 \in \Dd_2$ and $Q'_2 \in \Dd'_2$, given nonnegative constants $a_{Q_2}$ and $b_{Q'_2}$, we have
\[\sum_{\substack{Q_2, Q'_2 \\ \textup{nested}}}
A^{\textup{in}}_{Q_2,Q'_2} a_{Q_2} b_{Q'_2}
 \ls \big( \sum_{Q_2} a_{Q_2}^2\big)^{1/2}
\big( \sum_{Q'_2} b_{Q'_2}^2\big)^{1/2}.\]
In fact, there holds that
\[\Big(\sum_{Q'_2} \big( \sum_{\substack{Q_2, Q'_2 \\ \textup{nested} }} A^{\textup{in}}_{Q_2,Q'_2} a_{Q_2} \big)^2\Big)^{1/2}
\ls \big(\sum_{Q_2} a_{Q_2}^2 \big)^{1/2}.\]
\end{prop}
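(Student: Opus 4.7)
I would prove the stronger ``in fact'' statement directly by a three-stage Cauchy--Schwarz argument, indexing the nested pairs by the relative generation $s := \mathrm{gen}(Q_2) - \mathrm{gen}(Q'_2)$. Since $\ell(Q_2) < \delta^r \ell(Q'_2)$ forces $s \geq r+1$, and since $Q_2 \subset Q'_{2,1}$ for a unique child $Q'_{2,1} \in \mathrm{ch}(Q'_2)$, the bracketed sum on the left factors as
\[
\sum_{Q_2:\,(Q_2,Q'_2)\textup{ nested}} A^{\textup{in}}_{Q_2,Q'_2}\, a_{Q_2}
= \sum_{s \geq r+1} \delta^{s\alpha_2/2} \sum_{Q'_{2,1} \in \mathrm{ch}(Q'_2)}
\sum_{\substack{Q_2 \subset Q'_{2,1}\\ \mathrm{gen}(Q_2) = \mathrm{gen}(Q'_2)+s}}
\Bigl(\tfrac{\mu_2(Q_2)}{\mu_2(Q'_{2,1})}\Bigr)^{1/2} a_{Q_2},
\]
using $\ell(Q_2)/\ell(Q'_2) = \delta^s$.

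First stage: for fixed $Q'_{2,1}$ and fixed $s$, the relevant $Q_2$'s are pairwise disjoint subsets of $Q'_{2,1}$, so $\sum \mu_2(Q_2)/\mu_2(Q'_{2,1}) \leq 1$, and Cauchy--Schwarz gives the inner sum $\leq \bigl(\sum_{Q_2 \subset Q'_{2,1}} a_{Q_2}^2\bigr)^{1/2}$. The sum over the (at most $M$) children of $Q'_2$ is then absorbed by another Cauchy--Schwarz into $M^{1/2}\bigl(\sum_{Q_2 \subset Q'_2,\,\mathrm{gen}(Q_2)=\mathrm{gen}(Q'_2)+s} a_{Q_2}^2\bigr)^{1/2}$. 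Second stage: applying Cauchy--Schwarz to the $s$-sum with weights $\delta^{s\alpha_2/4}$ and using the geometric summability $\sum_{s \geq r+1} \delta^{s\alpha_2/2} < \infty$ yields
\[
\Bigl(\sum_{Q_2} A^{\textup{in}}_{Q_2,Q'_2}\, a_{Q_2}\Bigr)^2
\lesssim \sum_{s \geq r+1} \delta^{s\alpha_2/2}
\sum_{\substack{Q_2 \subset Q'_2\\ \mathrm{gen}(Q_2)=\mathrm{gen}(Q'_2)+s}} a_{Q_2}^2.
\]

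Third stage: sum over $Q'_2 \in \Dd'_2$ and interchange the order. For each fixed $s$ and each fixed $Q_2 \in \Dd_2$, there is \emph{at most one} $Q'_2 \in \Dd'_2$ at generation $\mathrm{gen}(Q_2)-s$ that contains $Q_2$, because $\Dd'_2$ partitions $X_2$ at each generation. Hence for each fixed $s$,
\[
\sum_{Q'_2} \sum_{\substack{Q_2 \subset Q'_2\\ \mathrm{gen}(Q_2)=\mathrm{gen}(Q'_2)+s}} a_{Q_2}^2 \leq \sum_{Q_2} a_{Q_2}^2,
\]
and one more application of the geometric series in $s$ produces the desired $\ell^2$ bound. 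The first inequality of the proposition then follows immediately from the ``in fact'' version and Cauchy--Schwarz on the $Q'_2$-sum against $\{b_{Q'_2}\}$.

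\textbf{Main obstacle.} The argument is essentially bookkeeping, but the one delicate point is the final swap: because $\Dd_2$ and $\Dd'_2$ are \emph{independent} grids, one must use (rather than take for granted) that a fixed $Q_2 \in \Dd_2$ has at most one ``ancestor'' $Q'_2 \in \Dd'_2$ of each prescribed generation — a consequence of the partitioning property~\eqref{a1} of the dyadic system $\Dd'_2$, not of any containment structure between the two grids. This is what lets the $\sum_{Q'_2}$ collapse cleanly into $\sum_{Q_2} a_{Q_2}^2$ and is what makes the constant in the estimate depend on $r, \delta, \alpha_2, M$ alone.
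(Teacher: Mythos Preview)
Your argument is correct and complete. The paper itself does not give a detailed proof of this proposition: it merely states that the result is a generalisation of Proposition~7.3 in~\cite{HM12a} and Lemma~7.4 in~\cite{NTV03}, and that the proofs given there go through unchanged since they do not use the triangle inequality. Your three-stage Cauchy--Schwarz argument (disjointness of the $Q_2$'s at a fixed generation inside each child $Q'_{2,1}$, geometric decay in the generation gap $s$, and the at-most-one-ancestor property of the partitioning grid $\Dd'_2$) is exactly the standard proof behind those references, so you have supplied precisely what the paper omits.
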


\begin{proof}
As for Proposition~\ref{sepseppop.1},
    the proof of Proposition~\ref{prop:A_in} is a generalisation of Proposition 7.3 in \cite{HM12a} from non-homogeneous metric spaces, and of Lemma~7.4 in~\cite{NTV03} from non-homogeneous Euclidean spaces to non-homogeneous quasimetric spaces. The proofs given there go through unchanged, as it does not depend on the quasitriangle inequality of quasimetric~$\rho_1$.
\end{proof}

Now combining Lemmas~\ref{lem1:sepnest} and~\ref{lem2:sepnest}, Propositions~\ref{sepseppop.1} and~\ref{prop:A_in}, we consider
the summation as in the first term of~\eqref{sepnesteq.1}, but with the term
$$\langle T(h_{u_1}^{Q_1} \otimes h_{u_2}^{Q_2}), h_{u'_1}^{Q'_1} \otimes h_{u'_2}^{Q'_2}\rangle
- \langle h_{u'_2}^{Q'_2}\rangle_{Q_2}
\langle T(h_{u_1}^{Q_1} \otimes h_{u_2}^{Q_2}), h_{u'_1}^{Q'_1} \otimes 1\rangle.$$
We have
\begin{eqnarray*}
  \lefteqn{  \sum_{\substack{Q_1, Q'_1 \\ \textup{separated}  }}
  \sum_{\substack{Q_2, Q'_2 \\ \textup{nested} }}
  \sum_{\substack{u_1,u'_1 \\u_2,u'_2}}
  |\langle f, h_{u_1}^{Q_1} \otimes h_{u_2}^{Q_2}\rangle|
  |\langle g, h_{u'_1}^{Q'_1} \otimes h_{u'_2}^{Q'_2}\rangle|
 }\\
  &&  \qquad \times  \,|\langle T(h_{u_1}^{Q_1} \otimes h_{u_2}^{Q_2}), h_{u'_1}^{Q'_1} \otimes h_{u'_2}^{Q'_2}\rangle
- \langle h_{u'_2}^{Q'_2}\rangle_{Q_2}
\langle T(h_{u_1}^{Q_1} \otimes h_{u_2}^{Q_2}), h_{u'_1}^{Q'_1} \otimes 1\rangle |   \\
  &\ls& \sum_{\substack{Q_1, Q'_1 \\ \textup{separated}}}
  \sum_{\substack{Q_2, Q'_2 \\ \textup{nested} }}
  \sum_{\substack{u_1,u'_1 \\u_2,u'_2}}
  A^{\textup{sep}}_{Q_1,Q'_1} A^{\textup{in}}_{Q_2,Q'_2}
  |\langle f, h_{u_1}^{Q_1} \otimes h_{u_2}^{Q_2}\rangle|
  |\langle g, h_{u'_1}^{Q'_1} \otimes h_{u'_2}^{Q'_2}\rangle| \\
  &\ls& \|f\|_{L^2(\mu)} \|g\|_{L^2(\mu)}.
\end{eqnarray*}

Applying the same proof technique to the second term of~\eqref{sepnesteq.1}, we obtain
\begin{eqnarray*}
  \lefteqn{  \sum_{\substack{Q_1, Q'_1 \\ \textup{separated}  }}
  \sum_{\substack{Q_2, Q'_2 \\ \textup{nested}\\ \text{gen}(Q'_2)=m }}
  \sum_{\substack{u_1,u'_1 \\u_2}}
  |\langle f, h_{u_1}^{Q_1} \otimes h_{u_2}^{Q_2}\rangle|
  |\langle g, h_{u'_1}^{Q'_1} \otimes h_{0}^{Q'_2}\rangle|
 }\\
  &&  \qquad \times  \,|\langle T(h_{u_1}^{Q_1} \otimes h_{u_2}^{Q_2}), h_{u'_1}^{Q'_1} \otimes h_{0}^{Q'_2}\rangle
- \langle h_{0}^{Q'_2}\rangle_{Q_2}
\langle T(h_{u_1}^{Q_1} \otimes h_{u_2}^{Q_2}), h_{u'_1}^{Q'_1} \otimes 1\rangle |   \\
  &\ls& \|f\|_{L^2(\mu)} \|g\|_{L^2(\mu)}.
\end{eqnarray*}

We are left to consider the same summations as in the first and second terms of~\eqref{sepnesteq.1} but involving the elements
$\langle h_{u'_2}^{Q'_2}\rangle_{Q_2}
\langle T(h_{u_1}^{Q_1} \otimes h_{u_2}^{Q_2}), h_{u'_1}^{Q'_1} \otimes 1\rangle$
and
$\langle h_{0}^{Q'_2}\rangle_{Q_2}
\langle T(h_{u_1}^{Q_1} \otimes h_{u_2}^{Q_2}), h_{u'_1}^{Q'_1} \otimes 1\rangle$, respectively.
That is,
\begin{align}\label{sepnesteq.3}
& \sum_{\substack{Q_1, Q'_1 \\ \textup{separated} }} \,
  \sum_{\substack{Q_2, Q'_2 \\ \textup{nested}}} \,
  \sum_{\substack{u_1,u'_1 \\u_2,u'_2}}
\langle f, h_{u_1}^{Q_1} \otimes h_{u_2}^{Q_2}\rangle
  \langle g, h_{u'_1}^{Q'_1} \otimes h_{u'_2}^{Q'_2}\rangle
  \langle h_{u'_2}^{Q'_2}\rangle_{Q_2}
\langle T(h_{u_1}^{Q_1} \otimes h_{u_2}^{Q_2}), h_{u'_1}^{Q'_1} \otimes 1\rangle\noz \\
&+
\sum_{\substack{Q_1, Q'_1 \\ \textup{separated} }} \,
  \sum_{\substack{Q_2, Q'_2 \\ \textup{nested}\\ \text{gen}(Q'_2)=m }} \,
  \sum_{\substack{u_1,u'_1 \\u_2}}
\langle f, h_{u_1}^{Q_1} \otimes h_{u_2}^{Q_2}\rangle
  \langle g, h_{u'_1}^{Q'_1} \otimes h_{0}^{Q'_2}\rangle
  \langle h_{0}^{Q'_2}\rangle_{Q_2}
\langle T(h_{u_1}^{Q_1} \otimes h_{u_2}^{Q_2}), h_{u'_1}^{Q'_1} \otimes 1\rangle.
\end{align}

Lemma~\ref{lem3.sepnest} below will let us rewrite~\eqref{sepnesteq.3} in terms of a paraproduct.
Notice that in Lemma~\ref{lem3.sepnest}, the cubes~$Q_1$ and~$Q'_1$ are not necessarily separated.
\begin{lem}\label{lem3.sepnest}
The sum
\begin{align}\label{sepnesteq.4}
&\sum_{Q_1, Q'_1 } \,
  \sum_{\substack{Q_2, Q'_2 \\ \textup{nested}}} \,
  \sum_{\substack{u_1,u'_1 \\u_2,u'_2}}
\langle f, h_{u_1}^{Q_1} \otimes h_{u_2}^{Q_2}\rangle
  \langle g, h_{u'_1}^{Q'_1} \otimes h_{u'_2}^{Q'_2}\rangle
  \langle h_{u'_2}^{Q'_2}\rangle_{Q_2}
\langle T(h_{u_1}^{Q_1} \otimes h_{u_2}^{Q_2}), h_{u'_1}^{Q'_1} \otimes 1\rangle  \noz \\
&+
\sum_{Q_1, Q'_1 } \,
  \sum_{\substack{Q_2, Q'_2 \\ \textup{nested}\\ \textup{gen}(Q'_2)=m }} \,
  \sum_{\substack{u_1,u'_1 \\u_2}}
\langle f, h_{u_1}^{Q_1} \otimes h_{u_2}^{Q_2}\rangle
  \langle g, h_{u'_1}^{Q'_1} \otimes h_{0}^{Q'_2}\rangle
  \langle h_{0}^{Q'_2}\rangle_{Q_2}
\langle T(h_{u_1}^{Q_1} \otimes h_{u_2}^{Q_2}), h_{u'_1}^{Q'_1} \otimes 1\rangle
\end{align}
equals
\[
\bigg\langle
\sum_{Q_1, Q'_1 \textup{ good}} \quad
\sum_{\substack{u_1,u'_1 \\u_2}} \quad
 h_{u'_1}^{Q'_1} \otimes  \big(\Pi_{b^{u_1 u'_1}_{Q_1 Q'_1}}^{u_2} \big)^*f^{Q_1,u_1},
 g_{\textup{good}}
\bigg\rangle,
\]
where
\begin{eqnarray*}
  f^{Q_1,u_1} &:=& \langle f, h_{u_1}^{Q_1}\rangle_1 = \int_{X_1} f(x,y)h_{u_1}^{Q_1}(x) \,d\mu_1(x), \\
  b^{u_1 u'_1}_{Q_1 Q'_1} &:=& \langle T^*(h_{u'_1}^{Q'_1} \otimes 1), h_{u_1}^{Q_1}\rangle_1, \quad \text{and}\\
  \Pi_{a}^{u_2} \omega &:=&  \sum_{Q'_2 \in \Dd'_2} \quad
\sum_{\substack{Q_2 \textup{ good} \\Q_2 \subset Q'_2 \\ \ell(Q_2) = \delta^r\ell(Q'_2)}}
\langle \omega\rangle_{Q'_2}
\langle a, h_{u_2}^{Q_2}\rangle h_{u_2}^{Q_2}.
\end{eqnarray*}
\end{lem}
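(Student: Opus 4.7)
\noindent\emph{Proposal.}
The plan is to compute both sides of the identity explicitly and match them by unfolding the adjoint paraproduct, applying Haar orthogonality in the $X_1$ factor, and executing a telescoping argument in the $X_2$ factor along $\Dd'_2$-ancestor chains.

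First, I would write out the adjoint of the paraproduct. Starting from the definition of $\Pi^{u_2}_a \omega$ and testing against $\phi$, one obtains
\[(\Pi^{u_2}_a)^* \phi(x_2) = \sum_{Q'_2 \in \Dd'_2}\,\,
\sum_{\substack{Q_2\text{ good},\, Q_2 \subset Q'_2\\\ell(Q_2)=\delta^r\ell(Q'_2)}}
\langle a, h_{u_2}^{Q_2}\rangle\, \langle h_{u_2}^{Q_2}, \phi\rangle_2\, \frac{\chi_{Q'_2}(x_2)}{\mu_2(Q'_2)}.\]
Substituting $a=b^{u_1 u'_1}_{Q_1 Q'_1}$ and $\phi=f^{Q_1,u_1}$ and unfolding the definitions gives $\langle a, h_{u_2}^{Q_2}\rangle = \langle T(h_{u_1}^{Q_1}\otimes h_{u_2}^{Q_2}), h_{u'_1}^{Q'_1}\otimes 1\rangle$ and $\langle h_{u_2}^{Q_2}, f^{Q_1,u_1}\rangle_2 = \langle f, h_{u_1}^{Q_1}\otimes h_{u_2}^{Q_2}\rangle$. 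Pairing against $g_{\text{good}}$ and integrating first in $x_1$ produces the factor $g^{Q'_1,u'_1}(x_2):=\int g_{\text{good}}(x_1,x_2)\,h_{u'_1}^{Q'_1}(x_1)\,d\mu_1(x_1)$, so the RHS becomes
\[\sum_{\substack{Q_1,Q'_1\text{ good}\\ u_1,u'_1,u_2}}\,
\sum_{\substack{Q'_2,\, Q_2\text{ good},\, Q_2\subset Q'_2\\\ell(Q_2)=\delta^r\ell(Q'_2)}}
\langle T(h_{u_1}^{Q_1}\otimes h_{u_2}^{Q_2}), h_{u'_1}^{Q'_1}\otimes 1\rangle\,
\langle f, h_{u_1}^{Q_1}\otimes h_{u_2}^{Q_2}\rangle\,
\langle g^{Q'_1,u'_1}\rangle_{Q'_2}.\]

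Second, on the LHS I would fix $(Q_1,Q'_1,Q_2,u_1,u'_1,u_2)$ (with $Q_1,Q_2$ good, since the Haar pairings with $f_{\text{good}}, g_{\text{good}}$ kill any bad contributions) and perform the inner sum over $(Q'_2,u'_2)$ with $(Q_2,Q'_2)$ nested, plus the boundary piece with $h_0^{Q'_2}$ at $\text{gen}(Q'_2)=m$. Since $Q_2$ is good and $\ell(Q_2)<\delta^r\ell(Q'_2)$, one has $Q_2\subset Q'_{2,1}$ for the unique child $Q'_{2,1}$ of $Q'_2$ containing $Q_2$, and $\langle h_{u'_2}^{Q'_2}\rangle_{Q_2}=\langle h_{u'_2}^{Q'_2}\rangle_{Q'_{2,1}}$ because $h_{u'_2}^{Q'_2}$ is constant on children of $Q'_2$. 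The martingale-difference identity
\[\sum_{u'_2}\langle F, h_{u'_2}^{Q'_2}\rangle\, \langle h_{u'_2}^{Q'_2}\rangle_{Q'_{2,1}} = \langle F\rangle_{Q'_{2,1}}-\langle F\rangle_{Q'_2}\]
applied with $F=g^{Q'_1,u'_1}$ turns the $u'_2$-sum into a consecutive difference of averages. For fixed good $Q_2$ the valid nested $Q'_2$ are precisely the $\Dd'_2$-ancestors of $Q_2$ with $\text{gen}(Q'_2)\in\{m,\dots,\text{gen}(Q_2)-r-1\}$, and since $Q'_{2,1}$ for one ancestor equals the next smaller ancestor, the sum telescopes to
$\langle g^{Q'_1,u'_1}\rangle_{P(Q_2)}-\langle g^{Q'_1,u'_1}\rangle_{A(Q_2)}$,
where $P(Q_2)$ is the $r$-th $\Dd'_2$-ancestor of $Q_2$ (so $\ell(P(Q_2))=\delta^{-r}\ell(Q_2)$) and $A(Q_2)$ is the ancestor at generation $m$. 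The boundary $h_0^{Q'_2}$ term at $\text{gen}(Q'_2)=m$ contributes $\mu_2(Q'_2)^{1/2}\langle g^{Q'_1,u'_1}\rangle_{Q'_2}\cdot\mu_2(Q'_2)^{-1/2}=\langle g^{Q'_1,u'_1}\rangle_{A(Q_2)}$, exactly cancelling the lower end and leaving $\langle g^{Q'_1,u'_1}\rangle_{P(Q_2)}$.

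Third, I would identify this with the RHS: since $P(Q_2)$ is the unique $Q'_2\in\Dd'_2$ satisfying $Q_2\subset Q'_2$ and $\ell(Q_2)=\delta^r\ell(Q'_2)$, the inner summation on the RHS collapses to the single term $Q'_2=P(Q_2)$, yielding precisely $\langle g^{Q'_1,u'_1}\rangle_{P(Q_2)}$. Reassembling over $(Q_1,Q'_1,u_1,u'_1,u_2,Q_2)$ then identifies LHS with RHS.

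I expect the principal technical obstacle to be the bookkeeping of the telescoping together with the generation-$m$ boundary term: one must verify that the $h_0^{Q'_2}$ piece closes the lowest end of the chain exactly, so that no remainder depending on the cutoff survives. A secondary subtlety is that $g^{Q'_1,u'_1}=\langle g_{\text{good}},h_{u'_1}^{Q'_1}\rangle_1$ is already a projection onto good $\Dd'_2$-Haar coefficients, which is what makes the averages on both sides match against the same set of admissible $Q'_2$ in the paraproduct sum.
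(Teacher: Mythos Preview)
Your proposal is correct and follows essentially the same route as the paper. The paper's proof is organized slightly differently but rests on the identical mechanism: it first replaces $\langle h_{u'_2}^{Q'_2}\rangle_{Q_2}$ by $\langle h_{u'_2}^{Q'_2}\rangle_{S(Q_2)}$ (your $P(Q_2)$), then observes that summing the Haar coefficients of $g_{\text{good}}^{Q'_1,u'_1}$ over all $Q'_2$ with $\ell(Q'_2)\le\delta^m$ together with the $h_0$-term reconstructs $g_{\text{good}}^{Q'_1,u'_1}$ via the inhomogeneous Haar expansion, so the whole inner sum collapses to $\langle g_{\text{good}}^{Q'_1,u'_1}\rangle_{S(Q_2)}$ in one line. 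Your explicit telescoping via the martingale-difference identity $\sum_{u'_2}\langle F,h_{u'_2}^{Q'_2}\rangle\langle h_{u'_2}^{Q'_2}\rangle_{Q'_{2,1}}=\langle F\rangle_{Q'_{2,1}}-\langle F\rangle_{Q'_2}$, followed by cancellation of the generation-$m$ endpoint against the $h_0$-term, is precisely the hands-on verification of that Haar reconstruction, so the two arguments are the same at heart. One small remark: your closing comment that the goodness restriction on the $\Dd'_2$-Haar coefficients of $g^{Q'_1,u'_1}$ is what makes the averages match is not the operative point---the paraproduct on the RHS sums over all $Q'_2\in\Dd'_2$ with no goodness constraint, and the match is purely the identification $P(Q_2)=S(Q_2)$; the role of $g_{\text{good}}$ is only to allow dropping the goodness restriction on $Q'_2$ on the LHS.
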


\begin{proof}
Since~$Q_2$ is good, $Q_2 \subset Q'_2$ and $\ell(Q_2) < \delta^r \ell(Q'_2)$, there exists a unique cube $S(Q_2) \in \Dd'_2$ such that $\ell(Q_2) = \delta^{r}\ell(S(Q_2))$ and $Q_2 \subset S(Q_2) \subset Q'_2$.
This implies
\[\langle h_{u'_2}^{Q'_2}\rangle_{Q_2}
= \langle h_{u'_2}^{Q'_2}\rangle_{S(Q_2)} \quad \text{and}\quad
 \langle h_{0}^{Q'_2}\rangle_{Q_2}
=  \langle h_{0}^{Q'_2}\rangle_{S(Q_2)}.\]
Recall that all the cubes appearing here are good.
In fact, the goodness of $Q'_2$ is unfavourable here. Therefore, we define
\[
 \langle g_{\text{good}}, h_{u'_1}^{Q'_1} \otimes h_{u'_2}^{Q'_2} \rangle :=
\left\{
  \begin{array}{ll}
    \langle g, h_{u'_1}^{Q'_1} \otimes h_{u'_2}^{Q'_2} \rangle,
     & \hbox{\text{if $Q'_2$ is good};} \\
    0, & \hbox{\text{if $Q'_2$ is bad}.}
  \end{array}
\right.
\]
Now we can add all bad cubes~$Q'_2$ to the summation~\eqref{sepnesteq.4} and write
\[\langle g, h_{u'_1}^{Q'_1} \otimes h_{u'_2}^{Q'_2} \rangle = \langle g_{\text{good}}, h_{u'_1}^{Q'_1} \otimes h_{u'_2}^{Q'_2} \rangle.\]
Below, we explicitly write the fact that $Q_1, Q'_1$ and~$Q_2$ are good, because we want to highlight the fact that~$Q'_2$ is not necessarily a good cube.
Also, we add the condition~$\ell(Q'_2) \leq \delta^m$ back, which we have suppressed in Section~\ref{subsec:outline}.
Note that~$Q_2$ and~$Q'_2$ are nested.
The sum~\eqref{sepnesteq.4} becomes
\begin{align*}
&\sum_{\substack{Q_1, Q'_1 \\ \textup{good} }} \,
  \sum_{Q_2 \textup{ good}} \,
  \sum_{\substack{Q'_2 \\ \ell(Q'_2) \leq \delta^m}} \,
  \sum_{\substack{u_1,u'_1 \\u_2,u'_2}}
  \langle g_{\textup{good}}, h_{u'_1}^{Q'_1} \otimes h_{u'_2}^{Q'_2}\rangle \langle h_{u'_2}^{Q'_2}\rangle_{S(Q_2)} \noz \\
&\hspace{2cm}\times \,
\langle f, h_{u_1}^{Q_1} \otimes h_{u_2}^{Q_2}\rangle
\langle T(h_{u_1}^{Q_1} \otimes h_{u_2}^{Q_2}), h_{u'_1}^{Q'_1} \otimes 1\rangle \noz \\
&+
\sum_{\substack{Q_1, Q'_1 \\ \textup{good} }} \,
  \sum_{Q_2 \textup{ good}} \,
  \sum_{\substack{Q'_2 \\ \ell(Q'_2) = \delta^m}}
  \sum_{\substack{u_1,u'_1 \\u_2}}
  \langle g_{\textup{ good}}, h_{u'_1}^{Q'_1} \otimes h_{0}^{Q'_2}\rangle\langle h_{0}^{Q'_2}\rangle_{S(Q_2)} \noz \\
&\hspace{2cm}\times \,
\langle f, h_{u_1}^{Q_1} \otimes h_{u_2}^{Q_2}\rangle
\langle T(h_{u_1}^{Q_1} \otimes h_{u_2}^{Q_2}), h_{u'_1}^{Q'_1} \otimes 1\rangle,
\end{align*}
which equals
\begin{align}\label{eq8:sep_nest}
&\sum_{Q_1, Q'_1 \textup{ good} } \,
\sum_{Q_2 \textup{ good}}
\sum_{\substack{u_1,u'_1 \\u_2}}
\bigg\langle
\sum_{\substack{Q'_2 \\ \ell(Q'_2) \leq \delta^m}}\,
\sum_{u'_2} \langle g_{\text{good}}, h_{u'_1}^{Q'_1} \otimes h_{u'_2}^{Q'_2}\rangle h_{u'_2}^{Q'_2}
\bigg\rangle_{S(Q_2)} \noz \\
&\hspace{2cm}\times \, \langle f, h_{u_1}^{Q_1} \otimes h_{u_2}^{Q_2}\rangle
\langle T(h_{u_1}^{Q_1} \otimes h_{u_2}^{Q_2}), h_{u'_1}^{Q'_1} \otimes 1 \rangle \noz\\
&+ \sum_{Q_1, Q'_1 \textup{ good} } \,
\sum_{Q_2 \textup{ good}}
\sum_{\substack{u_1,u'_1 \\u_2}}
\bigg\langle
\sum_{\substack{Q'_2 \\ \ell(Q'_2) = \delta^m}}
\langle g_{\text{good}}, h_{u'_1}^{Q'_1} \otimes h_{0}^{Q'_2}\rangle h_{0}^{Q'_2}
\bigg\rangle_{S(Q_2)} \noz \\
&\hspace{2cm}\times \, \langle f, h_{u_1}^{Q_1} \otimes h_{u_2}^{Q_2}\rangle
\langle T(h_{u_1}^{Q_1} \otimes h_{u_2}^{Q_2}), h_{u'_1}^{Q'_1} \otimes 1 \rangle.
\end{align}
Define
\[g_{\text{good}}^{Q'_1,u'_1}(y)
:= \Big\langle g_{\text{good}},h_{u'_1}^{Q'_1}\Big\rangle_1 (y)
:= \int_{X_1} g_{\text{good}}(x,y)h_{u'_1}^{Q'_1}(x) \,d\mu_1(x). \]
We write
\begin{align*}
  &\Big\langle g_{\text{good}}, h_{u'_1}^{Q'_1} \otimes h_{u'_2}^{Q'_2}\Big\rangle
  = \int_{X_2}g_{\text{good}}^{Q'_1,u'_1}(y) h_{u'_2}^{Q'_2}(y) \,d\mu_2(y)
  = \Big\langle g_{\text{good}}^{Q'_1,u'_1},h_{u'_2}^{Q'_2} \Big\rangle, \quad \text{and} \\
  &\Big\langle g_{\text{good}}, h_{u'_1}^{Q'_1} \otimes h_{0}^{Q'_2}\Big\rangle
  = \int_{X_2}g_{\text{good}}^{Q'_1,u'_1}(y) h_{0}^{Q'_2}(y) \,d\mu_2(y)
  = \Big\langle g_{\text{good}}^{Q'_1,u'_1},h_{0}^{Q'_2} \Big\rangle.
\end{align*}
Notice that
\begin{align*}
 &\hspace{-0.5cm}  \sum_{\substack{Q'_2 \\ \ell(Q'_2) \leq \delta^m}}\,
\sum_{u'_2} \langle g_{\text{good}}, h_{u'_1}^{Q'_1} \otimes h_{u'_2}^{Q'_2}\rangle h_{u'_2}^{Q'_2}
 + \sum_{\substack{Q'_2 \\ \ell(Q'_2) = \delta^m}}
\langle g_{\text{good}}, h_{u'_1}^{Q'_1} \otimes h_{0}^{Q'_2}\rangle h_{0}^{Q'_2}\\
&= \sum_{\substack{Q'_2 \\ \ell(Q'_2) \leq \delta^m}}\,
\sum_{u'_2} \Big\langle g_{\text{good}}^{Q'_1,u'_1},h_{u'_2}^{Q'_2} \Big\rangle h_{u'_2}^{Q'_2}
 + \sum_{\substack{Q'_2 \\ \ell(Q'_2) = \delta^m}}
\Big\langle g_{\text{good}}^{Q'_1,u'_1},h_{0}^{Q'_2} \Big\rangle h_{0}^{Q'_2}\\
& = g_{\text{good}}^{Q'_1,u'_1}.
\end{align*}
Hence, \eqref{eq8:sep_nest} equals
\begin{align*}
&\hspace{-0.5cm}\sum_{Q_1, Q'_1 \textup{ good} } \,
\sum_{Q_2 \textup{ good}}
\sum_{\substack{u_1,u'_1 \\u_2}}
\Big\langle
g_{\text{good}}^{Q'_1,u'_1}
\Big\rangle_{S(Q_2)} \langle f, h_{u_1}^{Q_1} \otimes h_{u_2}^{Q_2}\rangle
\langle T(h_{u_1}^{Q_1} \otimes h_{u_2}^{Q_2}), h_{u'_1}^{Q'_1} \otimes 1 \rangle \\
&= \, \sum_{Q_1, Q'_1 \textup{ good}} \,
\sum_{Q'_2} \,
\sum_{\substack{Q_2 \textup{ good} \\Q_2 \subset Q'_2 \\ \ell(Q_2) = \delta^r\ell(Q'_2)}} \,
\sum_{\substack{u_1,u'_1 \\u_2}} \,
\Big\langle  g_{\text{good}}^{Q'_1,u'_1} \Big\rangle_{Q'_2}
\langle f, h_{u_1}^{Q_1} \otimes h_{u_2}^{Q_2}\rangle
\langle T(h_{u_1}^{Q_1} \otimes h_{u_2}^{Q_2}), h_{u'_1}^{Q'_1} \otimes 1 \rangle .
\end{align*}

Define $f^{Q_1,u_1} := \langle f, h_{u_1}^{Q_1}\rangle_1$,
$b^{u_1 u'_1}_{Q_1 Q'_1} := \langle T^*(h_{u'
_1}^{Q'_1} \otimes 1), h_{u_1}^{Q_1}\rangle_1$ and
\[
\Pi_{a}^{u_2} \omega :=  \sum_{Q'_2 \in \Dd'_2} \quad
\sum_{\substack{Q_2 \textup{ good} \\Q_2 \subset Q'_2 \\ \ell(Q_2) = \delta^r\ell(Q'_2)}}
\langle \omega\rangle_{Q'_2}
\langle a, h_{u_2}^{Q_2}\rangle h_{u_2}^{Q_2}, \qquad u_2 \neq 0.
\]
We can manipulate each term of the summand as follows:
\begin{align*}
  \Big \langle g_{\text{good}}^{Q'_1,u'_1} \Big\rangle_{Q'_2}
  &= \intav_{Q'_2} \int_{X_1} g_{\text{good}}(x,y) h_{u'_1}^{Q'_1} (x) \,d\mu_1(x) \,d\mu_2(y) \\
  &=\int_{X_1} \intav_{Q'_2} g_{\text{good}}(x,y)\,d\mu_2(y) \,h_{u'_1}^{Q'_1} (x) \,d\mu_1(x)
  = \Big \langle h_{u'_1}^{Q'_1}, \langle g_{\text{good}} \rangle_{Q'_2} \Big\rangle, \\
  \langle f, h_{u_1}^{Q_1} \otimes h_{u_2}^{Q_2}\rangle
&= \Big\langle \langle f,h_{u_1}^{Q_1} \rangle_1, h_{u_2}^{Q_2}\Big\rangle
= \langle f^{Q_1,u_1}, h_{u_2}^{Q_2}\rangle,\quad \text{and}\\
\langle T(h_{u_1}^{Q_1} \otimes h_{u_2}^{Q_2}), h_{u'_1}^{Q'_1} \otimes 1 \rangle
&= \Big\langle \langle T^*( h_{u'_1}^{Q'_1} \otimes 1 ),h_{u_1}^{Q_1} \rangle_1, h_{u_2}^{Q_2}\Big\rangle
= \Big\langle b^{u_1 u'_1}_{Q_1 Q'_1}, h_{u_2}^{Q_2}\Big\rangle.
\end{align*}

Thus
\begin{eqnarray*}
 \lefteqn{ \Big\langle  g_{\text{good}}^{Q'_1,u'_1} \Big\rangle_{Q'_2}
\langle f, h_{u_1}^{Q_1} \otimes h_{u_2}^{Q_2}\rangle
\langle T(h_{u_1}^{Q_1} \otimes h_{u_2}^{Q_2}), h_{u'_1}^{Q'_1} \otimes 1 \rangle }\\
&=& \Big \langle h_{u'_1}^{Q'_1}, \langle g_{\text{good}} \rangle_{Q'_2} \Big\rangle
\langle f^{Q_1,u_1}, h_{u_2}^{Q_2}\rangle
\Big\langle b^{u_1 u'_1}_{Q_1 Q'_1}, h_{u_2}^{Q_2}\Big\rangle \\
&=& \bigg\langle h_{u'_1}^{Q'_1} \otimes  f^{Q_1,u_1},
\Big\langle  g_{\text{good}}\Big\rangle_{Q'_2}
\Big\langle b^{u_1 u'_1}_{Q_1 Q'_1}, h_{u_2}^{Q_2}\Big\rangle
h_{u_2}^{Q_2}
\bigg\rangle.
\end{eqnarray*}

So we are left with
\begin{eqnarray*}
&&
\sum_{Q_1, Q'_1 \textup{ good}} \quad
\sum_{\substack{u_1,u'_1 \\u_2}} \quad
\bigg\langle h_{u'_1}^{Q'_1} \otimes  f^{Q_1,u_1},
\sum_{Q'_2} \quad
\sum_{\substack{Q_2 \textup{ good} \\Q_2 \subset Q'_2 \\ \ell(Q_2) = \delta^r\ell(Q'_2)}}
\Big\langle  g_{\text{good}}\Big\rangle_{Q'_2}
\Big\langle b^{u_1 u'_1}_{Q_1 Q'_1}, h_{u_2}^{Q_2}\Big\rangle
h_{u_2}^{Q_2}
\bigg\rangle\\
  &=&
\sum_{Q_1, Q'_1 \textup{ good}} \quad
\sum_{\substack{u_1,u'_1 \\u_2}} \quad
\bigg\langle h_{u'_1}^{Q'_1} \otimes  f^{Q_1,u_1},
\Pi_{b^{u_1 u'_1}_{Q_1 Q'_1}}^{u_2} g_{\text{good}}
\bigg\rangle \\
&=&\bigg\langle
\sum_{Q_1, Q'_1 \textup{ good}} \quad
\sum_{\substack{u_1,u'_1 \\u_2}} \quad
 h_{u'_1}^{Q'_1} \otimes  \big(\Pi_{b^{u_1 u'_1}_{Q_1 Q'_1}}^{u_2} \big)^*f^{Q_1,u_1},
 g_{\text{good}}
\bigg\rangle.
\end{eqnarray*}
This completes the proof of Lemma~\ref{lem3.sepnest}.
\end{proof}

Now using Lemma~\ref{lem3.sepnest}, the sum~\eqref{sepnesteq.3} is given by
\[\bigg\langle
\sum_{\substack{Q_1, Q'_1 \textup{ good}\\ \textup{separated} }} \quad
\sum_{\substack{u_1,u'_1 \\u_2}} \quad
 h_{u'_1}^{Q'_1} \otimes  \big(\Pi_{b^{u_1 u'_1}_{Q_1 Q'_1}}^{u_2} \big)^*f^{Q_1,u_1},
 g_{\text{good}}
\bigg\rangle.\]
There holds
\begin{eqnarray*}
  \lefteqn{\bigg|\bigg\langle
\sum_{\substack{Q_1, Q'_1 \textup{ good}\\ \textup{separated} }} \quad
\sum_{\substack{u_1,u'_1 \\u_2}} \quad
 h_{u'_1}^{Q'_1} \otimes  \big(\Pi_{b^{u_1 u'_1}_{Q_1 Q'_1}}^{u_2} \big)^*f^{Q_1,u_1},
 g_{\text{good}}
\bigg\rangle\bigg|}\\
   &\leq&
   \bigg\| \sum_{\substack{Q_1, Q'_1 \textup{ good}\\ \textup{separated} }} \quad
   \sum_{\substack{u_1,u'_1 \\u_2}} \quad
 h_{u'_1}^{Q'_1} \otimes  \big(\Pi_{b^{u_1 u'_1}_{Q_1 Q'_1}}^{u_2} \big)^*f^{Q_1,u_1} \bigg\|_{L^2(\mu)} \|g_{\text{good}}\|_{L^2(\mu)} \\
 &\leq&
 \bigg(\sum_{Q'_1 \textup{ good}}
  \bigg\|
  \sum_{\substack{Q_1 \textup{ good}\\ Q_1, Q'_1\textup{separated} }} \quad
  \sum_{\substack{u_1,u'_1 \\u_2}} \quad
  \big(\Pi_{b^{u_1 u'_1}_{Q_1 Q'_1}}^{u_2} \big)^*f^{Q_1,u_1} \bigg\|_{L^2(\mu_2)}^2 \bigg)^{1/2}
  \|g\|_{L^2(\mu)} \\
  &\ls&
  \bigg(\sum_{Q'_1} \bigg[
  \sum_{\substack{Q_1 \textup{ good}\\Q_1, Q'_1 \textup{separated} }} \quad
  \sum_{u_1,u'_1} \quad
  \| b^{u_1 u'_1}_{Q_1 Q'_1}\|_{\bmo^2_{C_K}(\mu_2)}
  \|f^{Q_1,u_1}\|_{L^2(\mu_2)}
   \bigg]^2 \bigg)^{1/2}
   \|g\|_{L^2(\mu)}.
\end{eqnarray*}
We recall Definition~\ref{defn: BMO_k_p} of function spaces~$\bmo^p_{\kappa}$. Here~$C_K >1$ is fixed in Section~\ref{subsec:assume} (see Assumptions~\ref{assum_2} and~\ref{assum_4}).
These inequalities hold due to
 the orthonormality of the functions $h_{u'_1}^{Q'_1} \in L^2(\mu_1)$ and the $L^2$-boundedness of the paraproduct~$\Pi_a^{\kappa}$, stated in Lemma~\ref{lem4.sepnest} below.

\begin{lem}\label{lem4.sepnest}
  For any $M >1$, we have
  \[\|\Pi_a^{\kappa}\|_{L^2(\mu_2) \rightarrow L^2(\mu_2)} \ls \|a\|_{\bmo_M^2(\mu_2)}.\]
\end{lem}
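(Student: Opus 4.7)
The plan is to prove the $L^2$-boundedness of the paraproduct $\Pi_a^{u_2}$ by duality together with the Carleson embedding theorem in the non-homogeneous setting. First I would fix $\omega, \varphi \in L^2(\mu_2)$ and compute
\[
\langle \Pi_a^{u_2}\omega, \varphi\rangle
= \sum_{Q'_2 \in \Dd'_2} \,\sum_{\substack{Q_2 \textup{ good}\\ Q_2 \subset Q'_2\\ \ell(Q_2)=\delta^r\ell(Q'_2)}}
\langle \omega\rangle_{Q'_2}\,\langle a, h_{u_2}^{Q_2}\rangle\,\langle \varphi, h_{u_2}^{Q_2}\rangle.
\]
Since every such $Q_2$ has a \emph{unique} parent $S(Q_2) = Q'_2$, I can view the double sum as a single sum over $Q_2$. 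Applying Cauchy--Schwarz in this $Q_2$ variable gives
\[
|\langle \Pi_a^{u_2}\omega, \varphi\rangle|
\leq \bigg(\sum_{Q_2} |\langle \omega\rangle_{S(Q_2)}|^2\,|\langle a, h_{u_2}^{Q_2}\rangle|^2\bigg)^{1/2}
\bigg(\sum_{Q_2} |\langle \varphi, h_{u_2}^{Q_2}\rangle|^2\bigg)^{1/2}.
\]
The second factor is at most $\|\varphi\|_{L^2(\mu_2)}$ by Bessel's inequality for the orthonormal Haar system.

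Next I would regroup the first factor by $Q'_2$ and set
\[
c_{Q'_2} := \sum_{\substack{Q_2 \textup{ good}\\ Q_2 \subset Q'_2\\ \ell(Q_2)=\delta^r\ell(Q'_2)}} |\langle a, h_{u_2}^{Q_2}\rangle|^2,
\qquad \text{so that}\qquad
\sum_{Q_2} |\langle \omega\rangle_{S(Q_2)}|^2|\langle a, h_{u_2}^{Q_2}\rangle|^2
= \sum_{Q'_2} c_{Q'_2}\,|\langle \omega\rangle_{Q'_2}|^2.
\]
The key step is to verify that $\{c_{Q'_2}\}$ is a Carleson sequence, i.e.\ that for every open set $\Omega \subset X_2$ of finite measure one has $\sum_{Q'_2 \subset \Omega} c_{Q'_2} \lesssim \|a\|_{\bmo_M^2(\mu_2)}^2\,\mu_2(\Omega)$. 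To obtain this I would test on a single ball $B \subset X_2$ first: writing $a = (a - a_B) + a_B$, only the cancellative part contributes to $\langle a, h_{u_2}^{Q_2}\rangle$, and the orthogonality/Bessel inequality for the Haar functions on cubes contained in $B$ gives
\[
\sum_{Q'_2 \subset B} c_{Q'_2}
\leq \sum_{Q_2 \subset B}\sum_{u_2}|\langle a - a_B, h_{u_2}^{Q_2}\rangle|^2
\leq \|(a-a_B)\chi_B\|_{L^2(\mu_2)}^2
\leq \|a\|_{\bmo_M^2(\mu_2)}^2\,\mu_2(MB).
\]
The passage from balls to arbitrary open sets $\Omega$ is done via a standard Whitney-type decomposition of $\Omega$ into dyadic cubes, combined with the upper doubling property of $\mu_2$ to absorb the dilation factor $M$.

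Finally, I would invoke the Carleson embedding theorem in the non-homogeneous setting to conclude
\[
\sum_{Q'_2} c_{Q'_2}|\langle \omega\rangle_{Q'_2}|^2
\lesssim \|a\|_{\bmo_M^2(\mu_2)}^2\,\|\omega\|_{L^2(\mu_2)}^2,
\]
which combined with the bound on $\varphi$ gives $|\langle \Pi_a^{u_2}\omega, \varphi\rangle| \lesssim \|a\|_{\bmo_M^2(\mu_2)}\|\omega\|_{L^2(\mu_2)}\|\varphi\|_{L^2(\mu_2)}$, as desired. The main obstacle will be the Carleson embedding step, since the standard proof relies on a Hardy--Littlewood maximal function bound $|\langle \omega\rangle_{Q'_2}| \lesssim M\omega(x)$ for $x \in Q'_2$, and here one must use the maximal operator adapted to the upper doubling measure $\mu_2$ (with the enlargement factor $M$) and verify its $L^2(\mu_2)$-boundedness; this is where the non-homogeneous geometry actually enters the argument.
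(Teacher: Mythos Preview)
Your overall strategy via Carleson embedding is the same as the paper's, and the initial steps (orthogonality/Bessel in the $Q_2$ variable, reduction to a Carleson condition for the sequence $\{c_{Q'_2}\}$) are fine. The genuine gap is in your verification of the Carleson condition.

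Your Bessel argument on a cube $R\in\Dd'_2$ yields
\[
\sum_{\substack{Q_2\ \text{good},\ Q_2\subset R\\ \ell(Q_2)\le\delta^r\ell(R)}}|\langle a,h_{u_2}^{Q_2}\rangle|^2
\le \|(a-a_{B(R)})\chi_{B(R)}\|_{L^2(\mu_2)}^2
\lesssim \|a\|_{\bmo_M^2(\mu_2)}^2\,\mu_2\big(MB(R)\big),
\]
not $\mu_2(R)$. In the non-homogeneous setting $\mu_2$ is \emph{only upper doubling}: the doubling inequality $\lambda_2(x,2r)\le C_{\lambda_2}\lambda_2(x,r)$ holds for the dominating function $\lambda_2$, not for $\mu_2$ itself, so you cannot absorb the dilation and conclude $\mu_2(MB(R))\lesssim\mu_2(R)$. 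Thus your proposed ``upper doubling of $\mu_2$ to absorb the factor $M$'' step fails, and the dyadic Carleson embedding (which requires the testing bound $\sum_{Q'_2\subset R}c_{Q'_2}\lesssim\mu_2(R)$ with the \emph{same} cube on the right) is not verified.

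The paper closes exactly this gap by exploiting the one hypothesis you never use: the \emph{goodness} of the cubes $Q_2$ in the paraproduct. It estimates each term individually by $|\langle a,h_{u_2}^{Q_2}\rangle|^2\lesssim\|a\|_{\bmo_M^2(\mu_2)}^2\,\mu_2(MB(Q_2))$, and then shows (adapting \cite[Lemma~7.1]{HM12a}) that because $Q_2$ is good with $\ell(Q_2)\le\delta^r\ell(R)$, the dilated balls $MB(Q_2)$ remain inside $R$ and have bounded overlap; hence $\sum_{Q_2}\mu_2(MB(Q_2))\le N\mu_2(R)$. This is precisely where the non-homogeneous geometry enters, and it is a structural use of goodness that your Bessel-on-$R$ argument cannot replace.
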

\begin{proof}
  For $\omega \in L^2(\mu_2)$, we have
  \begin{align*}
    \|\Pi_a^{\kappa} \omega\|_{L^2(\mu_2)}
    = &  \sum_{Q'_2 \in \Dd'_2} \quad |\langle \omega\rangle_{Q'_2}|^2
\sum_{\substack{Q_2 \textup{ good} \\Q_2 \subset Q'_2 \\ \ell(Q_2) = \delta^r\ell(Q'_2)}}
|\langle a, h_{\kappa}^{Q_2}\rangle|^2.
  \end{align*}

Given a cube $Q \in \Dd'_2$, set
$$ \alpha_{Q}:={1\over \mu_2(Q)}\sum_{\substack{Q_2 \textup{ good} \\Q_2 \subset Q \\ \ell(Q_2) = \delta^r\ell(Q)}}
|\langle a, h_{\kappa}^{Q_2}\rangle|^2.$$
We claim that for each fixed cube~$Q'_2 \in \Dd'_2$ we have
\begin{equation}\label{new claim paraproduct}
\sum_{Q\in \Dd'_2, Q\subset Q'_2} \alpha_Q  \mu_2(Q) \lesssim \mu_2(Q'_2) \|a\|^2_{\bmo_M^2(\mu_2)}.
\end{equation}
Then by using the Carleson's embedding theorem~\cite[Theorem~5.2]{HM12a}, we get that
  \begin{align*}
    \|\Pi_a^{\kappa} \omega\|_{L^2(\mu_2)}
    = &  \sum_{Q'_2 \in \Dd'_2} \quad |\langle \omega\rangle_{Q'_2}|^2 \ \alpha_{Q'_2}\ \mu_2(Q'_2)\lesssim
    \|a\|^2_{\bmo_M^2(\mu_2)}  \|\omega\|_{L^2(\mu_2)}^2.
  \end{align*}
Hence, Lemma~\ref{lem4.sepnest} is established.

Now we prove~\eqref{new claim paraproduct}. We consider the left-hand side of~\eqref{new claim paraproduct}
\begin{equation}\label{new claim paraproduct 1}
\sum_{\substack{Q\in \Dd'_2\\ Q\subset Q'_2}} \alpha_Q  \mu_2(Q)
= \sum_{\substack{Q\in \Dd'_2\\ Q\subset Q'_2}}
\sum_{\substack{Q_2 \textup{ good} \\Q_2 \subset Q \\ \ell(Q_2) = \delta^r\ell(Q)}}
|\langle a, h_{\kappa}^{Q_2}\rangle|^2
\leq \sum_{\substack{Q_2 \textup{ good} \\Q_2 \subset Q'_2 \\ \ell(Q_2) \leq \delta^r\ell(Q'_2)}}
|\langle a, h_{\kappa}^{Q_2}\rangle|^2.
\end{equation}
Moreover, using the facts that $\int h_{\kappa}^{Q_2} \,d\mu_2 = 0$,
$\|h_{\kappa}^{Q_2} \|_{L^2(\mu_2)} \sim 1$, $\supp h_{\kappa}^{Q_2} \subset Q_2$, $Q_2 \subset B(Q_2):= B(z_{Q_2}, C_Q\ell(Q_2))$ and $a \in \bmo_M^2(\mu_2)$ we have
\begin{align}\label{new claim paraproduct 2}
  |\langle a, h_{\kappa}^{Q_2}\rangle|^2
  & =  |\langle (a-a_{B(Q_2)}), h_{\kappa}^{Q_2}\rangle|^2
   =  |\langle (a-a_{B(Q_2)})\chi_{B(Q_2)}, h_{\kappa}^{Q_2}\rangle|^2 \noz\\
  &\leq  \|(a-a_{B(Q_2)})\chi_{B(Q_2)}\|^2_{L^2(\mu_2)}
  \| h_{\kappa}^{Q_2}\|^2_{L^2(\mu_2)} \noz\\
  &\ls  \int_{B(Q_2)} |a-a_{B(Q_2)}|^2 \,d\mu_2 \noz\\
  & \ls  \mu_2(M B(Q_2)) \|a\|^2_{\bmo^2_M(\mu_2)}.
\end{align}
Inequalities~\eqref{new claim paraproduct 1} and~\eqref{new claim paraproduct 2} give
\[
\sum_{\substack{Q\in \Dd'_2\\ Q\subset Q'_2}} \alpha_Q  \mu_2(Q)
\ls \sum_{\substack{Q_2 \textup{ good} \\Q_2 \subset Q'_2 \\ \ell(Q_2) \leq \delta^r\ell(Q'_2)}}
 \mu_2(M B(Q_2)) \|a\|^2_{\bmo^2_M(\mu_2)}.
\]
To show~\eqref{new claim paraproduct}, it is sufficient to show that every point~$q \in Q'_2$ is covered by at most~$N$ balls~$MB(Q_2)$ for~$N$ independent of~$Q_2$, as it implies
\[
\sum_{\substack{Q_2 \textup{ good} \\Q_2 \subset Q'_2 \\ \ell(Q_2) \leq \delta^r\ell(Q'_2)}}
 \mu_2(M B(Q_2)) \leq N\mu_2(Q'_2).
\]
To prove this, we adapt a part of the proof of \cite[Lemma~7.1]{HM12a}, where they prove the same result in the setting of metric spaces~$(X,d)$.
For convenience, we note the differences in our notations in Table~\ref{tab:notation1}.
\begin{table}[H]
\caption{ Differences between our notation and that in~\cite{HM12a}.}\label{tab:notation1}
\begin{center}
\begin{tabular}{|c| c|}
  \hline
    Our notation & Notation in~\cite{HM12a} \\ \hline
   $Q'_2$ & $Q$  \\ \hline
   $Q_2$ & $R$,\\ \hline
   $C_Q$ & $C_0$ \\ \hline
   $c_Q$ & $C_1$ \\ \hline
   $M$ & $\kappa$ \\ \hline
\end{tabular}
\end{center}
\end{table}

The proof in~\cite{HM12a} goes through almost unchanged.
The idea is that we first show that every~$x \in Q'_2$ can belong to only $\ls 1$ balls~$MB(Q_2)$ associated with a fixed generation~$k \geq \text{gen}(Q'_2) + r$.
We then show that
if $MB(Q_2^k) \cap MB(Q_2^l)$, where $k = \text{gen}(Q_2^k)$ and $l = \text{gen}(Q_2^l)$, then $|k-l| \ls 1$.

The only difference is that, instead of obtaining that if~$R$ is a chosen cube for which $\text{gen}(R) = k > \text{gen}(Q) + r$, then for all~$x \in \kappa B_R$ we have
\[C_0\kappa\delta^k \leq d(x,X\backslash Q) \leq 3CC_0\kappa \delta^{k-1};\]
we will get that if~$Q_2$ is a chosen cube for which $\text{gen}(Q_2) = k > \text{gen}(Q'_2) + r$, then for all~$x \in MB(Q_2)$ we have
\[A_0 C_Q M \delta^k \leq \rho_2(x, X_2 \backslash Q'_2) \leq 4A_0^2 C_K C_QM\delta^{k^-1},\]
where $A_0$ is the quasitriangle constant of~$\rho_0$ and $C_K$ is fixed in the kernel estimates in Assumptions~\ref{assum_2} and~\ref{assum_4}.
This completes the proof of Lemma~\ref{lem4.sepnest}.
\end{proof}
%%%%%%%%%%%%%%%%%%%%%%%%%%%%%%%%%%%%%%%%%%%%%%%%%%%%%%%%%%%%%
The $\bmo$ norms above are estimated as follows.
\begin{lem}\label{lem5.sepnest}
   Suppose $Q_1$, $Q'_1$ are separated, meaning $\ell(Q_1) \leq \ell(Q'_1)$ and $\rho_1(Q_1,Q'_1) > \mathcal{C}\ell(Q_1)^{\gamma_1}\ell(Q'_1)^{1-\gamma_1}$.
  Then there holds that
  \[\| b^{u_1 u'_1}_{Q_1 Q'_1}\|_{\bmo^2_{C_K}(\mu_2)} \ls A_{Q_1,Q'_1}^{\textup{sep}}, \quad \text{where } C_K >1.\]
\end{lem}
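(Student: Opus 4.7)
The plan is to reduce the BMO estimate to an atomic test via the duality between $\bmo^2_{C_K}(\mu_2)$ and the atomic Hardy space $H^{1,2}(\mu_2)$ (as stated in the paper right after Definition~\ref{defn:hardy}). Concretely, it suffices to prove that for every $2$-atom $a$ associated with a ball $B_2\subset X_2$, there holds $|\langle b^{u_1 u'_1}_{Q_1 Q'_1},a\rangle_{L^2(\mu_2)}|\ls A^{\textup{sep}}_{Q_1,Q'_1}$. Unfolding the definition of $b^{u_1 u'_1}_{Q_1 Q'_1}$ and moving $T^*$ to the other side gives, formally, $\langle b^{u_1 u'_1}_{Q_1 Q'_1},a\rangle=\langle T(h_{u_1}^{Q_1}\otimes a),\,h_{u'_1}^{Q'_1}\otimes 1\rangle$, which we interpret rigorously, in the spirit of Section~\ref{subsec:intepretation}, by writing $1=\chi_V+\chi_{V^c}$ in the second variable for a sufficiently large concentric enlargement $V=\Lambda B_2$ with $\Lambda=\Lambda(A_0,C_K)$ chosen so that the separation inequality $\rho_2(V^c,B_2)\geq C_K\cdot 2A_0\,r_{B_2}$ holds.

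For the local piece $\mathrm{I}:=\langle T(h_{u_1}^{Q_1}\otimes a),h_{u'_1}^{Q'_1}\otimes\chi_V\rangle$, the separation hypothesis gives $Q_1\cap Q'_1=\emptyset$ together with the geometric condition $\rho_1(y_1,z)\leq \rho_1(x_1,z)/C_K$ required by Lemma~\ref{lem:property_kernel} (this is already verified in~\eqref{eq2.sepsep} of the Sep/Sep analysis). Using the cancellation $\int h_{u_1}^{Q_1}\,d\mu_1=0$, the lemma produces a factor $\|h_{u_1}^{Q_1}\|_{L^2(\mu_1)}\|a\|_{L^2(\mu_2)}\mu_1(Q_1)^{1/2}\|\chi_V\|_{L^2(\mu_2)}$ times the $X_1$-integral from Lemma~\ref{lem.sepsep}; the latter is bounded by $A^{\textup{sep}}_{Q_1,Q'_1}/\mu_1(Q_1)^{1/2}$ via Lemma~\ref{lem1:sepsep}. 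Once all normalizations are collected, the measure factor $\mu_2(V)^{1/2}\|a\|_{L^2(\mu_2)}$ is absorbed (this is exactly the role of the expansion parameter $C_K$ in the definition of $\bmo^2_{C_K}$), giving $|\mathrm{I}|\ls A^{\textup{sep}}_{Q_1,Q'_1}$.

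For the distant piece $\mathrm{II}:=\langle T(h_{u_1}^{Q_1}\otimes a),h_{u'_1}^{Q'_1}\otimes\chi_{V^c}\rangle$, the supports are now disjoint in both variables, and we have cancellation on both sides: $\int h_{u_1}^{Q_1}\,d\mu_1=\int a\,d\mu_2=0$. We therefore apply the full bi-parameter Lemma~\ref{lem1:property_kernel}. The $X_1$-integral is again estimated via Lemma~\ref{lem1:sepsep}, while the $X_2$-integral, using $\rho_2(y_2,w)\leq 2A_0 r_{B_2}$ for $y_2,w\in B_2$ and Lemma~\ref{upper_dbl_lem1} (analogously to the computation~\eqref{eq1_inter} in the $T(1)$-interpretation section), is controlled by an absolute constant. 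Combining these with the atomic normalization produces $|\mathrm{II}|\ls A^{\textup{sep}}_{Q_1,Q'_1}$ as well, and summing the two pieces completes the proof. The main delicate point is the bookkeeping of the three distinct $\mu_2$-measures ($\mu_2(B_2)$, $\mu_2(V)$, $\mu_2(C_K B_2)$) against the atom normalization: because $\mu_2$ is merely upper doubling, not doubling, these cannot be exchanged freely, and it is precisely the $C_K$-enlargement built into the $\bmo^2_{C_K}$ norm that allows the factors to cancel cleanly against $\|a\|_{L^2(\mu_2)}\leq \mu_2(C_K B_2)^{-1/2}$.
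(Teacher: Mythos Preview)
Your approach is essentially identical to the paper's: reduce to $2$-atoms by duality, split $1=\chi_V+\chi_{V^c}$, handle the local piece with Lemma~\ref{lem:property_kernel} and the distant piece with Lemma~\ref{lem1:property_kernel}, then invoke Lemma~\ref{lem1:sepsep} for the $X_1$-integral and Lemma~\ref{upper_dbl_lem1} for the $X_2$-tail. The paper simply takes $V=C_KJ$ and uses the center $x_J$ as the fixed reference point in the H\"older replacement (so that $\rho_2(y_2,x_J)\le r_J\le \rho_2(x_2,x_J)/C_K$ for $y_2\in J$, $x_2\notin C_KJ$), which sidesteps the need for a larger $\Lambda$ and keeps the enlargement exactly matched to the $\bmo^2_{C_K}$ normalization you correctly identified as essential.
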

\begin{proof}
  By duality, it is sufficient to show $|\langle b^{u_1 u'_1}_{Q_1 Q'_1}, a\rangle| \ls A_{Q_1,Q'_1}^{\textup{sep}}$ for all $2$-atom~$a$ in Hardy space~$H^1(\mu_2)$.
  Fix a ball $J = B(x_J,r_J) \subset X_2$ and an atom~$a$ such that $\supp a \subset J$, $\int_{J} a \,d\mu_2 = 0$ and $\|a\|_{L^2(\mu_1)} \leq \mu_1(J)^{-1/2}$ .
  Consider
  \begin{align*}
  \Big \langle b^{u_1 u'_1}_{Q_1 Q'_1}, a\Big\rangle
    =  \Big \langle  \langle T^*(h_{u'_1}^{Q'_1} \otimes 1), h_{u_1}^{Q_1}\rangle_1, a\Big\rangle
    =  \Big \langle   T^*(h_{u'_1}^{Q'_1} \otimes 1), h_{u_1}^{Q_1} \otimes a\Big\rangle
    =  \Big \langle   T( h_{u_1}^{Q_1} \otimes a),h_{u'_1}^{Q'_1} \otimes 1\Big\rangle.
    \end{align*}
  Let~$V := C_KJ = B(x_J,C_Kr_J)$.
  We need to show that
\begin{align}\label{eq4.sepnest}
    \Big|\Big \langle   T( h_{u_1}^{Q_1} \otimes a),h_{u'_1}^{Q'_1} \otimes 1\Big\rangle\Big|
     &\leq \Big|\Big \langle   T( h_{u_1}^{Q_1} \otimes a),h_{u'_1}^{Q'_1} \otimes \chi_V\Big\rangle\Big|
     + \Big|\Big \langle   T( h_{u_1}^{Q_1} \otimes a),h_{u'_1}^{Q'_1} \otimes \chi_{V^c}\Big\rangle\Big|\\
     &\ls  A_{Q_1,Q'_1}^{\textup{sep}}. \noz
  \end{align}
We will consider each term in~\eqref{eq4.sepnest}, individually.

Since~$Q_1$ and~$Q'_1$ are separated, inequality~\eqref{eq2.sepsep} holds for all $y_1,z \in Q_1$ and $x_1 \in Q'_1$.
Applying Lemma~\ref{lem:property_kernel} to $\phi_1 =  h_{u_1}^{Q_1}$, $\phi_2 = a$, $\theta_1= h_{u'_1}^{Q'_1} $ and $\theta_2 = \chi_V$ we have
\begin{align*}
    \Big|\Big \langle   T( h_{u_1}^{Q_1} \otimes a),h_{u'_1}^{Q'_1} \otimes \chi_V\Big\rangle\Big|
     &\ls \|h_{u_1}^{Q_1}\|_{L^2(\mu_1)} \|a\|_{L^2(\mu_2)}
     \mu_1(Q_1)^{1/2} \|\chi_V\|_{L^2(\mu_2)}\\
     & \times \int_{Q'_1}\frac{C_K^{\al_1} \ell(Q_1)^{\al_1}}
     {\rho_1(x_1,y_1)^{\al_1}\lambda_1(x_1,\rho_1(x_1,y_1))}
     |h_{u'_1}^{Q'_1}(x_1) |
     \,d\mu_1(x_1).
  \end{align*}
Using inequality~\eqref{eq3:sepsep} in Lemma~\ref{lem1:sepsep}, the integral above is bounded by
\[
\frac{\ell(Q_1)^{\al_1/2}\ell(Q'_1)^{\al_1/2}}{D(Q_1,Q'_1)^{\al_1}
\lambda_1(z,D(Q_1,Q'_1))}
\mu_1(Q'_1)^{1/2}
= \frac{A_{Q_1,Q'_1}^{\textup{sep}}}{\mu_1(Q_1)^{1/2}}.
\]
Therefore,
  \begin{eqnarray*}
    \Big|\Big \langle   T( h_{u_1}^{Q_1} \otimes a),h_{u'_1}^{Q'_1} \otimes \chi_V\Big\rangle\Big|
     \ls \|a\|_{L^2(\mu_2)} \mu_2(V)^{1/2}
      \mu_1(Q_1)^{1/2}
      \frac{A_{Q_1,Q'_1}^{\textup{sep}}}{\mu_1(Q_1)^{1/2}}
\ls A_{Q_1,Q'_1}^{\textup{sep}}..
  \end{eqnarray*}

  Next we consider the second term. For all $y_2, x_J \in J$ and $x_2 \in V^c$, we have
  \begin{equation}\label{eq5.sepnest}
    \rho_2(x_2,x_J) \geq C_K r_J \geq C_K \rho_2(x_J,y_2).
  \end{equation}
  Since~$Q_1\cap Q'_1 = \emptyset$, and $\supp a \cap \supp \chi_{V^c} = \emptyset$, we can follow a similar proof structure to that in Lemma~\ref{lem1:sepnest}. That is,  applying Lemma~\ref{lem1:property_kernel} to
$\phi_1 = h^{Q_1}_{u_1}$, $\phi_2 = a$, $\theta_1 = h^{Q'_1}_{u'_1}$ and $\theta_2 = \chi_{V^c}$ we have
\begin{align}\label{eq2:SepIn_c}
  &|\langle T(h_{u_1}^{Q_1} \otimes a), h_{u'_1}^{Q'_1} \otimes \chi_{V^c}\rangle| \noz\\
   &\hspace{0.5cm}\ls \|h^{Q_1}_{u_1}\|_{L^2(\mu_1)}  \|a\|_{L^2(\mu_2)}
   \mu_1(Q_1)^{1/2} \mu_2(J)^{1/2}
    \int_{Q'_1}    \frac{C_Q^{\al_1}\ell(Q_1)^{\al_1}}
    {\rho_1(x_1,z)^{\al_1}\lambda_1(z,\rho_1(x_1,z))} |h^{Q'_1}_{u'_1}(x_1)| \,d\mu_1(x_1) \noz\\
    & \hspace{1cm}\times
  \int_{V^c} \frac{r_J^{\al_2}}
  {\rho_2(x_2,x_J)^{\al_2}\lambda_2(x_J,\rho_2(x_2,x_J))} \,d\mu_2(x_2).
\end{align}
 Using inequality~\eqref{eq3:sepsep} of Lemma~\ref{lem1:sepsep}, the first integral above is bounded by
 \[\frac{\ell(Q_1)^{\al_1/2}\ell(Q'_1)^{\al_1/2}}{D(Q_1,Q'_1)^{\al_1}
\lambda_1(z,D(Q_1,Q'_1))} \mu_1(Q'_1)^{1/2}
\ls \frac{A^{\text{sep}}_{Q_1,Q'_1}}{\mu_1(Q_1)^{1/2}}.\]
 Using Lemma~\ref{upper_dbl_lem1},  the second integral above is bounded by
\begin{eqnarray*}%\label{eq6:sep_nest}
 r_J^{\al_2}
 \int_{X_2 \backslash B(x_J, C_K r_J)}  \frac{\rho_2(x_2,x_J)^{-\al_2}}{\lambda_2(x_J,\rho_2(x,x_J))} \,d\mu_2(x_2) \ls r^{\al_2} (C_K r_J)^{-\al_2}
 \ls 1.
\end{eqnarray*}
Thus, \eqref{eq2:SepIn_c} becomes
\begin{eqnarray*}
 |\langle T(h_{u_1}^{Q_1} \otimes h_{u_2}^{Q_2}), h_{u'_1}^{Q'_1} \otimes \chi_{V^c}\rangle|
   \ls    \|a\|_{L^2(\mu_2)}
   \mu_1(Q_1)^{1/2} \mu_2(J)^{1/2}
    \frac{A^{\text{sep}}_{Q_1,Q'_1}}{\mu_1(Q_1)^{1/2}}
    = A^{\textup{sep}}_{Q_1,Q'_1}.
\end{eqnarray*}
This establishes Lemma~\ref{lem5.sepnest}.
\end{proof}

Now, using Lemma~\ref{lem5.sepnest} and Proposition~\ref{sepseppop.1} we go back to considering
\begin{eqnarray*}
  \lefteqn{  \sum_{Q'_1} \bigg[
  \sum_{\substack{Q_1\textup{ good}\\Q_1, Q'_1 \textup{separated} }} \,
  \sum_{u_1,u'_1}
  \| b^{u_1 u'_1}_{Q_1 Q'_1}\|_{\bmo^2_{C_K}(\mu_2)}
  \|f^{Q_1,u_1}\|_{L^2(\mu_2)}
   \bigg]^2 }\hspace{0.5cm}\\
   &\ls&   \sum_{Q'_1} \bigg[
  \sum_{\substack{Q_1 \textup{ good}\\Q_1, Q'_1 \textup{separated} }} \,
  \sum_{u_1}
  A_{Q_1,Q'_1}^{\textup{sep}}
  \|f^{Q_1,u_1}\|_{L^2(\mu_2)}
   \bigg]^2
   \ls \sum_{Q_1} \sum_{u_1}   \|f^{Q_1,u_1}\|^2_{L^2(\mu_2)} \\
   &=& \sum_{Q_1} \sum_{u_1}  \int_{X_2} \Big|\int_{X_1} f(x_1,x_2) h_{u_1}^{Q_1}(x_1) \,d\mu_1(x_1) \Big|^2 \,d\mu_2(x_2)\\
   &=& \int_{X_2} \sum_{Q_1} \sum_{u_1}   |\langle f(\cdot, x_2),h_{u_1}^{Q_1} \rangle|^2 \,d\mu_2(x_2) \\
   &=& \int_{X_2} \|f(\cdot,x_2)\|_{L^2(\mu_1)}^2 \,d\mu_2(x_2)
   = \int_{X_2}\int_{X_1} |f(x_1,x_2)|^2 \,d\mu = \|f\|_{L^2(\mu)}^2.
\end{eqnarray*}
This lets us control the first sum in~\eqref{sepnesteq.3} by $\|f\|_{L^2(\mu)} \|g\|_{L^2(\mu)}$. Other sum in~\eqref{sepnesteq.3} can be controlled in the same way, completing the \emph{Sep/Nes} case.
%%%%%%%%%%%%%%%%%%%%%%%%%%%%%%%%%%%%%%%%%%%%%%%%%%%%%%%%%%%%%%%%%%%%%%%%%%%%%%%%
\subsection{Separated/Adjacent cubes}\label{subsec:sep_adj}
In this section, we consider the \emph{Sep/Adj} case. That is, we estimate~\eqref{coreeq.1} when $Q_1, Q'_1$ are separated and $Q_2, Q'_2$ are adjacent:
\begin{eqnarray*}
  &&\ell(Q_1) \leq \ell(Q'_1), \quad \rho_1(Q_1,Q'_1) > \mathcal{C}\ell(Q_1)^{\gamma_1}\ell(Q'_1)^{1-\gamma_1}, \quad \text{and} \\
  &&\delta^r \ell(Q'_2) \leq \ell(Q_2) \leq \ell(Q'_2), \quad \rho_2(Q_2,Q'_2) \leq \mathcal{C}\ell(Q_2)^{\gamma_2}\ell(Q'_2)^{1-\gamma_2},
\end{eqnarray*}
where $\mathcal{C}:= 2A_0C_QC_K$.
With the same reason as in the \emph{Sep/Sep} case, the terms in~\eqref{coreeq.1} which involves~$h_0^{Q_1}$ vanish.
Therefore, we only need to bound eight (out of 16 terms) in~\eqref{coreeq.1}:
\begin{align}\label{sepadj_eq.1}
 &\hspace{-0.5cm}\sum_{\substack{Q_1,Q'_1  \\ \text{separated} }} \,
  \sum_{\substack{Q_2,Q'_2\\ \text{adjacent} }} \,
  \sum_{\substack{u_1,u'_1 \\u_2,u'_2}} \,
  \langle f, h_{u_1}^{Q_1} \otimes h_{u_2}^{Q_2}\rangle
  \langle g, h_{u'_1}^{Q'_1} \otimes h_{u'_2}^{Q'_2}\rangle
  \langle T(h_{u_1}^{Q_1} \otimes h_{u_2}^{Q_2}), h_{u'_1}^{Q'_1} \otimes h_{u'_2}^{Q'_2}\rangle \noz\\
  &+ \sum_{\substack{Q_1,Q'_1  \\ \text{separated} }} \,
  \sum_{\substack{Q_2,Q'_2 \\ \text{adjacent} \\ \text{gen}(Q'_2) = m}} \,
  \sum_{\substack{u_1,u'_1 \\u_2}} \,
  \langle f, h_{u_1}^{Q_1} \otimes h_{u_2}^{Q_2}\rangle
  \langle g, h_{u'_1}^{Q'_1} \otimes h_{0}^{Q'_2}\rangle
  \langle T(h_{u_1}^{Q_1} \otimes h_{u_2}^{Q_2}), h_{u'_1}^{Q'_1} \otimes h_{0}^{Q'_2}\rangle\noz\\
  &+  \sum_{\substack{Q_1,Q'_1  \\ \text{separated} \\ \text{gen}(Q'_1) = m }} \,
  \sum_{\substack{Q_2,Q'_2\\ \text{adjacent}  }} \,
  \sum_{\substack{u_1\\u_2,u'_2}} \,
  \langle f, h_{u_1}^{Q_1} \otimes h_{u_2}^{Q_2}\rangle
  \langle g, h_{0}^{Q'_1} \otimes h_{u'_2}^{Q'_2}\rangle
  \langle T(h_{u_1}^{Q_1} \otimes h_{u_2}^{Q_2}), h_{0}^{Q'_1} \otimes h_{u'_2}^{Q'_2}\rangle \noz\\
  &+  \sum_{\substack{Q_1,Q'_1  \\ \text{separated} \\ \text{gen}(Q'_1) = m }} \,
  \sum_{\substack{Q_2,Q'_2\\ \text{adjacent} \\ \text{gen}(Q'_2) = m }} \,
  \sum_{\substack{u_1 \\u_2}} \,
  \langle f, h_{u_1}^{Q_1} \otimes h_{u_2}^{Q_2}\rangle
  \langle g, h_{0}^{Q'_1} \otimes h_{0}^{Q'_2}\rangle
  \langle T(h_{u_1}^{Q_1} \otimes h_{u_2}^{Q_2}), h_{0}^{Q'_1} \otimes h_{0}^{Q'_2}\rangle \noz\\
  &+ \sum_{\substack{Q_1,Q'_1  \\ \text{separated} }} \,
  \sum_{\substack{Q_2,Q'_2\\ \text{adjacent} }} \,
  \sum_{\substack{u_1,u'_1 \\u'_2}} \,
  \langle f, h_{u_1}^{Q_1} \otimes h_{0}^{Q_2}\rangle
  \langle g, h_{u'_1}^{Q'_1} \otimes h_{u'_2}^{Q'_2}\rangle
  \langle T(h_{u_1}^{Q_1} \otimes h_{0}^{Q_2}), h_{u'_1}^{Q'_1} \otimes h_{u'_2}^{Q'_2}\rangle \noz\\
  &+ \sum_{\substack{Q_1,Q'_1  \\ \text{separated} }} \,
  \sum_{\substack{Q_2,Q'_2 \\ \text{adjacent} \\ \text{gen}(Q_2) = \text{gen}(Q'_2) = m}} \,
  \sum_{u_1,u'_1} \,
  \langle f, h_{u_1}^{Q_1} \otimes h_{0}^{Q_2}\rangle
  \langle g, h_{u'_1}^{Q'_1} \otimes h_{0}^{Q'_2}\rangle
  \langle T(h_{u_1}^{Q_1} \otimes h_{0}^{Q_2}), h_{u'_1}^{Q'_1} \otimes h_{0}^{Q'_2}\rangle \noz\\
  &+   \sum_{\substack{Q_1,Q'_1  \\ \text{separated} \\ \text{gen}(Q'_1) = m }} \,
  \sum_{\substack{Q_2,Q'_2\\ \text{adjacent} \\\text{gen}(Q_2) = m  }} \,
  \sum_{\substack{u_1\\u'_2}} \,
  \langle f, h_{u_1}^{Q_1} \otimes h_{0}^{Q_2}\rangle
  \langle g, h_{0}^{Q'_1} \otimes h_{u'_2}^{Q'_2}\rangle
  \langle T(h_{u_1}^{Q_1} \otimes h_{0}^{Q_2}), h_{0}^{Q'_1} \otimes h_{u'_2}^{Q'_2}\rangle\noz\\
  &+  \sum_{\substack{Q_1,Q'_1  \\ \text{separated} \\ \text{gen}(Q'_1) = m }} \,
  \sum_{\substack{Q_2,Q'_2\\ \text{adjacent} \\ \text{gen}(Q_2) = \text{gen}(Q'_2) = m }} \,
  \sum_{u_1} \,
  \langle f, h_{u_1}^{Q_1} \otimes h_{0}^{Q_2}\rangle
  \langle g, h_{0}^{Q'_1} \otimes h_{0}^{Q'_2}\rangle
  \langle T(h_{u_1}^{Q_1} \otimes h_{0}^{Q_2}), h_{0}^{Q'_1} \otimes h_{0}^{Q'_2}\rangle.
\end{align}
We will consider the first term in~\eqref{sepadj_eq.1}. The other terms can be estimated analogously.

Again, as $Q_1$ and~$Q'_1$ are separated, inequality~\eqref{eq2.sepsep} holds for all $y_1,z \in Q_1$ and~$x_1 \in Q'_1$. Applying Lemma~\ref{lem:property_kernel} to
$\phi_1 = h_{u_1}^{Q_1}$, $\phi_2 = h_{u_2}^{Q_2}$, $\theta_1 = h_{u'_1}^{Q'_1}$ and $\theta_2 = h_{u'_2}^{Q'_2}$ we obtain
  \begin{eqnarray*}
    |\langle T(h_{u_1}^{Q_1} \otimes h_{u_2}^{Q_2}), h_{u'_1}^{Q'_1} \otimes h_{u'_2}^{Q'_2}\rangle|
     &\ls& \|h_{u_1}^{Q_1}\|_{L^2(\mu_1)} \|h_{u_2}^{Q_2}\|_{L^2(\mu_2)}  \mu_1(Q_1)^{1/2} \|h_{u'_2}^{Q'_2}\|_{L^2(\mu_2)} \\
     &&\times \int_{Q'_1} \frac{ C_Q^{\al_1}\ell(Q_1)^{\al_1}}{ \rho_1(x_1,z)^{\al_1}\lambda_1(z,\rho_1(x_1,z))}
     |h_{u'_1}^{Q'_1}(x_1)|\, d\mu_1(x_1).
  \end{eqnarray*}
 Using equation~\eqref{eq3:sepsep} of Lemma~\ref{lem1:sepsep}, the integral above is bounded by
 \[\frac{\ell(Q_1)^{\al_1/2}\ell(Q'_1)^{\al_1/2}}{D(Q_1,Q'_1)^{\al_1}
\lambda_1(z,D(Q_1,Q'_1))} \mu_1(Q'_1)^{1/2}
\ls \frac{A^{\text{sep}}_{Q_1,Q'_1}}{\mu_1(Q_1)^{1/2}}.\]
Therefore
\[
|\langle T(h_{u_1}^{Q_1} \otimes h_{u_2}^{Q_2}), h_{u'_1}^{Q'_1} \otimes h_{u'_2}^{Q'_2}\rangle|
\ls A^{\text{sep}}_{Q_1,Q'_1}.
\]

Note that for a given~$Q_2$, there are finitely many~$Q'_2$ such that
\[\delta^r \ell(Q'_2) \leq \ell(Q_2) \leq \ell(Q'_2), \quad \rho_2(Q_2,Q'_2) \leq \mathcal{C}\ell(Q_2)^{\gamma_2}\ell(Q'_2)^{1-\gamma_2}.\]
Using Proposition~\ref{sepseppop.1} we can now estimate the first term in~\eqref{sepadj_eq.1} by
\begin{eqnarray*}
\lefteqn{
  \sum_{\substack{Q_1,Q'_1  \\ \text{separated} }} \quad
  \sum_{\substack{Q_2,Q'_2\\ \text{adjacent} }} \quad
  \sum_{\substack{u_1,u'_1 \\u_2,u'_2}}
  A_{Q_1,Q'_1}^{\textup{sep}}
  |\langle f, h_{u_1}^{Q_1} \otimes h_{u_2}^{Q_2}\rangle|
  |\langle g, h_{u'_1}^{Q'_1} \otimes h_{u'_2}^{Q'_2}\rangle|}\\
  &\leq& \sum_{\substack{u_1,u'_1 \\u_2,u'_2}} \quad
  \sum_{\ell(Q_1) \leq \ell(Q'_1)}
  A_{Q_1,Q'_1}^{\textup{sep}}
  \bigg(\sum_{Q_2} |\langle  f, h_{u_1}^{Q_1} \otimes h_{u_2}^{Q_2} \rangle|^2 \bigg)^{1/2}
  \bigg(\sum_{Q'_2}|\langle g, h_{u'_1}^{Q'_1} \otimes h_{u'_2}^{Q'_2}\rangle|^2  \bigg)^{1/2} \\
  &\ls&
   \bigg(\sum_{Q_1}\sum_{u_1}\sum_{Q_2}\sum_{u_2} |\langle  f, h_{u_1}^{Q_1} \otimes h_{u_2}^{Q_2} \rangle|^2 \bigg)^{1/2}
  \bigg(\sum_{Q'_1}\sum_{u'_1}\sum_{Q'_2}\sum_{u'_2}|\langle g, h_{u'_1}^{Q'_1} \otimes h_{u'_2}^{Q'_2}\rangle|^2  \bigg)^{1/2} \\
  &\ls& \|f\|_{L^2(\mu)} \|g\|_{L^2(\mu)}.
\end{eqnarray*}
This completes the \emph{Sep/Adj} case.
%%%%%%%%%%%%%%%%%%%%%%%%%%%%%%%%%%%%%%%%%%%%%%%%%%%%%%%%%%%%%%%%%%%%%%%%%%%
\section{Adjacent Cubes}\label{sec:adjacent}
In this section, we handle the cases involving adjacent cubes.
In Section~\ref{subsec:surgery}, we present the so-called \emph{surgery} for adjacent cubes, which lets us split the cubes into smaller parts, which are easier to deal with.
In Section~\ref{subsec:ball_cover}, we recall the construction of the \emph{random almost-covering by balls}, which will further allow us to decompose the intersection~$\Delta$ of two dyadic cubes from different dyadic grids into smaller parts. This construction is first introduced in~\cite[Section~8]{HM12a}.
In Section~\ref{subsec:adj_in}, we consider the \emph{Adj/Nes} case when $Q_1, Q'_1$ are adjacent and $Q_2, Q'_2$ are nested.
In Section~\ref{subsec:adj_adj},  we consider the \emph{Adj/Adj} case when $Q_1, Q'_1$ are adjacent and $Q_2, Q'_2$ are adjacent.

\subsection{Surgery for adjacent cubes}\label{subsec:surgery}
Handling the cases involving adjacent cubes requires a further splitting, namely the surgery for adjacent cubes. This idea was also used by other authors who work on non-homogeneous settings. See for example \cite{NTV03, HM12a, HM14}.

The setting of this section is a non-homogeneous (one-parameter) quasimetric spaces~$(X,\rho,\mu)$.
We can think of~$X$ as the factor~$X_1$ or~$X_2$ of our product space.
We drop the subscript in all symbols for simplicity.
In this section, we present the surgery of adjacent cubes on $(X,\rho,\mu)$. This surgery is derived from~\cite[Section~9]{HM12a}.
Later, we will apply this surgery to the cubes in each factor~$X_1$ and~$X_2$ of the bi-parameter space~$\mathcal{X}$.

Let~$\Dd, \Dd'$ be two independent random dyadic grids in~$X$.
Let~$Q \in \Dd$ and~$Q' \in \Dd'$ be two adjacent cubes. That is
\[\delta^r \ell(Q') \leq \ell(Q) \leq \ell(Q'),
 \quad \rho_1(Q,Q') \leq \mathcal{C}\ell(Q)^{\gamma}\ell(Q')^{1-\gamma},
 \quad \text{where } \mathcal{C} = 2A_0C_QC_K.\]

Given a fixed small~$\epsilon \in (0,1)$, we define the following sets:
\begin{eqnarray*}
  \Delta &:=& Q \cap Q', \\
  \delta_Q &:=& \{x: \rho(x,Q) \leq \epsilon \ell(Q) \textup{ and }
  \rho(x, X\backslash Q) \leq \epsilon\ell(Q)\}, \\
   \delta_{Q'} &:=& \{x: \rho(x,Q') \leq \epsilon \ell(Q') \textup{ and }
  \rho(x, X\backslash Q') \leq \epsilon\ell(Q')\}, \\
  Q_b &:=& Q \cap \bigcup_{\substack{Q'\in \Dd' \\\delta^r \ell(Q') \leq \ell(Q) \leq \ell(Q') }} \delta_{Q'} ,\\
   Q'_b &:=& Q' \cap \bigcup_{\substack{Q\in \Dd \\\delta^r \ell(Q') \leq \ell(Q) \leq \ell(Q') }} \delta_{Q} \\
  Q_s &:=& Q \backslash (\Delta \cup \delta_{Q'}), \\
  Q'_s &:=& Q' \backslash (\Delta \cup \delta_{Q}), \\
  Q_{\partial} &:=& Q \backslash (\Delta \cup Q_s), \\
  Q'_{\partial} &:=& Q \backslash (\Delta \cup Q'_s), \text{ and } \\
  \widetilde{\Delta} &:=& \Delta \backslash (\delta_Q \cup \delta_{Q'}).
\end{eqnarray*}

Figure~\ref{fig:surgery_1} illustrates the sets $\Delta$, $\delta_{Q}$, $\delta_{Q'}$, $Q_s$, $Q_{\partial}$ and $\widetilde{\Delta}$ in~$\R$ to give some intuition as to what is going on in the more general case.
\begin{figure}[H]
	\centering
\captionsetup{width=0.8\textwidth}
  	\includegraphics[width=15cm]{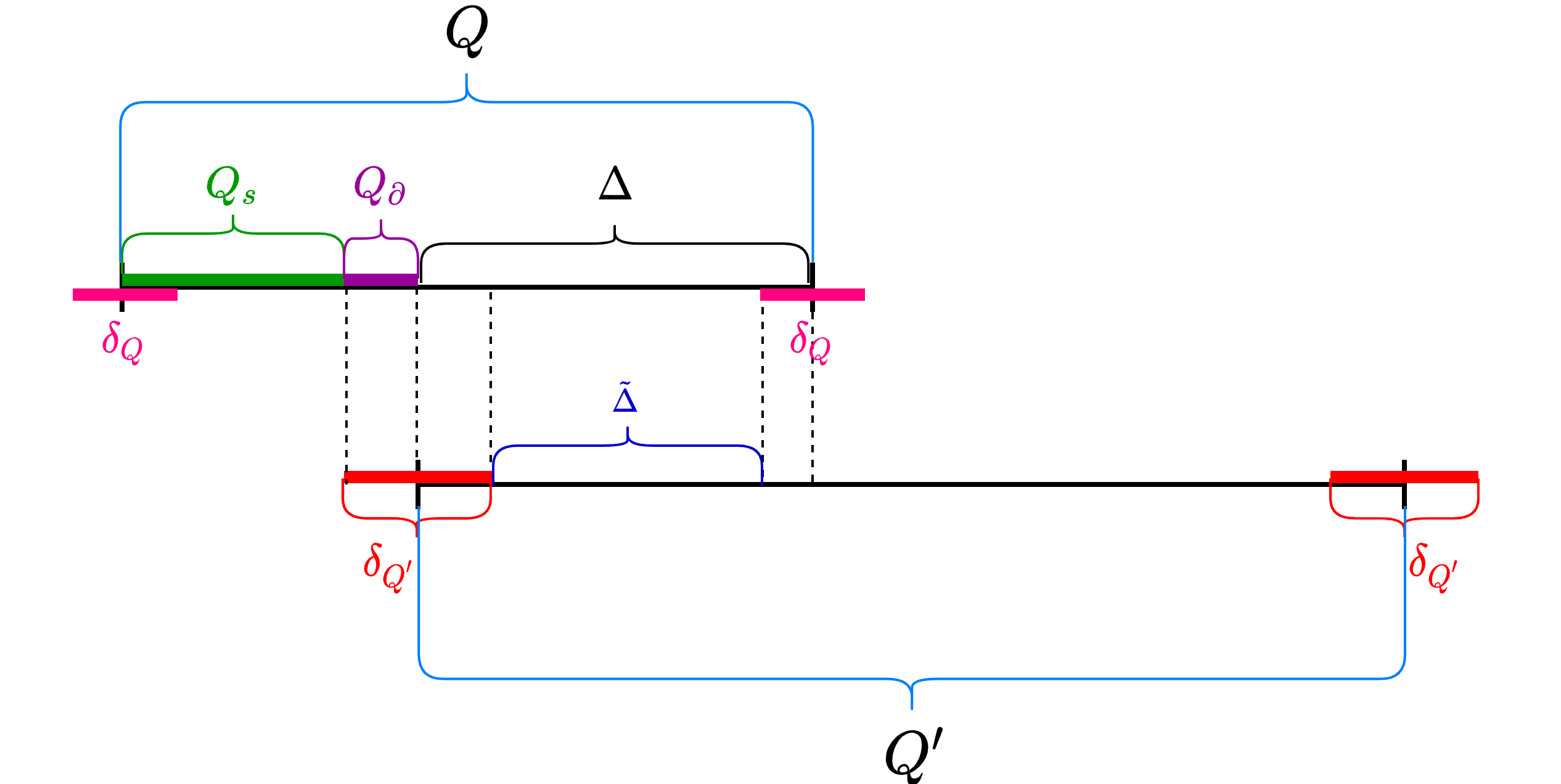}
   	\caption{Sets  $\Delta$, $\delta_{Q'}$, $Q_s$ $Q_{\partial}$ and $\widetilde{\Delta}$  in~$\R$.}
  	\label{fig:surgery_1}
\end{figure}

The set $\Delta$ is the intersection of the two cubes~$Q$ and~$Q'$,
$\delta_Q$ is the set of points close to the boundary of~$Q$,
$Q_b$ is the set of points in~$Q$ which are close to the boundary of at least one cube~$Q' \in \Dd'$ which is adjacent to~$Q$,
$Q_s$ is the set of points in~$Q$ which are strictly separated from~$Q'$,
$Q_{\partial}$ is the set of points in~$Q$ which do not belong to~$Q'$ but are close to the boundary of~$Q'$,
and $\widetilde{\Delta}$ is the set of points that belong to both cubes~$Q$ and~$Q'$ but are far from the boundary of both cubes.
The sets~$\delta_{Q'}$, $Q'_b$, $Q'_s$ and~$Q'_{\partial}$ are described analogously.

\begin{rem}
  As we may notice from the definitions of the sets above, in order to perform surgery on the cubes~$Q$ and~$Q'$, it is sufficient if $\delta^r \ell(Q') \leq \ell(Q) \leq \ell(Q')$, which is one (of two) conditions satisfied by adjacent cubes.
\end{rem}

By \emph{surgery on adjacent cubes~$Q$, $Q'$} we mean the decomposition of
each of the cubes~$Q$ and~$Q'$ into three disjoint sets:
\begin{equation}\label{surgery_eq1}
  Q = Q_s \cup Q_{\partial} \cup \Delta \quad \text{and} \quad
  Q' = Q'_s \cup Q'_{\partial} \cup \Delta.
\end{equation}

Notice also that the following inclusion property holds.
\begin{equation}\label{surgery_eq2}
  Q_{\partial} \subset \delta_{Q'} \subset Q_b \quad \text{and} \quad
  Q'_{\partial} \subset \delta_{Q} \subset Q'_b.
\end{equation}

The following lemma will be useful in the calculations of~\eqref{eq:a} and~\eqref{eq:c} in Lemma~\ref{adj_in_lem3} below.
\begin{lem}\label{lem1:surgery}
Suppose $\epsilon \in (0,1)$ is fixed as in the definition of surgery above.
  For all $x \in Q$, $y \in Q'_s$, we have
  \begin{equation}\label{surgery_eq3}
  \lambda(x, \rho(x,y)) \gs C(\epsilon) \mu(Q)^{1/2} \mu(Q')^{1/2}.
  \end{equation}
  Similarly, equation~\eqref{surgery_eq3} also holds for all $x \in Q_s$ and $y \in Q'$.
\end{lem}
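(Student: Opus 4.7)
\textbf{Proof plan for Lemma~\ref{lem1:surgery}.}

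My plan is to reduce the claim to two separate estimates, one showing $\mu(Q) \lesssim C(\e)\,\lambda(x,\rho(x,y))$ and one showing $\mu(Q') \lesssim C(\e)\,\lambda(x,\rho(x,y))$; the geometric mean then yields the desired bound. The main tools are the defining properties of $Q'_s$ (which force a quantitative lower bound on $\rho(x,y)$), the upper-doubling condition, and the symmetry assumption \eqref{eq2:up_dbl} on the dominating function $\lambda$.

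First, since $y\in Q'_s=Q'\setminus(\Delta\cup\delta_Q)$, we have $y\notin Q$ (because $y\notin\Delta$ but $y\in Q'$), so $\rho(y,X\setminus Q)=0\le\e\ell(Q)$; as $y\notin\delta_Q$, the other defining inequality of $\delta_Q$ must fail, giving $\rho(y,Q)>\e\ell(Q)$. In particular, for every $x\in Q$,
\[
\rho(x,y)\ge\rho(y,Q)>\e\ell(Q)\ge\e\delta^{r}\ell(Q'),
\]
where the last step uses the adjacency hypothesis $\delta^r\ell(Q')\le\ell(Q)$.

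Second, I bound $\mu(Q)$. Since $x\in Q\subset B(x_Q,C_Q\ell(Q))$, we have $\rho(x,x_Q)\le C_Q\ell(Q)$, so by \eqref{eq2:up_dbl} and the upper doubling property,
\[
\mu(Q)\le\lambda(x_Q,C_Q\ell(Q))\lesssim\lambda(x,C_Q\ell(Q))\lesssim C(\e)\,\lambda(x,\e\ell(Q))\le C(\e)\,\lambda(x,\rho(x,y)).
\]
For $\mu(Q')$, I use the quasitriangle inequality together with the adjacency bound $\rho(Q,Q')\le\mathcal{C}\ell(Q)^{\gamma}\ell(Q')^{1-\gamma}\le\mathcal{C}\ell(Q')$ and the estimates $\rho(x,x_Q)\le C_Q\ell(Q)\le C_Q\ell(Q')$, $\rho(y,x_{Q'})\le C_Q\ell(Q')$ to conclude that $\rho(x,y)\le C_2\ell(Q')$ for a constant $C_2$ depending only on $A_0,C_Q,\mathcal{C}$. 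Then \eqref{eq2:up_dbl} applied with parameter $r=C_2\ell(Q')\ge\rho(x,y)$ yields $\lambda(y,C_Q\ell(Q'))\lesssim\lambda(x,C_2\ell(Q'))$, and doubling gives
\[
\mu(Q')\le\lambda(x_{Q'},C_Q\ell(Q'))\lesssim\lambda(y,C_Q\ell(Q'))\lesssim\lambda(x,C_2\ell(Q'))\lesssim C(\e)\,\lambda(x,\e\delta^{r}\ell(Q'))\le C(\e)\,\lambda(x,\rho(x,y)).
\]
Taking the geometric mean of the two bounds gives $\mu(Q)^{1/2}\mu(Q')^{1/2}\lesssim C(\e)\,\lambda(x,\rho(x,y))$, which is~\eqref{surgery_eq3}. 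The second statement, with $x\in Q_s$ and $y\in Q'$, follows by an identical argument: now $x\notin Q'$ and $x\notin\delta_{Q'}$ force $\rho(x,Q')>\e\ell(Q')$, hence $\rho(x,y)>\e\ell(Q')\ge\e\ell(Q)$, after which the same two-sided estimate on $\mu(Q)$ and $\mu(Q')$ goes through verbatim.

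There is no real obstacle here: the argument is routine once the correct geometric facts are extracted from the definition of $Q'_s$. The one point requiring care is the bookkeeping of quasimetric constants in the estimate $\rho(x,y)\le C_2\ell(Q')$, which is necessary to invoke the symmetry \eqref{eq2:up_dbl} when transferring $\lambda(y,\cdot)$ to $\lambda(x,\cdot)$; this is where the hypothesis $\ell(Q)\le\ell(Q')$ plays its role.
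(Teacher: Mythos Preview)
Your proof is correct and uses the same key ingredients as the paper: the lower bound $\rho(x,y)>\e\ell(Q)\ge\e\delta^r\ell(Q')$ extracted from the definition of $Q'_s$, the doubling of $\lambda$, the symmetry hypothesis~\eqref{eq2:up_dbl}, and the upper-doubling control $\mu(B)\le\lambda$. The organization differs slightly: the paper first factors $\lambda(x,\rho(x,y))\sim\lambda(x,\rho(x,y))^{1/2}\lambda(y,\rho(x,y))^{1/2}$ via~\eqref{eq1:up_dbl}, then bounds the two half-powers from below by $C(\e)\mu(Q)^{1/2}$ and $C(\e)\mu(Q')^{1/2}$ using only monotonicity and $x\in Q$, $y\in Q'$. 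This sidesteps the step you single out as the delicate one---the upper bound $\rho(x,y)\le C_2\ell(Q')$ needed to transfer $\lambda(y,\cdot)$ to $\lambda(x,\cdot)$---and in particular never uses the second adjacency condition $\rho(Q,Q')\le\mathcal{C}\ell(Q)^\gamma\ell(Q')^{1-\gamma}$. Your route works but carries this extra piece of bookkeeping; the paper's factorization trick is a small economy worth noting.
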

\begin{proof}
We will consider the first case when $x \in Q$, $y \in Q'_s$.
The second case when $x \in Q_s$, $y \in Q'$ can be proved analogously.

Note that for each $x \in Q$, $y \in Q'_s$, we have
$\rho(x,y) \geq \rho(y,\partial Q) > \epsilon \ell(Q) \geq \epsilon \delta^r \ell(Q').$
This together with~\eqref{eq1:up_dbl} gives us
\begin{align*}
  \lambda(x,\rho(x,y)) & \sim \lambda(x,\rho(x,y))^{1/2} \lambda(y,\rho(x,y)) ^{1/2} \\
   & \geq \lambda(x,\epsilon \ell(Q))^{1/2} \lambda(y, \epsilon \delta^r \ell(Q'))^{1/2}.
\end{align*}
Let~$x^*$ be the centre of the cube~$Q$, so $\rho(x^*,x) \leq C_Q \ell(Q)$.
Using the doubling property of~$\lambda$, inequality~\eqref{eq2:up_dbl} and the upper doubling property of~$\mu$ we have
\begin{align*}
\lambda(x,\epsilon \ell(Q)) &\geq C(\epsilon) \lambda(x,C_Q \ell(Q))
\geq C(\epsilon) \lambda (x^*, C_Q \ell(Q))
\geq C(\epsilon) \mu(B(x^*, C_Q \ell(Q))) \geq C(\epsilon) \mu(Q).
\end{align*}
Similarly, we also have
$
 \lambda(y, \epsilon \delta^r \ell(Q')) \geq C(\epsilon) \mu(Q').
$
Therefore,
 \[
 \lambda(x, \rho(x,y)) \gs C(\epsilon) \mu(Q)^{1/2} \mu(Q')^{1/2},
  \]
  as required.
\end{proof}

We also recall a result in~\cite{HM12a} stated as Lemma~\ref{lem2:surgery} below. This lemma will be used in the calculation of estimate~\eqref{eq6:Adj_In}.
\begin{lem}[\text{\cite[Lemma~10.1]{HM12a}}]
\label{lem2:surgery}
There exists $\eta>0$ such that for each fixed $x \in X$ and dyadic cube~$Q = Q^k_{\al}$ we have
\[\mathbb{P}(x \in \delta_{Q^k_{\al}}) \ls \epsilon^{\eta}.\]
\end{lem}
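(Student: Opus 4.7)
The plan is to analyze the random construction underlying Theorem~\ref{thm:random_sys_dyadic_cube} and show that, for any fixed point $x$, the event $\{x \in \delta_{Q^k_\alpha(\omega)}\}$ is governed by a small collection of independent coordinates $\omega_j$ that each contribute at most an $O(\epsilon^\eta)$ probability. Recall that $x \in \delta_{Q^k_\alpha}$ means that $x$ lies in a $\rho$-neighborhood of the boundary $\partial Q^k_\alpha$ of width $\epsilon \delta^k$. So the task reduces to estimating the probability that the boundary of the random cube $Q^k_\alpha(\omega)$ passes within $\epsilon \delta^k$ of the fixed point $x$.

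The first step is a geometric reduction: by the geometric doubling of $(X,\rho)$, the cube $Q^k_\alpha$ has only $O(1)$ neighboring cubes $Q^k_\beta$ at the same generation whose closures can lie within distance $\epsilon \delta^k$ of $x$. Therefore it suffices to bound, for each such pair $(\alpha,\beta)$, the probability that $x$ lies in an $\epsilon \delta^k$-neighborhood of the shared interface between $Q^k_\alpha$ and $Q^k_\beta$, and then take a finite union bound.

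The second step is probabilistic: by the construction of \cite{HK12} (recalled in Theorem~\ref{thm:random_sys_dyadic_cube}), the actual centres $z^j_\beta(\omega_j)$ used to assemble the cubes at each generation $j$ are chosen independently across $j$, and the assignment of a candidate cube at level $j+1$ to a parent at level $j$ depends only on whether $z^j_\gamma(\omega_j)$ lies in a prescribed region. Thus whether $x$ falls on the ``$Q^k_\alpha$-side'' or the ``$Q^k_\beta$-side'' of the interface can be switched by perturbing a single $\omega_j$ within a region whose probability mass is $O(\epsilon^\eta)$ — the power $\eta>0$ arising because the probability of a child-cube centre falling in an $\epsilon \delta^j$-sized region near the interface is controlled by a power of $\epsilon$ using the (geometric) doubling of the reference measure on the probability space $\Omega_j$. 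Summing a geometric series in $j$ over the finitely many scales contributing to the $\epsilon \delta^k$-boundary layer then yields $\mathbb{P}(x \in \delta_{Q^k_\alpha}) \lesssim \epsilon^\eta$.

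The main obstacle will be making the probabilistic step quantitative in our general quasimetric, non-homogeneous setting: one must track how the quasitriangle constant $A_0$ and the upper doubling constants $C_\lambda$ enter into the thickness of the ``boundary layer'' generated by perturbing a single $\omega_j$, and verify that the resulting exponent $\eta$ is strictly positive and depends only on $A_0$ and the geometric doubling constant $N$ of $(X,\rho)$. Once this is in hand, the final estimate assembles immediately from the union bound and the independence across scales. Since all the ingredients are intrinsic to the construction of \cite{HK12} and do not use any additional features beyond those already assumed in our setting, the argument from \cite[Lemma~10.1]{HM12a} transfers verbatim, giving the desired bound.
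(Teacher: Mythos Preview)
The paper does not prove this lemma; it is stated as a direct citation of \cite[Lemma~10.1]{HM12a} and used as a black box (the sentence preceding it reads ``We also recall a result in~\cite{HM12a} stated as Lemma~\ref{lem2:surgery} below''). So there is no in-paper argument to compare your proposal against.

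Regarding your sketch on its own merits: the overall shape---randomization in the Hyt\"onen--Kairema construction disperses cube boundaries, so a fixed point lies in the $\epsilon$-layer $\delta_{Q^k_\alpha}$ only with small probability---is correct and is the mechanism behind \cite[Lemma~10.1]{HM12a}. But your ``probabilistic step'' is not yet a proof. The spaces $\Omega_j$ in Theorem~\ref{thm:random_sys_dyadic_cube} are discrete (a finite choice among candidate centres), not spaces carrying a ``geometrically doubling reference measure,'' so the phrase ``the probability of a child-cube centre falling in an $\epsilon\delta^j$-sized region \ldots\ is controlled by a power of $\epsilon$'' has no clear meaning in this construction. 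The actual argument in \cite{HM12a} instead iterates over scales $j\ge k$: at each scale the uniform lower bound $\tau_0$ in property~(iii) of Theorem~\ref{thm:random_sys_dyadic_cube} gives a fixed positive probability that the boundary is pushed away from $x$ by at least a fixed fraction of $\delta^j$, and independence across $j$ turns this into geometric decay. The number of relevant scales is $\sim \log_{1/\delta}(1/\epsilon)$, which is what produces the exponent $\eta>0$ depending only on $\delta$, $A_0$, and $\tau_0$. Your sketch gestures at a multi-scale sum but misidentifies where the power of $\epsilon$ comes from.
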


By Lemma~\ref{lem2:surgery} and the definition of~$Q_b$, we have that $\mathbb{E}(\mu(Q_b)) \ls \epsilon^{\eta} \mu(Q)$, where the expectation~$\mathbb{E}$ is taken over dyadic grids~$\Dd'$.

%%%%%%%%%%%%%%%%%%%%%%%%%%%%%%%%%%%%%%%%%%%%%%%%%%%%%%%%%%%%%%%%%%%%%%%%%%5%%%%
\subsection{Random almost-covering by balls}\label{subsec:ball_cover}
The setting of this section is the same as that of Section~\ref{subsec:surgery} above. That is, non-homogeneous one-parameter quasimetric spaces~$(X,\rho,\mu)$.

We will introduce the construction of a probabilistic covering~$\mathcal{B}$ of a large portion of the space with balls, starting from some fixed size of the radii of balls, and then decreasing the radii but only for some controlled amount.
This construction of a covering is originally developed in~\cite[Section~8]{HM12a}.
The covering~$\mathcal{B}$ is used as a substitute for a certain auxiliary third dyadic grid used in [NTV, Section 10.2] and in~\cite[Section 5.1]{HM14}.
Once this covering is available, we can carry out further decomposition of the characteristic function~$\chi_{\Delta}$, where $\Delta = Q_1 \cap Q'_1$ as defined in Section~\ref{subsec:surgery}
(see \eqref{eq3:adj} and~\eqref{eq4:adj} below).
Furthermore, at the end of this section we also present some observations and Lemma~\ref{lem1:ball_cover}, which are  properties related to the covering~$\mathcal{B}$. These properties will be used in later proofs.

Proposition~\ref{prop1:adj} summarises the construction of the covering.
Please refer to~\cite[Section~8]{HM12a} for the detail of the construction.
\begin{prop}[\text{\cite[Proposition 8.2]{HM12a}}] \label{prop1:adj}
Suppose~$(X,\rho,\mu)$ is a non-homogeneous one-parameter quasimetric space.
Let $0 < \vartheta < A_0^{-4}/32$, where~$A_0$ is the quasitriangle constant, and let $k \in \Z$ and $\upsilon \in (0,1)$ be given.
Then we may randomly construct a family~$\mathcal{B}$ of disjoint quasiballs as follows: there exist constants $c(\vartheta, \upsilon)$ and $C$ with $0 < c(\vartheta, \upsilon) \leq C < 1 $ so that
 if $B, \widehat{B} \in \mathcal{B}$ are two different balls, then
\[c(\vartheta,\upsilon)\vartheta^k \leq r_B \leq C\vartheta^k, \qquad \rho(B,\widehat{B}) \geq c(\vartheta,\upsilon)\vartheta^k. \]
and for every~$x \in X$,
\[\mathbb{P}\bigg( x \in \bigcup_{B \in \mathcal{B}}B\bigg) > 1 - \upsilon.\]
Here~$\mathbb{P}$ is a probability measure on the collection of all such families~$\mathcal{B}$.
\end{prop}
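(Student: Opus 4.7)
This proposition restates \cite[Proposition~8.2]{HM12a}, so the proof would follow the construction carried out there. I sketch the strategy.

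The plan is a three-stage probabilistic construction. First, using the geometric doubling of $(X,\rho)$, I would select a maximal $\vartheta^k$-net of candidate centres $\{x_i\}_{i \in I}$, so that $\rho(x_i, x_j) \geq \vartheta^k$ for $i \neq j$ and every $x \in X$ lies within quasidistance $\vartheta^k$ of some $x_i$. Second, for each $i$ independently, draw a random radius $r_i$ supported in the interval $[c(\vartheta,\upsilon)\vartheta^k, C\vartheta^k]$, with the distribution chosen so that for any fixed $t$ in this interval the tail probability $\mathbb{P}(r_i \geq t)$ is close to one when $t$ is close to the lower endpoint. Third, form $\mathcal{B}$ by a greedy thinning: process the candidate balls $B(x_i, r_i)$ in some fixed order and retain a ball in $\mathcal{B}$ if and only if its quasidistance to every already-retained ball is at least $c(\vartheta,\upsilon)\vartheta^k$.

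The radius and separation properties are then automatic by construction. The radius bound $r_B \leq C\vartheta^k$ with $C<1$, combined with the smallness assumption $\vartheta < A_0^{-4}/32$, guarantees via the quasitriangle inequality that two balls whose centres are at net-distance $\geq \vartheta^k$ apart are always separated by quasidistance at least $c(\vartheta,\upsilon)\vartheta^k$, so the thinning step only rarely rejects a candidate.

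The main obstacle is the coverage bound $\mathbb{P}(x \in \bigcup_{B \in \mathcal{B}} B) > 1 - \upsilon$ for an arbitrary fixed $x \in X$. For such an $x$ there is a nearest candidate centre $x_{i(x)}$ with $\rho(x, x_{i(x)}) < \vartheta^k$, and I would need to show that (i)~the random radius satisfies $r_{i(x)} > \rho(x, x_{i(x)})$ with probability close to one, and (ii)~the ball $B(x_{i(x)}, r_{i(x)})$ actually survives the thinning step with high probability. Part~(i) is handled by choosing the distribution of $r_i$ so that the probability of $r_i < t$ decays quantitatively in $t/\vartheta^k$; part~(ii) uses that, by geometric doubling, the number of other candidate centres whose balls could possibly conflict with $x_{i(x)}$ is bounded by a dimensional constant $N$, and each such conflict only occurs for a narrow range of radii, so a union bound together with a sufficiently small choice of $c(\vartheta,\upsilon)$ (depending on both $\vartheta$ and $\upsilon$) yields the claim. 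The detailed probabilistic estimates are carried out in \cite[Section~8]{HM12a}.
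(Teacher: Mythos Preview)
The paper does not prove this proposition; it simply cites \cite[Section~8]{HM12a}. Your sketch, however, outlines a construction that is not the one in \cite{HM12a} and, as written, has a genuine gap.

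The problem is coverage. You take a single maximal $\vartheta^k$-net of centres $\{x_i\}$ and draw radii $r_i \leq C\vartheta^k$ with $C<1$. But any point $x$ whose nearest net centre $x_{i(x)}$ satisfies $\rho(x,x_{i(x)}) \in (C\vartheta^k,\,\vartheta^k)$ can never lie in $B(x_{i(x)}, r_{i(x)})$, since $r_{i(x)} \leq C\vartheta^k < \rho(x,x_{i(x)})$; and every other centre $x_j$ is by definition at least as far from $x$, so $B(x_j,r_j)$ cannot cover $x$ either. For such $x$ one has $\mathbb{P}\big(x\in\bigcup_{B\in\mathcal{B}}B\big)=0$, not $>1-\upsilon$. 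Your thinning step is irrelevant here: the obstruction is structural---at a single scale one cannot simultaneously have net separation $\geq\vartheta^k$, ball radii $\leq C\vartheta^k<\vartheta^k$, and near-complete coverage.

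The construction in \cite{HM12a} is instead iterative over scales. One uses a random dyadic-type system so that at level $k$ each point has a fixed positive probability of lying ``deep inside'' its cell, in which case it is covered by a ball of radius comparable to $\vartheta^k$ about the cell's centre; points not so covered are passed to level $k+1$, where the procedure repeats with balls of radius $\sim\vartheta^{k+1}$; and one iterates down to level $k+m$ for $m$ large enough (depending on $\upsilon$) that the residual uncovered probability drops below $\upsilon$. This multi-scale mechanism is precisely why the lower radius bound $c(\vartheta,\upsilon)\vartheta^k\sim\vartheta^{k+m}$ genuinely depends on $\upsilon$, whereas in your single-scale scheme there is no source for that dependence.
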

From Proposition~\ref{prop1:adj} we know that all balls~$B \in \mathcal{B}$ have roughly the same radius, and are $c(\vartheta, \upsilon)$ separated.

Now, we will use Proposition~\ref{prop1:adj} to decompose the function~$\chi_{\Delta}$.
Recall that we have fixed~$\epsilon \in (0,1)$ in Section~\ref{subsec:surgery}.
We now fix $\delta = A_0^{-4}/1000$, where~$A_0$ is the quasitriangle constant. Then fix the smallest~$k$ for which
\begin{equation}\label{eq2:adj}
\delta^k \leq (\Lambda)^{-1} (2A_0)^{-2}\epsilon \min (\ell(Q),\ell(Q')),
\end{equation}
where~$\Lambda > 1$ is from Assumptions~\ref{assum_5} and~\ref{assum_6} of weak boundedness properties of the operator~$T$.

\begin{rem}
Notice that our choice of~$k$ here is slightly different from that in~\cite{HM12a},
in which $k$ is chosen so that
\begin{equation}\label{eq1:adj}
\delta^k \leq \Lambda^{-1}(8^{-1}\epsilon \min (\ell(Q,\ell(Q'))))^{\beta}, \quad \text{where } \beta \geq 1.
\end{equation}
Recall that in~\cite{HM12a}, they initially work with quasimetrics~$\rho$. Then, using a result of Mac\'{i}as and Segovia, they reduce to working with an equivalent metric~$d$ so that for all~$x,y \in X$
\[2^{-\beta}d(x,y)^{\beta} \leq \rho(x,y) \leq 4^{\beta}d(x,y)^{\beta}.\]
The extra power~$\beta$ in~\eqref{eq1:adj} is to cope with this reduction.
Since we always work with quasimetrics, we can choose~$k$ as in~\eqref{eq2:adj}.
\end{rem}

Consider some small enough $\upsilon \in (0,1)$, and set $\vartheta = \delta$.
By Proposition~\ref{prop1:adj} we can randomly construct a family~$\mathcal{B}$ of disjoint  balls starting from the fixed level~$k$ with parameter~$\upsilon$, so that
for all $x \in X$ we have
$\mathbb{P}( x \in \bigcup_{B \in \mathcal{B}}B) > 1 - \upsilon$.
This implies
$\mathbb{E}(\mu(\widetilde{\Delta}\backslash \bigcup_{B \in \mathcal{B}} B)) <\upsilon \mu(\widetilde{\Delta}),$ where the expectation~$\mathbb{E}$ is taken over families~$\mathcal{B}$.
So we may now fix some such covering~$\mathcal{B}$ for which
$\mu(\widetilde{\Delta}\backslash \bigcup_{B \in \mathcal{B}} B) <\upsilon \mu(\widetilde{\Delta})$.
We now remove from the collection~$\mathcal{B}$ those balls that do not touch~$\widetilde{\Delta}$.

We claim that for all~$B \in \mathcal{B}$, we have $\Lambda B := B(x_B, \Lambda r_B) \subset \Delta$.
To see this, fix a ball $B = B(x_B,r_B) \in \mathcal{B}$.
By Proposition~\ref{prop1:adj}, $r_B \leq C\vartheta^k = C\delta^k$, where~$C<1$.
As $B \cap \widetilde{\Delta} \neq \emptyset$, there exists $x \in B$ so that $\rho(x, X \backslash \Delta) \geq \epsilon \min (\ell(Q), \ell(Q'))$.
Fix $w \in \Lambda B$, then with the choice of~$k$ as in~\eqref{eq2:adj} we have
\begin{eqnarray*}
  \rho(x, X \backslash \Delta) - A_0\rho(w, X \backslash \Delta)
  &\leq& A_0\rho(w,x)
  \leq A_0^2 (\rho(w,x_B) + \rho(x_B,x))
  \leq 2A_0^2 \Lambda C  \delta^k \\
  &\leq& 2^{-1} \epsilon \min (\ell(Q), \ell(Q')).
\end{eqnarray*}
Therefore,
$\rho(w, X \backslash \Delta) \geq (1-2^{-1})A_0^{-1} \epsilon \min (\ell(Q), \ell(Q'))$. Since this is true for all $w \in \Lambda B$, we have
 $\rho(\Lambda B , X \backslash \Delta) \geq (2A_0)^{-1} \epsilon \min (\ell(Q), \ell(Q')) > 0$, and so $\Lambda B \subset \Delta$.

Since any two balls~$B$ and~$\widehat{B} \in \mathcal{B}$ are separated by a distance comparable to their radii, namely $\rho(B,\widehat{B}) \geq c(\vartheta,\upsilon)\vartheta^k$ and $c(\vartheta,\upsilon)\vartheta^k \leq r_B$, the same is still true for some expanded balls~$(1+\epsilon')B$, which contain the support of the function $\widetilde{\chi}_B := \widetilde{\chi}_{B,\epsilon'}$.
Recall that $\widetilde{\chi}_{B,\epsilon'} \in C^{\eta}(\mathcal{X})$ is a function which is more regular than the rough function~$\chi_B$, and for all~$x \in X$ we have $\chi_B(x) \leq \widetilde{\chi}_{B,\epsilon'}(x) \leq \chi_{(1+\epsilon')B}(x)$.
As~$\vartheta$ and~$k$ are fixed, the parameter~$\epsilon'$ only depends on~$\upsilon$.

We write
\begin{equation}\label{eq3:adj}
\chi_{\Delta} = \sum_{B \in \mathcal{B}} \widetilde{\chi}_B + \widetilde{\chi}_{\Delta\backslash \cup B},
\end{equation}
where
\[
\widetilde{\chi}_{\Delta\backslash \cup B}
:= \chi_{\Delta} - \sum_{B \in \mathcal{B}} \widetilde{\chi}_B
\leq \chi_{\Delta} - \sum_{B \in \mathcal{B}} \chi_B
= \chi_{\Delta\backslash \cup B}.
\]
Note also that $\widetilde{\chi}_{\Delta\backslash \cup B} \geq 0$ since
$(1+\epsilon')B \subset \Lambda B \subset \Delta$.
We further decompose
\begin{eqnarray}\label{eq4:adj}
  \widetilde{\chi}_{\Delta\backslash \cup B}
  = \widetilde{\chi}_{\Delta\backslash \cup B}
  (\chi_{\Om_i} + \chi_{\Om_Q} + \chi_{\Om_{Q'}})
  =: \widetilde{\chi}_{\Om_i} + \widetilde{\chi}_{\Om_Q} + \widetilde{\chi}_{\Om_{Q'}},
\end{eqnarray}
where $\Delta\backslash \cup B$ is a disjoint union of $\Om_i := \widetilde{\Delta}\backslash \cup B$, and some sets $\Om_Q \subset Q_b$ and $\Om_{Q'} \subset Q'_b$.
Figure~\ref{fig:surgery_2} presents the sets $\Omega_i$, $\Omega_Q$ and $\Omega_{Q'}$ in~$\R$ for intuition in the general case.
\begin{figure}[H]
	\centering
\captionsetup{width=0.8\textwidth}
  	\includegraphics[width=14cm]{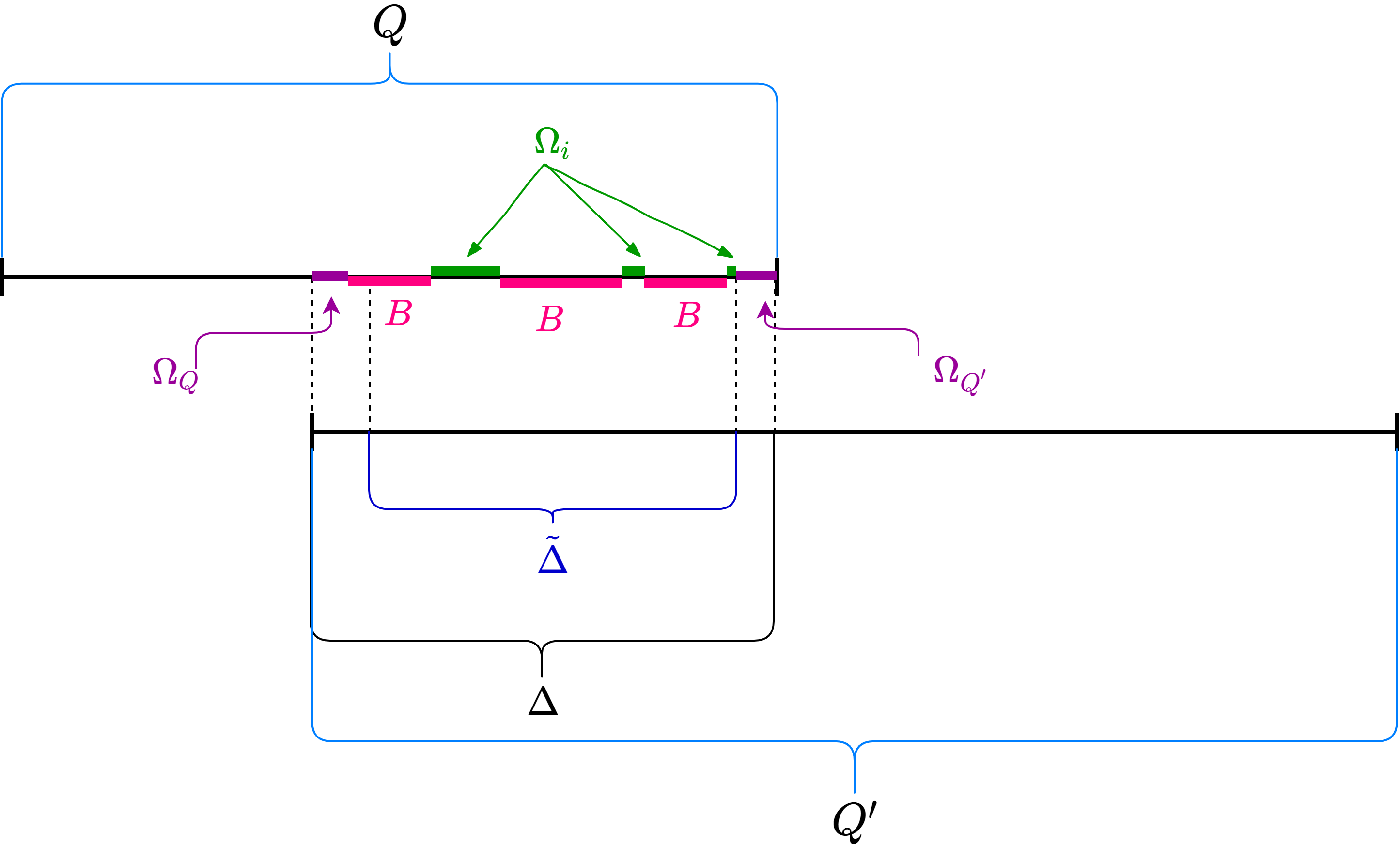}
   	\caption{Sets $\Omega_i$, $\Omega_Q$ and $\Omega_{Q'}$ in~$\R$.}
  	\label{fig:surgery_2}
\end{figure}

The following observations will be useful for later calculations:
\begin{align}
   &   \|\widetilde{\chi}_{\Om_i}\|_{L^2}
  \leq \|\chi_{\widetilde{\Delta}\backslash \cup B}\|_{L^2}
  \leq \mu(\widetilde{\Delta}\backslash \cup B)^{1/2}
  \leq \upsilon^{1/2}\mu(\widetilde{\Delta})^{1/2}, \label{ball_cover_eq1}\\
  &   \Big\|\sum_{B \in \mathcal{B}} \widetilde{\chi}_B\Big\|_{L^2}
  \leq \|\chi_{\Delta}\|_{L^2}
  \leq \mu(\Delta)^{1/2}, \label{ball_cover_eq2} \\
  &   \|\widetilde{\chi}_{\Om_Q}\|_{L^2}
  \leq \|\chi_{\Om_Q}\|_{L^2} \leq \mu(\Om_Q)^{1/2} \leq \mu(Q_b)^{1/2}, \label{ball_cover_eq3}
\end{align}
and similarly,
\begin{equation}\label{ball_cover_eq4}
  \|\widetilde{\chi}_{\Om_{Q'}}\|_{L^2}
  \leq \mu(Q'_b)^{1/2}.
\end{equation}

Lemma~\ref{lem1:ball_cover} shows another property of the collection~$\mathcal{B}$.
This lemma will be used in the calculation of~\eqref{eq:g2} in Lemma~\ref{adj_in_lem3}.
\begin{lem}\label{lem1:ball_cover}
  Fix the balls $B, \widehat{B} \in \mathcal{B}$ such that $B \neq \widehat{B}$.
  For all $x \in (1+\epsilon')B =: B_{\epsilon'}$ and $y \in (1+\epsilon')\widehat{B}=: \widehat{B}_{\epsilon'}$ we have
  \[\lambda(x,\rho(x,y)) \gs C(\upsilon) \mu(B_{\epsilon'})^{1/2} \mu(\widehat{B}_{\epsilon'})^{1/2},\]
  where $\epsilon' \in (0,1)$ is fixed in the surgery, and $\upsilon \in (0,1)$ is given in the construction of the covering~$\mathcal{B}$.
\end{lem}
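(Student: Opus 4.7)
The plan is to reduce the claim to two one-sided estimates and then combine them via a geometric mean. Specifically, I will show that for $x \in B_{\epsilon'}$ and $y \in \widehat{B}_{\epsilon'}$,
\[
\lambda(x,\rho(x,y)) \gs C(\upsilon)\,\mu(B_{\epsilon'}) \quad\text{and}\quad \lambda(x,\rho(x,y)) \gs C(\upsilon)\,\mu(\widehat{B}_{\epsilon'}).
\]
Once these are in hand, writing $\lambda(x,\rho(x,y)) = \lambda(x,\rho(x,y))^{1/2}\lambda(x,\rho(x,y))^{1/2}$ and invoking the symmetry identity~\eqref{eq1:up_dbl} to replace one of these factors by $\lambda(y,\rho(x,y))$ up to a universal constant yields the claimed geometric-mean bound.

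For the underlying geometry, recall from Proposition~\ref{prop1:adj} that $r_B,r_{\widehat{B}} \in [c(\vartheta,\upsilon)\vartheta^k,\,C\vartheta^k]$ with $C<1$, and that the construction of $\mathcal{B}$ (cf.\ the discussion just above the lemma) is arranged so that $(1+\epsilon')B$ and $(1+\epsilon')\widehat{B}$ remain separated by a distance of order $c(\vartheta,\upsilon)\vartheta^k$. Hence $\rho(x,y) \gs c(\vartheta,\upsilon)\vartheta^k$, while $(1+\epsilon')r_B \leq 2C\vartheta^k$; consequently $\rho(x,y)$ and $(1+\epsilon')r_B$ are comparable up to a constant depending only on $\upsilon$, and the same holds for $\rho(x,y)$ versus $(1+\epsilon')r_{\widehat{B}}$.

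For the first one-sided estimate, the monotonicity of $r \mapsto \lambda(x,r)$ together with a bounded (depending only on $\upsilon$) number of applications of the doubling bound $\lambda(x,2r)\leq C_\lambda\lambda(x,r)$ produces
\[
\lambda(x,\rho(x,y)) \gs C(\upsilon)\,\lambda(x,(1+\epsilon')r_B).
\]
Since $\rho(x,x_B)\leq (1+\epsilon')r_B$, the symmetry hypothesis~\eqref{eq2:up_dbl} applied with $r=(1+\epsilon')r_B$ gives $\lambda(x,(1+\epsilon')r_B) \gs \lambda(x_B,(1+\epsilon')r_B)$, and then the upper-doubling property supplies $\lambda(x_B,(1+\epsilon')r_B) \geq \mu(B(x_B,(1+\epsilon')r_B)) = \mu(B_{\epsilon'})$. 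Chaining these three steps yields $\lambda(x,\rho(x,y)) \gs C(\upsilon)\,\mu(B_{\epsilon'})$. The second one-sided estimate follows from running the identical argument with $y$ and $\widehat{B}$ in place of $x$ and $B$, ending with one application of~\eqref{eq1:up_dbl} to convert $\lambda(y,\rho(x,y))$ back to $\lambda(x,\rho(x,y))$.

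The only real obstacle is bookkeeping: one must verify that the constants coming from Proposition~\ref{prop1:adj} (i.e.\ $c(\vartheta,\upsilon)$ and $C$), the number of doubling iterations needed to pass between the comparable radii $\rho(x,y)$, $\vartheta^k$, and $(1+\epsilon')r_B$, and the constants in the symmetry conditions~\eqref{eq2:up_dbl} and~\eqref{eq1:up_dbl} all combine into a single finite $C(\upsilon)$. No individual step is genuinely difficult once the comparability $\rho(x,y) \sim (1+\epsilon')r_B \sim (1+\epsilon')r_{\widehat{B}}$, with implicit constants depending only on $\upsilon$, is established.
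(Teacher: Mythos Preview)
Your proposal is correct and follows essentially the same route as the paper: split $\lambda(x,\rho(x,y))$ into the geometric mean $\lambda(x,\rho(x,y))^{1/2}\lambda(y,\rho(x,y))^{1/2}$ via~\eqref{eq1:up_dbl}, use the separation $\rho(B_{\epsilon'},\widehat{B}_{\epsilon'}) \gs c(\vartheta,\upsilon)\vartheta^k$ and the comparability of $\vartheta^k$ with the radii $r_{B_{\epsilon'}}$, $r_{\widehat{B}_{\epsilon'}}$ to pass through doubling, and finish with the upper-doubling bound $\lambda \geq \mu$. The only cosmetic difference is that you establish the two one-sided estimates first and apply the geometric-mean splitting at the end, whereas the paper opens with the splitting; your explicit invocation of~\eqref{eq2:up_dbl} to recenter at $x_B$ is a welcome clarification of a step the paper leaves implicit.
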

\begin{proof}
Note that $\vartheta$ is already fixed and $\epsilon'$ only depends on $\upsilon$.
 Using~\eqref{eq1:up_dbl}, Proposition~\ref{prop1:adj}, and the facts that $\lambda$ is doubling and $\mu$ is upper doubling, we get
 \begin{align*}
  \lambda(x,\rho(x,y)) & \sim \lambda(x,\rho(x,y))^{1/2} \lambda(y,\rho(x,y)) ^{1/2}
   \geq \lambda(x,\rho(B_{\epsilon'},\widehat{B}_{\epsilon'}))^{1/2} \lambda(y,\rho(B_{\epsilon'},\widehat{B}_{\epsilon'})) ^{1/2} \\
  & \geq \lambda(x,c(\vartheta, \upsilon) \vartheta^k)^{1/2} \lambda(y,c(\vartheta, \upsilon) \vartheta^k) ^{1/2}
   \gs \lambda(x,C(\upsilon) r_{B_{\epsilon'}})^{1/2} \lambda(y,C(\upsilon) r_{\widehat{B}_{\epsilon'}}) ^{1/2} \\
  & \geq C(\upsilon) \mu(B_{\epsilon'})^{1/2} \mu(\widehat{B}_{\epsilon'})^{1/2}, \quad \text{  as required.} \qedhere
  \end{align*}
\end{proof}

To sum up,
given two adjacent cubes~$Q \in \Dd$ and~$Q' \in \Dd'$, we can perform the decompositions as in~\eqref{surgery_eq1}, \eqref{eq3:adj} and~\eqref{eq4:adj}.
%%%%%%%%%%%%%%%%%%%%%%%%%%%%%%%%%%%%%%%%%%%%%%%%%%%%%%%%%%%%%%%%%%%%%%%%%%%
%%%%%%%%%%%%%%%%%%%%%%%%%%%%%%%%%%%%%%%%%%%%%%%%%%%%%%%%%%%%%%%%%%%%%%%%%%%
\subsection{Adjacent/Nested cubes}\label{subsec:adj_in}
In this section, we consider the \emph{Adj/Nes} case. That is, we estimate~\eqref{coreeq.1} when $Q_1, Q'_1$ are adjacent and $Q_2, Q'_2$ are nested:
\begin{eqnarray*}
\delta^r \ell(Q'_1) \leq \ell(Q_1) \leq \ell(Q'_1),
 && \rho_1(Q_1,Q'_1) \leq \mathcal{C}\ell(Q_1)^{\gamma_1}\ell(Q'_1)^{1-\gamma_1}, \quad \text{and} \\
 \ell(Q_2) < \delta^r \ell(Q'_2),
&& \rho_2(Q_2,Q'_2) \leq \mathcal{C}\ell(Q_2)^{\gamma_2}\ell(Q'_2)^{1-\gamma_2},
\end{eqnarray*}
where $\mathcal{C}:= 2A_0C_QC_K$.
With the same reason as in the \emph{Sep/Nes} case, the terms in~\eqref{coreeq.1} which involves~$h_0^{Q_2}$ vanish.
Thus, we only need to estimate eight (out of 16 terms) in~\eqref{coreeq.1}:
\begin{align}\label{adj_in_eq.1}
 &\hspace{-0.5cm}\sum_{\substack{Q_1,Q'_1  \\ \text{adjacent} }} \,
  \sum_{\substack{Q_2,Q'_2\\ \text{nested} }} \,
  \sum_{\substack{u_1,u'_1 \\u_2,u'_2}} \,
  \langle f, h_{u_1}^{Q_1} \otimes h_{u_2}^{Q_2}\rangle
  \langle g, h_{u'_1}^{Q'_1} \otimes h_{u'_2}^{Q'_2}\rangle
  \langle T(h_{u_1}^{Q_1} \otimes h_{u_2}^{Q_2}), h_{u'_1}^{Q'_1} \otimes h_{u'_2}^{Q'_2}\rangle \noz\\
  &+  \sum_{\substack{Q_1,Q'_1  \\ \text{adjacent} }} \,
  \sum_{\substack{Q_2,Q'_2 \\ \text{nested} \\ \text{gen}(Q'_2) = m}} \,
  \sum_{\substack{u_1,u'_1 \\u_2}} \,
  \langle f, h_{u_1}^{Q_1} \otimes h_{u_2}^{Q_2}\rangle
  \langle g, h_{u'_1}^{Q'_1} \otimes h_{0}^{Q'_2}\rangle
  \langle T(h_{u_1}^{Q_1} \otimes h_{u_2}^{Q_2}), h_{u'_1}^{Q'_1} \otimes h_{0}^{Q'_2}\rangle \noz\\
  &+ \sum_{\substack{Q_1,Q'_1  \\ \text{adjacent} \\ \text{gen}(Q'_1) = m }} \,
  \sum_{\substack{Q_2,Q'_2\\ \text{nested}  }} \,
  \sum_{\substack{u_1\\u_2,u'_2}} \,
  \langle f, h_{u_1}^{Q_1} \otimes h_{u_2}^{Q_2}\rangle
  \langle g, h_{0}^{Q'_1} \otimes h_{u'_2}^{Q'_2}\rangle
  \langle T(h_{u_1}^{Q_1} \otimes h_{u_2}^{Q_2}), h_{0}^{Q'_1} \otimes h_{u'_2}^{Q'_2}\rangle \noz\\
  &+   \sum_{\substack{Q_1,Q'_1  \\ \text{adjacent} \\ \text{gen}(Q'_1) = m }} \,
  \sum_{\substack{Q_2,Q'_2\\ \text{nested} \\ \text{gen}(Q'_2) = m }} \,
  \sum_{\substack{u_1 \\u_2}} \,
  \langle f, h_{u_1}^{Q_1} \otimes h_{u_2}^{Q_2}\rangle
  \langle g, h_{0}^{Q'_1} \otimes h_{0}^{Q'_2}\rangle
  \langle T(h_{u_1}^{Q_1} \otimes h_{u_2}^{Q_2}), h_{0}^{Q'_1} \otimes h_{0}^{Q'_2}\rangle \noz\\
  &+\sum_{\substack{Q_1,Q'_1  \\ \text{adjacent} \\ \text{gen}(Q_1) = m}} \,
  \sum_{\substack{Q_2,Q'_2\\ \text{nested} }} \,
  \sum_{\substack{u'_1 \\u_2,u'_2}} \,
  \langle f, h_{0}^{Q_1} \otimes h_{u_2}^{Q_2}\rangle
  \langle g, h_{u'_1}^{Q'_1} \otimes h_{u'_2}^{Q'_2}\rangle
  \langle T(h_{0}^{Q_1} \otimes h_{u_2}^{Q_2}), h_{u'_1}^{Q'_1} \otimes h_{u'_2}^{Q'_2}\rangle\noz\\
  &+ \sum_{\substack{Q_1,Q'_1  \\ \text{adjacent} \\ \text{gen}(Q_1) = m }} \,
  \sum_{\substack{Q_2,Q'_2 \\ \text{nested} \\ \text{gen}(Q'_2) = m}} \,
  \sum_{\substack{u'_1 \\u_2}} \,
  \langle f, h_{0}^{Q_1} \otimes h_{u_2}^{Q_2}\rangle
  \langle g, h_{u'_1}^{Q'_1} \otimes h_{0}^{Q'_2}\rangle
  \langle T(h_{0}^{Q_1} \otimes h_{u_2}^{Q_2}), h_{u'_1}^{Q'_1} \otimes h_{0}^{Q'_2}\rangle\noz\\
  &+ \sum_{\substack{Q_1,Q'_1  \\ \text{adjacent} \\ \text{gen}(Q_1) = \text{gen}(Q'_1) = m }} \,
  \sum_{\substack{Q_2,Q'_2\\ \text{nested}  }} \,
  \sum_{u_2,u'_2} \,
  \langle f, h_{0}^{Q_1} \otimes h_{u_2}^{Q_2}\rangle
  \langle g, h_{0}^{Q'_1} \otimes h_{u'_2}^{Q'_2}\rangle
  \langle T(h_{0}^{Q_1} \otimes h_{u_2}^{Q_2}), h_{0}^{Q'_1} \otimes h_{u'_2}^{Q'_2}\rangle\noz\\
  &+ \sum_{\substack{Q_1,Q'_1  \\ \text{adjacent} \\ \text{gen}(Q_1) = \text{gen}(Q'_1) = m }} \,
  \sum_{\substack{Q_2,Q'_2\\ \text{nested} \\ \text{gen}(Q'_2) = m }} \,
  \sum_{u_2} \,
  \langle f, h_{0}^{Q_1} \otimes h_{u_2}^{Q_2}\rangle
  \langle g, h_{0}^{Q'_1} \otimes h_{0}^{Q'_2}\rangle
  \langle T(h_{0}^{Q_1} \otimes h_{u_2}^{Q_2}), h_{0}^{Q'_1} \otimes h_{0}^{Q'_2}\rangle .
\end{align}

We will estimate the pair of the first two sums in~\eqref{adj_in_eq.1}. The remaining pairs can be handled using the same proof technique.
The proof strategy in this case is the same as that in the \emph{Sep/Nes} case (Section~\ref{subsec:sep_in}). Namely,
we write~$\langle T(h_{u_1}^{Q_1} \otimes h_{u_2}^{Q_2}), h_{u'_1}^{Q'_1} \otimes h_{u'_2}^{Q'_2}\rangle$ as the sum
\begin{eqnarray*}
(\langle T(h_{u_1}^{Q_1} \otimes h_{u_2}^{Q_2}), h_{u'_1}^{Q'_1} \otimes h_{u'_2}^{Q'_2}\rangle
- \langle h_{u'_2}^{Q'_2}\rangle_{Q_2}
\langle T(h_{u_1}^{Q_1} \otimes h_{u_2}^{Q_2}), h_{u'_1}^{Q'_1} \otimes 1\rangle)  \langle h_{u'_2}^{Q'_2}\rangle_{Q_2}
\langle T(h_{u_1}^{Q_1} \otimes h_{u_2}^{Q_2}), h_{u'_1}^{Q'_1} \otimes 1\rangle,
\end{eqnarray*}
and then consider the first summation in~\eqref{adj_in_eq.1} involving each of these terms.
Here $\langle h \rangle_{Q} := \mu_2(Q)^{-1} \int_{Q} h \,d\mu_2$.
Let~$Q'_{2,1} \in \text{ch}(Q'_2)$ be such that~$Q_2 \subset Q'_{2,1}$.

As in the \emph{Sep/Nes} case, we also have
\begin{eqnarray*}
  \lefteqn{\langle T(h_{u_1}^{Q_1} \otimes h_{u_2}^{Q_2}), h_{u'_1}^{Q'_1} \otimes h_{u'_2}^{Q'_2}\rangle
- \langle h_{u'_2}^{Q'_2}\rangle_{Q_2}
\langle T(h_{u_1}^{Q_1} \otimes h_{u_2}^{Q_2}), h_{u'_1}^{Q'_1} \otimes 1\rangle} \\
  &=& \sum_{\substack{Q' \in \text{ch}(Q'_2) \\ Q' \subset Q'_2 \backslash Q'_{2,1}}} \langle T(h_{u_1}^{Q_1} \otimes h_{u_2}^{Q_2}), h_{u'_1}^{Q'_1} \otimes h_{u'_2}^{Q'_2} \chi_{Q'}\rangle
-   \langle h_{u'_2}^{Q'_2}\rangle_{Q'_{2,1}}
\langle T(h_{u_1}^{Q_1} \otimes h_{u_2}^{Q_2}), h_{u'_1}^{Q'_1} \otimes \chi_{Q^{'c}_{2,1}}\rangle.
\end{eqnarray*}
These two terms will be estimated in Lemmas~\ref{lem1:adj_in} and~\ref{lem2:adj_in} below. These  two lemmas are substitutes for Lemmas~\ref{lem1:sepnest} and~\ref{lem2:sepnest} from the \emph{Sep/Nes} case.
The main differences are the use of Lemma~\ref{lem:property_kernel} instead of Lemma~\ref{lem1:property_kernel}.

\begin{lem}\label{lem1:adj_in}
Suppose $Q_1$, $Q'_1$ are adjacent,
$Q_2$, $Q'_2$ are nested, and $Q'_{2,1} \in \textup{ch}(Q'_2)$ such that $Q_2 \subset Q'_{2,1}$.
  There holds
  \[|\langle h_{u'_2}^{Q'_2}\rangle_{Q'_{2,1}}
\langle T(h_{u_1}^{Q_1} \otimes h_{u_2}^{Q_2}), h_{u'_1}^{Q'_1} \otimes \chi_{Q^{'c}_{2,1}}\rangle| \ls A_{Q_2,Q'_2}^{\textup{in}},\]
where
\[A^{\textup{in}}_{Q_2,Q'_2} := \left(\frac{\ell(Q_2)}{\ell(Q'_2)}\right)^{\al_2/2}
\left( \frac{\mu_2(Q_2)}{\mu_2(Q'_{2,1})}\right)^{1/2}. \]
\end{lem}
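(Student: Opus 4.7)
The plan is to follow the same structural decomposition as in Lemma~\ref{lem1:sepnest} of the \emph{Sep/Nes} case, but to replace the use of Lemma~\ref{lem1:property_kernel} (which exploits cancellation in both variables) with the second form of Lemma~\ref{lem:property_kernel} (cancellation in the second variable only). This substitution is forced because $Q_1$ and $Q'_1$ are now adjacent rather than separated, so there is no separation on the first factor to exploit; consequently the bound loses the factor $A^{\textup{sep}}_{Q_1,Q'_1}$ and reduces to just $A^{\textup{in}}_{Q_2,Q'_2}$.

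First I would treat the scalar factor using property~\eqref{eq2:pro_Haar} of Haar functions, which gives $|\langle h_{u'_2}^{Q'_2}\rangle_{Q'_{2,1}}| \ls \mu_2(Q'_{2,1})^{-1/2}$. Next I verify that the hypotheses of the second part of Lemma~\ref{lem:property_kernel} are satisfied for the pairing
\[
\langle T(h_{u_1}^{Q_1} \otimes h_{u_2}^{Q_2}), h_{u'_1}^{Q'_1} \otimes \chi_{Q^{'c}_{2,1}}\rangle:
\]
the supports $\supp h_{u_2}^{Q_2} \subset Q_2 \subset Q'_{2,1}$ and $\supp \chi_{Q^{'c}_{2,1}}$ are disjoint, the Haar function has vanishing mean, and—since $Q_2$ is good and $Q_2 \subset Q'_{2,1}$ with $\ell(Q_2) \le \delta^r \ell(Q'_{2,1})$—property~(i) of Claim~\ref{claim:sep_nest} yields $\rho_2(y_2,w) \le C_Q\ell(Q_2) \le \rho_2(x_2,w)/C_K$ for $y_2,w\in Q_2$ and $x_2 \in Q^{'c}_{2,1}$.

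Applying Lemma~\ref{lem:property_kernel} then dominates the pairing by $\|h_{u_1}^{Q_1}\|_{L^2(\mu_1)}\|h_{u_2}^{Q_2}\|_{L^2(\mu_2)} \|h_{u'_1}^{Q'_1}\|_{L^2(\mu_1)} \mu_2(Q_2)^{1/2}$ times the integral
\[
\int_{Q^{'c}_{2,1}} \frac{\rho_2(y_2,w)^{\al_2}}{\rho_2(x_2,w)^{\al_2}\lambda_2(w,\rho_2(x_2,w))}\, d\mu_2(x_2).
\]
Using $\rho_2(y_2,w)\le C_Q\ell(Q_2)$, the inclusion $B(w,\rho_2(Q_2,Q^{'c}_{2,1}))\subset Q'_{2,1}$ from property~(iii) of Claim~\ref{claim:sep_nest}, the decay estimate of Lemma~\ref{upper_dbl_lem1}, and the lower bound $\rho_2(Q_2,Q^{'c}_{2,1})\gs \ell(Q_2)^{1/2}\ell(Q'_2)^{1/2}$ from property~(ii), this integral is controlled exactly as in~\eqref{eq6:sep_nest} by $\ell(Q_2)^{\al_2/2}\ell(Q'_2)^{-\al_2/2}$. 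Since $\|h_{u_1}^{Q_1}\|_{L^2(\mu_1)}$, $\|h_{u_2}^{Q_2}\|_{L^2(\mu_2)}$, $\|h_{u'_1}^{Q'_1}\|_{L^2(\mu_1)}$ are each $\sim 1$, combining everything gives
\[
|\langle h_{u'_2}^{Q'_2}\rangle_{Q'_{2,1}}\langle T(h_{u_1}^{Q_1}\otimes h_{u_2}^{Q_2}), h_{u'_1}^{Q'_1}\otimes \chi_{Q^{'c}_{2,1}}\rangle|
\ls \mu_2(Q'_{2,1})^{-1/2}\mu_2(Q_2)^{1/2}\left(\frac{\ell(Q_2)}{\ell(Q'_2)}\right)^{\al_2/2} = A^{\textup{in}}_{Q_2,Q'_2}.
\]

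I do not expect any real obstacle beyond careful bookkeeping: the geometric inputs (goodness of $Q_2$, adjacency of $Q_1,Q'_1$ being immaterial on the first factor once we have a vanishing-mean Haar function on the second factor with disjoint supports) and the kernel/size inputs are already in place from Claim~\ref{claim:sep_nest}, Lemma~\ref{upper_dbl_lem1}, and Lemma~\ref{lem:property_kernel}. The main conceptual point—and the reason the statement loses the $A^{\textup{sep}}_{Q_1,Q'_1}$ factor—is that adjacency in the first variable prevents any first-variable H\"older gain, so the entire decay in the estimate must come from the nested pair $(Q_2,Q'_2)$ in the second variable.
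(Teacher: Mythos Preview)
Your proposal is correct and follows essentially the same approach as the paper: bound the Haar average by $\mu_2(Q'_{2,1})^{-1/2}$ via \eqref{eq2:pro_Haar}, apply the second form of Lemma~\ref{lem:property_kernel} using the disjointness $Q_2\cap Q^{'c}_{2,1}=\emptyset$ and the distance estimate from Claim~\ref{claim:sep_nest}(i), and then invoke the integral bound \eqref{eq6:sep_nest} to obtain $\ell(Q_2)^{\al_2/2}\ell(Q'_2)^{-\al_2/2}$. Your explanation of why Lemma~\ref{lem:property_kernel} replaces Lemma~\ref{lem1:property_kernel} (no separation in the first variable) is exactly the point the paper makes before stating the lemma.
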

\begin{proof}
First, using property~\eqref{eq2:pro_Haar} of Haar functions we get
  \begin{equation}\label{eq1:lem1:adj_in}
  \langle h_{u'_2}^{Q'_2}\rangle_{Q'_{2,1}}
\ls \mu_2(Q'_{2})^{-1/2}.
  \end{equation}
Second, we consider the term $\langle T(h_{u_1}^{Q_1} \otimes h_{u_2}^{Q_2}), h_{u'_1}^{Q'_1} \otimes \chi_{Q^{'c}_{2,1}}\rangle$.
Recall that $Q_2 \cap Q^{'c}_{2,1} = \emptyset$,
and by property~(i) of Claim~\ref{claim:sep_nest},  for all $y_2, w \in Q_2$, $x_2 \in Q^{'c}_{2,1}$ we have
 \[\rho_2(y_2,w) \leq C_Q \ell(Q_2) \leq \frac{\rho_2(x_2,w)}{C_K}.\]
Applying Lemma~\ref{lem:property_kernel} to
$\phi_1 = h_{u_1}^{Q_1} $, $\phi_2 =  h_{u_2}^{Q_2}$, $\theta_1 = h_{u'_1}^{Q'_1} $ and $\theta_2 = \chi_{Q^{'c}_{2,1}}$ we have
  \begin{align*}
    |\langle T(h_{u_1}^{Q_1} \otimes h_{u_2}^{Q_2}), h_{u'_1}^{Q'_1} \otimes \chi_{Q^{'c}_{2,1}}\rangle|
    &\ls \|h_{u_1}^{Q_1}\|_{L^2(\mu_1)} \|h_{u'_1}^{Q'_1}\|_{L^2(\mu_1)}
    \|h_{u_2}^{Q_2}\|_{L^2(\mu_2)}
    \mu_2(Q_2)^{1/2} \\
   &\hspace{0.5cm}\times \, \int_{Q^{'c}_{2,1}}   \frac{C_Q^{\al_2} \ell(Q_2)^{\al_2}}
    {\rho_2(x_2,w)^{\al_2}\lambda_2(w,\rho_2(x_2,w))^{\al_2}} \,d\mu_2(x_2).
  \end{align*}
Moreover, as shown in~\eqref{eq6:sep_nest}, the integral above is bounded by
 $\ell(Q_2)^{\al_2/2} \ell(Q'_{2})^{-\al_2/2}$.
Therefore,
\begin{equation}\label{eq2:lem1:adj_in}
  |\langle T(h_{u_1}^{Q_1} \otimes h_{u_2}^{Q_2}), h_{u'_1}^{Q'_1} \otimes \chi_{Q^{'c}_{2,1}}\rangle|
  \ls \bigg(\frac{\ell(Q_2)}{\ell(Q'_2)} \bigg)^{\al_2/2} \mu_2(Q_2)^{1/2}.
\end{equation}

Combining~\eqref{eq1:lem1:adj_in} and~\eqref{eq2:lem1:adj_in}, we establish Lemma~\ref{lem1:adj_in}.
\end{proof}

\begin{lem}\label{lem2:adj_in}
Suppose $Q_1$ and~$Q'_1$ are adjacent,
$Q_2$ and~$Q'_2$ are nested,
and $Q' \in \textup{ch}(Q'_2)$ such that $Q' \subset Q'_2 \backslash Q'_{2,1}$. Then
\[|\langle T(h_{u_1}^{Q_1} \otimes h_{u_2}^{Q_2}), h_{u'_1}^{Q'_1} \otimes h_{u'_2}^{Q'_2} \chi_{Q'}\rangle| \ls A_{Q_2,Q'_2}^{\textup{in}}.\]
\end{lem}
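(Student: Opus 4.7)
\emph{Proof proposal.} The plan is to mimic the proof of Lemma~\ref{lem2:sepnest} from the Sep/Nes case, but replace the full kernel representation (Lemma~\ref{lem1:property_kernel}) with the partial kernel estimate (Lemma~\ref{lem:property_kernel}) in order to avoid the separation hypothesis in the first coordinate. Since $Q_1$ and $Q'_1$ are merely adjacent, we no longer obtain a factor $A^{\textup{sep}}_{Q_1,Q'_1}$, and we will absorb the first-coordinate factors via the $L^2$ norms of the Haar functions and the partial boundedness hypothesis built into Lemma~\ref{lem:property_kernel}.

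First, I will verify that the separation hypothesis of Lemma~\ref{lem:property_kernel} holds in the second coordinate. Because $Q_2$ is good and nested inside $Q'_{2,1}$ with $\ell(Q_2)\le\delta^r\ell(Q'_{2,1})$, Claim~\ref{claim:sep_nest}(i) gives $\rho_2(Q_2, Q^{'c}_{2,1})\ge \mathcal{C}\ell(Q_2)$, hence for $x_2\in Q'\subset Q^{'c}_{2,1}$ and $y_2,w\in Q_2$ we have $\rho_2(y_2,w)\le C_Q\ell(Q_2)\le \rho_2(x_2,w)/C_K$. Combined with $\int h^{Q_2}_{u_2}\,d\mu_2=0$, this makes the second case of Lemma~\ref{lem:property_kernel} applicable with $\phi_1=h^{Q_1}_{u_1}$, $\phi_2=h^{Q_2}_{u_2}$, $\theta_1=h^{Q'_1}_{u'_1}$, $\theta_2=h^{Q'_2}_{u'_2}\chi_{Q'}$, yielding
\begin{align*}
|\langle T(h_{u_1}^{Q_1}\otimes h_{u_2}^{Q_2}),h_{u'_1}^{Q'_1}\otimes h_{u'_2}^{Q'_2}\chi_{Q'}\rangle|
&\ls \|h^{Q_1}_{u_1}\|_{L^2(\mu_1)}\|h^{Q_2}_{u_2}\|_{L^2(\mu_2)}\|h^{Q'_1}_{u'_1}\|_{L^2(\mu_1)}\mu_2(Q_2)^{1/2}\\
&\quad\times \int_{Q'}\frac{C_Q^{\al_2}\ell(Q_2)^{\al_2}}{\rho_2(x_2,w)^{\al_2}\lambda_2(w,\rho_2(x_2,w))}|h^{Q'_2}_{u'_2}(x_2)|\,d\mu_2(x_2).
\end{align*}

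The second-coordinate integral is handled exactly as in the proof of Lemma~\ref{lem2:sepnest} via the estimate~\eqref{eq5:SepIn_b}: using the lower bound $\rho_2(x_2,w)\gs \ell(Q_2)^{\gamma_2}\ell(Q'_2)^{1-\gamma_2}$, the doubling of $\lambda_2$ with the identity $\al_2\gamma_2+t_2\gamma_2=\al_2/2$, and the comparison $\mu_2(Q')^{1/2}/\lambda_2(w,\ell(Q'))\ls \mu_2(Q'_{2,1})^{-1/2}$, the integral is bounded by $(\ell(Q_2)/\ell(Q'_2))^{\al_2/2}\mu_2(Q'_{2,1})^{-1/2}$.

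Finally, putting the estimates together and using $\|h^{Q_1}_{u_1}\|_{L^2(\mu_1)}\|h^{Q'_1}_{u'_1}\|_{L^2(\mu_1)}\|h^{Q_2}_{u_2}\|_{L^2(\mu_2)}\sim 1$ from Theorem~\ref{thm.4.2_KLPW}(vii), we obtain
\[
|\langle T(h_{u_1}^{Q_1}\otimes h_{u_2}^{Q_2}),h_{u'_1}^{Q'_1}\otimes h_{u'_2}^{Q'_2}\chi_{Q'}\rangle|
\ls \mu_2(Q_2)^{1/2}\left(\frac{\ell(Q_2)}{\ell(Q'_2)}\right)^{\al_2/2}\frac{1}{\mu_2(Q'_{2,1})^{1/2}}=A^{\textup{in}}_{Q_2,Q'_2},
\]
which is the desired estimate. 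The main (minor) subtlety is that no first-coordinate cancellation or separation is available, so one must be careful to invoke the correct variant of Lemma~\ref{lem:property_kernel} (the one that only uses $\int\phi_2\,d\mu_2=0$ and separation in the second coordinate) rather than its full-kernel analogue; beyond that, every intermediate estimate is already present in the Sep/Nes argument.
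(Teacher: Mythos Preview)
Your proof is correct and follows essentially the same approach as the paper: apply the second-coordinate case of Lemma~\ref{lem:property_kernel} (using $\int h^{Q_2}_{u_2}\,d\mu_2=0$ and the separation $\rho_2(y_2,w)\le \rho_2(x_2,w)/C_K$ coming from Claim~\ref{claim:sep_nest}(i)), then invoke the already-established estimate~\eqref{eq5:SepIn_b} for the resulting second-coordinate integral. The paper's proof is line-for-line the same, with the only cosmetic difference that it cites the bound $\rho_2(Q_2,Q')\ge \mathcal{C}\ell(Q_2)$ directly rather than via $Q'\subset Q^{'c}_{2,1}$.
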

\begin{proof}
  Note that $Q_2 \cap Q' = \emptyset$.
  Moreover, using property~(i) of Claim~\ref{claim:sep_nest},  for all $y_2, w \in Q_2$, $x_2 \in Q'$ we have
 \[\rho_2(y_2,w) \leq C_Q \ell(Q_2) \leq  \frac{\rho_2(Q_2,Q')}{2A_0C_K} \leq \frac{\rho_2(x_2,w)}{C_K}.\]
  Similar to Lemma~\ref{lem1:adj_in}, we apply  Lemma~\ref{lem:property_kernel} to
$\phi_1 = h_{u_1}^{Q_1} $, $\phi_2 =  h_{u_2}^{Q_2}$, $\theta_1 = h_{u'_1}^{Q'_1} $ and $\theta_2 = h_{u'_2}^{Q'_2} \chi_{Q'}$ to get
  \begin{align*}
    |\langle T(h_{u_1}^{Q_1} \otimes h_{u_2}^{Q_2}), h_{u'_1}^{Q'_1} \otimes h_{u'_2}^{Q'_2} \chi_{Q'}\rangle|
    &\ls  \|h_{u_1}^{Q_1}\|_{L^2(\mu_1)} \|h_{u'_1}^{Q'_1}\|_{L^2(\mu_1)}
    \|h_{u_2}^{Q_2}\|_{L^2(\mu_2)}
    \mu_2(Q_2)^{1/2}\\
    &\hspace{0.5cm}\times\int_{Q'}   \frac{C_Q^{\al_2} \ell(Q_2)^{\al_2} }
    {\rho_2(x_2,w)^{\al_2}\lambda_2(w,\rho_2(x_2,w))} |h_{u'_2}^{Q'_2}(x_2)|  \,d\mu_2(x_2).
  \end{align*}
The integral above was estimated in~\eqref{eq5:SepIn_b}:
\[\int_{Q'} \frac{C_Q^{\al_2}\ell(Q_2)^{\al_2}}
  {\rho_2(x_2,w)^{\al_2}\lambda_2(w,\rho_2(x_2,w))} |h_{u'_2}^{Q'_2}(x_2)| \,d\mu_2(x_2)
  \ls  \left( \frac{\ell(Q_2)}{\ell(Q'_2)}\right)^{\al_2/2}
  \frac{1}{\mu_2(Q'_{2,1})^{1/2}}.\]
Thus we have
  \begin{eqnarray*}
    |\langle T(h_{u_1}^{Q_1} \otimes h_{u_2}^{Q_2}), h_{u'_1}^{Q'_1} \otimes h_{u'_2}^{Q'_2} \chi_{Q'}\rangle|
        \ls \left(\frac{\ell(Q_2)}{\ell(Q'_2)}\right)^{\al_2/2} \left( \frac{\mu_2(Q_2)}{\mu_2(Q'_{2,1})}\right)^{1/2} = A^{\textup{in}}_{Q_2,Q'_2}
  \end{eqnarray*}
as required. \qedhere
\end{proof}

Lemmas~\ref{lem1:adj_in} and~\ref{lem2:adj_in} give us
\begin{align*}
  &\hspace{-0.5cm}|\langle T(h_{u_1}^{Q_1} \otimes h_{u_2}^{Q_2}), h_{u'_1}^{Q'_1} \otimes h_{u'_2}^{Q'_2}\rangle
- \langle h_{u'_2}^{Q'_2}\rangle_{Q_2}
\langle T(h_{u_1}^{Q_1} \otimes h_{u_2}^{Q_2}), h_{u'_1}^{Q'_1} \otimes 1\rangle| \\
 &\ls \sum_{\substack{Q' \in \text{ch}(Q'_2) \\ Q' \subset Q'_2 \backslash Q'_{2,1}}} A_{Q_2,Q'_2}^{\textup{in}}
 + A_{Q_2,Q'_2}^{\textup{in}}
 \ls A_{Q_2,Q'_2}^{\textup{in}}.
\end{align*}
Recall that for a given~$Q_1$, there are only finitely many~$Q'_1$ such that $Q_1$ and $Q'_1$ are adjacent, meaning
\[\delta^r \ell(Q'_1) \leq \ell(Q_1) \leq \ell(Q'_1),
 \quad \rho_1(Q_1,Q'_1) \leq \mathcal{C}\ell(Q_1)^{\gamma_1}\ell(Q'_1)^{1-\gamma_1}.\]
Combining this fact with Lemmas~\ref{lem1:adj_in}, \ref{lem2:adj_in} and Proposition~\ref{prop:A_in}, we have
 \begin{eqnarray}\label{eq5a:Adj_In}
   \lefteqn{\sum_{\substack{Q_1,Q'_1  \\ \text{adjacent} }} \,
  \sum_{\substack{Q_2,Q'_2\\ \text{nested} }}\,
  \sum_{\substack{u_1,u'_1 \\u_2,u'_2}}\,
 | \langle T(h_{u_1}^{Q_1} \otimes h_{u_2}^{Q_2}), h_{u'_1}^{Q'_1} \otimes h_{u'_2}^{Q'_2}\rangle
- \langle h_{u'_2}^{Q'_2}\rangle_{Q_2}
\langle T(h_{u_1}^{Q_1} \otimes h_{u_2}^{Q_2}), h_{u'_1}^{Q'_1} \otimes 1\rangle |} \noz\\
  &&\hspace{1.5cm} \times |\langle f, h_{u_1}^{Q_1} \otimes h_{u_2}^{Q_2}\rangle|
 |\langle g, h_{u'_1}^{Q'_1} \otimes h_{u'_2}^{Q'_2}\rangle| \noz\\
   &\ls& \sum_{\substack{Q_1,Q'_1  \\ \text{adjacent} }} \,
  \sum_{\substack{Q_2,Q'_2\\ \text{nested} }}\,
  \sum_{\substack{u_1,u'_1 \\u_2,u'_2}}\,
  A_{Q_2,Q'_2}^{\textup{in}}
  |\langle f, h_{u_1}^{Q_1} \otimes h_{u_2}^{Q_2}\rangle|
  |\langle g, h_{u'_1}^{Q'_1} \otimes h_{u'_2}^{Q'_2}\rangle| \noz\\
  &\ls& \sum_{\substack{Q_1,Q'_1  \\ \text{adjacent} }} \,
  \sum_{u_1,u'_1}\,
  \left( \sum_{Q_2}|\sum_{u_2}\langle f, h_{u_1}^{Q_1} \otimes h_{u_2}^{Q_2}\rangle|^2 \right)^{1/2}
  \left( \sum_{Q'_2}|\sum_{u'_2}\langle g, h_{u'_1}^{Q'_1} \otimes h_{u'_2}^{Q'_2}\rangle|^2\right)^{1/2} \noz\\
  &\ls& \left(\sum_{Q_1}\sum_{u_1}\sum_{Q_2}\sum_{u_2}|\langle f, h_{u_1}^{Q_1} \otimes h_{u_2}^{Q_2}\rangle|^2\right)^{1/2}
   \left(\sum_{Q'_1}\sum_{u'_1}\sum_{Q'_2}\sum_{u'_2}|\langle g, h_{u'_1}^{Q'_1} \otimes h_{u'_2}^{Q'_2}\rangle|^2\right)^{1/2}\noz\\
   &\ls& \|f\|_{L^2(\mu)} \|g\|_{L^2(\mu)}.
 \end{eqnarray}

 Similarly, we also obtain the same result for the second sum in~\eqref{adj_in_eq.1}. Specifically,
\begin{align}
   &\sum_{\substack{Q_1,Q'_1  \\ \text{adjacent} }} \,
  \sum_{\substack{Q_2,Q'_2\\ \text{nested}\\ \text{gen}(Q'_2) = m }}\,
  \sum_{\substack{u_1,u'_1 \\u_2}}\,
 | \langle T(h_{u_1}^{Q_1} \otimes h_{u_2}^{Q_2}), h_{u'_1}^{Q'_1} \otimes h_{0}^{Q'_2}\rangle
- \langle h_{0}^{Q'_2}\rangle_{Q_2}
\langle T(h_{u_1}^{Q_1} \otimes h_{u_2}^{Q_2}), h_{u'_1}^{Q'_1} \otimes 1\rangle | \noz\\
  &\hspace{1.5cm} \times |\langle f, h_{u_1}^{Q_1} \otimes h_{u_2}^{Q_2}\rangle|
 |\langle g, h_{u'_1}^{Q'_1} \otimes h_{0}^{Q'_2}\rangle| \noz\\
   &\ls \|f\|_{L^2(\mu)} \|g\|_{L^2(\mu)}. \label{eq9:Adj_In}
\end{align}

 Now we only need to consider the same summation as in the first and the second term of~\eqref{adj_in_eq.1}, but with the elements $\langle h_{u'_2}^{Q'_2}\rangle_{Q_2}
\langle T(h_{u_1}^{Q_1} \otimes h_{u_2}^{Q_2}), h_{u'_1}^{Q'_1} \otimes 1\rangle$
and $\langle h_{0}^{Q'_2}\rangle_{Q_2}
\langle T(h_{u_1}^{Q_1} \otimes h_{u_2}^{Q_2}), h_{u'_1}^{Q'_1} \otimes 1\rangle$, respectively. That is
\begin{align}
  &\sum_{\substack{Q_1,Q'_1  \\ \text{adjacent} }} \,
  \sum_{\substack{Q_2,Q'_2\\ \text{nested} }}\,
  \sum_{\substack{u_1,u'_1 \\u_2,u'_2}}
  \langle f, h_{u_1}^{Q_1} \otimes h_{u_2}^{Q_2}\rangle
   \langle g, h_{u'_1}^{Q'_1} \otimes h_{u'_2}^{Q'_2}\rangle
  \langle h_{u'_2}^{Q'_2}\rangle_{Q_2}
\langle T(h_{u_1}^{Q_1} \otimes h_{u_2}^{Q_2}), h_{u'_1}^{Q'_1} \otimes 1\rangle \noz\\
&\hspace{0.5cm}+
\sum_{\substack{Q_1,Q'_1  \\ \text{adjacent} }} \,
  \sum_{\substack{Q_2,Q'_2\\ \text{nested}\\ \text{gen}(Q'_2) = m }}\,
  \sum_{\substack{u_1,u'_1 \\u_2}}
  \langle f, h_{u_1}^{Q_1} \otimes h_{u_2}^{Q_2}\rangle
   \langle g, h_{u'_1}^{Q'_1} \otimes h_{0}^{Q'_2}\rangle
  \langle h_{0}^{Q'_2}\rangle_{Q_2}
\langle T(h_{u_1}^{Q_1} \otimes h_{u_2}^{Q_2}), h_{u'_1}^{Q'_1} \otimes 1\rangle. \label{eq10:Adj_In}
\end{align}
Using Lemma~\ref{lem3.sepnest}, these two sums equal
\begin{eqnarray*}
  \lefteqn{\bigg\langle
\sum_{\substack{Q_1, Q'_1 \textup{ good}\\ \textup{adjacent} }} \quad
\sum_{\substack{u_1,u'_1 \\u_2}} \quad
 h_{u'_1}^{Q'_1} \otimes  \big(\Pi_{b^{u_1 u'_1}_{Q_1 Q'_1}}^{u_2} \big)^*f^{Q_1,u_1},
 g_{\text{good}}
\bigg\rangle}\hspace{0.5cm}\\
 &=& \sum_{\substack{Q_1, Q'_1 \textup{ good}\\ \textup{adjacent} }} \quad
 \sum_{\substack{u_1,u'_1 \\u_2}} \quad
\bigg\langle
\big(\Pi_{b^{u_1 u'_1}_{Q_1 Q'_1}}^{u_2} \big)^*f^{Q_1,u_1},
 \langle g_{\text{good}}, h_{u'_1}^{Q'_1} \rangle_1
\bigg\rangle \\
&=& \sum_{\substack{Q_1, Q'_1 \textup{ good}\\ \textup{adjacent} }} \quad
\sum_{\substack{u_1,u'_1 \\u_2}} \quad
\bigg\langle
\big(\Pi_{b^{u_1 u'_1}_{Q_1 Q'_1}}^{u_2} \big)^*f^{Q_1,u_1},
 g_{\text{good}}^{Q'_1,u'_1}
\bigg\rangle, \\
\end{eqnarray*}
where
\begin{eqnarray*}
  f^{Q_1,u_1} &:=& \langle f, h_{u_1}^{Q_1}\rangle_1 = \int_{X_1} f(x,y)h_{u_1}^{Q_1}(x) \,d\mu_1(x), \\
  g_{\text{good}}^{Q'_1,u'_1} &:=& \langle g_{\text{good}}, h_{u_1}^{Q_1}\rangle_1, \\
  b^{u_1 u'_1}_{Q_1 Q'_1} &:=& \langle T^*(h_{u'_1}^{Q'_1} \otimes 1), h_{u_1}^{Q_1}\rangle_1, \text{ and} \\
  \Pi_{a}^{u_2} \omega &:=&  \sum_{Q'_2 \in \Dd'_2} \quad
\sum_{\substack{Q_2 \textup{ good} \\Q_2 \subset Q'_2 \\ \ell(Q_2) = \delta^r\ell(Q'_2)}}
\langle \omega\rangle_{Q'_2}
\langle a, h_{u_2}^{Q_2}\rangle h_{u_2}^{Q_2}, \qquad u_2 \neq 0.
\end{eqnarray*}

For fixed $u_1, u'_1, u_2$, we can then estimate
\begin{eqnarray}\label{eq5:Adj_In}
 && \bigg| \sum_{\substack{Q_1, Q'_1 \textup{ good}\\ \textup{adjacent} }} \quad
\bigg\langle
\big(\Pi_{b^{u_1 u'_1}_{Q_1 Q'_1}}^{u_2} \big)^*f^{Q_1,u_1},
 g_{\text{good}}^{Q'_1,u'_1}
\bigg\rangle\bigg| \noz\\
 &\leq& \sum_{\substack{Q_1, Q'_1 \textup{ good}\\ \textup{adjacent} }} \quad
 \Big\|\big(\Pi_{b^{u_1 u'_1}_{Q_1 Q'_1}}^{u_2} \big)^*f^{Q_1,u_1}\Big\|_{L^2(\mu_2)} \,
 \big\|g_{\text{good}}^{Q'_1,u'_1}\big\|_{L^2(\mu_2)} \noz\\
 &\ls& \sum_{\substack{Q_1, Q'_1 \\ \textup{adjacent} }} \quad
 \Big\|b^{u_1 u'_1}_{Q_1 Q'_1}\Big\|_{\bmo_{C_K}^{2}(\mu_2)} \,
  \big\|f^{Q_1,u_1}\big\|_{L^2(\mu_2)} \,
 \big\|g^{Q'_1,u'_1}\big\|_{L^2(\mu_2)},
\end{eqnarray}
where we use the paraproduct estimate in Lemma~\ref{lem4.sepnest} and the bound
\[\big\|g_{\text{good}}^{Q'_1,u'_1}\big\|_{L^2(\mu_2)}
= \big\|\langle g_{\text{good}}, h^{Q'_1}_{u'_1}\rangle_1 \big \|_{L^2(\mu_2)}
\leq \big\|\langle g, h^{Q'_1}_{u'_1}\rangle_1 \big \|_{L^2(\mu_2)}
= \big\|g^{Q'_1,u'_1}\big\|_{L^2(\mu_2)}.\]

In Lemma~\ref{adj_in_lem3} below, we will estimate $\Big\|b^{u_1 u'_1}_{Q_1 Q'_1}\Big\|_{\bmo_{C_K}^{2}(\mu_2)} $.

\begin{lem}\label{adj_in_lem3}
  If $Q_1$ and $Q'_1$ are adjacent, then there holds that
  \[\Big\|b^{u_1 u'_1}_{Q_1 Q'_1}\Big\|_{\bmo_{C_K}^{2}(\mu_2)}
  \ls C(\epsilon,\upsilon) + \upsilon^{1/2} \|T\|
  + \|T\| \Big( H_{Q_1,b}^{u_1} +  H_{Q'_1,b}^{u'_1}\Big),\]
  where $\epsilon \in (0,1)$ is fixed in the surgery of adjacent cubes (Section~\ref{subsec:surgery}), $\upsilon \in (0,1)$ is fixed in the random almost-covering by balls (Section~\ref{subsec:ball_cover}), and
  \begin{align*}
   H_{Q_1,b}^{u_1} &:= \Big( \sum_{Q \in \textup{ch}(Q_1)}
  |\langle h_{u_1}^{Q_1}\rangle_Q|^2 \mu_1(Q_b)\Big)^{1/2}, \text{ and}\quad
  H_{Q'_1,b}^{u'_1} := \Big( \sum_{Q' \in \textup{ch}(Q'_1)}
  |\langle h_{u'_1}^{Q'_1}\rangle_{Q'}|^2 \mu_1(Q'_b)\Big)^{1/2}.
  \end{align*}
\end{lem}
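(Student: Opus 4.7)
The plan is to argue by duality, following the template of Lemma~\ref{lem5.sepnest} from the \emph{Sep/Nes} case but with the extra combinatorial work that adjacency forces us to do. It suffices to bound $|\langle b^{u_1 u'_1}_{Q_1 Q'_1}, a\rangle|$ for every $2$-atom $a$ supported on a ball $J = B(x_J, r_J) \subset X_2$ with $\int a\, d\mu_2 = 0$ and $\|a\|_{L^2(\mu_2)} \le \mu_2(J)^{-1/2}$. Unfolding the partial adjoint, $\langle b^{u_1 u'_1}_{Q_1 Q'_1}, a\rangle = \langle T(h_{u_1}^{Q_1} \otimes a), h_{u'_1}^{Q'_1} \otimes 1\rangle$. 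Setting $V = C_K J$, we split $1 = \chi_V + \chi_{V^c}$. The $\chi_{V^c}$ piece is handled exactly as in the proof of Lemma~\ref{lem5.sepnest}: the separation $\rho_2(x_2,x_J) \ge C_K \rho_2(x_J,y_2)$ for $y_2 \in J$, $x_2 \in V^c$ together with $\int a\, d\mu_2 = 0$ lets us apply Lemma~\ref{lem:property_kernel} and get an $O(1)$ contribution via Lemma~\ref{upper_dbl_lem1}.

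The heart of the argument is the $\chi_V$ piece, where the singularity in the first variable forces us to use the surgery of Section~\ref{subsec:surgery} applied to $Q_1$ and $Q'_1$. Writing $Q_1 = Q_{1,s} \cup Q_{1,\partial} \cup \Delta$ and $Q'_1 = Q'_{1,s} \cup Q'_{1,\partial} \cup \Delta$ with $\Delta = Q_1 \cap Q'_1$, I would decompose
\[
\langle T(h_{u_1}^{Q_1}\otimes a), h_{u'_1}^{Q'_1} \otimes \chi_V\rangle = \sum_{A,B} \langle T(h_{u_1}^{Q_1} \chi_A \otimes a), h_{u'_1}^{Q'_1} \chi_B \otimes \chi_V\rangle,
\]
where $A$ runs over $\{Q_{1,s}, Q_{1,\partial}, \Delta\}$ and $B$ over $\{Q'_{1,s}, Q'_{1,\partial}, \Delta\}$. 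For the pairings in which at least one of $A$ or $B$ equals $Q_{1,s}$ or $Q'_{1,s}$, Lemma~\ref{lem1:surgery} ensures $\lambda_1(x,\rho_1(x,y)) \gtrsim C(\epsilon)\mu_1(Q_1)^{1/2}\mu_1(Q'_1)^{1/2}$, so the full kernel representation together with the mean-zero property of $a$ (via Lemma~\ref{lem:property_kernel}) absorbs the factors $h_{u_1}^{Q_1}$ and $h_{u'_1}^{Q'_1}$ and produces a constant $C(\epsilon)$. For the pairings involving a $\partial$-piece, I would write $h_{u_1}^{Q_1}\chi_{Q_{1,\partial}} = \sum_{Q \in \mathrm{ch}(Q_1)} \langle h_{u_1}^{Q_1}\rangle_Q \chi_{Q \cap Q_{1,\partial}}$, exploit the inclusion $Q_{1,\partial} \subset Q_{1,b}$ from \eqref{surgery_eq2}, and estimate directly via $\|T\|$ and Cauchy--Schwarz. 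This yields the $\|T\| H_{Q_1,b}^{u_1}$ and $\|T\| H_{Q'_1,b}^{u'_1}$ terms.

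The remaining, and hardest, contribution is the $\Delta/\Delta$ pairing $\langle T(h_{u_1}^{Q_1}\chi_\Delta \otimes a), h_{u'_1}^{Q'_1}\chi_\Delta \otimes \chi_V\rangle$. Here I invoke the random ball covering of Section~\ref{subsec:ball_cover}: by \eqref{eq3:adj}--\eqref{eq4:adj}, $\chi_\Delta = \sum_{B \in \mathcal{B}} \widetilde{\chi}_B + \widetilde{\chi}_{\Omega_i} + \widetilde{\chi}_{\Omega_{Q_1}} + \widetilde{\chi}_{\Omega_{Q'_1}}$. On a single ball $B = B'$ the contribution $\langle T(h_{u_1}^{Q_1}\widetilde{\chi}_B \otimes a), h_{u'_1}^{Q'_1}\widetilde{\chi}_B \otimes \chi_V\rangle$ is handled by the partial weak boundedness property of Assumption~\ref{assum_6} (noting that $\Lambda B \subset \Delta$ and that $a$ is mean-zero). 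When $B \neq B'$, Lemma~\ref{lem1:ball_cover} supplies the separation needed to apply Lemma~\ref{lem:property_kernel}, and a Cauchy--Schwarz argument over the disjoint family $\mathcal{B}$ controls the sum. The terms from $\widetilde{\chi}_{\Omega_{Q_1}}$ and $\widetilde{\chi}_{\Omega_{Q'_1}}$ are absorbed into $\|T\| H_{Q_1,b}^{u_1}$ and $\|T\| H_{Q'_1,b}^{u'_1}$ via \eqref{ball_cover_eq3}--\eqref{ball_cover_eq4}, while $\widetilde{\chi}_{\Omega_i}$ is handled by the crude bound $\|T\|$ combined with \eqref{ball_cover_eq1} to produce the $\upsilon^{1/2}\|T\|$ term.

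Summing the contributions gives the claimed estimate, with $C(\epsilon,\upsilon)$ accounting for all pairings where kernel separation suffices, $\upsilon^{1/2}\|T\|$ for the uncovered piece $\Omega_i$, and $\|T\|(H_{Q_1,b}^{u_1}+H_{Q'_1,b}^{u'_1})$ for the boundary pieces. The main technical obstacle will be bookkeeping in the $\Delta/\Delta$ term: keeping the constants from Lemma~\ref{lem1:ball_cover} uniform in $\mathcal{B}$ while simultaneously verifying that the weak boundedness Assumption~\ref{assum_6} applies with the regularised characteristic functions $\widetilde{\chi}_B$ in place of $\chi_B$ (the dilated balls $\Lambda B$ still sit inside $\Delta$ by our choice~\eqref{eq2:adj} of scale $k$). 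Everything else is a direct adaptation of arguments already developed in the \emph{Sep/Nes} reduction.
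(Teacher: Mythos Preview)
Your overall strategy is the same as the paper's: test against an $H^{1,2}(\mu_2)$-atom $a$, split $1=\chi_V+\chi_{V^c}$, dispatch the far piece by Lemma~\ref{lem:property_kernel} and Lemma~\ref{upper_dbl_lem1}, and attack the near piece via surgery, the random ball covering, and Assumption~\ref{assum_6}. The one substantive difference is the order in which you pass to children. The paper first expands
\[
h_{u_1}^{Q_1}=\sum_{Q\in\mathrm{ch}(Q_1)}\langle h_{u_1}^{Q_1}\rangle_Q\,\chi_Q,\qquad
h_{u'_1}^{Q'_1}=\sum_{Q'\in\mathrm{ch}(Q'_1)}\langle h_{u'_1}^{Q'_1}\rangle_{Q'}\,\chi_{Q'},
\]
and then performs surgery on each pair $(Q,Q')$ of \emph{children}. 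This is what makes the sets $Q_b$ (for $Q\in\mathrm{ch}(Q_1)$) appear directly, and hence produces exactly the quantity $H_{Q_1,b}^{u_1}$ as stated. Your route---surgery on $(Q_1,Q'_1)$ first, then splitting into children for the $\partial$-pieces---yields instead $\mu_1(Q\cap (Q_1)_\partial)$, and the inclusion $Q\cap(Q_1)_\partial\subset Q_b$ is not automatic: $(Q_1)_\partial\subset\delta_{Q'_1}$, but $Q'_1$ need not lie in the scale window $\delta^r\ell(\widetilde Q')\le\ell(Q)\le\ell(\widetilde Q')$ defining $Q_b$ (it fails when $\ell(Q_1)=\delta^r\ell(Q'_1)$). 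So while your argument would prove a correct estimate, it would be in terms of a slightly different boundary quantity than the one in the lemma statement; reversing the order as the paper does avoids this mismatch.
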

\begin{proof}
  Fix a cube~$J \subset X_2$ and a $2$-atom~$a$ of the Hardy space $H^{1,2}(\mu_2)$. That is $\supp a \subset J$, $\int_{J} a \,d\mu = 0$ and $\|a\|_{L^2(\mu_2)} \leq \mu_2(J)^{-1/2}$.
  Let $x_J$ be the centre of~$J$. Then $J \subset B(x_J, C_Q\ell(J)) =: B(J)$.
  Let $V \subset X_2$ be an arbitrary ball such that
  $C_K B(J) = B(x_J, C_KC_Q\ell(J)) \subset V.$
  We need to show that
  \begin{eqnarray}\label{eqa:Adj_In}
    \langle b^{u_1 u'_1}_{Q_1 Q'_1},a \rangle &=&
    |\langle T(h_{u_1}^{Q_1} \otimes a), h_{u'_1}^{Q'_1} \otimes 1 \rangle| \noz\\
    &\leq& |\langle T(h_{u_1}^{Q_1} \otimes a), h_{u'_1}^{Q'_1} \otimes \chi_V\rangle|
    + |\langle T(h_{u_1}^{Q_1} \otimes a), h_{u'_1}^{Q'_1} \otimes \chi_{V^c} \rangle| \\
    &\ls& C(\epsilon,\upsilon) + \upsilon^{1/2} \|T\|
  + \|T\| \big( H_{Q_1,b}^{u_1} +  H_{Q'_1,b}^{u'_1}\big). \noz
    \end{eqnarray}
  We will consider each term in~\eqref{eqa:Adj_In}.
  As the reader may notice, the idea of the proof of Lemma~\ref{adj_in_lem3} is similar to that of Lemma~\ref{lem5.sepnest}, in which we estimate $\big\|b^{u_1 u'_1}_{Q_1 Q'_1}\big\|_{\bmo_{C_K}^{2}(\mu_2)}$ when $Q_1$ and~$Q'_1$ are separated.

  We start with $|\langle T(h_{u_1}^{Q_1} \otimes a), h_{u'_1}^{Q'_1} \otimes \chi_{V^c} \rangle|$.
  Recall that $\supp a \cap \supp  \chi_{V^c} = \emptyset$ and for $y_2\in J$ and~$x_2 \in V^c$, we have
  $$\rho_2(x_J,x_2) \geq C_KC_Q \ell(J) \geq C_K \rho_2(y_2,x_J).$$
  Thus, we can apply Lemma~\ref{lem:property_kernel} to
  $\phi_1 = h^{Q_1}_{u_1}$, $\phi_2 = a$, $\theta_1 = h^{Q'_1}_{u'_1}$ and $\theta_2 = \chi_{V^c}$. We have
\begin{eqnarray*}
  \lefteqn{|\langle T(h_{u_1}^{Q_1} \otimes a), h_{u'_1}^{Q'_1} \otimes \chi_{V^c}\rangle|} \noz\\
   &\ls& \|h^{Q_1}_{u_1}\|_{L^2(\mu_1)}  \|h^{Q'_1}_{u'_1}\|_{L^2(\mu_1)}  \|a\|_{L^2(\mu_2)}  \mu_2(J)^{1/2}
  \int_{V^c} \frac{C_Q^{\al_2}\ell(J)^{\al_2}}
  {\rho_2(x_2,x_J)^{\al_2}\lambda_2(x_J,\rho_2(x_2,x_J))} \,d\mu_2(x_2).
\end{eqnarray*}
 Using Lemma~\ref{upper_dbl_lem1}, the integral above is bounded by
\begin{eqnarray*}
 \ell(J)^{\al_2}
 \int_{X_2 \backslash B(x_J, C_K C_Q \ell(J))}  \frac{\rho_2(x_2,x_J)^{-\al_2}}{\lambda_2(x_J,\rho_2(x_2,x_J))} \,d\mu_2(x_2) \noz\ls \ell(J)^{\al_2} (C_K C_Q \ell(J))^{-\al_2}
 \ls 1.
\end{eqnarray*}
This implies
\begin{equation}\label{eq3:AdjIn}
  |\langle T(h_{u_1}^{Q_1} \otimes a), h_{u'_1}^{Q'_1} \otimes \chi_{V^c}\rangle|
  \ls
  \|a\|_{L^2(\mu_2)}  \mu_2(V)^{1/2} \ls 1.
\end{equation}

Next, we consider the term $|\langle T(h_{u_1}^{Q_1} \otimes a), h_{u'_1}^{Q'_1} \otimes \chi_{V} \rangle|$.
We write the Haar function~$h_{u_1}^{Q_1}$ as a combination of~$\chi_Q$, where $Q \in \text{ch}(Q_1)$.
Similarly,  write~$h_{u'_1}^{Q'_1}$ as a combination of~$\chi_{Q'}$, where $Q' \in \text{ch}(Q'_1)$.
Then we have
\begin{align*}
   &\hspace{-0.5cm} \Big|\Big\langle T\Big(\sum_{Q \in \text{ch}(Q_1)} \langle h_{u_1}^{Q_1}\rangle_{Q} \chi_Q \otimes a\Big),
    \sum_{Q' \in \text{ch}(Q'_1)} \langle h_{u'_1}^{Q'_1}\rangle_{Q'} \chi_{Q'} \otimes \chi_{V} \Big\rangle \Big| \\
   & \leq \sum_{Q \in \text{ch}(Q_1)} \sum_{Q' \in \text{ch}(Q'_1)}
   |\langle h_{u_1}^{Q_1}\rangle_{Q}| |\langle h_{u'_1}^{Q'_1}\rangle_{Q'} |
   |\langle T(\chi_Q \otimes a), \chi_{Q'} \otimes \chi_V \rangle|.
\end{align*}
Notice that, since~$Q_1$ and~$Q'_1$ are adjacent, $\delta^r \ell(Q'_1) \leq \ell(Q_1) \leq \ell(Q'_1)$. Thus, we also have $\delta^r \ell(Q') \leq \ell(Q) \leq \ell(Q')$.
Now for fixed~$Q$ and~$Q'$, we focus on the matrix element $\langle T(\chi_Q \otimes a), \chi_{Q'} \otimes \chi_V \rangle$.
We apply surgery to the pair of cubes $Q$ and~$Q'$ as shown in~\eqref{surgery_eq1}, \eqref{eq3:adj} and~\eqref{eq4:adj} (see Sections~\ref{subsec:surgery} and~\ref{subsec:ball_cover}):
\begin{align*}
  Q = Q_s \cup Q_{\partial} \cup \Delta,  \quad &\text{and} \quad Q' = Q'_s \cap Q'_{\partial} \cap \Delta, \\
   \chi_{\Delta} = \sum_{B \in \mathcal{B}} \widetilde{\chi}_B + \widetilde{\chi}_{\Delta\backslash \cup B}, \quad &\text{and} \quad \widetilde{\chi}_{\Delta\backslash \cup B} =  \widetilde{\chi}_{\Om_i} + \widetilde{\chi}_{\Om_Q} + \widetilde{\chi}_{\Om_{Q'}}.
\end{align*}
Then $\langle T(\chi_Q \otimes a), \chi_{Q'} \otimes \chi_V \rangle$ is decomposed into
\begin{align}
    &\hspace{-0.5cm} \langle T(\chi_Q \otimes a), \chi_{Q'_{s}} \otimes \chi_V \rangle \label{eq:a}\\
   & + \langle T(\chi_Q \otimes a), \chi_{Q'_{\partial}} \otimes \chi_V \rangle \label{eq:b}\\
   & + \langle T(\chi_{Q_s} \otimes a), \chi_{\Delta} \otimes \chi_V \rangle \label{eq:c}\\
   & + \langle T(\chi_{Q_{\partial}} \otimes a), \chi_{\Delta} \otimes \chi_V \rangle \label{eq:d}\\
   & + \langle T(\chi_{\Delta} \otimes a), \widetilde{\chi}_{\Delta\backslash \cup B} \otimes \chi_V \rangle \label{eq:e}\\
   & + \langle T(\widetilde{\chi}_{\Delta\backslash \cup B} \otimes a), \sum_{B \in \mathcal{B}} \widetilde{\chi}_B \otimes \chi_V \rangle \label{eq:f}\\
   & + \langle T(\sum_{B \in \mathcal{B}} \widetilde{\chi}_B  \otimes a), \sum_{B \in \mathcal{B}} \widetilde{\chi}_B \otimes \chi_V \rangle. \label{eq:g}
\end{align}
Equations~\eqref{eq:e}--\eqref{eq:g} can be decomposed further:
\begin{align}
  \langle T(\chi_{\Delta} \otimes a), \widetilde{\chi}_{\Delta\backslash \cup B} \otimes \chi_V \rangle
  = {} & \langle T(\chi_{\Delta} \otimes a),
   \widetilde{\chi}_{\Omega_i} \otimes \chi_V \rangle \label{eq:e1} \\
   & + \langle T(\chi_{\Delta} \otimes a),
    \widetilde{\chi}_{\Omega_Q} \otimes \chi_V \rangle \label{eq:e2}\\
   & + \langle T(\chi_{\Delta} \otimes a),
   \widetilde{\chi}_{\Omega_{Q'}} \otimes \chi_V \rangle, \label{eq:e3}
\end{align}
\begin{align}
  \langle T(\widetilde{\chi}_{\Delta\backslash \cup B} \otimes a), \sum_{B \in \mathcal{B}} \widetilde{\chi}_B \otimes \chi_V \rangle
  = {} & \langle T(\widetilde{\chi}_{\Omega_i} \otimes a), \sum_{B \in \mathcal{B}} \widetilde{\chi}_B \otimes \chi_V \rangle \label{eq:f1}\\
   & + \langle T(\widetilde{\chi}_{\Omega_Q} \otimes a), \sum_{B \in \mathcal{B}} \widetilde{\chi}_B \otimes \chi_V \rangle \label{eq:f2}\\
   & + \langle T(\widetilde{\chi}_{\Omega_{Q'}} \otimes a), \sum_{B \in \mathcal{B}} \widetilde{\chi}_B \otimes \chi_V \rangle, \label{eq:f3}
\end{align}
\begin{align}
  \langle T(\sum_{B \in \mathcal{B}} \widetilde{\chi}_B  \otimes a), \sum_{B \in \mathcal{B}} \widetilde{\chi}_B \otimes \chi_V \rangle
  = {} & \sum_{B} \langle T( \widetilde{\chi}_B  \otimes a), \widetilde{\chi}_B \otimes \chi_V \rangle \label{eq:g1} \\
   & + \sum_{B \neq \widehat{B}} \langle T( \widetilde{\chi}_B  \otimes a), \widetilde{\chi}_{\widehat{B}} \otimes \chi_V \rangle. \label{eq:g2}
\end{align}

Using properties~\eqref{surgery_eq2}, \eqref{ball_cover_eq4} and~\eqref{ball_cover_eq2}, we can show that~\eqref{eq:b}, \eqref{eq:e3} and~\eqref{eq:f3} are dominated by
\[
\|T\| \|a\|_{L^2(\mu_2)} \mu_1(Q)^{1/2} \mu_1(Q'_b)^{1/2} \mu_2(V)^{1/2}.
\]
Similarly, using properties~\eqref{surgery_eq2}, \eqref{ball_cover_eq3} and~\eqref{ball_cover_eq2}, we can show that \eqref{eq:d}, \eqref{eq:e2} and~\eqref{eq:f2} are dominated by
\[
\|T\| \|a\|_{L^2(\mu_2)} \mu_1(Q_b)^{1/2} \mu_1(Q')^{1/2} \mu_2(V)^{1/2}.
\]
Using properties~\eqref{ball_cover_eq1} and~\eqref{ball_cover_eq2}, we can show that~\eqref{eq:e1} and~\eqref{eq:f1} are dominated by
\[
\upsilon^{1/2}\|T\| \|a\|_{L^2(\mu_2)} \mu_1(Q)^{1/2} \mu_1(Q')^{1/2} \mu_2(V)^{1/2}.
\]

We claim that~\eqref{eq:a} and~\eqref{eq:c} are dominated by
\[
C(\epsilon) \|a\|_{L^2(\mu_2)}\mu_1(Q)^{1/2} \mu_1(Q')^{1/2} \mu_2(V)^{1/2}.
\]
We present the proof for~\eqref{eq:a}. The same argument works for~\eqref{eq:c}.
Since $Q \cap Q'_s = \emptyset$, we can use Assumptions~\ref{assum_3} and~\ref{assum_4} to obtain
\begin{align*}
    |\langle T(\chi_Q \otimes a), \chi_{Q'_{s}} \otimes \chi_V \rangle |
   & \leq \int_{Q'_s} \int_{Q} |K_{a,\chi_V}(x_1,y_1)| \,d\mu_1(x_1) \,d\mu_1(y_1) \\
   & \leq C(a,\chi_V)  \int_{Q'_s} \int_{Q}
    \frac{1}{\lambda_1(x_1, \rho_1(x_1,y_1))}  \,d\mu_1(x_1) \,d\mu_1(y_1) \\
   & \ls \|a\|_{L^2(\mu_1)} \|\chi_V\|_{L^2(\mu_1)}
   \int_{Q'_s} \int_{Q} \frac{1}{\lambda_1(x_1, \rho_1(x_1,y_1))}  \,d\mu_1(x_1) \,d\mu_1(y_1).
\end{align*}
Using Lemma~\ref{lem1:surgery}, the integral above is bounded by
\[C(\epsilon)  \mu_1(Q)^{-1/2}\mu_1(Q')^{-1/2} \mu_1(Q) \mu_1(Q'_s)
\leq C(\epsilon)\mu_1(Q)^{1/2}\mu_1(Q')^{1/2}. \]
Thus,
\[
|\langle T(\chi_Q \otimes a), \chi_{Q'_{s}} \otimes \chi_V \rangle |
\ls C(\epsilon) \|a\|_{L^2(\mu_2)}\mu_1(Q)^{1/2} \mu_1(Q')^{1/2} \mu_2(V)^{1/2}.
\]

We claim that~\eqref{eq:g1} and~\eqref{eq:g2} are dominated by
\[C(\epsilon,\upsilon) \|a\|_{L^2(\mu_2)}\mu_1(Q)^{1/2} \mu_1(Q')^{1/2} \mu_2(V)^{1/2}.\]
The constant depends on~$\epsilon$ because the balls $B \in \mathcal{B}$ depend on~$\widetilde{\Delta}$, which in turn depends on~$\epsilon$.
We start with~\eqref{eq:g1}. Using Assumption~\ref{assum_6} and the fact that $\Lambda B \subset \Delta$, we have
\begin{align*}
  |\sum_{B} \langle T( \widetilde{\chi}_B  \otimes a), \widetilde{\chi}_B \otimes \chi_V \rangle|
  \ls & \, \, C(\epsilon,\upsilon)\|a\|_{L^2(\mu_2)}  \mu_2(V)^{1/2} \mu_1(\Lambda B)\\
  \leq &  \, \, C(\epsilon,\upsilon)\|a\|_{L^2(\mu_2)}  \mu_2(V)^{1/2} \mu_1(Q) \\
  \leq & \, \, C(\epsilon,\upsilon)\|a\|_{L^2(\mu_2)}  \mu_2(V)^{1/2} \mu_1(Q)^{1/2} \mu_1(Q')^{1/2}.
\end{align*}

For~\eqref{eq:g2}, we can use Assumptions~\ref{assum_3} and~\ref{assum_4}, and Lemma~\ref{lem1:ball_cover}, to get
\begin{align*}
    & \hspace{-1cm} |\sum_{B \neq \widehat{B}} \langle T( \widetilde{\chi}_B  \otimes a), \widetilde{\chi}_{\widehat{B}} \otimes \chi_V \rangle| \\
    \leq & \sum_{B \neq \widehat{B}}
   | \langle T(\chi_{(1+\epsilon')B}  \otimes a), \chi_{(1+\epsilon')\widehat{B}} \otimes \chi_V \rangle| \\
   \leq & \sum_{B \neq \widehat{B}}  \int_{(1+\epsilon')B} \int_{(1+\epsilon')\widehat{B}} |K_{a,\chi_V}(x_1,y_1)| \,d\mu_1(x_1) \,d\mu_1(y_1) \\
   \leq & \sum_{B \neq \widehat{B}} C(a,\chi_V)
    \int_{(1+\epsilon')B} \int_{(1+\epsilon')\widehat{B}}
    \frac{1}{\lambda_1(x_1, \rho_1(x_1,y_1))}  \,d\mu_1(x_1) \,d\mu_1(y_1) \\
   \ls &  \sum_{B \neq \widehat{B}} \|a\|_{L^2(\mu_1)} \|\chi_V\|_{L^2(\mu_1)}
   \int_{(1+\epsilon')B} \int_{(1+\epsilon')\widehat{B}} \frac{1}{\lambda_1(x_1, \rho_1(x_1,y_1))}  \,d\mu_1(x_1) \,d\mu_1(y_1)\\
    \ls  & \, \, C(\epsilon, \upsilon) \|a\|_{L^2(\mu_1)} \mu_2(V)^{1/2}
   \mu_1((1+\epsilon')B)^{1/2} \mu_1((1+\epsilon')\widehat{B})^{1/2}\\
  \leq  & \, \,   C(\epsilon,\upsilon) \mu_2(J)^{-1/2} \mu_2(V)^{1/2} \mu_1(Q)^{1/2} \mu_1(Q')^{1/2} \\
   \ls  & \, \,   C(\epsilon,\upsilon) \mu_1(Q)^{1/2} \mu_1(Q')^{1/2} .
\end{align*}

Combining all the estimates above, we obtain
\begin{align*}
 & \hspace{-1cm}  |\langle T(h_{u_1}^{Q_1} \otimes a, h_{u'_1}^{Q'_1} \otimes \chi_{V}) \rangle| \\
   \leq & \sum_{Q \in \text{ch}(Q_1)} \sum_{Q' \in \text{ch}(Q'_1)}
   |\langle h_{u_1}^{Q_1}\rangle_{Q}| |\langle h_{u'_1}^{Q'_1}\rangle_{Q'} |
   |\langle T(\chi_Q \otimes a), \chi_{Q'} \otimes \chi_V \rangle|\\
   \ls & \,\,
   \sum_{Q \in \text{ch}(Q_1)} \sum_{Q' \in \text{ch}(Q'_1)}
  |\langle h_{u_1}^{Q_1}\rangle_{Q}| |\langle h_{u'_1}^{Q'_1}\rangle_{Q'} |
   \\
  & \times \Big[ \|T\| \mu_1(Q)^{1/2} \mu_1(Q'_b)^{1/2}
  + \|T\| \mu_1(Q_b)^{1/2} \mu_1(Q')^{1/2}
  + \upsilon^{1/2} \|T\| \mu_1(Q)^{1/2} \mu_1(Q')^{1/2} \\
  &\hspace{0.5cm} + C(\epsilon, \upsilon)  \mu_1(Q)^{1/2} \mu_1(Q')^{1/2} \Big]\\
  =: & \, \,
    \sum_{Q \in \text{ch}(Q_1)} \sum_{Q' \in \text{ch}(Q'_1)}
  |\langle h_{u_1}^{Q_1}\rangle_{Q}| |\langle h_{u'_1}^{Q'_1}\rangle_{Q'} |
   \big[(\mathcal F_1) + (\mathcal F_2) + (\mathcal F_3) + (\mathcal F_4)\big].
\end{align*}
Using property~\eqref{eq2:pro_Haar} of Haar functions, we have
 \[
 |\langle h_{u_1}^{Q_1}\rangle_{Q}| \ls \mu_1(Q)^{-1/2}
  \quad \text{and}  \quad |\langle h_{u'_1}^{Q'_1}\rangle_{Q'} | \ls \mu_1(Q')^{-1/2}
  .\]
Thus, the sum involving~$(\mathcal F_3)$ is dominated by
$\upsilon^{1/2} \|T\| $,
and the sum involving~$(\mathcal F_4)$ is dominated by
$C(\epsilon, \upsilon)$.
Next, using H\"{o}lder's inequality, the sum involving~$(\mathcal F_1)$ is estimates as
\begin{align*}
    & \hspace{-1cm} \|T\| \sum_{Q \in \text{ch}(Q_1)} \sum_{Q' \in \text{ch}(Q'_1)}
   |\langle h_{u'_1}^{Q'_1}\rangle_{Q'} |  \mu_1(Q'_b)^{1/2} \\
   & =\|T\|  M_{Q_1}   \sum_{Q' \in \text{ch}(Q'_1)}
   |\langle h_{u'_1}^{Q'_1}\rangle_{Q'} |  \mu_1(Q'_b)^{1/2}\\
   & \leq \|T\|  M_{Q_1} \Big(\sum_{Q' \in \text{ch}(Q'_1)}
   |\langle h_{u'_1}^{Q'_1}\rangle_{Q'} |^2  \mu_1(Q'_b) \Big)^{1/2}
   \Big(\sum_{Q' \in \text{ch}(Q'_1)} 1 \Big)^{1/2}
    \ls \|T\|   H^{u'_1}_{Q'_1,b}.
\end{align*}
Similarly, the sum involving~$(\mathcal F_2)$ is reduced to $\|T\| H^{u_1}_{Q_1,b}$.
This implies
\begin{align}\label{eq4:Adj_In}
  |\langle T(h_{u_1}^{Q_1} \otimes a, h_{u'_1}^{Q'_1} \otimes \chi_{V}) \rangle| \ls  \Big[C(\epsilon,\upsilon) + \upsilon^{1/2} \|T\|
  + \|T\| \big( H_{Q_1,b}^{u_1} +  H_{Q'_1,b}^{u'_1}\big)\Big] \|a\|_{L^2(\mu_2)} \mu_2(V)^{1/2}.
\end{align}
Hence, equations~\eqref{eq3:AdjIn} and~\eqref{eq4:Adj_In} have established Lemma~\ref{adj_in_lem3}.
\end{proof}

%%%%%%%%%%%%%%%%%%%%%%%%%%%%
Now, using Lemma~\ref{adj_in_lem3}, we return to considering~\eqref{eq5:Adj_In}.
We note that $ \big\|f^{Q_1,u_1}\big\|_{L^2(\mu_2)} \ls \|f\|_{L^2(\mu)}$,
 $ \big\|g^{Q'_1,u'_1}\big\|_{L^2(\mu_2)} \ls \|g\|_{L^2(\mu)}$, and for each $Q_1 \in \Dd_1$, there are only finitely many~$Q'_1 \in \Dd'_1$ adjacent to~$Q_1$.
Thus
\begin{align}\label{eq6:Adj_In}
  &\hspace{-1cm} \sum_{\substack{Q_1, Q'_1 \\ \textup{adjacent} }} \quad
 \Big\|b^{u_1 u'_1}_{Q_1 Q'_1}\Big\|_{\bmo_{C_K}^{2}(\mu_2)} \,
  \big\|f^{Q_1,u_1}\big\|_{L^2(\mu_2)} \,
 \big\|g^{Q'_1,u'_1}\big\|_{L^2(\mu_2)} \noz\\
  \ls & \,\, C(\epsilon,\upsilon) \|f\|_{L^2(\mu)} \|g\|_{L^2(\mu)}
  + \upsilon^{1/2} \|T\| \|f\|_{L^2(\mu)} \|g\|_{L^2(\mu)} \noz\\
  & + \|T\| \sum_{\substack{Q_1, Q'_1 \\ \textup{adjacent} }}
   H_{Q_1,b}^{u_1}  \big\|f^{Q_1,u_1}\big\|_{L^2(\mu_2)} \,
 \big\|g^{Q'_1,u'_1}\big\|_{L^2(\mu_2)} \noz\\
 & + \|T\| \sum_{\substack{Q_1, Q'_1 \\ \textup{adjacent} }}
   H_{Q'_1,b}^{u'_1}  \big\|f^{Q_1,u_1}\big\|_{L^2(\mu_2)} \,
 \big\|g^{Q'_1,u'_1}\big\|_{L^2(\mu_2)} \noz\\
 =: & \, (\widehat{\mathcal F}_1) + (\widehat{\mathcal F}_2)  + (\widehat{\mathcal F}_3)  + (\widehat{\mathcal F}_4) .
\end{align}
Term~$(\widehat{\mathcal F}_3)$ is estimated by
\[\|T\| \Big( \sum_{Q_1 \in \Dd_1} ( H_{Q_1,b}^{u_1})^2  \|f^{Q_1,u_1}\|^2_{L^2(\mu_2)} \Big)^{1/2} \|g\|_{L^2(\mu)}.\]
By Lemma~\ref{lem2:surgery}, we have $\mathbb{E}(\mu_1(Q_b)) \ls \epsilon^{\eta} \mu_1(Q)$, where $\epsilon >0$ and $\eta>0$. Thus,
\begin{equation}\label{eq8:Adj_In}
\mathbb{E}[(H_{Q_1,b}^{u_1})^2] \ls \epsilon^{\eta}  |\langle h_{u_1}^{Q_1}\rangle_{Q}|^2 \mu_1(Q) \ls \epsilon^{\eta}.
\end{equation}
This implies
\[\mathbb{E}\Big( \sum_{Q_1 \in \Dd_1} ( H_{Q_1,b}^{u_1})^2  \|f^{Q_1,u_1}\|^2_{L^2(\mu_2)} \Big)^{1/2}
\ls \epsilon^{\eta/2} \Big( \sum_{Q_1 \in \Dd_1} \|f^{Q_1,u_1}\|^2_{L^2(\mu_2)} \Big)^{1/2}
= \epsilon^{\eta/2} \|f\|_{L^2(\mu)}. \]
Therefore, $(\widehat{\mathcal F}_3)$ is dominated by $\epsilon^{\eta/2} \|T\| \|f\|_{L^2(\mu)} \|g\|_{L^2(\mu)}.$
Similarly, we can obtain that $(\widehat{\mathcal F}_4)$ is also dominated by $\epsilon^{\eta/2} \|T\| \|f\|_{L^2(\mu)} \|g\|_{L^2(\mu)}.$

Fixing $\epsilon$ and $\upsilon$ sufficiently small, \eqref{eq6:Adj_In} becomes
\begin{align}\label{eq7:Adj_In}
  &\hspace{-1cm}  \sum_{\substack{Q_1, Q'_1 \\ \textup{adjacent} }} \quad
 \Big\|b^{u_1 u'_1}_{Q_1 Q'_1}\Big\|_{\bmo_{C_K}^{2}(\mu_2)} \,
  \big\|f^{Q_1,u_1}\big\|_{L^2(\mu_2)} \,
 \big\|g^{Q'_1,u'_1}\big\|_{L^2(\mu_2)} \noz \\
  &\ls \, \, C \|f\|_{L^2(\mu)} \|g\|_{L^2(\mu)} + 0.01 \|T\|  \|f\|_{L^2(\mu)} \|g\|_{L^2(\mu)} \noz \\
   &\ls  \, \,  (C + 0.01 \|T\|) \|f\|_{L^2(\mu)} \|g\|_{L^2(\mu)},
\end{align}
completing the estimation of~\eqref{eq10:Adj_In}.

Hence, equations~\eqref{eq5a:Adj_In}, \eqref{eq9:Adj_In} together with~\eqref{eq7:Adj_In} has completed the \emph{Adj/Nes} case.
%%%%%%%%%%%%%%%%%%%%%%%%%%%%%%%%%%%%%%%%%%%%%%%%%%%%%%%%%%%%%%%%%%%%%%%%%%%
\subsection{Adjacent/Adjacent cubes}\label{subsec:adj_adj}
In this section, we consider the \emph{Adj/Adj} case. That is, we estimate~\eqref{coreeq.1} when $Q_1, Q'_1$ are adjacent and $Q_2, Q'_2$ are adjacent:
\begin{eqnarray*}
  \delta^r \ell(Q'_1) \leq \ell(Q_1) \leq \ell(Q'_1),
  &&  \quad \rho_1(Q_1,Q'_1) \leq \mathcal{C}\ell(Q_1)^{\gamma_1}\ell(Q'_1)^{1-\gamma_1}, \quad \text{and} \\
  \delta^r \ell(Q'_2) \leq \ell(Q_2) \leq \ell(Q'_2),
  && \quad \rho_2(Q_2,Q'_2) \leq \mathcal{C}\ell(Q_2)^{\gamma_2}\ell(Q'_2)^{1-\gamma_2},
\end{eqnarray*}
where $\mathcal{C}:= 2A_0C_QC_K$.
There are 16 terms in~\eqref{coreeq.1}, which we need to bound. We will give the proof of the first term. The other terms can be handled analogously.
We are to bound
\begin{equation}\label{adj_adj_eq.1}
  \sum_{\substack{Q_1,Q'_1  \\ \text{adjacent} }} \quad
  \sum_{\substack{Q_2,Q'_2\\ \text{adjacent} }} \quad
  \sum_{\substack{u_1,u'_1 \\u_2,u'_2}}
  \langle f, h_{u_1}^{Q_1} \otimes h_{u_2}^{Q_2}\rangle
  \langle g, h_{u'_1}^{Q'_1} \otimes h_{u'_2}^{Q'_2}\rangle
  \langle T(h_{u_1}^{Q_1} \otimes h_{u_2}^{Q_2}), h_{u'_1}^{Q'_1} \otimes h_{u'_2}^{Q'_2}\rangle.
\end{equation}

Lemma~\ref{lem1:AdjAdj} estimates the matrix element $ |\langle T(h_{u_1}^{Q_1} \otimes h_{u_2}^{Q_2}), h_{u'_1}^{Q'_1} \otimes h_{u'_2}^{Q'_2}\rangle |$.
\begin{lem}\label{lem1:AdjAdj}
  Given pairs of adjacent cubes $(Q_1,Q_1')$ and $(Q_2,Q'_2)$, we have
  \begin{align*}
      |\langle T(h_{u_1}^{Q_1} \otimes h_{u_2}^{Q_2}), h_{u'_1}^{Q'_1} \otimes h_{u'_2}^{Q'_2}\rangle |
    \ls
  C(\epsilon,\upsilon) + \upsilon^{1/2} \|T\|
  + \|T\| \Big( H_{Q_1,b}^{u_1} +  H_{Q'_1,b}^{u'_1}
  +  H_{Q_2,b}^{u_2} +  H_{Q'_2,b}^{u'_2}\Big),
 \end{align*}
  where $\epsilon \in (0,1)$ is fixed in the surgery of adjacent cubes (Section~\ref{subsec:surgery}), $\upsilon \in (0,1)$ is fixed in the random almost-covering by balls (Section~\ref{subsec:ball_cover}), and for $i = 1,2$
  \begin{align*}
   H_{Q_i,b}^{u_i} &:= \Big( \sum_{Q \in \textup{ch}(Q_i)}
  |\langle h_{u_i}^{Q_i}\rangle_Q|^2 \mu_i(Q_b)\Big)^{1/2}, \text{ and}\quad
  H_{Q'_i,b}^{u'_i} := \Big( \sum_{Q' \in \textup{ch}(Q'_i)}
  |\langle h_{u'_1}^{Q'_i}\rangle_{Q'}|^2 \mu_i(Q'_b)\Big)^{1/2}.
  \end{align*}
\end{lem}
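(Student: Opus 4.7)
My plan is to adapt the argument of Lemma~\ref{adj_in_lem3} by performing the surgery construction of Section~\ref{subsec:surgery} in both factors simultaneously. Since $(Q_1,Q'_1)$ are adjacent in $X_1$ and $(Q_2,Q'_2)$ are adjacent in $X_2$, I would apply the three-piece decomposition $Q_1 = Q_{1,s}\cup Q_{1,\partial}\cup\Delta^{(1)}$ (and similarly for $Q'_1$) in $X_1$, and the analogous splitting in $X_2$ with intersection $\Delta^{(2)}=Q_2\cap Q'_2$. Inside each $\Delta^{(i)}$ I then apply the random almost-covering by balls from Section~\ref{subsec:ball_cover}, obtaining $\chi_{\Delta^{(i)}} = \sum_{B\in\mathcal{B}^{(i)}}\widetilde{\chi}_B+\widetilde{\chi}_{\Omega^{(i)}_i}+\widetilde{\chi}_{\Omega^{(i)}_Q}+\widetilde{\chi}_{\Omega^{(i)}_{Q'}}$.

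First I would expand each Haar function as a linear combination of characteristic functions of its children, reducing the matrix element to a weighted sum over quadruples of children $Q\in\textup{ch}(Q_1)$, $Q'\in\textup{ch}(Q'_1)$, $P\in\textup{ch}(Q_2)$, $P'\in\textup{ch}(Q'_2)$ of pairings $|\langle T(\chi_Q\otimes\chi_P),\chi_{Q'}\otimes\chi_{P'}\rangle|$. For each such quadruple, surgery in both factors produces a tensor product of two three-piece decompositions on both the input and the output, which expands the pairing into many terms indexed by the combination of piece-types in each factor.

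The key estimates then parallel those in Lemma~\ref{adj_in_lem3}, applied one factor at a time. Whenever a pairing has in the $i$-th factor one input of type $Q_{i,s}$ (so supports are disjoint in that factor), the partial kernel representation (Assumptions~\ref{assum_3}, \ref{assum_4}) applies, and Lemma~\ref{lem1:surgery} extracts the crucial factor $\mu_i(Q)^{-1/2}\mu_i(Q')^{-1/2}$ from $\lambda_i(x,\rho_i(x,y))$, contributing only to $C(\epsilon,\upsilon)$. The ``boundary'' pieces $\chi_{Q_{i,\partial}}$, $\widetilde{\chi}_{\Omega^{(i)}_Q}$, $\widetilde{\chi}_{\Omega^{(i)}_{Q'}}$ are bounded by $\|T\|$ together with $\mu_i(Q_b)^{1/2}$ or $\mu_i(Q'_b)^{1/2}$, and after summing over children and applying Cauchy--Schwarz (as in~\eqref{eq8:Adj_In}) they produce the contributions $\|T\|H^{u_i}_{Q_i,b}$ and $\|T\|H^{u'_i}_{Q'_i,b}$. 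The inner residuals $\widetilde{\chi}_{\Omega^{(i)}_i}$ contribute $\upsilon^{1/2}\|T\|$ via~\eqref{ball_cover_eq1}. Finally, the terms where both factors are in the ball-covered parts split into a diagonal sum $\sum_{B,B'}\langle T(\widetilde{\chi}_B\otimes\widetilde{\chi}_{B'}),\widetilde{\chi}_B\otimes\widetilde{\chi}_{B'}\rangle$, which is controlled by the full weak boundedness property (Assumption~\ref{assum_5}), and off-diagonal terms where at least one factor has $B\neq\widehat{B}$, which reduce through the partial kernel representation together with Lemma~\ref{lem1:ball_cover} to a contribution of $C(\epsilon,\upsilon)$.

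The main obstacle is the combinatorial bookkeeping: the tensor of two three-piece surgeries, each further refined by the three-piece ball decomposition, yields a large number of cross-terms, and one must verify that every \emph{mixed} term of type ``separated in one factor, ball-piece in the other'' can be handled by the partial kernel representation in the separated factor while using the partial boundedness $C(f_2,g_2)\ls \|f_2\|_{L^2(\mu_2)}\|g_2\|_{L^2(\mu_2)}$ from Assumption~\ref{assum_4} in the remaining factor. Once this accounting is complete and the children-average bound $|\langle h^{Q_i}_{u_i}\rangle_Q|\ls\mu_i(Q)^{-1/2}$ (with its $X_2$ analogue) is used to reduce the Haar coefficients, the collected estimates assemble to the claimed bound, with the $H^{u_i}_{Q_i,b}$ and $H^{u'_i}_{Q'_i,b}$ contributions arising exactly from the boundary pieces in each of the two factors.
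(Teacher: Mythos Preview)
Your proposal is correct and follows essentially the same approach as the paper. The one organizational difference worth noting is that the paper avoids a full tensor-product expansion by decomposing sequentially: it first peels off the $Q_s$, $Q_\partial$ pieces in the first factor against the \emph{undecomposed} $\chi_P\otimes\chi_{P'}$ in the second, then on the remaining $\chi_{\Delta_1}\otimes\chi_{\Delta_1}$ block peels off $P_s$, $P_\partial$ in the second factor, and only on the core $\chi_{\Delta_1}\otimes\chi_{\Delta_2}$ block applies the ball-covering decomposition (again sequentially, first in factor~1, then in factor~2). This nested ordering cuts the number of cross-terms substantially compared to the full $3\times3\times3\times3$ tensor you describe, and in particular makes the ``mixed'' terms you flag as the main obstacle disappear: each separated or boundary piece is always paired against an undecomposed characteristic function in the other factor, so the partial boundedness bound $C(f_2,g_2)\lesssim\|f_2\|_{L^2}\|g_2\|_{L^2}$ applies directly. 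One small correction: for the doubly off-diagonal ball term $\sum_{B_1\neq\widehat B_1,\,B_2\neq\widehat B_2}$ you need the \emph{full} kernel representation (Assumptions~\ref{assum_1},~\ref{assum_2}) together with Lemma~\ref{lem1:ball_cover} in both factors, not the partial one.
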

\begin{proof}
  Let $Q \in \text{ch}(Q_1)$, $Q' \in \text{ch}(Q'_1)$, $P \in \text{ch}(Q_2)$ and $P' \in \text{ch}(Q'_2)$.
  We write
  \begin{align*}
     \langle T(h_{u_1}^{Q_1} \otimes h_{u_2}^{Q_2}), h_{u'_1}^{Q'_1} \otimes h_{u'_2}^{Q'_2}\rangle
    & = \sum_{\substack{Q, Q' \\ P,P'}}
    \langle T(h_{u_1}^{Q_1}\chi_Q \otimes h_{u_2}^{Q_2}\chi_P), h_{u'_1}^{Q'_1}\chi_{Q'} \otimes h_{u'_2}^{Q'_2} \chi_{P'}\rangle \\
    & = \sum_{\substack{Q, Q' \\ P,P'}}
    \langle h_{u_1}^{Q_1}\rangle_{Q}\, \langle h_{u_2}^{Q_2}\rangle_{P} \,
    \langle h_{u'_1}^{Q'_1}\rangle_{Q'}\, \langle h_{u'_2}^{Q'_2} \rangle_{P'}\,
    \langle T(\chi_Q \otimes \chi_P), \chi_{Q'} \otimes \chi_{P'}\rangle.
  \end{align*}
  Now, we focus on the term $ \langle T(\chi_Q \otimes \chi_P), \chi_{Q'} \otimes \chi_{P'}\rangle$.

  Recall that in the \emph{Adj/Nes} case when we consider $\langle T(\chi_Q \otimes a), \chi_{Q'} \otimes \chi_{V}\rangle$, we perform surgery on the pair $(Q,Q')$.
  In addition to that, here in the \emph{Adj/Adj} case we also perform surgery on the pair of cubes $(P,P')$.
  In particular, we will use the following decompositions:
  \begin{align*}
  Q = Q_s \cup Q_{\partial} \cup \Delta_1,  \quad &\text{and} \quad Q' = Q'_s \cup Q'_{\partial} \cup \Delta_1, \\
   P= P_s \cup P_{\partial} \cup \Delta_2,  \quad &\text{and} \quad P' = P'_s \cup P'_{\partial} \cup \Delta_2, \\
   \chi_{\Delta_1} = \sum_{B_1 \in \mathcal{B}_1} \widetilde{\chi}_{B_1} + \widetilde{\chi}_{\Delta_1\backslash \cup B_1}, \quad &\text{and} \quad \widetilde{\chi}_{\Delta_1\backslash \cup B_1} =  \widetilde{\chi}_{\Om_i} + \widetilde{\chi}_{\Om_Q} + \widetilde{\chi}_{\Om_{Q'}}, \\
   \chi_{\Delta_2} = \sum_{B_2 \in \mathcal{B}_2} \widetilde{\chi}_{B_2} + \widetilde{\chi}_{\Delta_2\backslash \cup B_2}, \quad &\text{and} \quad \widetilde{\chi}_{\Delta_2\backslash \cup B_2} =  \widetilde{\chi}_{\Om_j} + \widetilde{\chi}_{\Om_P} + \widetilde{\chi}_{\Om_{P'}}.
\end{align*}
  Here $\Delta_1 := Q \cap Q'$ and $\Delta_2 := P \cap P'$. The sets $P_s$, $P_{\partial}$, $P'_s$, $P'_{\partial}$ are defined analogously to $Q_s$, $Q_{\partial}$, $Q'_s$, $Q'_{\partial}$, respectively (see Section~\ref{subsec:surgery}).
  $\mathcal{B}_1$ and $\mathcal{B}_2$ are random almost-coverings by balls of each factor~$X_1$ and $X_2$.
  The sets $\Omega_j$, $\Omega_P$ and $\Omega_{P'}$ are defined analogously to
  $\Omega_i$, $\Omega_Q$ and $\Omega_{Q'}$, respectively (see Section~\ref{subsec:ball_cover}).

  Then $ \langle T(\chi_Q \otimes \chi_P), \chi_{Q'} \otimes \chi_{P'}\rangle$ can be split into
  \begin{align}
    &\hspace{-0.5cm} \langle T(\chi_Q \otimes \chi_P), \chi_{Q'_{s}} \otimes \chi_{P'} \rangle \label{eq:1_AdjAdj}\\
   & + \langle T(\chi_Q \otimes \chi_P), \chi_{Q'_{\partial}} \otimes \chi_{P'} \rangle \label{eq:2_AdjAdj}\\
   & + \langle T(\chi_{Q_s} \otimes \chi_P), \chi_{\Delta_1} \otimes \chi_{P'}  \rangle \label{eq:3_AdjAdj}\\
   & + \langle T(\chi_{Q_{\partial}} \otimes \chi_P), \chi_{\Delta_1} \otimes \chi_{P'}  \rangle \label{eq:4_AdjAdj}\\
   & + \langle T(\chi_{\Delta_1} \otimes \chi_P), \chi_{\Delta_1} \otimes \chi_{P'_s}  \rangle \label{eq:5_AdjAdj}\\
   & + \langle T(\chi_{\Delta_1} \otimes \chi_P), \chi_{\Delta_1} \otimes \chi_{P'_{\partial}}  \rangle \label{eq:6_AdjAdj}\\
   & + \langle T(\chi_{\Delta_1} \otimes \chi_{P_s}), \chi_{\Delta_1} \otimes \chi_{\Delta_2}  \rangle \label{eq:7_AdjAdj}\\
   & + \langle T(\chi_{\Delta_1} \otimes \chi_{P_{\partial}}), \chi_{\Delta_1} \otimes \chi_{\Delta_2}  \rangle \label{eq:8_AdjAdj}\\
   & + \langle T(\chi_{\Delta_1} \otimes \chi_{\Delta_2}), \widetilde{\chi}_{\Delta_1\backslash \cup B_1} \otimes \chi_{\Delta_2}  \rangle \label{eq:9_AdjAdj}\\
   & + \Big\langle T(\widetilde{\chi}_{\Delta_1\backslash \cup B_1} \otimes \chi_{\Delta_2}), \sum_{B_1 \in \mathcal{B}_1} \widetilde{\chi}_{B_1} \otimes \chi_{\Delta_2}  \Big\rangle \label{eq:10_AdjAdj}\\
   & + \Big\langle T( \sum_{B_1 \in \mathcal{B}_1} \widetilde{\chi}_{B_1} \otimes \chi_{\Delta_2}), \sum_{B_1 \in \mathcal{B}_1} \widetilde{\chi}_{B_1} \otimes \widetilde{\chi}_{\Delta_2\backslash \cup B_2}   \Big\rangle \label{eq:11_AdjAdj}\\
   & + \Big\langle T( \sum_{B_1 \in \mathcal{B}_1} \widetilde{\chi}_{B_1} \otimes \widetilde{\chi}_{\Delta_2\backslash \cup B_2} ),
   \sum_{B_1 \in \mathcal{B}_1} \widetilde{\chi}_{B_1} \otimes
   \sum_{B_2 \in \mathcal{B}_2} \widetilde{\chi}_{B_2} \Big\rangle \label{eq:12_AdjAdj} \\
   & + \Big\langle T( \sum_{B_1 \in \mathcal{B}_1} \widetilde{\chi}_{B_1} \otimes \sum_{B_2 \in \mathcal{B}_2} \widetilde{\chi}_{B_2}) ,
   \sum_{B_1 \in \mathcal{B}_1} \widetilde{\chi}_{B_1} \otimes
   \sum_{B_2 \in \mathcal{B}_2} \widetilde{\chi}_{B_2} \Big\rangle. \label{eq:13_AdjAdj}
\end{align}
Terms~\eqref{eq:9_AdjAdj}--\eqref{eq:13_AdjAdj} can be further decomposed:
\begin{align}
   \langle T(\chi_{\Delta_1} \otimes \chi_{\Delta_2}), \widetilde{\chi}_{\Delta_1\backslash \cup B_1} \otimes \chi_{\Delta_2}  \rangle
  & =  \, \langle T(\chi_{\Delta_1} \otimes \chi_{\Delta_2}), \widetilde{\chi}_{\Omega_i} \otimes \chi_{\Delta_2}  \rangle \label{eq:9.1_AdjAdj}\\
   &\hspace{0.5cm} + \langle T(\chi_{\Delta_1} \otimes \chi_{\Delta_2}), \widetilde{\chi}_{\Omega_Q} \otimes \chi_{\Delta_2}  \rangle \label{eq:9.2_AdjAdj}\\
   &\hspace{0.5cm} + \langle T(\chi_{\Delta_1} \otimes \chi_{\Delta_2}), \widetilde{\chi}_{\Omega_{Q'}} \otimes \chi_{\Delta_2}  \rangle,
   \label{eq:9.3_AdjAdj}
\end{align}
\begin{align}
  &\hspace{-1cm}\Big\langle T(\widetilde{\chi}_{\Delta_1\backslash \cup B_1} \otimes \chi_{\Delta_2}), \sum_{B_1 \in \mathcal{B}_1} \widetilde{\chi}_{B_1} \otimes \chi_{\Delta_2}  \Big\rangle \noz \\
  =& \, \Big\langle T(\widetilde{\chi}_{\Omega_i} \otimes \chi_{\Delta_2}), \sum_{B_1 \in \mathcal{B}_1} \widetilde{\chi}_{B_1} \otimes \chi_{\Delta_2}  \Big \rangle  \label{eq:10.1_AdjAdj}\\
  &\hspace{+0.5cm} +  \Big\langle T(\widetilde{\chi}_{\Omega_Q} \otimes \chi_{\Delta_2}), \sum_{B_1 \in \mathcal{B}_1} \widetilde{\chi}_{B_1} \otimes \chi_{\Delta_2}  \Big\rangle  \label{eq:10.2_AdjAdj}\\
  &\hspace{+0.5cm} +  \Big\langle T(\widetilde{\chi}_{\Omega_{Q'}} \otimes \chi_{\Delta_2}), \sum_{B_1 \in \mathcal{B}_1} \widetilde{\chi}_{B_1} \otimes \chi_{\Delta_2}  \Big\rangle  \label{eq:10.3_AdjAdj},
\end{align}
\begin{align}
  &\hspace{-1cm}  \Big\langle T\Big( \sum_{B_1 \in \mathcal{B}_1} \widetilde{\chi}_{B_1} \otimes \chi_{\Delta_2}\Big), \sum_{B_1 \in \mathcal{B}_1} \widetilde{\chi}_{B_1} \otimes \widetilde{\chi}_{\Delta_2\backslash \cup B_2}   \Big\rangle  \noz \\
  = & \, \Big \langle T\Big( \sum_{B_1 \in \mathcal{B}_1} \widetilde{\chi}_{B_1} \otimes \chi_{\Delta_2}\Big), \sum_{B_1 \in \mathcal{B}_1} \widetilde{\chi}_{B_1} \otimes \widetilde{\chi}_{\Omega_j}   \Big\rangle \label{eq:11.1_AdjAdj} \\
  & + \Big\langle T\Big( \sum_{B_1 \in \mathcal{B}_1} \widetilde{\chi}_{B_1} \otimes \chi_{\Delta_2}\Big), \sum_{B_1 \in \mathcal{B}_1} \widetilde{\chi}_{B_1} \otimes \widetilde{\chi}_{\Omega_P}   \Big\rangle \label{eq:11.2_AdjAdj} \\
  & + \Big\langle T\Big( \sum_{B_1 \in \mathcal{B}_1} \widetilde{\chi}_{B_1} \otimes \chi_{\Delta_2}\Big), \sum_{B_1 \in \mathcal{B}_1} \widetilde{\chi}_{B_1} \otimes \widetilde{\chi}_{\Omega_{P'}}   \Big\rangle \label{eq:11.3_AdjAdj},
\end{align}
\begin{align}
   &\hspace{-1cm} \Big\langle T\Big( \sum_{B_1 \in \mathcal{B}_1} \widetilde{\chi}_{B_1} \otimes \widetilde{\chi}_{\Delta_2\backslash \cup B_2}\Big) ,
   \sum_{B_1 \in \mathcal{B}_1} \widetilde{\chi}_{B_1} \otimes
   \sum_{B_2 \in \mathcal{B}_2} \widetilde{\chi}_{B_2} \Big\rangle  \noz \\
  = & \, \Big\langle T\Big( \sum_{B_1 \in \mathcal{B}_1} \widetilde{\chi}_{B_1} \otimes \widetilde{\chi}_{\Omega_j}\Big) ,
   \sum_{B_1 \in \mathcal{B}_1} \widetilde{\chi}_{B_1} \otimes
   \sum_{B_2 \in \mathcal{B}_2} \widetilde{\chi}_{B_2} \Big\rangle \label{eq:12.1_AdjAdj} \\
   & + \Big\langle T\Big( \sum_{B_1 \in \mathcal{B}_1} \widetilde{\chi}_{B_1} \otimes \widetilde{\chi}_{\Omega_P} \Big),
   \sum_{B_1 \in \mathcal{B}_1} \widetilde{\chi}_{B_1} \otimes
   \sum_{B_2 \in \mathcal{B}_2} \widetilde{\chi}_{B_2} \Big\rangle \label{eq:12.2_AdjAdj} \\
   & + \Big\langle T\Big( \sum_{B_1 \in \mathcal{B}_1} \widetilde{\chi}_{B_1} \otimes \widetilde{\chi}_{\Omega_{P'}}\Big) ,
   \sum_{B_1 \in \mathcal{B}_1} \widetilde{\chi}_{B_1} \otimes
   \sum_{B_2 \in \mathcal{B}_2} \widetilde{\chi}_{B_2} \Big\rangle, \label{eq:12.3_AdjAdj}
\end{align}
\begin{align}
  &\hspace{-1cm}  \Big\langle T\Big( \sum_{B_1 \in \mathcal{B}_1} \widetilde{\chi}_{B_1} \otimes \sum_{B_2 \in \mathcal{B}_2} \widetilde{\chi}_{B_2} \Big),
   \sum_{B_1 \in \mathcal{B}_1} \widetilde{\chi}_{B_1} \otimes
   \sum_{B_2 \in \mathcal{B}_2} \widetilde{\chi}_{B_2} \Big\rangle \noz \\
  = & \, \sum_{B_1, B_2} \Big\langle T\Big( \widetilde{\chi}_{B_1} \otimes  \widetilde{\chi}_{B_2}\Big) ,
    \widetilde{\chi}_{B_1} \otimes
    \widetilde{\chi}_{B_2} \Big\rangle \label{eq:13.1_AdjAdj} \\
  & + \sum_{B_1, B_2 \neq \widehat{B}_2} \Big\langle T\Big( \widetilde{\chi}_{B_1} \otimes  \widetilde{\chi}_{B_2} \Big),
    \widetilde{\chi}_{B_1} \otimes
    \widetilde{\chi}_{\widehat{B}_2} \Big\rangle \label{eq:13.2_AdjAdj} \\
  & + \sum_{B_1 \neq \widehat{B}_1, B_2} \Big\langle T( \widetilde{\chi}_{B_1} \otimes  \widetilde{\chi}_{B_2}) ,
    \widetilde{\chi}_{\widehat{B}_1} \otimes
    \widetilde{\chi}_{B_2} \Big\rangle \label{eq:13.3_AdjAdj} \\
  & + \sum_{B_1 \neq \widehat{B}_1, B_2 \neq \widehat{B}_2} \Big\langle T\Big( \widetilde{\chi}_{B_1} \otimes  \widetilde{\chi}_{B_2} \Big),
    \widetilde{\chi}_{\widehat{B}_1} \otimes
    \widetilde{\chi}_{\widehat{B}_2} \Big\rangle \label{eq:13.4_AdjAdj}.
\end{align}

Using properties~\eqref{surgery_eq2}, \eqref{ball_cover_eq4} and~\eqref{ball_cover_eq2}, we can show that~\eqref{eq:2_AdjAdj}, \eqref{eq:9.3_AdjAdj} and \eqref{eq:10.3_AdjAdj} are dominated by
\[\|T\| \mu_1(Q)^{1/2} \mu_2(P)^{1/2} \mu_1(Q'_b)^{1/2} \mu_2(P')^{1/2};\]
and \eqref{eq:6_AdjAdj}, \eqref{eq:11.3_AdjAdj} and \eqref{eq:12.3_AdjAdj} by
\[\|T\| \mu_1(Q)^{1/2} \mu_2(P)^{1/2} \mu_1(Q')^{1/2} \mu_2(P'_b)^{1/2}.\]

Similarly, using properties~\eqref{surgery_eq2}, \eqref{ball_cover_eq3} and~\eqref{ball_cover_eq2}, we can show that \eqref{eq:4_AdjAdj}, \eqref{eq:9.2_AdjAdj} and \eqref{eq:10.2_AdjAdj} are dominated by
\[\|T\| \mu_1(Q_b)^{1/2} \mu_2(P)^{1/2} \mu_1(Q')^{1/2} \mu_2(P')^{1/2};\]
and \eqref{eq:8_AdjAdj}, \eqref{eq:11.2_AdjAdj} and \eqref{eq:12.2_AdjAdj} by
\[\|T\| \mu_1(Q)^{1/2} \mu_2(P_b)^{1/2} \mu_1(Q')^{1/2} \mu_2(P')^{1/2}.\]

Using properties~\eqref{ball_cover_eq1} and~\eqref{ball_cover_eq2}, we can show that \eqref{eq:9.1_AdjAdj}, \eqref{eq:10.1_AdjAdj}, \eqref{eq:11.1_AdjAdj} and \eqref{eq:12.1_AdjAdj} are dominated by
\[\upsilon^{1/2} \|T\| \mu_1(Q)^{1/2} \mu_2(P)^{1/2} \mu_1(Q')^{1/2} \mu_2(P')^{1/2}.\]

We can also show that~\eqref{eq:1_AdjAdj}, \eqref{eq:3_AdjAdj}, \eqref{eq:5_AdjAdj} and \eqref{eq:7_AdjAdj} are dominated by
\[C(\epsilon) \mu_1(Q)^{1/2}\mu_2(P)^{1/2} \mu_1(Q')^{1/2} \mu_2(P')^{1/2}.\]
Their proofs are similar to that of~\eqref{eq:a} in the \emph{Adj/Nes} case, where we require the use of Assumption~\ref{assum_3}, Assumption~\ref{assum_4} and Lemma~\ref{lem1:surgery}.

We claim that terms \eqref{eq:13.1_AdjAdj}--\eqref{eq:13.4_AdjAdj} are estimated by
\[C(\epsilon,\upsilon) \mu_1(Q)^{1/2} \mu_2(P)^{1/2} \mu_1(Q')^{1/2} \mu_2(P')^{1/2}.\]
For~\eqref{eq:13.1_AdjAdj}, we use Assumption~\ref{assum_5} of the full weak boundedness property to get
\begin{align*}
  \big|\sum_{B_1, B_2} \langle T( \widetilde{\chi}_{B_1} \otimes  \widetilde{\chi}_{B_2} ,
    \widetilde{\chi}_{B_1} \otimes
    \widetilde{\chi}_{B_2} \rangle\big|
    \leq & \,\,\sum_{B_1, B_2} C(\epsilon, \upsilon) \mu_1(\Lambda B_1) \mu_2(\Lambda B_2) \\
  \leq  & \,\,C(\epsilon, \upsilon) \mu_1(Q) \mu_2(P) \\
  \leq & \,\,C(\epsilon,\upsilon) \mu_1(Q)^{1/2} \mu_2(P)^{1/2} \mu_1(Q')^{1/2} \mu_2(P')^{1/2}.
\end{align*}
The proofs of \eqref{eq:13.2_AdjAdj} and \eqref{eq:13.3_AdjAdj} are similar to that of~\eqref{eq:g1} in the \emph{Adj/Nes} case, where we use Assumption~\ref{assum_6} of the partial weak boundedness property and the fact that $\Lambda B \subset \Delta$.
The proof of~\eqref{eq:13.4_AdjAdj} is similar to that of~\eqref{eq:g2} in the \emph{Adj/Nes} case, where we use Assumptions~\ref{assum_3} and~\ref{assum_4} of the partial weak boundedness property, and Lemma~\ref{lem1:ball_cover}.

Combining all the estimates above, we obtain
 \begin{align*}
    &\hspace{-1.5cm} |\langle T(h_{u_1}^{Q_1} \otimes h_{u_2}^{Q_2}), h_{u'_1}^{Q'_1} \otimes h_{u'_2}^{Q'_2}\rangle| \\
     \hspace{-0.7cm}\ls & { }\sum_{\substack{Q, Q' \\ P,P'}}
    |\langle h_{u_1}^{Q_1}\rangle_{Q}|\,
    |\langle h_{u_2}^{Q_2}\rangle_{P}| \,
    |\langle h_{u'_1}^{Q'_1}\rangle_{Q'}|\,
    |\langle h_{u'_2}^{Q'_2} \rangle_{P'}|\, \\
    & \times \, \Big[ \|T\| \mu_1(Q)^{1/2} \mu_2(P)^{1/2} \mu_1(Q'_b)^{1/2} \mu_2(P')^{1/2} \\
    &\hspace{0.7cm} +  \|T\| \mu_1(Q)^{1/2} \mu_2(P)^{1/2} \mu_1(Q')^{1/2} \mu_2(P'_b)^{1/2} \\
    & \hspace{0.7cm}+ \|T\| \mu_1(Q_b)^{1/2} \mu_2(P)^{1/2} \mu_1(Q')^{1/2} \mu_2(P')^{1/2} \\
    &\hspace{0.7cm} + \|T\| \mu_1(Q)^{1/2} \mu_2(P_b)^{1/2} \mu_1(Q')^{1/2} \mu_2(P')^{1/2} \\
    &\hspace{0.7cm} + \upsilon^{1/2} \|T\| \mu_1(Q)^{1/2} \mu_2(P)^{1/2} \mu_1(Q')^{1/2} \mu_2(P')^{1/2} \\
    &\hspace{0.7cm} + C(\epsilon,\upsilon) \mu_1(Q)^{1/2} \mu_2(P)^{1/2} \mu_1(Q')^{1/2} \mu_2(P')^{1/2}.
    \Big] \\
    =: & { } \sum_{\substack{Q, Q' \\ P,P'}}
    |\langle h_{u_1}^{Q_1}\rangle_{Q}|\,
    |\langle h_{u_2}^{Q_2}\rangle_{P}| \,
    |\langle h_{u'_1}^{Q'_1}\rangle_{Q'}|\,
    |\langle h_{u'_2}^{Q'_2} \rangle_{P'}|\, \\
    & \times \big[(\widetilde{\mathcal F}_1) + (\widetilde{\mathcal F}_2) + (\widetilde{\mathcal F}_3) + (\widetilde{\mathcal F}_4) + (\widetilde{\mathcal F}_5) + (\widetilde{\mathcal F}_6)\big].
  \end{align*}
Using property~\eqref{eq2:pro_Haar} of Haar functions, we have
\begin{align*}
 &\text{$|\langle h_{u_1}^{Q_1}\rangle_{Q}| \ls \mu_1(Q)^{-1/2}$,\quad
 $|\langle h_{u_2}^{Q_2}\rangle_{P}| \ls \mu_2(P)^{-1/2}$,} \\
 &\text{$|\langle h_{u'_1}^{Q'_1}\rangle_{Q'} | \ls \mu_1(Q')^{-1/2}$, \quad and \quad
 $|\langle h_{u'_2}^{Q'_2}\rangle_{P'} | \ls \mu_2(P')^{-1/2}$.}
 \end{align*}
Thus, the sum involving~$(\widetilde{\mathcal F}_5)$ is controlled by $\upsilon^{1/2} \|T\| $,
and the sum involving~$(\widetilde{\mathcal F}_6)$ by
$C(\epsilon, \upsilon)$.
Moreover, the sum involving~$(\widetilde{\mathcal F}_1)$ is estimated by
\[
 \|T\|   \sum_{Q' \in \text{ch}(Q'_1)}
   |\langle h_{u'_1}^{Q'_1}\rangle_{Q'} |  \mu_1(Q'_b)^{1/2}
   \ls \|T\|  \Big(\sum_{Q' \in \text{ch}(Q'_1)}
   |\langle h_{u'_1}^{Q'_1}\rangle_{Q'} |^2  \mu_1(Q'_b) \Big)^{1/2}
   = \|T\|   H^{u'_1}_{Q'_1,b}.
\]
Similarly, the sum involving~$(\widetilde{\mathcal F}_2)$ is reduced to $\|T\| H^{u'_2}_{Q'_2,b}$,
the sum involving~$(\widetilde{\mathcal F}_3)$ is reduced to $\|T\| H^{u_1}_{Q_1,b}$, and
the sum involving~$(\widetilde{\mathcal F}_4)$ is reduced to $\|T\| H^{u_2}_{Q_2,b}$.
Therefore, Lemma~\ref{lem1:AdjAdj} is established.
\end{proof}

Now, using Lemma~\ref{lem1:AdjAdj}, we go back to considering~\eqref{adj_adj_eq.1}.
Recall that as shown in~\eqref{eq8:Adj_In}, we have
$\mathbb{E}[(H_{Q_1,b}^{u_1})^2] \ls \epsilon^{\eta}$, where $\eta >0$.
Similarly, we also have
\[\text{
$\mathbb{E}[(H_{Q'_1,b}^{u'_1})^2] \ls \epsilon^{\eta}$,\quad
$\mathbb{E}[(H_{Q_2,b}^{u_2})^2] \ls \epsilon^{\eta}$, and \quad
$\mathbb{E}[(H_{Q'_2,b}^{u'_2})^2] \ls \epsilon^{\eta}$.}\]
Thus, for fixed $u_1$, $u'_1$, $u_2$ and $u'_2$, \eqref{adj_adj_eq.1} becomes
\begin{align*}
 &\hspace{-1cm}\sum_{\substack{Q_1,Q'_1  \\ \text{adjacent} }} \quad
  \sum_{\substack{Q_2,Q'_2\\ \text{adjacent} }}
  |\langle f, h_{u_1}^{Q_1} \otimes h_{u_2}^{Q_2}\rangle|
  |\langle g, h_{u'_1}^{Q'_1} \otimes h_{u'_2}^{Q'_2}\rangle| \big[ C(\epsilon,\upsilon) + \upsilon^{1/2} \|T\|
  + \epsilon^{\eta/2}\|T\| \big]\\
  \leq & \,\, C(\epsilon,\upsilon)\|f\|_{L^2(\mu)} \|g\|_{L^2(\mu)}
  + \upsilon^{1/2} \|T\| \|f\|_{L^2(\mu)} \|g\|_{L^2(\mu)}
  + \epsilon^{\eta/2} \|T\| \|f\|_{L^2(\mu)} \|g\|_{L^2(\mu)}.
\end{align*}
Fixing $\epsilon$ and~$\upsilon$ sufficiently small, we establish the bound
$(C + 0.01 \|T\|) \|f\|_{L^2(\mu)} \|g\|_{L^2(\mu)}$,
completing the estimation of the \emph{Adj/Adj} case.

%%%%%%%%%%%%%%%%%%%%%%%%%%%%%%%%%%%%%%%%%%%%%%%%%%%%%%%%%%%%%%%%%%%%%%%%%%%%%%
\section{Nested Cubes}
\subsection{Nested/Nested cubes}
In this section, we consider the \emph{Nes/Nes} case. That is, we estimate~\eqref{coreeq.1} when $Q_1, Q'_1$ are nested and $Q_2, Q'_2$ are nested:
\begin{eqnarray*}
  && \ell(Q_1) < \delta^r \ell(Q'_1), \quad \rho_1(Q_1,Q'_1) \leq \mathcal{C}\ell(Q_1)^{\gamma_1}\ell(Q'_1)^{1-\gamma_1}, \quad \text{and} \\
  && \ell(Q_2) < \delta^r \ell(Q'_2), \quad \rho_2(Q_2,Q'_2) \leq \mathcal{C}\ell(Q_2)^{\gamma_2}\ell(Q'_2)^{1-\gamma_2},
\end{eqnarray*}
where $\mathcal{C}:= 2A_0C_QC_K$.
As explained in the \emph{Sep/Nes} case, when $Q_2, Q'_2$ are nested, the terms in~\eqref{coreeq.1} which involves~$h_0^{Q_2}$ vanish. Here, with the same reasoning,  the terms in~\eqref{coreeq.1} which involves~$h_0^{Q_1}$ also vanish.
We are left to bound
\begin{align}\label{InIn_eq.1}
  &\hspace{-0.5cm}\sum_{\substack{Q_1,Q'_1  \\ \text{nested} }} \,
  \sum_{\substack{Q_2,Q'_2\\ \text{nested} }} \,
  \sum_{\substack{u_1,u'_1 \\u_2,u'_2}} \,
  \langle f, h_{u_1}^{Q_1} \otimes h_{u_2}^{Q_2}\rangle
  \langle g, h_{u'_1}^{Q'_1} \otimes h_{u'_2}^{Q'_2}\rangle
  \langle T(h_{u_1}^{Q_1} \otimes h_{u_2}^{Q_2}), h_{u'_1}^{Q'_1} \otimes h_{u'_2}^{Q'_2}\rangle \noz\\
  &+   \sum_{\substack{Q_1,Q'_1  \\ \text{nested} }} \,
  \sum_{\substack{Q_2,Q'_2 \\ \text{ nested} \\ \text{gen}(Q'_2) = m}} \,
  \sum_{\substack{u_1,u'_1 \\u_2}} \,
  \langle f, h_{u_1}^{Q_1} \otimes h_{u_2}^{Q_2}\rangle
  \langle g, h_{u'_1}^{Q'_1} \otimes h_{0}^{Q'_2}\rangle
  \langle T(h_{u_1}^{Q_1} \otimes h_{u_2}^{Q_2}), h_{u'_1}^{Q'_1} \otimes h_{0}^{Q'_2}\rangle \noz\\
  &+   \sum_{\substack{Q_1,Q'_1  \\ \text{nested} \\ \text{gen}(Q'_1) = m }} \,
  \sum_{\substack{Q_2,Q'_2\\ \text{nested}  }} \,
  \sum_{\substack{u_1\\u_2,u'_2}} \,
  \langle f, h_{u_1}^{Q_1} \otimes h_{u_2}^{Q_2}\rangle
  \langle g, h_{0}^{Q'_1} \otimes h_{u'_2}^{Q'_2}\rangle
  \langle T(h_{u_1}^{Q_1} \otimes h_{u_2}^{Q_2}), h_{0}^{Q'_1} \otimes h_{u'_2}^{Q'_2}\rangle \noz\\
  &+  \sum_{\substack{Q_1,Q'_1  \\ \text{nested} \\ \text{gen}(Q'_1) = m }} \,
  \sum_{\substack{Q_2,Q'_2\\ \text{nested} \\ \text{gen}(Q'_2) = m }} \,
  \sum_{\substack{u_1 \\u_2}} \,
  \langle f, h_{u_1}^{Q_1} \otimes h_{u_2}^{Q_2}\rangle
  \langle g, h_{0}^{Q'_1} \otimes h_{0}^{Q'_2}\rangle
  \langle T(h_{u_1}^{Q_1} \otimes h_{u_2}^{Q_2}), h_{0}^{Q'_1} \otimes h_{0}^{Q'_2}\rangle.
\end{align}

For~$i = 1,2$, let~$Q_{i,1}' \in \text{ch}(Q'_i)$ be such that $Q_i \subset Q_{i,1}'$, and~$Q_{i,a}' \in \text{ch}(Q'_i)$ be such that $Q'_{i,a} \in Q'_i \backslash Q'_{i,1}$. Define
\begin{align*}
&S^{u'_i}_{Q_iQ'_i} := \chi_{Q^{'c}_{i,1}} \big(h_{u'_i}^{Q'_i} - \langle h_{u'_i}^{Q'_i} \rangle_{Q'_{i,1}}\big)
= - \langle h_{u'_i}^{Q'_i} \rangle_{Q'_{i,1}} \chi_{Q^{'c}_{i,1}}
+ \sum_{Q'_{i,a}} h_{u'_i}^{Q'_i} \chi_{Q^{'}_{i,a}}, \quad \text{and} \\
&S^{0}_{Q_iQ'_i} := \chi_{Q^{'c}_{i,1}} \big(h_{0}^{Q'_i} - \langle h_{0}^{Q'_i} \rangle_{Q'_{i,1}}\big)
= - \langle h_{0}^{Q'_i} \rangle_{Q'_{i,1}} \chi_{Q^{'c}_{i,1}}
+ \sum_{Q'_{i,a}} h_{0}^{Q'_i} \chi_{Q^{'}_{i,a}}.
\end{align*}

We perform the split
\begin{align}%\label{InIn_eq2}
   &\hspace{-1cm} \langle T(h_{u_1}^{Q_1} \otimes h_{u_2}^{Q_2}), h_{u'_1}^{Q'_1} \otimes h_{u'_2}^{Q'_2}\rangle \noz \\
   &\hspace{-0.5cm}= \, \langle T(h_{u_1}^{Q_1} \otimes h_{u_2}^{Q_2}), S^{u'_1}_{Q_1Q'_1} \otimes S^{u'_2}_{Q_2Q'_2}\rangle \label{InIn_eq1a} \\
   &+ \,\langle h_{u'_1}^{Q'_1} \rangle_{Q_1} \langle T(h_{u_1}^{Q_1} \otimes h_{u_2}^{Q_2}), 1\otimes S^{u'_2}_{Q_2Q'_2}\rangle \label{InIn_eq1b}\\
   &+\,  \langle h_{u'_2}^{Q'_2} \rangle_{Q_2}   \langle T(h_{u_1}^{Q_1} \otimes h_{u_2}^{Q_2}), S^{u'_1}_{Q_1Q'_1} \otimes 1 \rangle \label{InIn_eq1c}\\
   &+ \,  \langle h_{u'_1}^{Q'_1} \rangle_{Q_1} \langle h_{u'_2}^{Q'_2} \rangle_{Q_2}   \langle T(h_{u_1}^{Q_1} \otimes h_{u_2}^{Q_2}), 1 \rangle. \label{InIn_eq1d}
\end{align}
Similarly, we write
\begin{align}%\label{InIn_eq2}
   &\hspace{-1cm} \langle T(h_{u_1}^{Q_1} \otimes h_{u_2}^{Q_2}), h_{u'_1}^{Q'_1} \otimes h_{0}^{Q'_2}\rangle \noz \\
   &\hspace{-0.5cm}= \, \langle T(h_{u_1}^{Q_1} \otimes h_{u_2}^{Q_2}), S^{u'_1}_{Q_1Q'_1} \otimes S^{0}_{Q_2Q'_2}\rangle \label{InIn_eq2a} \\
   &+ \,\langle h_{u'_1}^{Q'_1} \rangle_{Q_1} \langle T(h_{u_1}^{Q_1} \otimes h_{u_2}^{Q_2}), 1\otimes S^{0}_{Q_2Q'_2}\rangle \label{InIn_eq2b}\\
   &+\,  \langle h_{0}^{Q'_2} \rangle_{Q_2}   \langle T(h_{u_1}^{Q_1} \otimes h_{u_2}^{Q_2}), S^{u'_1}_{Q_1Q'_1} \otimes 1 \rangle \label{InIn_eq2c}\\
   &+ \,  \langle h_{u'_1}^{Q'_1} \rangle_{Q_1} \langle h_{0}^{Q'_2} \rangle_{Q_2}   \langle T(h_{u_1}^{Q_1} \otimes h_{u_2}^{Q_2}), 1 \rangle, \label{InIn_eq2d}
\end{align}

\begin{align}%\label{InIn_eq2}
   &\hspace{-1cm} \langle T(h_{u_1}^{Q_1} \otimes h_{u_2}^{Q_2}), h_{0}^{Q'_1} \otimes h_{u'_2}^{Q'_2}\rangle \noz \\
   &\hspace{-0.5cm}= \, \langle T(h_{u_1}^{Q_1} \otimes h_{u_2}^{Q_2}), S^{0}_{Q_1Q'_1} \otimes S^{u'_2}_{Q_2Q'_2}\rangle \label{InIn_eq3a} \\
   &+ \,\langle h_{0}^{Q'_1} \rangle_{Q_1} \langle T(h_{u_1}^{Q_1} \otimes h_{u_2}^{Q_2}), 1\otimes S^{u'_2}_{Q_2Q'_2}\rangle \label{InIn_eq3b}\\
   &+\,  \langle h_{u'_2}^{Q'_2} \rangle_{Q_2}   \langle T(h_{u_1}^{Q_1} \otimes h_{u_2}^{Q_2}), S^{0}_{Q_1Q'_1} \otimes 1 \rangle \label{InIn_eq3c}\\
   &+ \,  \langle h_{0}^{Q'_1} \rangle_{Q_1} \langle h_{u'_2}^{Q'_2} \rangle_{Q_2}   \langle T(h_{u_1}^{Q_1} \otimes h_{u_2}^{Q_2}), 1 \rangle, \label{InIn_eq3d} \quad\text{and}
\end{align}

\begin{align}%\label{InIn_eq2}
   &\hspace{-1cm} \langle T(h_{u_1}^{Q_1} \otimes h_{u_2}^{Q_2}), h_{0}^{Q'_1} \otimes h_{0}^{Q'_2}\rangle \noz \\
   &\hspace{-0.5cm}= \, \langle T(h_{u_1}^{Q_1} \otimes h_{u_2}^{Q_2}), S^{0}_{Q_1Q'_1} \otimes S^{0}_{Q_2Q'_2}\rangle \label{InIn_eq4a} \\
   &+ \,\langle h_{0}^{Q'_1} \rangle_{Q_1} \langle T(h_{u_1}^{Q_1} \otimes h_{u_2}^{Q_2}), 1\otimes S^{0}_{Q_2Q'_2}\rangle \label{InIn_eq4b}\\
   &+\,  \langle h_{0}^{Q'_2} \rangle_{Q_2}   \langle T(h_{u_1}^{Q_1} \otimes h_{u_2}^{Q_2}), S^{0}_{Q_1Q'_1} \otimes 1 \rangle \label{InIn_eq4c}\\
   &+ \,  \langle h_{0}^{Q'_1} \rangle_{Q_1} \langle h_{0}^{Q'_2} \rangle_{Q_2}   \langle T(h_{u_1}^{Q_1} \otimes h_{u_2}^{Q_2}), 1 \rangle \label{InIn_eq4d}.
\end{align}
Our plan is the following. First, we estimate the sum involving the element~\eqref{InIn_eq1a}. Then the sums involving \eqref{InIn_eq2a}, \eqref{InIn_eq3a}, \eqref{InIn_eq4a} can be estimated using a similar proof structure.
Second, we group the sums involving~\eqref{InIn_eq1b} and~\eqref{InIn_eq2b} and estimate them at once. Then the pairs of sums involving \eqref{InIn_eq3b} and~\eqref{InIn_eq4b}; \eqref{InIn_eq1c} and~\eqref{InIn_eq2c};~\eqref{InIn_eq3c} and~\eqref{InIn_eq4c} can be estimated analogously.
Finally, we group the sums involving~\eqref{InIn_eq1d}, \eqref{InIn_eq2d}, \eqref{InIn_eq3d}, and~\eqref{InIn_eq4d} and deal with them all at the same time.

We start with the sum involving~\eqref{InIn_eq1a}. The following lemma gives its bound.

\begin{lem}\label{lem1:InIn}
  Suppose $Q_1$, $Q'_1$ are nested and $Q_2$, $Q'_2$ are nested.
  There holds that
  \[| \langle T(h_{u_1}^{Q_1} \otimes h_{u_2}^{Q_2}), S^{u'_1}_{Q_1Q'_1} \otimes S^{u'_u}_{Q_2Q'_2}\rangle|
  \ls A^{\textup{in}}_{Q_1,Q'_1} A^{\textup{in}}_{Q_2,Q'_2}, \quad \text{where} \]
  \[A^{\textup{in}}_{Q_i,Q'_i} := \left(\frac{\ell(Q_i)}{\ell(Q'_i)}\right)^{\al_i/2}
\left( \frac{\mu_i(Q_i)}{\mu_i(Q'_{i,1})}\right)^{1/2}, \quad i = 1,2. \]
\end{lem}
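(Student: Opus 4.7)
\emph{Proof plan for Lemma~\ref{lem1:InIn}.} The strategy is to apply Lemma~\ref{lem1:property_kernel} directly to the pairing, exploiting that the Haar functions $h_{u_1}^{Q_1}$ and $h_{u_2}^{Q_2}$ have vanishing mean and that the supports of the ``inner'' and ``outer'' factors are geometrically well separated. Concretely, $\supp h_{u_i}^{Q_i} \subset Q_i \subset Q'_{i,1}$ while $\supp S^{u'_i}_{Q_iQ'_i} \subset Q^{'c}_{i,1}$. Because $Q_i$ is good and $\ell(Q_i) \leq \delta^r \ell(Q'_i)$, Claim~\ref{claim:sep_nest}(i) (whose proof only uses nestedness and goodness of the inner cube) provides, for $r$ large enough, $\rho_i(Q_i,Q^{'c}_{i,1}) \geq \mathcal{C}\ell(Q_i) \geq 2A_0 C_K \rho_i(y_i,z_i)$ whenever $y_i,z_i \in Q_i$. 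Thus the hypotheses of Lemma~\ref{lem1:property_kernel} are satisfied with $\phi_i = h_{u_i}^{Q_i}$, $\theta_i = S^{u'_i}_{Q_iQ'_i}$, and it yields
\[
|\langle T(h_{u_1}^{Q_1}\otimes h_{u_2}^{Q_2}), S^{u'_1}_{Q_1Q'_1}\otimes S^{u'_2}_{Q_2Q'_2}\rangle|
\lesssim \mu_1(Q_1)^{1/2}\mu_2(Q_2)^{1/2}\, I_1\, I_2,
\]
where (fixing arbitrary $z_i\in Q_i$)
\[
I_i := \int_{Q^{'c}_{i,1}}
\frac{\ell(Q_i)^{\alpha_i}}{\rho_i(x_i,z_i)^{\alpha_i}\lambda_i(z_i,\rho_i(x_i,z_i))}\,|S^{u'_i}_{Q_iQ'_i}(x_i)|\,d\mu_i(x_i).
\]

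The main task is to show $I_i \lesssim (\ell(Q_i)/\ell(Q'_i))^{\alpha_i/2}\mu_i(Q'_{i,1})^{-1/2}$. I will split $I_i = I_i^{(1)} + I_i^{(2)}$, where $I_i^{(1)}$ is the integral over $Q'_i \setminus Q'_{i,1} = \bigcup_a Q'_{i,a}$ (the ``sibling children'' of $Q'_{i,1}$) and $I_i^{(2)}$ is the integral over $X_i \setminus Q'_i$. On each sibling $Q'_{i,a}$, the function $S^{u'_i}_{Q_iQ'_i}$ equals $h^{Q'_i}_{u'_i} - \langle h^{Q'_i}_{u'_i}\rangle_{Q'_{i,1}}$, and the calculation for $I_i^{(1)}$ follows the same template used to obtain~\eqref{eq5:SepIn_b} in Lemma~\ref{lem2:sepnest}: the lower bound $\rho_i(x_i,z_i) \gtrsim \ell(Q_i)^{\gamma_i}\ell(Q'_i)^{1-\gamma_i}$ (valid since $Q_i$ is good) and the doubling of $\lambda_i$ produce the factor $(\ell(Q_i)/\ell(Q'_i))^{\alpha_i/2}/\lambda_i(z_i,\ell(Q'_i))$, while the Haar-function size estimate in Theorem~\ref{thm.4.2_KLPW}(vii) yields the desired $\mu_i(Q'_{i,1})^{-1/2}$ on converting through upper doubling, exactly as at the end of Lemma~\ref{lem2:sepnest}. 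On $X_i \setminus Q'_i$ the function $S^{u'_i}_{Q_iQ'_i}$ reduces to the constant $-\langle h^{Q'_i}_{u'_i}\rangle_{Q'_{i,1}}$, bounded by $\mu_i(Q'_{i,1})^{-1/2}$ via~\eqref{eq2:pro_Haar}, so $I_i^{(2)}$ is handled by Lemma~\ref{upper_dbl_lem1} together with the good-cube separation $\rho_i(Q_i,X_i\setminus Q'_i) \gtrsim \ell(Q_i)^{1/2}\ell(Q'_i)^{1/2}$ coming from Proposition~\ref{prop2:sep_nest} (since $\gamma_i \leq 1/2$), giving $I_i^{(2)} \lesssim \mu_i(Q'_{i,1})^{-1/2}\ell(Q_i)^{\alpha_i}\rho_i(Q_i,X_i\setminus Q'_i)^{-\alpha_i} \lesssim (\ell(Q_i)/\ell(Q'_i))^{\alpha_i/2}\mu_i(Q'_{i,1})^{-1/2}$.

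Combining, $\mu_i(Q_i)^{1/2} I_i \lesssim A^{\textup{in}}_{Q_i,Q'_i}$ by the definition of $A^{\textup{in}}_{Q_i,Q'_i}$, and multiplying the two factors ($i=1,2$) gives the claimed bound. The only subtle step is verifying the hypothesis of Lemma~\ref{lem1:property_kernel}, which amounts to checking the quantitative separation between the disjoint supports; this is precisely what the goodness of $Q_i$ together with $\ell(Q_i) \leq \delta^r \ell(Q'_i)$ buys us, and it has already been exploited in both the separated/nested (Lemma~\ref{lem2:sepnest}) and adjacent/nested (Lemma~\ref{lem2:adj_in}) analyses. The rest is a routine symmetric repetition of those estimates in both coordinates and is expected to pose no serious obstacle.
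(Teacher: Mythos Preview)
Your proposal is correct and follows essentially the same approach as the paper: both arguments rest on Lemma~\ref{lem1:property_kernel}, the support separation from Claim~\ref{claim:sep_nest}, and the integral estimates \eqref{eq6:sep_nest} and \eqref{eq5:SepIn_b}. The only cosmetic difference is that the paper first expands $S^{u'_1}_{Q_1Q'_1} \otimes S^{u'_2}_{Q_2Q'_2}$ into four tensor pieces and applies Lemma~\ref{lem1:property_kernel} to each, whereas you apply the lemma once and then split the resulting integral $I_i$ by region; these are equivalent reorganizations of the same computation.
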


\begin{proof}
The proof of Lemma~\ref{lem1:InIn} requires some results that we have shown in the \emph{Sep/Nes} cases. Specifically, Claim~\ref{claim:sep_nest}, equations~\eqref{eq6:sep_nest} and~\eqref{eq5:SepIn_b} were shown to be true for cubes~$Q_2$ and~$Q'_2$ which are nested. We note that these results also holds for cubes~$Q_1$ and~$Q'_1$ which are nested.

We write $\langle T(h_{u_1}^{Q_1} \otimes h_{u_2}^{Q_2}), S^{u'_1}_{Q_1Q'_1} \otimes S^{u'_u}_{Q_2Q'_2}\rangle$ as
\begin{align}%\label{}
  &\hspace{-0,5cm} \langle h_{u'_1}^{Q'_1} \rangle_{Q'_{1,1}}
  \langle h_{u'_2}^{Q'_2} \rangle_{Q'_{2,1}}
  \langle T(h_{u_1}^{Q_1} \otimes h_{u_2}^{Q_2}), \chi_{Q^{'c}_{1,1}} \otimes \chi_{Q^{'c}_{2,1}} \rangle \label{InIn_eq5} \\
  & - \, \langle h_{u'_1}^{Q'_1} \rangle_{Q'_{1,1}}
  \sum_{Q'_{2,a}}
  \langle T(h_{u_1}^{Q_1} \otimes h_{u_2}^{Q_2}), \chi_{Q^{'c}_{1,1}} \otimes h_{u'_2}^{Q'_2} \chi_{Q^{'}_{2,a}} \rangle \label{InIn_eq6} \\
  & - \, \langle h_{u'_2}^{Q'_2} \rangle_{Q'_{2,1}}
  \sum_{Q'_{1,a}}
  \langle T(h_{u_1}^{Q_1} \otimes h_{u_2}^{Q_2}), h_{u'_1}^{Q'_1} \chi_{Q^{'}_{1,a}} \otimes \chi_{Q^{'c}_{2,1}} \rangle \label{InIn_eq7}\\
  & + \, \sum_{Q'_{1,a}} \sum_{Q'_{2,a}}
  \langle T(h_{u_1}^{Q_1} \otimes h_{u_2}^{Q_2}), h_{u'_1}^{Q'_1} \chi_{Q^{'}_{1,a}} \otimes h_{u'_2}^{Q'_2} \chi_{Q^{'}_{2,a}}  \rangle. \label{InIn_eq8}
\end{align}
We will apply Lemma~\ref{lem1:property_kernel} to the terms~\eqref{InIn_eq5}--\eqref{InIn_eq8} and show that they are bounded by $A^{\textup{in}}_{Q_1,Q'_1} A^{\textup{in}}_{Q_2,Q'_2}$.
We will show the calculation for~\eqref{InIn_eq5}. The bound for three other terms can be obtained similarly.

Note that
\begin{equation}\label{InIn_eq9}
|\langle h_{u'_1}^{Q'_1} \rangle_{Q'_{1,1}}| \ls \mu_1(Q'_{1,1})^{-1/2}
\quad \text{and} \quad |\langle h_{u'_2}^{Q'_2} \rangle_{Q'_{2,1}}|\ls \mu_2(Q'_{2,1})^{-1/2}.
\end{equation}
Using property~(i) in Claim~\ref{claim:sep_nest}, for all $y_1,z \in Q_1$, $x_1 \in Q^{'c}_{1,1}$, $y_2,w \in Q_2$ and $x_2 \in Q^{'c}_{2,1}$ we have
\[
\rho_1(y_1,z) \leq \frac{\rho_1(x_1,z)}{C_K}
\quad \text{and} \quad
\rho_2(y_2,w) \leq \frac{\rho_2(x_2,w)}{C_K}.
\]
Applying Lemma~\ref{lem1:property_kernel} to
$\phi_1 = h^{Q_1}_{u_1}$, $\phi_2 = h^{Q_2}_{u_2}$, $\theta_1 = \chi_{Q^{'c}_{1,1}}$ and $\theta_2 = \chi_{Q^{'c}_{2,1}}$ we have
\begin{eqnarray*}%\label{eq2:SepIn_a}
  |\langle T(h_{u_1}^{Q_1} \otimes h_{u_2}^{Q_2}), \chi_{Q^{'c}_{1,1}} \otimes \chi_{Q^{'c}_{2,1}}\rangle|
   &\ls& \|h^{Q_1}_{u_1}\|_{L^2(\mu_1)}  \|h^{Q_2}_{u_2}\|_{L^2(\mu_2)}
   \mu_1(Q_1)^{1/2} \mu_2(Q_2)^{1/2} \noz\\
   && \times
    \int_{Q^{'c}_{1,1}}    \frac{C_Q^{\al_1}\ell(Q_1)^{\al_1}}
    {\rho_1(x_1,z)^{\al_1}\lambda_1(z,\rho_1(x_1,z))} \,d\mu_1(x_1) \noz\\
    && \times
  \int_{Q^{'c}_{2,1}} \frac{C_Q^{\al_2}\ell(Q_2)^{\al_2}}
  {\rho_2(x_2,w)^{\al_2}\lambda_2(w,\rho_2(x_2,w))} \,d\mu_2(x_2).
\end{eqnarray*}
As shown in equation~\eqref{eq6:sep_nest}, the two integrals above are bounded by $\ell(Q_1)^{\al_1/2} \ell(Q'_1)^{-\al_1/2}$ and $\ell(Q_2)^{\al_2/2} \ell(Q'_2)^{-\al_2/2}$, respectively.
These together with~\eqref{InIn_eq9} imply that~\eqref{InIn_eq5} is bounded by $A^{\textup{in}}_{Q_1,Q'_1} A^{\textup{in}}_{Q_2,Q'_2}$.
This establishes Lemma~\ref{lem1:InIn}.
\end{proof}

Using Lemma~\ref{lem1:InIn} and Proposition~\ref{prop:A_in},  the term in~\eqref{InIn_eq.1} involving element~\eqref{InIn_eq1a} is dominated by $\|f\|_{L^2(\mu)} \|g\|_{L^2(\mu)}$:
\begin{align*}
   &\hspace{-0.5cm}\sum_{\substack{Q_1,Q'_1  \\ \text{nested} }} \,
  \sum_{\substack{Q_2,Q'_2\\ \text{nested} }} \,
  \sum_{\substack{u_1,u'_1 \\u_2,u'_2}} \,
  |\langle f, h_{u_1}^{Q_1} \otimes h_{u_2}^{Q_2}\rangle|
  |\langle g, h_{u'_1}^{Q'_1} \otimes h_{u'_2}^{Q'_2}\rangle|
  |\langle T(h_{u_1}^{Q_1} \otimes h_{u_2}^{Q_2}), S^{u'_1}_{Q_1Q'_1} \otimes S^{u'_u}_{Q_2Q'_2}\rangle |\noz\\
  & \ls \sum_{\substack{Q_1,Q'_1  \\ \text{nested} }} \,
  \sum_{\substack{Q_2,Q'_2\\ \text{nested} }} \,
  \sum_{\substack{u_1,u'_1 \\u_2,u'_2}} \,
  A^{\textup{in}}_{Q_1,Q'_1} A^{\textup{in}}_{Q_2,Q'_2}
  |\langle f, h_{u_1}^{Q_1} \otimes h_{u_2}^{Q_2}\rangle|
  |\langle g, h_{u'_1}^{Q'_1} \otimes h_{u'_2}^{Q'_2}\rangle| \\
  & \ls  \sum_{\substack{Q_1,Q'_1  \\ \text{nested} }}
  A^{\textup{in}}_{Q_1,Q'_1}
  \left(\sum_{Q_2} \sum_{u_2} |\langle f, h_{u_1}^{Q_1} \otimes h_{u_2}^{Q_2}\rangle|^2 \right)^{1/2}
  \left(\sum_{Q'_2} \sum_{u'_2}  |\langle g, h_{u'_1}^{Q'_1} \otimes h_{u'_2}^{Q'_2}\rangle|^2 \right)^{1/2} \\
  &\ls \left(\sum_{Q_1}\sum_{u_1} \sum_{Q_2} \sum_{u_2} |\langle f, h_{u_1}^{Q_1} \otimes h_{u_2}^{Q_2}\rangle|^2 \right)^{1/2}
   \left(\sum_{Q'_1}\sum_{u'_1} \sum_{Q'_2} \sum_{u'_2}  |\langle g, h_{u'_1}^{Q'_1} \otimes h_{u'_2}^{Q'_2}\rangle|^2 \right)^{1/2} \\
  &\ls \|f\|_{L^2(\mu)} \|g\|_{L^2(\mu)}.
\end{align*}

Next, we group the sums involving~\eqref{InIn_eq1b} and~\eqref{InIn_eq2b} and estimate them together. To do so, we write these sums in terms of a paraproduct.
\begin{lem}\label{lem2:InIn}
The sum
\begin{align*}%\label{sepnesteq.4}
&\sum_{\substack{Q_1, Q'_1 \\ \textup{nested} }} \,
  \sum_{\substack{Q_2, Q'_2 \\ \textup{nested}}} \,
  \sum_{\substack{u_1,u'_1 \\u_2,u'_2}}
\langle f, h_{u_1}^{Q_1} \otimes h_{u_2}^{Q_2}\rangle
  \langle g, h_{u'_1}^{Q'_1} \otimes h_{u'_2}^{Q'_2}\rangle
  \langle h_{u'_2}^{Q'_2}\rangle_{Q_2}
\langle T(h_{u_1}^{Q_1} \otimes h_{u_2}^{Q_2}), S^{u'_1}_{Q_1Q'_1} \otimes 1\rangle  \noz \\
&+
\sum_{\substack{Q_1, Q'_1 \\ \textup{nested}} } \,
  \sum_{\substack{Q_2, Q'_2 \\ \textup{nested}\\ \textup{gen}(Q'_2)=m }} \,
  \sum_{\substack{u_1,u'_1 \\u_2}}
\langle f, h_{u_1}^{Q_1} \otimes h_{u_2}^{Q_2}\rangle
  \langle g, h_{u'_1}^{Q'_1} \otimes h_{0}^{Q'_2}\rangle
  \langle h_{0}^{Q'_2}\rangle_{Q_2}
\langle T(h_{u_1}^{Q_1} \otimes h_{u_2}^{Q_2}), S^{u'_1}_{Q_1Q'_1}  \otimes 1\rangle
\end{align*}
equals
\[
\bigg\langle
\sum_{Q_1, Q'_1 \textup{ good}} \quad
\sum_{\substack{u_1,u'_1 \\u_2}} \quad
 h_{u'_1}^{Q'_1} \otimes  \big(\Pi_{c^{u_1 u'_1}_{Q_1 Q'_1}}^{u_2} \big)^*f^{Q_1,u_1},
 g_{\textup{good}}
\bigg\rangle,
\]
where
\begin{eqnarray*}
  f^{Q_1,u_1} &:=& \langle f, h_{u_1}^{Q_1}\rangle_1 = \int_{X_1} f(x,y)h_{u_1}^{Q_1}(x) \,d\mu_1(x), \\
  c^{u_1 u'_1}_{Q_1 Q'_1} &:=& \langle T^*(S^{u'_1}_{Q_1Q'_1}  \otimes 1), h_{u_1}^{Q_1}\rangle_1, \quad \text{and}\\
  \Pi_{a}^{u_2} \omega &:=&  \sum_{Q'_2 \in \Dd'_2} \quad
\sum_{\substack{Q_2 \textup{ good} \\Q_2 \subset Q'_2 \\ \ell(Q_2) = \delta^r\ell(Q'_2)}}
\langle \omega\rangle_{Q'_2}
\langle a, h_{u_2}^{Q_2}\rangle h_{u_2}^{Q_2}.
\end{eqnarray*}
\end{lem}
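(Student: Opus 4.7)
The strategy is essentially identical to that of Lemma~\ref{lem3.sepnest} in the Sep/Nes case: the only difference is that in the present setting the factor acting on the first variable is $S^{u'_1}_{Q_1 Q'_1}$ rather than $h_{u'_1}^{Q'_1}$, but this affects only the definition of the ``inner'' function $c^{u_1 u'_1}_{Q_1 Q'_1}$ and plays no role whatsoever in the manipulation of the $X_2$-variable. The separation versus nesting of $Q_1$ and $Q'_1$ is likewise irrelevant, since the whole argument is carried out in the $X_2$-coordinate.

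First I would exploit the nested structure of $Q_2$ and $Q'_2$ together with the goodness of $Q_2$: since $\ell(Q_2)<\delta^{r}\ell(Q'_2)$, there exists a unique cube $S(Q_2)\in\Dd'_2$ with $\ell(Q_2)=\delta^{r}\ell(S(Q_2))$ and $Q_2\subset S(Q_2)\subset Q'_2$. Because $\ell(S(Q_2))<\ell(Q'_2)$ the cube $S(Q_2)$ is contained in a child of $Q'_2$, on which $h_{u'_2}^{Q'_2}$ (respectively $h_{0}^{Q'_2}$) is constant. This gives
\[
\langle h_{u'_2}^{Q'_2}\rangle_{Q_2}=\langle h_{u'_2}^{Q'_2}\rangle_{S(Q_2)},\qquad \langle h_{0}^{Q'_2}\rangle_{Q_2}=\langle h_{0}^{Q'_2}\rangle_{S(Q_2)}.
\]
As in Lemma~\ref{lem3.sepnest}, I would introduce $g_{\text{good}}$ by setting $\langle g_{\text{good}},h_{u'_1}^{Q'_1}\otimes h_{u'_2}^{Q'_2}\rangle$ equal to $\langle g,h_{u'_1}^{Q'_1}\otimes h_{u'_2}^{Q'_2}\rangle$ when $Q'_2$ is good and zero otherwise, so that the goodness restriction on $Q'_2$ can be dropped from the summation and instead absorbed into $g_{\text{good}}$. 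This allows the two given sums to be combined into a single expression indexed over all $Q'_2\in\Dd'_2$ with $\ell(Q'_2)\leq\delta^m$, the $u'_2$-summation handling the case $\ell(Q'_2)<\delta^m$ and the $h_0^{Q'_2}$ term corresponding to the top-generation case.

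Next, reorganising by the intermediate cube $S(Q_2)$: the two collapsed Haar series in the $X_2$-variable,
\[
\sum_{\substack{Q'_2\\\ell(Q'_2)<\delta^m}}\sum_{u'_2}\langle g_{\text{good}}^{Q'_1,u'_1},h_{u'_2}^{Q'_2}\rangle h_{u'_2}^{Q'_2}+\sum_{\substack{Q'_2\\\ell(Q'_2)=\delta^m}}\langle g_{\text{good}}^{Q'_1,u'_1},h_{0}^{Q'_2}\rangle h_{0}^{Q'_2},
\]
reproduce $g_{\text{good}}^{Q'_1,u'_1}:=\langle g_{\text{good}},h_{u'_1}^{Q'_1}\rangle_1$, and averaging this function against $S(Q_2)$ yields $\langle g_{\text{good}}^{Q'_1,u'_1}\rangle_{S(Q_2)}=\langle h_{u'_1}^{Q'_1},\langle g_{\text{good}}\rangle_{S(Q_2)}\rangle$. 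Re-indexing the sum over $(Q_2,Q'_2)$ as a sum over $Q'_2\in\Dd'_2$ together with $Q_2\subset Q'_2$ satisfying $\ell(Q_2)=\delta^{r}\ell(Q'_2)$ replaces $S(Q_2)$ by $Q'_2$ in the averaging, which is precisely the indexing in the definition of the paraproduct $\Pi_a^{u_2}$.

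Finally, I would dualise: the remaining matrix element factorises as
\[
\langle f,h_{u_1}^{Q_1}\otimes h_{u_2}^{Q_2}\rangle\langle T(h_{u_1}^{Q_1}\otimes h_{u_2}^{Q_2}),S^{u'_1}_{Q_1Q'_1}\otimes 1\rangle=\langle f^{Q_1,u_1},h_{u_2}^{Q_2}\rangle\langle c^{u_1 u'_1}_{Q_1 Q'_1},h_{u_2}^{Q_2}\rangle,
\]
with $c^{u_1 u'_1}_{Q_1 Q'_1}=\langle T^*(S^{u'_1}_{Q_1Q'_1}\otimes 1),h_{u_1}^{Q_1}\rangle_1$. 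Packaging the $Q_2$-sum against $\langle g_{\text{good}}\rangle_{Q'_2}$ as the paraproduct $\Pi_{c^{u_1 u'_1}_{Q_1 Q'_1}}^{u_2}g_{\text{good}}$ acting on $g_{\text{good}}$, and then transferring this paraproduct to act on $f^{Q_1,u_1}$ via its adjoint, produces exactly the claimed expression. The only point requiring care is verifying that $S(Q_2)$ lies strictly inside $Q'_2$ so that $h_{u'_2}^{Q'_2}$ (and $h_0^{Q'_2}$) is constant on $S(Q_2)$; this is where the strict inequality $\ell(Q_2)<\delta^r\ell(Q'_2)$ from the nested hypothesis is used, and this is the only structural input beyond purely formal bookkeeping.
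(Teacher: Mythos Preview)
Your proposal is correct and follows exactly the same approach as the paper: the paper's own proof simply states that Lemma~\ref{lem2:InIn} is identical to Lemma~\ref{lem3.sepnest} with $S^{u'_1}_{Q_1Q'_1}$ replacing $h_{u'_1}^{Q'_1}$ and $c^{u_1 u'_1}_{Q_1 Q'_1}$ replacing $b^{u_1 u'_1}_{Q_1 Q'_1}$, and you have faithfully reproduced the steps of that earlier lemma with this substitution. One minor slip: in your displayed Haar expansion the condition on the first sum should be $\ell(Q'_2)\leq\delta^m$ rather than $\ell(Q'_2)<\delta^m$, so that together with the $h_0^{Q'_2}$ term at generation $m$ you recover the full inhomogeneous expansion of $g_{\text{good}}^{Q'_1,u'_1}$.
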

\begin{proof}
  Notice that Lemma~\ref{lem2:InIn} is similar to Lemma~\ref{lem3.sepnest} in the \emph{Sep/Nes} case, except that~$S^{u'_1}_{Q_1Q'_1} $ has replaced~$h_{u'_1}^{Q'_1}$ and~$c^{u_1 u'_1}_{Q_1 Q'_1}$ has replaced~$b^{u_1 u'_1}_{Q_1 Q'_1}$.
  Thus, the proof of Lemma~\ref{lem2:InIn} is similar to that of Lemma~\ref{lem3.sepnest}, incorporating the above mentioned differences.
\end{proof}

Using Lemma~\ref{lem2:InIn} and Lemma~\ref{lem4.sepnest} we have
\begin{align}\label{InIn:eq14}
   & \bigg| \bigg\langle
\sum_{Q_1, Q'_1 \textup{ good}} \quad
\sum_{\substack{u_1,u'_1 \\u_2}} \quad
 h_{u'_1}^{Q'_1} \otimes  \big(\Pi_{c^{u_1 u'_1}_{Q_1 Q'_1}}^{u_2} \big)^*f^{Q_1,u_1},
 g_{\textup{good}}
\bigg\rangle \bigg| \noz\\
  & \ls \bigg( \sum_{Q'_1 \textup{ good}}
  \bigg [\sum_{Q_1 \textup{ good}} \quad
\sum_{\substack{u_1,u'_1 \\u_2}} \quad
\|c^{u_1 u'_1}_{Q_1 Q'_1}\|_{\bmo^2_{C_K}(\mu_2)}
\|f^{Q_1,u_1}\|_{L^2(\mu_2)}
 \bigg]^2 \bigg)^{1/2} \|g\|_{L^2(\mu)}.
\end{align}
We will estimated the term $\|c^{u_1 u'_1}_{Q_1 Q'_1}\|_{\bmo^2_{C_K}(\mu_2)}$ below.

\begin{lem}\label{lem3:InIn}
  If $Q_1$, $Q'_1$ are nested,
  then there holds that
  \[\| c^{u_1 u'_1}_{Q_1 Q'_1}\|_{\bmo^2_{C_K}(\mu_2)} \ls A_{Q_1,Q'_1}^{\textup{in}}, \quad \text{where } C_K >1.\]
\end{lem}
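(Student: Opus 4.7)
The plan is to mimic the argument for Lemma~\ref{lem5.sepnest} from the \emph{Sep/Nes} case, with the separated geometry replaced by the nested geometry that gives rise to the factor $A^{\textup{in}}_{Q_1,Q'_1}$ instead of $A^{\textup{sep}}_{Q_1,Q'_1}$. By $H^1$--$\bmo^2_{C_K}$ duality (Section~\ref{subsec:product_BMO}), it suffices to show that
\[
\Big|\big\langle c^{u_1 u'_1}_{Q_1 Q'_1}, a\big\rangle\Big| = \Big|\big\langle T(h^{Q_1}_{u_1}\otimes a), S^{u'_1}_{Q_1Q'_1}\otimes 1\big\rangle\Big| \ls A^{\textup{in}}_{Q_1,Q'_1}
\]
for every $2$-atom $a$ of $H^{1}(\mu_2)$ supported in a ball $J\subset X_2$ with $\|a\|_{L^2(\mu_2)}\le \mu_2(J)^{-1/2}$ and $\int a\,d\mu_2=0$. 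Setting $V:=C_KB(J)$, we split the pairing as
\[
\big\langle T(h^{Q_1}_{u_1}\otimes a), S^{u'_1}_{Q_1Q'_1}\otimes 1\big\rangle = \big\langle T(h^{Q_1}_{u_1}\otimes a), S^{u'_1}_{Q_1Q'_1}\otimes \chi_V\big\rangle + \big\langle T(h^{Q_1}_{u_1}\otimes a), S^{u'_1}_{Q_1Q'_1}\otimes \chi_{V^c}\big\rangle,
\]
and estimate the two pieces separately.

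For the $V^c$ piece, the cancellation $\int a\,d\mu_2=0$ together with $\supp a\subset J$ and the fact that $\rho_2(x_J,x_2)\ge C_K\rho_2(y_2,x_J)$ for $y_2\in J$, $x_2\in V^c$, allow us to apply Lemma~\ref{lem:property_kernel} (second conclusion) with $\phi_2=a$, $\theta_2=\chi_{V^c}$. Using Lemma~\ref{upper_dbl_lem1} to control the resulting tail integral by a constant, this piece is bounded by $\|S^{u'_1}_{Q_1Q'_1}\|_{L^1(\mu_1)}\|h^{Q_1}_{u_1}\|_{L^2(\mu_1)}\mu_1(Q_1)^{1/2}$ times harmless factors, which will be shown below to give $A^{\textup{in}}_{Q_1,Q'_1}$.

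For the $V$ piece, which is the main one, I will use the decomposition
\[
S^{u'_1}_{Q_1Q'_1} = -\big\langle h^{Q'_1}_{u'_1}\big\rangle_{Q'_{1,1}}\chi_{Q^{'c}_{1,1}} + \sum_{\substack{Q'_{1,a}\in\textup{ch}(Q'_1)\\ Q'_{1,a}\subset Q'_1\setminus Q'_{1,1}}} h^{Q'_1}_{u'_1}\chi_{Q'_{1,a}}
\]
(exactly as in the proof of Lemma~\ref{lem1:InIn}) and treat each of the two resulting terms exactly as the two analogous terms there. Crucially, $\supp h^{Q_1}_{u_1}\subset Q_1\subset Q'_{1,1}$ is disjoint from $\supp S^{u'_1}_{Q_1Q'_1}\subset Q^{'c}_{1,1}$, and the goodness of $Q_1$ combined with $\ell(Q_1)\le \delta^r\ell(Q'_1)$ (property~(i) of Claim~\ref{claim:sep_nest}) supplies the separation $\rho_1(y_1,z)\le \rho_1(x_1,z)/C_K$ needed for Lemma~\ref{lem:property_kernel} (first conclusion), applied with $\phi_1=h^{Q_1}_{u_1}$, $\theta_1=S^{u'_1}_{Q_1Q'_1}\chi_{Q^{'c}_{1,1}}$ (or $=h^{Q'_1}_{u'_1}\chi_{Q'_{1,a}}$), $\phi_2=a$, $\theta_2=\chi_V$. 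The $x_1$-integral over $Q^{'c}_{1,1}$ is handled by the computation already performed in~\eqref{eq6:sep_nest} (for the first term) and by the computation behind~\eqref{eq5:SepIn_b} (for the second term), yielding in both cases the factor $(\ell(Q_1)/\ell(Q'_1))^{\al_1/2}\mu_1(Q_1)^{1/2}\mu_1(Q'_{1,1})^{-1/2}=A^{\textup{in}}_{Q_1,Q'_1}$. The $x_2$-integral over $V$ only contributes $\|a\|_{L^2(\mu_2)}\mu_2(V)^{1/2}\ls 1$.

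The main obstacle is bookkeeping: ensuring that the separation conditions required by Lemma~\ref{lem:property_kernel} are satisfied on the correct sides and that the geometric gain of $(\ell(Q_1)/\ell(Q'_1))^{\al_1/2}$ survives both in the ``boundary'' term with $\chi_{Q^{'c}_{1,1}}$ and in the ``sibling'' terms $h^{Q'_1}_{u'_1}\chi_{Q'_{1,a}}$. Since the two required integral estimates are already proved in Section~\ref{subsec:sep_in} (in the course of proving Lemmas~\ref{lem1:sepnest} and~\ref{lem2:sepnest}), we simply invoke them here. Combining the $V$ and $V^c$ estimates yields $|\langle c^{u_1 u'_1}_{Q_1 Q'_1},a\rangle|\ls A^{\textup{in}}_{Q_1,Q'_1}$, which is the desired conclusion.
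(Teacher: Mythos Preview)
Your overall plan and the treatment of the $\chi_V$ piece are correct and essentially identical to the paper's: decompose $S^{u'_1}_{Q_1Q'_1}$ into $-\langle h^{Q'_1}_{u'_1}\rangle_{Q'_{1,1}}\chi_{Q^{'c}_{1,1}}$ and the sibling terms $h^{Q'_1}_{u'_1}\chi_{Q'_{1,a}}$, then apply Lemma~\ref{lem:property_kernel} (first conclusion) and invoke the integrals~\eqref{eq6:sep_nest} and~\eqref{eq5:SepIn_b}. This matches the paper's handling of~\eqref{InIn:eq10} and~\eqref{InIn:eq12} exactly.

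The gap is in your $\chi_{V^c}$ piece. You propose to apply Lemma~\ref{lem:property_kernel} (second conclusion) with $\phi_2=a$, $\theta_2=\chi_{V^c}$ and $\theta_1=S^{u'_1}_{Q_1Q'_1}$, and you claim a bound involving $\|S^{u'_1}_{Q_1Q'_1}\|_{L^1(\mu_1)}$. This fails for two reasons. First, that lemma outputs the factor $\|\phi_1\|_{L^2(\mu_1)}\|\theta_1\|_{L^2(\mu_1)}$, not an $L^1$ norm of $\theta_1$. Second, and fatally, $S^{u'_1}_{Q_1Q'_1}$ is supported on all of $Q^{'c}_{1,1}$ and equals the nonzero constant $-\langle h^{Q'_1}_{u'_1}\rangle_{Q'_{1,1}}$ on $X_1\setminus Q'_1$; hence it lies in neither $L^1(\mu_1)$ nor $L^2(\mu_1)$. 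Using only the $x_2$-cancellation throws away the disjointness of $h^{Q_1}_{u_1}$ from $S^{u'_1}_{Q_1Q'_1}$ in $x_1$, and with it the decay $(\ell(Q_1)/\ell(Q'_1))^{\al_1/2}$ that produces $A^{\textup{in}}_{Q_1,Q'_1}$.

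The fix, which is what the paper does for~\eqref{InIn:eq11} and~\eqref{InIn:eq13}, is to decompose $S^{u'_1}_{Q_1Q'_1}$ \emph{before} treating the $\chi_{V^c}$ piece as well, and then for each resulting term apply the \emph{full} kernel lemma, Lemma~\ref{lem1:property_kernel}, not the partial one. Both disjointnesses are available: $\supp h^{Q_1}_{u_1}\subset Q_1\subset Q'_{1,1}$ is disjoint from $Q^{'c}_{1,1}$ (and from each $Q'_{1,a}$), and $\supp a\subset J$ is disjoint from $V^c$; both $h^{Q_1}_{u_1}$ and $a$ have mean zero. The $x_1$-integral then gives the $A^{\textup{in}}_{Q_1,Q'_1}$ factor via~\eqref{eq6:sep_nest} or~\eqref{eq5:SepIn_b}, while the $x_2$-integral over $V^c$ is $\ls 1$ by Lemma~\ref{upper_dbl_lem1}.
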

\begin{proof}
  By duality, it is sufficient to show $|\langle c^{u_1 u'_1}_{Q_1 Q'_1}, a\rangle| \ls A_{Q_1,Q'_1}^{\textup{sep}}$ for all $2$-atoms~$a$ in the Hardy space~$H^1(\mu_2)$.
  Fix a ball $J = B(x_J,r_J) \subset X_2$ and an atom~$a$ such that $\supp a \subset J$, $\int_{J} a \,d\mu_2 = 0$ and $\|a\|_{L^2(\mu_1)} \leq \mu_1(J)^{-1/2}$ .
  Consider
  \begin{align*}
  \Big \langle c^{u_1 u'_1}_{Q_1 Q'_1}, a\Big\rangle
    &=  \Big \langle   T( h_{u_1}^{Q_1} \otimes a),S^{u'_1}_{Q_1Q'_1} \otimes 1\Big\rangle \\
    &=  -\langle h_{u'_1}^{Q'_1} \rangle_{Q'_{1,1}} \Big \langle
    T( h_{u_1}^{Q_1} \otimes a),\chi_{Q^{'c}_{1,1}} \otimes 1 \Big\rangle
    +  \sum_{Q'_{1,a}} \Big \langle   T( h_{u_1}^{Q_1} \otimes a),h_{u'_1}^{Q'_1} \chi_{Q'_{1,a}} \otimes 1\Big\rangle.
    \end{align*}
  Let~$V := C_KJ = B(x_J,C_Kr_J)$.
  We need to show that
  \begin{eqnarray}
    \Big|\Big \langle   T( h_{u_1}^{Q_1} \otimes a),S^{u'_1}_{Q_1Q'_1} \otimes 1\Big\rangle\Big|
     &\leq& \Big| \langle h_{u'_1}^{Q'_1} \rangle_{Q'_{1,1}} \Big \langle   T( h_{u_1}^{Q_1} \otimes a),\chi_{Q^{'c}_{1,1}} \otimes \chi_V\Big\rangle\Big| \label{InIn:eq10} \\
     &&+ \Big| \langle h_{u'_1}^{Q'_1} \rangle_{Q'_{1,1}} \Big \langle   T( h_{u_1}^{Q_1} \otimes a),\chi_{Q^{'c}_{1,1}} \otimes \chi_{V^c}\Big\rangle\Big| \label{InIn:eq11} \\
     &&+ \sum_{Q'_{1,a}} \Big|\Big \langle   T( h_{u_1}^{Q_1} \otimes a),h_{u'_1}^{Q'_1} \chi_{Q'_{1,a}} \otimes \chi_{V}\Big\rangle\Big| \label{InIn:eq12} \\
     &&+ \sum_{Q'_{1,a}} \Big|\Big \langle   T( h_{u_1}^{Q_1} \otimes a),h_{u'_1}^{Q'_1} \chi_{Q'_{1,a}} \otimes \chi_{V^c}\Big\rangle\Big| \label{InIn:eq13}\\
     &\ls&  A_{Q_1,Q'_1}^{\textup{in}}. \noz
  \end{eqnarray}
  To do so, we will apply Lemma~\ref{lem:property_kernel} to the terms~\eqref{InIn:eq10} and~\eqref{InIn:eq12}, and apply Lemma~\ref{lem1:property_kernel} to the terms~\eqref{InIn:eq11} and~\eqref{InIn:eq13}.

  For~\eqref{InIn:eq10}, applying Lemma~\ref{lem:property_kernel} to $\phi_1 =  h_{u_1}^{Q_1}$, $\phi_2 = a$, $\theta_1= \chi_{Q^{'c}_{1,1}}$ and $\theta_2 = \chi_V$ we have
  \begin{eqnarray*}
    \Big|\Big \langle   T( h_{u_1}^{Q_1} \otimes a),\chi_{Q^{'c}_{1,1}} \otimes \chi_V\Big\rangle\Big|
     &\ls& \|h_{u_1}^{Q_1}\|_{L^2(\mu_1)} \|a\|_{L^2(\mu_2)}
     \mu_1(Q_1)^{1/2} \|\chi_V\|_{L^2(\mu_2)} \\ &&\times\int_{Q^{'c}_{1,1}}\frac{C_K^{\al_1} \ell(Q_1)^{\al_1}}
     {\rho_1(x_1,z)^{\al_1}\lambda_1(z,\rho_1(x_1,z))}
     |\chi_{Q^{'c}_{1,1}}|
     \,d\mu_1(x_1).
  \end{eqnarray*}
Following equation~\eqref{eq6:sep_nest}, the integral above is bounded by $\ell(Q_1)^{\al_1/2} \ell(Q'_1)^{-\al_1/2}$. This together with~\eqref{InIn_eq9} implies that~\eqref{InIn:eq10} is dominated by~$A_{Q_1,Q'_1}^{\textup{in}}$.

For~\eqref{InIn:eq11}, applying Lemma~\ref{lem1:property_kernel} to
$\phi_1 = h^{Q_1}_{u_1}$, $\phi_2 = a$, $\theta_1 = \chi_{Q^{'c}_{1,1}}$ and $\theta_2 = \chi_{V^c}$ we have
\begin{eqnarray*}%\label{eq2:SepIn_a}
  |\langle T(h_{u_1}^{Q_1} \otimes a, \chi_{Q^{'c}_{1,1}} \otimes \chi_{V^c}\rangle|
   &\ls& \|h^{Q_1}_{u_1}\|_{L^2(\mu_1)}  \|a\|_{L^2(\mu_2)}
   \mu_1(Q_1)^{1/2} \mu_2(J)^{1/2} \\
    && \times \int_{Q^{'c}_{1,1}}    \frac{C_Q^{\al_1}\ell(Q_1)^{\al_1}}
    {\rho_1(x_1,z)^{\al_1}\lambda_1(z,\rho_1(x_1,z))} \,d\mu_1(x_1) \\
  && \times  \int_{V^c} \frac{r_J^{\al_2}}
  {\rho_2(x_2,x_J)^{\al_2}\lambda_2(x_J,\rho_2(x_2,x_J))} \,d\mu_2(x_2).
\end{eqnarray*}
The first integral above is bounded by $\ell(Q_1)^{\al_1/2} \ell(Q'_1)^{-\al_1/2}$. Using Lemma~\ref{upper_dbl_lem1}, the second integral is bounded by
\[r_J^{\al_2} \int_{X_2 \backslash B(x_J,C_kr_J)}
 \frac{\rho_2(x_2,x_J)^{-\al_2}}
  {\lambda_2(x_J,\rho_2(x_2,x_J))} \,d\mu_2(x_2)
  \ls r_J^{\al_2} (C_K r_J)^{-\al_2} \sim 1.
  \]
 Hence, \eqref{InIn:eq11} is dominated by~$A_{Q_1,Q'_1}^{\textup{in}}$.

 For \eqref{InIn:eq12}, for each fixed cube~$Q'_{1,a}$ we apply Lemma~\ref{lem:property_kernel} to $\phi_1 =  h_{u_1}^{Q_1}$, $\phi_2 = a$, $\theta_1= h_{u'_1}^{Q'_1} \chi_{Q'_{1,a}}$ and $\theta_2 = \chi_V$ to have
  \begin{eqnarray*}
    \Big|\Big \langle   T( h_{u_1}^{Q_1} \otimes a),h_{u'_1}^{Q'_1} \chi_{Q'_{1,a}} \otimes \chi_V\Big\rangle\Big|
     &\ls& \|h_{u_1}^{Q_1}\|_{L^2(\mu_1)} \|a\|_{L^2(\mu_2)}
     \mu_1(Q_1)^{1/2} \|\chi_V\|_{L^2(\mu_2)} \\
     && \times \int_{Q'_{1,a}}\frac{C_K^{\al_1} \ell(Q_1)^{\al_1}}
     {\rho_1(x_1,z)^{\al_1}\lambda_1(z,\rho_1(x_1,z))}
     |h_{u'_1}^{Q'_1} \chi_{Q'_{1,a}}|
     \,d\mu_1(x_1).
  \end{eqnarray*}
 Following equation~\eqref{eq5:SepIn_b}, the integral above in bounded by
 $\ell(Q_1)^{\al_1/2} \ell(Q'_1)^{-\al_1/2} \mu_1(Q'_{1,1})^{-1/2}$.
 Thus, \eqref{InIn:eq12} is dominated by~$A_{Q_1,Q'_1}^{\textup{in}}$.

  For \eqref{InIn:eq13}, for each fixed cube~$Q'_{1,a}$ we apply Lemma~\ref{lem1:property_kernel} to $\phi_1 =  h_{u_1}^{Q_1}$, $\phi_2 = a$, $\theta_1= h_{u'_1}^{Q'_1} \chi_{Q'_{1,a}}$ and $\theta_2 = \chi_{V^c}$ giving
\begin{eqnarray*}%\label{eq2:SepIn_a}
  |\langle T(h_{u_1}^{Q_1} \otimes a, h_{u'_1}^{Q'_1} \chi_{Q'_{1,a}}\otimes \chi_{V^c}\rangle|
   &\ls& \|h^{Q_1}_{u_1}\|_{L^2(\mu_1)}  \|a\|_{L^2(\mu_2)}
   \mu_1(Q_1)^{1/2} \mu_2(J)^{1/2}\\
   && \times \int_{Q'_{1,a}}    \frac{C_Q^{\al_1}\ell(Q_1)^{\al_1}}
    {\rho_1(x_1,z)^{\al_1}\lambda_1(z,\rho_1(x_1,z))}|h_{u'_1}^{Q'_1} \chi_{Q'_{1,a}}| \,d\mu_1(x_1) \noz\\
    && \times
  \int_{V^c} \frac{r_J^{\al_2}}
  {\rho_2(x_2,x_J)^{\al_2}\lambda_2(x_J,\rho_2(x_2,x_J))} \,d\mu_2(x_2).
\end{eqnarray*}
The first integral above is bounded by $\ell(Q_1)^{\al_1/2} \ell(Q'_1)^{-\al_1/2} \mu_1(Q'_{1,1})^{-1/2}$, while the second integral is bounded by 1.
Hence, \eqref{InIn:eq13} is dominated by~$A_{Q_1,Q'_1}^{\textup{in}}$.
This completes the proof of Lemma~\ref{lem3:InIn}.
\end{proof}

Now we go back to considering~\eqref{InIn:eq14}. We have that
\begin{align*}
   & \sum_{Q'_1 \textup{ good}}
  \bigg [\sum_{Q_1 \textup{ good}} \quad
\sum_{\substack{u_1,u'_1 \\u_2}} \quad
\|c^{u_1 u'_1}_{Q_1 Q'_1}\|_{\bmo^2_{C_K}(\mu_2)}
\|f^{Q_1,u_1}\|_{L^2(\mu_2)}
 \bigg]^2 \\
 & \ls \sum_{Q'_1 \textup{ good}}
  \bigg [\sum_{Q_1 \textup{ good}} \quad
\sum_{\substack{u_1,u'_1 \\u_2}} \quad
A_{Q_1,Q'_1}^{\textup{in}}
\|f^{Q_1,u_1}\|_{L^2(\mu_2)}
 \bigg]^2  \\
 &\ls \sum_{Q_1}\sum_{u_1} \|f^{Q_1,u_1}\|_{L^2(\mu_2)} \\
 & = \sum_{Q_1}\sum_{u_1}
 \int_{X_2} \bigg| \int_{X_1} f(x_1,x_2) h_{u_1}^{Q_1}(x_1) \,d\mu_1(x_1)\bigg|^2 \,d\mu_2(x_2)
  = \int_{X_2} \sum_{Q_1}\sum_{u_1}
 |\langle f(\cdot,x_2), h_{u_1}^{Q_1}\rangle|^2 \,d\mu_2(x_2) \\
 & = \int_{X_2} \|f(\cdot,x_2)\|_{L^2(\mu_1)}^2 \,d\mu_2(x_2)
  = \int_{X_2} \int_{X_1} |f(x_1,x_2)|^2 \,d\mu = \|f\|_{L^2(\mu)}^2.
\end{align*}
This implies that  the sums involving~\eqref{InIn_eq1b} and~\eqref{InIn_eq2b} are dominated by $\|f\|_{L^2(\mu)}^2 \|g\|_{L^2(\mu)}^2$.

%%%%%%%%%%%%%%%%%%%%%%%%%%%%%%%%%%%%%%%%%%%%%%%%%%%%%%%%%%%%%%%%%%%%%%%%%%
\subsection{Full Paraproduct}\label{subsec:full_para}
We are left to consider the sums involving~\eqref{InIn_eq1d}, \eqref{InIn_eq2d} , \eqref{InIn_eq3d} and~\eqref{InIn_eq4d}:
\begin{align}\label{InIn_eq.15}
  &\hspace{-0.5cm}\sum_{\substack{Q_1,Q'_1  \\ \text{nested} }} \,
  \sum_{\substack{Q_2,Q'_2\\ \text{nested} }} \,
  \sum_{\substack{u_1,u'_1 \\u_2,u'_2}} \,
  \langle f, h_{u_1}^{Q_1} \otimes h_{u_2}^{Q_2}\rangle
  \langle g, h_{u'_1}^{Q'_1} \otimes h_{u'_2}^{Q'_2}\rangle
  \langle h_{u'_1}^{Q'_1} \rangle_{Q_1} \langle h_{u'_2}^{Q'_2} \rangle_{Q_2}   \langle T(h_{u_1}^{Q_1} \otimes h_{u_2}^{Q_2}), 1 \rangle \noz\\
  &+   \sum_{\substack{Q_1,Q'_1  \\ \text{nested} }} \,
  \sum_{\substack{Q_2,Q'_2 \\ \text{ nested} \\ \text{gen}(Q'_2) = m}} \,
  \sum_{\substack{u_1,u'_1 \\u_2}} \,
  \langle f, h_{u_1}^{Q_1} \otimes h_{u_2}^{Q_2}\rangle
  \langle g, h_{u'_1}^{Q'_1} \otimes h_{0}^{Q'_2}\rangle
  \langle h_{u'_1}^{Q'_1} \rangle_{Q_1} \langle h_{0}^{Q'_2} \rangle_{Q_2}   \langle T(h_{u_1}^{Q_1} \otimes h_{u_2}^{Q_2}), 1 \rangle \noz\\
  &+   \sum_{\substack{Q_1,Q'_1  \\ \text{nested} \\ \text{gen}(Q'_1) = m }} \,
  \sum_{\substack{Q_2,Q'_2\\ \text{nested}  }} \,
  \sum_{\substack{u_1\\u_2,u'_2}} \,
  \langle f, h_{u_1}^{Q_1} \otimes h_{u_2}^{Q_2}\rangle
  \langle g, h_{0}^{Q'_1} \otimes h_{u'_2}^{Q'_2}\rangle
  \langle h_{0}^{Q'_1} \rangle_{Q_1} \langle h_{u'_2}^{Q'_2} \rangle_{Q_2}   \langle T(h_{u_1}^{Q_1} \otimes h_{u_2}^{Q_2}), 1 \rangle \noz\\
  &+  \sum_{\substack{Q_1,Q'_1  \\ \text{nested} \\ \text{gen}(Q'_1) = m }} \,
  \sum_{\substack{Q_2,Q'_2\\ \text{nested} \\ \text{gen}(Q'_2) = m }} \,
  \sum_{\substack{u_1 \\u_2}} \,
  \langle f, h_{u_1}^{Q_1} \otimes h_{u_2}^{Q_2}\rangle
  \langle g, h_{0}^{Q'_1} \otimes h_{0}^{Q'_2}\rangle
  \langle h_{0}^{Q'_1} \rangle_{Q_1} \langle h_{0}^{Q'_2} \rangle_{Q_2}   \langle T(h_{u_1}^{Q_1} \otimes h_{u_2}^{Q_2}), 1 \rangle.
\end{align}
Since~$Q_1$ is good, $Q_1 \subset Q'_1$ and $\ell(Q_1) < \delta^r \ell(Q'_1)$, there exists a unique cube $S(Q_1) \in \Dd'_1$ such that $\ell(Q_1) = \delta^{r}\ell(S(Q_1))$ and $Q_1 \subset S(Q_1) \subset Q'_1$.
Similarly, there exists a unique cube $S(Q_2) \in \Dd'_2$ such that $\ell(Q_2) = \delta^{r}\ell(S(Q_2))$ and $Q_2 \subset S(Q_2) \subset Q'_2$.
This implies
\[
\langle h_{u'_1}^{Q'_1}\rangle_{Q_1}
= \langle h_{u'_1}^{Q'_1}\rangle_{S(Q_1)}, \,
 \langle h_{0}^{Q'_1}\rangle_{Q_1}
=  \langle h_{0}^{Q'_1}\rangle_{S(Q_1)}, \,
\langle h_{u'_2}^{Q'_2}\rangle_{Q_2}
= \langle h_{u'_2}^{Q'_2}\rangle_{S(Q_2)} \, \text{and}\,
 \langle h_{0}^{Q'_2}\rangle_{Q_2}
=  \langle h_{0}^{Q'_2}\rangle_{S(Q_2)}.\]
Recall that all the cubes appearing here are good.
We define
\[
 \langle g_{\text{good}}, h_{u'_1}^{Q'_1} \otimes h_{u'_2}^{Q'_2} \rangle :=
\left\{
  \begin{array}{ll}
    \langle g, h_{u'_1}^{Q'_1} \otimes h_{u'_2}^{Q'_2} \rangle,
     & \hbox{\text{if $Q'_1$ is good and $Q'_2$ is good};} \\
    0, & \hbox{\text{otherwise}.}
  \end{array}
\right.
\]
Now we can add all bad cubes~$Q'_2$ to the summation~\eqref{sepnesteq.4} and write
\[\langle g, h_{u'_1}^{Q'_1} \otimes h_{u'_2}^{Q'_2} \rangle = \langle g_{\text{good}}, h_{u'_1}^{Q'_1} \otimes h_{u'_2}^{Q'_2} \rangle.\]
Below, we rewrite~\eqref{InIn_eq.15} in terms of~$g_{\text{good}}$.  We explicitly write the fact that $Q_1$ and~$Q_2$ are good, because we want to highlight the fact that~$Q'_1$ and~$Q'_2$ are not necessarily good cubes.
Also, we add the conditions~$\ell(Q'_1) \leq \delta^m$ and~$\ell(Q'_2) \leq \delta^m$ back into the sum, which we have suppressed in Section~\ref{subsec:outline}.

\begin{align*}
  &\sum_{Q_1, Q'_1 \textup{ good} } \,
\sum_{Q_2 \textup{ good}}
\sum_{\substack{u_1\\u_2}}
\bigg\langle
\sum_{\substack{Q'_1 \\ \ell(Q'_1) \leq \delta^m}}\,
\sum_{\substack{Q'_2 \\ \ell(Q'_2) \leq \delta^m}}\,
\sum_{\substack{u'_1 \\u'_2}} \langle g_{\text{good}}, h_{u'_1}^{Q'_1} \otimes h_{u'_2}^{Q'_2}\rangle h_{u'_1}^{Q'_1} \otimes h_{u'_2}^{Q'_2}
\bigg\rangle_{S(Q_1) \otimes S(Q_2)} \noz \\
&\hspace{2cm}\times \, \langle f, h_{u_1}^{Q_1} \otimes h_{u_2}^{Q_2}\rangle
\langle T^*1, h_{u_1}^{Q_1} \otimes h_{u_2}^{Q_2}\rangle \noz\\
&+ \sum_{Q_1, Q'_1 \textup{ good} } \,
\sum_{Q_2 \textup{ good}}
\sum_{\substack{u_1\\u_2}}
\bigg\langle
\sum_{\substack{Q'_1 \\ \ell(Q'_1) = \delta^m}}\,
\sum_{\substack{Q'_2 \\ \ell(Q'_2) \leq \delta^m}}\,
\sum_{u'_2} \langle g_{\text{good}}, h_{0}^{Q'_1} \otimes h_{u'_2}^{Q'_2}\rangle h_{0}^{Q'_1} \otimes h_{u'_2}^{Q'_2}
\bigg\rangle_{S(Q_1) \otimes S(Q_2)} \noz \\
&\hspace{2cm}\times \, \langle f, h_{u_1}^{Q_1} \otimes h_{u_2}^{Q_2}\rangle
\langle T^*1, h_{u_1}^{Q_1} \otimes h_{u_2}^{Q_2}\rangle \\
& + \sum_{Q_1, Q'_1 \textup{ good} } \,
\sum_{Q_2 \textup{ good}}
\sum_{\substack{u_1\\u_2}}
\bigg\langle
\sum_{\substack{Q'_1 \\ \ell(Q'_1) \leq \delta^m}}\,
\sum_{\substack{Q'_2 \\ \ell(Q'_2) = \delta^m}}\,
\sum_{u'_1} \langle g_{\text{good}}, h_{u'_1}^{Q'_1} \otimes h_{0}^{Q'_2}\rangle h_{u'_1}^{Q'_1} \otimes h_{0}^{Q'_2}
\bigg\rangle_{S(Q_1) \otimes S(Q_2)} \noz \\
&\hspace{2cm}\times \, \langle f, h_{u_1}^{Q_1} \otimes h_{u_2}^{Q_2}\rangle
\langle T^*1, h_{u_1}^{Q_1} \otimes h_{u_2}^{Q_2}\rangle \noz\\
& + \sum_{Q_1, Q'_1 \textup{ good} } \,
\sum_{Q_2 \textup{ good}}
\sum_{\substack{u_1\\u_2}}
\bigg\langle
\sum_{\substack{Q'_1 \\ \ell(Q'_1) = \delta^m}}\,
\sum_{\substack{Q'_2 \\ \ell(Q'_2) = \delta^m}}\,
 \langle g_{\text{good}}, h_{0}^{Q'_1} \otimes h_{0}^{Q'_2}\rangle h_{0}^{Q'_1} \otimes h_{0}^{Q'_2}
\bigg\rangle_{S(Q_1) \otimes S(Q_2)} \noz \\
&\hspace{2cm}\times \, \langle f, h_{u_1}^{Q_1} \otimes h_{u_2}^{Q_2}\rangle
\langle T^*1, h_{u_1}^{Q_1} \otimes h_{u_2}^{Q_2}\rangle.
\end{align*}
Note that
\begin{align*}
 g_{\text{good}} =& \sum_{\substack{Q'_1 \\ \ell(Q'_1) \leq \delta^m}}\,
\sum_{\substack{Q'_2 \\ \ell(Q'_2) \leq \delta^m}}\,
\sum_{\substack{u'_1 \\u'_2}} \langle g_{\text{good}}, h_{u'_1}^{Q'_1} \otimes h_{u'_2}^{Q'_2}\rangle h_{u'_1}^{Q'_1} \otimes h_{u'_2}^{Q'_2} \\
& + \sum_{\substack{Q'_1 \\ \ell(Q'_1) = \delta^m}}\,
\sum_{\substack{Q'_2 \\ \ell(Q'_2) \leq \delta^m}}\,
\sum_{u'_2} \langle g_{\text{good}}, h_{0}^{Q'_1} \otimes h_{u'_2}^{Q'_2}\rangle h_{0}^{Q'_1} \otimes h_{u'_2}^{Q'_2} \\
& + \sum_{\substack{Q'_1 \\ \ell(Q'_1) \leq \delta^m}}\,
\sum_{\substack{Q'_2 \\ \ell(Q'_2) = \delta^m}}\,
\sum_{u'_2} \langle g_{\text{good}}, h_{u'_1}^{Q'_1} \otimes h_{0}^{Q'_2}\rangle h_{u'_1}^{Q'_1} \otimes h_{0}^{Q'_2} \\
& + \sum_{\substack{Q'_1 \\ \ell(Q'_1) = \delta^m}}\,
\sum_{\substack{Q'_2 \\ \ell(Q'_2) = \delta^m}}\,
\langle g_{\text{good}}, h_{0}^{Q'_1} \otimes h_{0}^{Q'_2}\rangle h_{0}^{Q'_1} \otimes h_{0}^{Q'_2}.
\end{align*}
Hence for fixed~$u_1$ and~$u_2$ we are to bound
\begin{align*}
   &\sum_{Q_1, Q'_1 \textup{ good} } \,
\sum_{Q_2 \textup{ good}}
\sum_{\substack{u_1\\u_2}}
\langle g_{\text{good}}\rangle_{S(Q_1) \otimes S(Q_2)}
 \langle f, h_{u_1}^{Q_1} \otimes h_{u_2}^{Q_2}\rangle
\langle T^*1, h_{u_1}^{Q_1} \otimes h_{u_2}^{Q_2}\rangle \noz \\
   & = \sum_{Q'_1} \sum_{\substack{Q_1 \textup{ good} \\ Q_1 \subset Q'_1 \\ \ell(Q_1) = \delta^r \ell(Q'_1)}} \,
   \sum_{Q'_2} \sum_{\substack{Q_2 \textup{ good} \\ Q_2 \subset Q'_2 \\ \ell(Q_2) = \delta^r \ell(Q'_2)}} \,
\sum_{\substack{u_1\\u_2}}
\langle g_{\text{good}}\rangle_{Q_1 \otimes Q_2}
 \langle f, h_{u_1}^{Q_1} \otimes h_{u_2}^{Q_2}\rangle
\langle T^*1, h_{u_1}^{Q_1} \otimes h_{u_2}^{Q_2}\rangle \noz \\
  & = \Big \langle \Pi_{T^*1}^{u_1 u_2} g_{\text{good}},f \Big \rangle,
\end{align*}
where
\begin{equation}\label{eq:full paraproduct}
\Pi_{b}^{u_1 u_2} w := \sum_{Q'_1} \sum_{\substack{Q_1 \textup{ good} \\ Q_1 \subset Q'_1 \\ \ell(Q_1) = \delta^r \ell(Q'_1)}} \,
   \sum_{Q'_2} \sum_{\substack{Q_2 \textup{ good} \\ Q_2 \subset Q'_2 \\ \ell(Q_2) = \delta^r \ell(Q'_2)}} \,
   \langle w\rangle_{Q_1 \otimes Q_2}
   \langle b, h_{u_1}^{Q_1} \otimes h_{u_2}^{Q_2}\rangle
   h_{u_1}^{Q_1} \otimes h_{u_2}^{Q_2}.
\end{equation}
Using Lemma~\ref{lem4:InIn} below, we conclude that the sums involving~\eqref{InIn_eq1d}, \eqref{InIn_eq2d} , \eqref{InIn_eq3d} and~\eqref{InIn_eq4d} are dominated by
$$| \langle \Pi_{T^*1}^{u_1 u_2} g_{\text{good}},f \rangle|
\leq \|\Pi_{T^*1}^{u_1 u_2} g_{\text{good}}\|_{L^2(\mu)}
\|f\|_{L^2(\mu)}
\leq \|T^*1\|_{\bmo_{\textup{prod}}(\mu)} \|g\|_{L^2(\mu)} \|f\|_{L^2(\mu)}.
$$

\begin{lem}\label{lem4:InIn}
  There holds that
  \[
  \|\Pi_{b}^{u_1 u_2}\|_{L^2(\mu) \rightarrow L^2(\mu)} \ls \|b\|_{\bmo_{\textup{prod}}(\mu)}.
  \]
\end{lem}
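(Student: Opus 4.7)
The plan is to prove the $L^2(\mu)$-boundedness of the product paraproduct $\Pi_{b}^{u_1 u_2}$ by combining $L^2$-duality with the $H^1_{\textup{prod}}(\mu)$--$\bmo_{\textup{prod}}(\mu)$ duality developed in Section~\ref{subsec:product_BMO}. By $L^2$-duality it suffices to show that, for all test functions $w,g \in L^2(\mu)$,
\begin{equation*}
|\langle \Pi_{b}^{u_1 u_2} w, g\rangle| \ls \|b\|_{\bmo_{\textup{prod}}(\mu)}\,\|w\|_{L^2(\mu)}\,\|g\|_{L^2(\mu)}.
\end{equation*}
Using the orthonormality of the Haar basis, the left-hand side equals $\langle b, \varphi\rangle$, where
\begin{equation*}
\varphi := \sum_{Q'_1}\sum_{\substack{Q_1\textup{ good}\\ S(Q_1)=Q'_1}}\sum_{Q'_2}\sum_{\substack{Q_2\textup{ good}\\ S(Q_2)=Q'_2}} \langle w\rangle_{Q_1\otimes Q_2}\,\langle g, h_{u_1}^{Q_1}\otimes h_{u_2}^{Q_2}\rangle\, h_{u_1}^{Q_1}\otimes h_{u_2}^{Q_2}.
\end{equation*}

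Applying Proposition~\ref{prop1:dual_prod} gives $|\langle b, \varphi\rangle| \ls \|b\|_{\bmo_{\textup{prod}}(\mu)}\,\|S_{\Dd_1\Dd_2}\varphi\|_{L^1(\mu)}$, reducing matters to the square-function estimate $\|S_{\Dd_1\Dd_2}\varphi\|_{L^1(\mu)} \ls \|w\|_{L^2(\mu)}\|g\|_{L^2(\mu)}$. Since the Haar coefficients of $\varphi$ factor as $\langle \varphi, h_{u_1}^{Q_1}\otimes h_{u_2}^{Q_2}\rangle = \langle w\rangle_{Q_1\otimes Q_2}\langle g, h_{u_1}^{Q_1}\otimes h_{u_2}^{Q_2}\rangle$, pulling the $w$-dependent factor out of the sum in the definition of $S_{\Dd_1\Dd_2}\varphi$ yields the pointwise bound
\begin{equation*}
S_{\Dd_1\Dd_2}\varphi(x) \leq \mathcal{M}w(x)\cdot S_{\Dd_1\Dd_2}g(x),
\end{equation*}
where $\mathcal{M}w(x) := \sup\{|\langle w\rangle_{Q_1\otimes Q_2}| : (Q_1,Q_2)\textup{ good},\ x\in S(Q_1)\otimes S(Q_2)\}$. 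Cauchy--Schwarz in $L^1(\mu)$ then gives $\|S_{\Dd_1\Dd_2}\varphi\|_{L^1(\mu)} \leq \|\mathcal{M}w\|_{L^2(\mu)}\|S_{\Dd_1\Dd_2}g\|_{L^2(\mu)}$, with $\|S_{\Dd_1\Dd_2}g\|_{L^2(\mu)} \ls \|g\|_{L^2(\mu)}$ immediate from Haar orthogonality (the normalisation $\chi_{S(Q_1)\otimes S(Q_2)}/\mu(S(Q_1)\otimes S(Q_2))$ integrates to $1$). Everything thus comes down to the maximal-type inequality $\|\mathcal{M}w\|_{L^2(\mu)} \ls \|w\|_{L^2(\mu)}$.

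The main obstacle will be this last estimate: the averages defining $\mathcal{M}$ are taken over rectangles $Q_1\otimes Q_2$ lying $r$ dyadic generations below the window $S(Q_1)\otimes S(Q_2)\ni x$, so the rectangles themselves need not contain~$x$, and $\mathcal{M}$ is therefore not immediately dominated by a standard product dyadic strong maximal function. To handle this we will exploit that $r$ is a fixed integer throughout the argument: for each ancestor pair $(Q'_1,Q'_2) = (S(Q_1),S(Q_2))\ni x$ the inner supremum over good descendants $(Q_1,Q_2)$ with $S(Q_i)=Q'_i$ will be controlled by iterating the dyadic Hardy--Littlewood maximal functions $M_{\Dd_i}$ of the proof of Theorem~\ref{thm:s1_t1_dual} at most $r$ times, with the accumulated constants absorbed into ones depending only on~$r$ and the upper-doubling constants $C_{\lambda_i}$ of $\mu_1, \mu_2$. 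The $L^2(\mu)$-boundedness of $\mathcal{M}$ then follows from the $L^2(\mu)$-boundedness of the product maximal function $M_{\Dd_1}M_{\Dd_2}$ already established in the proof of Theorem~\ref{thm:s1_t1_dual}, completing the argument.
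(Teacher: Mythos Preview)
Your duality-and-square-function strategy is sound and is in fact the route the paper takes for the \emph{mixed} paraproduct in Proposition~\ref{prop1:mixed_para}. The difficulty you isolate is also real, but your proposed resolution does not work. In the non-homogeneous setting the quantity
\[
\mathcal{M}w(x)=\sup\bigl\{\,|\langle w\rangle_{Q_1\times Q_2}|:\ x\in S(Q_1)\times S(Q_2)\bigr\}
\]
is genuinely unbounded on $L^2(\mu)$, and iterating $M_{\Dd_i}$ a fixed number $r$ of times does not cure this. The point is that for $x_1\in S(Q_1)\setminus Q_1$ one cannot recover $\langle w\rangle_{Q_1}$ from dyadic averages at $x_1$ without comparing $\mu_1(Q_1)$ to $\mu_1(S(Q_1))$, and this ratio is uncontrolled without doubling. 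Concretely: put almost all the $\mu_1$-mass on $S(Q_1)\setminus Q_1$ and take $w=\mu_1(Q_1)^{-1/2}\chi_{Q_1}$; then $\|w\|_{L^2(\mu_1)}=1$ while $\langle w\rangle_{Q_1}=\mu_1(Q_1)^{-1/2}$, so $\mathcal{M}w\geq \mu_1(Q_1)^{-1/2}$ on all of $S(Q_1)$ and $\|\mathcal{M}w\|_{L^2(\mu_1)}$ blows up. Iterating $M_{\Dd_1}$ at $x_1\notin Q_1$ only sees averages over cubes containing $x_1$, all of which have denominator at least $\mu_1(S(Q_1))$, so no finite iterate recovers $\mu_1(Q_1)^{-1/2}$.

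The underlying issue is a typo in the displayed formula~\eqref{eq:full paraproduct}: both the derivation immediately preceding it (where the average is $\langle g_{\text{good}}\rangle_{S(Q_1)\otimes S(Q_2)}$) and the paper's own proof of the lemma (which begins with $|\langle w\rangle_S|^2$, $S=Q'_1\times Q'_2\in\Dd'$) show that the average should be $\langle w\rangle_{Q'_1\times Q'_2}$, not $\langle w\rangle_{Q_1\times Q_2}$. With this correction your $\mathcal{M}$ becomes $\sup_{R'\in\Dd',\,R'\ni x}|\langle w\rangle_{R'}|$, the ordinary dyadic strong maximal function over $\Dd'$, which is $L^2(\mu)$-bounded by Doob's inequality in each factor (no doubling needed), and your argument goes through cleanly. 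For comparison, the paper's proof bypasses duality and argues directly: expanding $\|\Pi_b^{u_1u_2}w\|_{L^2(\mu)}^2$ by orthogonality, applying the layer-cake formula to the factor $|\langle w\rangle_S|^2$, using $\{|\langle w\rangle_S|>t\}\Rightarrow S\subset\{M_{\Dd'}w>t\}$, and then invoking the $\bmo_{\textup{prod}}(\mu)$ condition with $\Omega=\{M_{\Dd'}w>t\}$ to arrive at $\|b\|_{\bmo_{\textup{prod}}(\mu)}^2\|M_{\Dd'}w\|_{L^2(\mu)}^2$.
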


\begin{proof}
  Suppose $w \in L^2(\mu)$.
  We recall
  the strong maximal function $M_{\Dd'}$ with respect~$\Dd'$, being defined as
  \[M_{\Dd'}w(x) := \sup_{\substack{R' \in \Dd' \\ R' \ni x}} \frac{1}{\mu(5R')} \int_{R'} |w(y)| \, d\mu(y).\]
  Consider
  \begin{align*}
    \|\Pi_{b}^{u_1 u_2} w\|^2_{L^2(\mu_2)}
    & \leq \sum_{S \in \Dd'} \sum_{\substack{R = Q_1 \times Q_2 \subset \Dd'{\text{-good}} \\ R \subset S \\ \text{gen}(R) = \text{gen}(S) + (r,r)}}
    |\langle w\rangle_{S}|^2
    |\langle b, h_{u_1}^{Q_1} \otimes h_{u_2}^{Q_2}\rangle|^2 \\
    & = 2 \int_{0}^{\infty} \sum_{\substack{S \in \Dd' \\|\langle w\rangle_{S}| >t }}
     \sum_{\substack{R = Q_1 \times Q_2 \subset \Dd'{\text{-good}} \\ R \subset S \\ \text{gen}(R) = \text{gen}(S) + (r,r)}}
     |\langle b, h_{u_1}^{Q_1} \otimes h_{u_2}^{Q_2}\rangle|^2 \,tdt \\
    & \leq 2 \int_{0}^{\infty} \sum_{\substack{S \in \Dd' \\S \subset \{M_{\Dd'} w > t \} }}
     \sum_{\substack{R = Q_1 \times Q_2 \subset \Dd'{\text{-good}} \\ R \subset S \\ \text{gen}(R) = \text{gen}(S) + (r,r)}}
     |\langle b, h_{u_1}^{Q_1} \otimes h_{u_2}^{Q_2}\rangle|^2 \,tdt \\
    & \ls \|b\|^2_{\bmo_{\textup{prod}}(\mu)}
     \int_{0}^{\infty} \mu( \{M_{\Dd'} w > t \}) \, tdt \\
    & = \|b\|^2_{\bmo_{\textup{prod}}(\mu)}
    \|M_{\Dd'} w\|^2_{L^2(\mu)} \\
    & \ls \|b\|^2_{\bmo_{\textup{prod}}(\mu)}
    \|w\|^2_{L^2(\mu)}. \qedhere
  \end{align*}
\end{proof}

\section{Mixed Paraproducts}\label{sec:mix_para}
So far, we have completed the estimation of $|\E\langle Tf_{k,\text{good}}, g_{k,\text{good}}\rangle|$ in~\eqref{coreeq.1} corresponding to the subseries when $\ell(Q_1) \leq \ell(Q_1')$ and~$\ell(Q_2) \leq \ell(Q_2')$.

In this section, we discuss the difference which arises in estimating the second subseries $\ell(Q_1) \leq \ell(Q_1')$ and~$\ell(Q_2) > \ell(Q_2')$. Briefly, instead of the full paraproduct $\Pi_{b}^{u_1 u_2}$ being defined in~\eqref{eq:full paraproduct}, we will obtain a mixed paraproduct~$\Pi_{\text{mixed},b}^{u_1u'_2}$ as shown in~\eqref{eq:mixed paraproduct} below.
To see that, we start by estimating the following sums, which is a substitution of~\eqref{InIn_eq.15} in the full paraproduct case,
\begin{align}\label{Mix:eq1}
  &\hspace{-0.5cm}\sum_{\substack{Q_1,Q'_1  \\ \text{nested} }} \,
  \sum_{\substack{Q'_2,Q_2\\ \text{nested} }} \,
  \sum_{\substack{u_1,u'_1 \\u_2,u'_2}} \,
  \langle f, h_{u_1}^{Q_1} \otimes h_{u_2}^{Q_2}\rangle
  \langle g, h_{u'_1}^{Q'_1} \otimes h_{u'_2}^{Q'_2}\rangle
  \langle h_{u'_1}^{Q'_1} \rangle_{Q_1} \langle h_{u_2}^{Q_2} \rangle_{Q'_2}   \langle T^*_1(h_{u_1}^{Q_1} \otimes h_{u'_2}^{Q'_2}), 1 \rangle \\
  &+  \sum_{\substack{Q_1,Q'_1  \\ \text{nested} }} \,
  \sum_{\substack{Q'_2,Q_2 \\ \text{ nested} \\ \text{gen}(Q_2) = m}} \,
  \sum_{\substack{u_1,u'_1 \\u'_2}} \,
  \langle f, h_{u_1}^{Q_1} \otimes h_{0}^{Q_2}\rangle
  \langle g, h_{u'_1}^{Q'_1} \otimes h_{u'_2}^{Q'_2}\rangle
  \langle h_{u'_1}^{Q'_1} \rangle_{Q_1} \langle h_{0}^{Q_2} \rangle_{Q'_2}   \langle T^*_1(h_{u_1}^{Q_1} \otimes h_{u'_2}^{Q'_2}), 1 \rangle  \noz\\
  &+  \sum_{\substack{Q_1,Q'_1  \\ \text{nested} \\ \text{gen}(Q'_1) = m }} \,
  \sum_{\substack{Q'_2,Q_2\\ \text{nested}  }} \,
  \sum_{\substack{u_1\\u_2,u'_2}} \,
  \langle f, h_{u_1}^{Q_1} \otimes h_{u_2}^{Q_2}\rangle
  \langle g, h_{0}^{Q'_1} \otimes h_{u'_2}^{Q'_2}\rangle
  \langle h_{0}^{Q'_1} \rangle_{Q_1} \langle h_{u_2}^{Q_2} \rangle_{Q'_2}   \langle T^*_1(h_{u_1}^{Q_1} \otimes h_{u'_2}^{Q'_2}), 1 \rangle \noz\\
  &+ \sum_{\substack{Q_1,Q'_1  \\ \text{nested} \\ \text{gen}(Q'_1) = m }} \,
  \sum_{\substack{Q'_2,Q_2\\ \text{nested} \\ \text{gen}(Q_2) = m }} \,
  \sum_{\substack{u_1 \\u'_2}} \,
  \langle f, h_{u_1}^{Q_1} \otimes h_{0}^{Q_2}\rangle
  \langle g, h_{0}^{Q'_1} \otimes h_{u'_2}^{Q'_2}\rangle
  \langle h_{0}^{Q'_1} \rangle_{Q_1} \langle h_{0}^{Q_2} \rangle_{Q'_2}   \langle T^*_1(h_{u_1}^{Q_1} \otimes h_{u'_2}^{Q'_2}), 1 \rangle.\noz
\end{align}
Since~$Q_1$ is nested in~$Q'_1$, there exists a unique cube $S(Q_1) \in \Dd'_1$ such that $\ell(Q_1) = \delta^{r}\ell(S(Q_1))$ and $Q_1 \subset S(Q_1) \subset Q'_1$.
Note that in this case, $Q'_2$ is nested in~$Q_2$, meaning $\ell(Q'_2) < \delta^r \ell(Q_2)$ and $\rho_2(Q_2,Q'_2) \leq \mathcal{C}\ell(Q'_2)^{\gamma_2} \ell(Q_2)^{1-\gamma_2}$.
Hence, there exists a unique cube $S(Q'_2) \in \Dd_2$ such that $\ell(Q'_2) = \delta^{r}\ell(S(Q'_2))$ and $Q'_2 \subset S(Q'_2) \subset Q_2$.
Therefore,
\[
\langle h_{u'_1}^{Q'_1}\rangle_{Q_1}
= \langle h_{u'_1}^{Q'_1}\rangle_{S(Q_1)}, \,
 \langle h_{0}^{Q'_1}\rangle_{Q_1}
=  \langle h_{0}^{Q'_1}\rangle_{S(Q_1)}, \,
\langle h_{u_2}^{Q_2}\rangle_{Q'_2}
= \langle h_{u_2}^{Q_2}\rangle_{S(Q'_2)} \, \text{and}\,
 \langle h_{0}^{Q_2}\rangle_{Q'_2}
=  \langle h_{0}^{Q_2}\rangle_{S(Q_2)}.\]
We define
\[
 \langle f_{\text{good}}, h_{u_1}^{Q_1} \otimes h_{u_2}^{Q_2} \rangle :=
\left\{
  \begin{array}{ll}
    \langle f, h_{u_1}^{Q_1} \otimes h_{u_2}^{Q_2} \rangle,
     & \hbox{\text{if $Q_2$ is good};} \\
    0, & \hbox{\text{if $Q_2$ is bad}}
  \end{array}
\right.
\]
and
\[
 \langle g_{\text{good}}, h_{u'_1}^{Q'_1} \otimes h_{u'_2}^{Q'_2} \rangle :=
\left\{
  \begin{array}{ll}
    \langle g, h_{u'_1}^{Q'_1} \otimes h_{u'_2}^{Q'_2} \rangle,
     & \hbox{\text{if $Q'_1$ is good};} \\
    0, & \hbox{\text{if $Q'_1$ is bad}.}
  \end{array}
\right.
\]

Below, we rewrite the four terms in \eqref{Mix:eq1} via $f_{\text{good}}$ and~$g_{\text{good}}$.  We explicitly write the fact that $Q_1$ and~$Q'_2$ are good, because we want to highlight the fact that~$Q'_1$ and~$Q_2$ are not necessarily good cubes.
Also, we add the conditions~$\ell(Q'_1) \leq \delta^m$ and~$\ell(Q_2) \leq \delta^m$ back, which we have suppressed in Section~\ref{subsec:outline}.  This then gives

\begin{align*}
    &\hspace{-1cm} \eqref{Mix:eq1} = \sum_{Q_1\text{ good} } \,
  \sum_{Q'_2 \text{ good} } \,
   \langle T_1(1), h_{u_1}^{Q_1} \otimes h_{u'_2}^{Q'_2} \rangle \\
   & \hspace{-0.5cm}\times \bigg\{
  \sum_{\substack{Q'_1 \\ \ell(Q'_1) \leq \delta^m}} \,
  \sum_{\substack{Q_2 \\ \ell(Q_2) \leq \delta^m}} \,
  \sum_{\substack{u_1,u'_1 \\u_2,u'_2}} \,
  \langle f_{\text{good}}, h_{u_1}^{Q_1} \otimes h_{u_2}^{Q_2}\rangle
  \langle g_{\text{good}}, h_{u'_1}^{Q'_1} \otimes h_{u'_2}^{Q'_2}\rangle
  \langle h_{u'_1}^{Q'_1} \rangle_{S(Q_1)} \langle h_{u_2}^{Q_2} \rangle_{S(Q'_2)}   \noz\\
  &+
  \sum_{\substack{Q'_1 \\ \ell(Q'_1) \leq \delta^m}} \,
  \sum_{\substack{Q_2 \\ \ell(Q_2) = \delta^m}} \,
  \sum_{\substack{u_1,u'_1 \\u'_2}} \,
  \langle f_{\text{good}}, h_{u_1}^{Q_1} \otimes h_{0}^{Q_2}\rangle
  \langle g_{\text{good}}, h_{u'_1}^{Q'_1} \otimes h_{u'_2}^{Q'_2}\rangle
  \langle h_{u'_1}^{Q'_1} \rangle_{S(Q_1)} \langle h_{0}^{Q_2} \rangle_{S(Q'_2)}   \noz\\
  &+
  \sum_{\substack{Q'_1 \\ \ell(Q'_1) = \delta^m}} \,
  \sum_{\substack{Q_2 \\ \ell(Q_2) \leq \delta^m}} \,
  \sum_{\substack{u_1\\u_2,u'_2}} \,
  \langle f_{\text{good}}, h_{u_1}^{Q_1} \otimes h_{u_2}^{Q_2}\rangle
  \langle g_{\text{good}}, h_{0}^{Q'_1} \otimes h_{u'_2}^{Q'_2}\rangle
  \langle h_{0}^{Q'_1} \rangle_{S(Q_1)} \langle h_{u_2}^{Q_2} \rangle_{S(Q'_2)}   \noz\\
  &+
  \sum_{\substack{Q'_1 \\ \ell(Q'_1) = \delta^m}} \,
  \sum_{\substack{Q_2 \\ \ell(Q_2) = \delta^m}} \,
  \sum_{\substack{u_1 \\u'_2}} \,
  \langle f_{\text{good}}, h_{u_1}^{Q_1} \otimes h_{0}^{Q_2}\rangle
  \langle g_{\text{good}}, h_{0}^{Q'_1} \otimes h_{u'_2}^{Q'_2}\rangle
  \langle h_{0}^{Q'_1} \rangle_{S(Q_1)} \langle h_{0}^{Q_2} \rangle_{S(Q'_2)}   \bigg\}
\end{align*}
which equals
\begin{align*}
    &\hspace{-1cm} \sum_{Q_1\text{ good} } \,
  \sum_{Q'_2 \text{ good} } \,
  \sum_{\substack{u_1 \\u'_2}} \,
   \langle T_1(1), h_{u_1}^{Q_1} \otimes h_{u'_2}^{Q'_2} \rangle \\
   & \hspace{-0.5cm}\times \bigg\{
  \Big \langle \sum_{\substack{Q_2 \\ \ell(Q_2) \leq \delta^m}} \,
  \sum_{u_2}
  \langle f_{\text{good}}, h_{u_1}^{Q_1} \otimes h_{u_2}^{Q_2}\rangle
   h_{u_2}^{Q_2} \Big\rangle_{S(Q'_2)}
  \Big \langle \sum_{\substack{Q'_1 \\ \ell(Q'_1) \leq \delta^m}} \,
  \sum_{u'_1}
  \langle g_{\text{good}}, h_{u'_1}^{Q'_1} \otimes h_{u'_2}^{Q'_2} \rangle
   h_{u'_1}^{Q'_1} \Big\rangle_{S(Q_1)}   \noz\\
  &+
  \Big \langle \sum_{\substack{Q_2 \\ \ell(Q_2) = \delta^m}} \,
  \langle f_{\text{good}}, h_{u_1}^{Q_1} \otimes h_{0}^{Q_2}\rangle
   h_{0}^{Q_2} \Big\rangle_{S(Q'_2)}
  \Big \langle \sum_{\substack{Q'_1 \\ \ell(Q'_1) \leq \delta^m}} \,
  \sum_{u'_1}
  \langle g_{\text{good}}, h_{u'_1}^{Q'_1} \otimes h_{u'_2}^{Q'_2} \rangle
   h_{u'_1}^{Q'_1} \Big\rangle_{S(Q_1)}   \noz\\
  &+
  \Big \langle \sum_{\substack{Q_2 \\ \ell(Q_2) \leq \delta^m}} \,
  \sum_{u_2}
  \langle f_{\text{good}}, h_{u_1}^{Q_1} \otimes h_{u_2}^{Q_2}\rangle
   h_{u_2}^{Q_2} \Big\rangle_{S(Q'_2)}
  \Big \langle \sum_{\substack{Q'_1 \\ \ell(Q'_1) = \delta^m}} \,
  \langle g_{\text{good}}, h_{0}^{Q'_1} \otimes h_{u'_2}^{Q'_2} \rangle
   h_{0}^{Q'_1} \Big\rangle_{S(Q_1)}   \noz\\
  &+
  \Big \langle \sum_{\substack{Q_2 \\ \ell(Q_2) = \delta^m}} \,
  \langle f_{\text{good}}, h_{u_1}^{Q_1} \otimes h_{0}^{Q_2}\rangle
   h_{0}^{Q_2} \Big\rangle_{S(Q'_2)}
  \Big \langle \sum_{\substack{Q'_1 \\ \ell(Q'_1) = \delta^m}} \,
  \langle g_{\text{good}}, h_{0}^{Q'_1} \otimes h_{u'_2}^{Q'_2} \rangle
   h_{0}^{Q'_1} \Big\rangle_{S(Q_1)}     \bigg\}.
\end{align*}
Define
\[f_{\text{good}}^{Q_1,u_1}(y)
:= \Big\langle f_{\text{good}},h_{u_1}^{Q_1}\Big\rangle_1 (y)
\quad\text{ and }\quad
g_{\text{good}}^{Q'_2,u'_2}(y)
:= \Big\langle g_{\text{good}},h_{u'_2}^{Q'_2}\Big\rangle_2 (y). \]
Then the above expression becomes
\begin{align*}
    &\hspace{-1cm} \sum_{Q_1\text{ good} } \,
  \sum_{Q'_2 \text{ good} } \,
  \sum_{\substack{u_1 \\u'_2}} \,
   \langle T_1(1), h_{u_1}^{Q_1} \otimes h_{u'_2}^{Q'_2} \rangle \\
   & \hspace{-0.3cm}\times \bigg\{
  \Big \langle \sum_{\substack{Q_2 \\ \ell(Q_2) \leq \delta^m}} \,
  \sum_{u_2}
  \langle f_{\text{good}}^{Q_1,u_1}, h_{u_2}^{Q_2}\rangle
   h_{u_2}^{Q_2} \Big\rangle_{S(Q'_2)}
  \Big \langle \sum_{\substack{Q'_1 \\ \ell(Q'_1) \leq \delta^m}} \,
  \sum_{u'_1}
  \langle g_{\text{good}}^{Q'_2,u'_2}, h_{u'_1}^{Q'_1} \rangle
   h_{u'_1}^{Q'_1} \Big\rangle_{S(Q_1)}   \noz\\
  &+
  \Big \langle \sum_{\substack{Q_2 \\ \ell(Q_2) = \delta^m}} \,
  \langle f_{\text{good}}^{Q_1,u_1}, h_{0}^{Q_2}\rangle
   h_{0}^{Q_2} \Big\rangle_{S(Q'_2)}
  \Big \langle \sum_{\substack{Q'_1 \\ \ell(Q'_1) \leq \delta^m}} \,
  \sum_{u'_1}
  \langle g_{\text{good}}^{Q'_2,u'_2}, h_{u'_1}^{Q'_1} \rangle
   h_{u'_1}^{Q'_1} \Big\rangle_{S(Q_1)}   \noz\\
  &+
  \Big \langle \sum_{\substack{Q_2 \\ \ell(Q_2) \leq \delta^m}} \,
  \sum_{u_2}
  \langle f_{\text{good}}^{Q_1,u_1}, h_{u_2}^{Q_2}\rangle
   h_{u_2}^{Q_2} \Big\rangle_{S(Q'_2)}
  \Big \langle \sum_{\substack{Q'_1 \\ \ell(Q'_1) = \delta^m}} \,
  \langle g_{\text{good}}^{Q'_2,u'_2}, h_{0}^{Q'_1} \rangle
   h_{0}^{Q'_1} \Big\rangle_{S(Q_1)}   \noz\\
  &+
  \Big \langle \sum_{\substack{Q_2 \\ \ell(Q_2) = \delta^m}} \,
  \langle f_{\text{good}}^{Q_1,u_1}, h_{0}^{Q_2}\rangle
   h_{0}^{Q_2} \Big\rangle_{S(Q'_2)}
  \Big \langle \sum_{\substack{Q'_1 \\ \ell(Q'_1) = \delta^m}} \,
  \langle g_{\text{good}}^{Q'_2,u'_2}, h_{0}^{Q'_1} \rangle
   h_{0}^{Q'_1} \Big\rangle_{S(Q_1)}     \bigg\} \\
    &\hspace{-0.5cm}= \sum_{Q_1\text{ good} } \,
  \sum_{Q'_2 \text{ good} } \,
  \sum_{\substack{u_1 \\u'_2}} \,
   \langle T_1(1), h_{u_1}^{Q_1} \otimes h_{u'_2}^{Q'_2} \rangle \\
   & \hspace{-0.3cm}\times \bigg\{
  \Big \langle
  \sum_{\substack{Q_2 \\ \ell(Q_2) \leq \delta^m}} \,
  \sum_{u_2}
  \langle f_{\text{good}}^{Q_1,u_1}, h_{u_2}^{Q_2}\rangle
   h_{u_2}^{Q_2}
   +  \sum_{\substack{Q_2 \\ \ell(Q_2) = \delta^m}} \,
  \langle f_{\text{good}}^{Q_1,u_1}, h_{0}^{Q_2}\rangle
   h_{0}^{Q_2}
   \Big\rangle_{S(Q'_2)} \\
  & \times \Big \langle
   \sum_{\substack{Q'_1 \\ \ell(Q'_1) \leq \delta^m}} \,
  \sum_{u'_1}
  \langle g_{\text{good}}^{Q'_2,u'_2}, h_{u'_1}^{Q'_1} \rangle
   h_{u'_1}^{Q'_1}
    + \sum_{\substack{Q'_1 \\ \ell(Q'_1) = \delta^m}} \,
  \langle g_{\text{good}}^{Q'_2,u'_2}, h_{0}^{Q'_1} \rangle
   h_{0}^{Q'_1}
   \Big\rangle_{S(Q_1)}    \bigg\} \\
   &\hspace{-0.5cm} =  \sum_{Q_1\text{ good} } \,
  \sum_{Q'_2 \text{ good} } \,
  \sum_{\substack{u_1 \\u'_2}} \,
   \langle T_1(1), h_{u_1}^{Q_1} \otimes h_{u'_2}^{Q'_2} \rangle
   \langle f_{\text{good}}^{Q_1,u_1}\rangle_{S(Q'_2)}
   \langle g_{\text{good}}^{Q'_2,u'_2}\rangle_{S(Q_1)} \\
   &\hspace{-0.5cm} =  \sum_{Q'_1 } \,
    \sum_{\substack{Q_1 \textup{ good} \\ Q_1 \subset Q'_1 \\ \ell(Q_1) = \delta^r \ell(Q'_1)}} \,
  \sum_{Q_2} \,
   \sum_{\substack{Q'_2 \textup{ good} \\ Q'_2 \subset Q_2 \\ \ell(Q'_2) = \delta^r \ell(Q_2)}} \,
  \sum_{\substack{u_1 \\u'_2}} \,
   \langle T_1(1), h_{u_1}^{Q_1} \otimes h_{u'_2}^{Q'_2} \rangle
   \langle f_{\text{good}}^{Q_1,u_1}\rangle_{Q_2}
   \langle g_{\text{good}}^{Q'_2,u'_2}\rangle_{Q'_1} \\
   &\hspace{-0.5cm} =  \sum_{Q'_1 } \,
    \sum_{\substack{Q_1 \textup{ good} \\ Q_1 \subset Q'_1 \\ \ell(Q_1) = \delta^r \ell(Q'_1)}} \,
  \sum_{Q_2} \,
   \sum_{\substack{Q'_2 \textup{ good} \\ Q'_2 \subset Q_2 \\ \ell(Q'_2) = \delta^r \ell(Q_2)}} \,
  \sum_{\substack{u_1 \\u'_2}} \,
   \langle T_1(1), h_{u_1}^{Q_1} \otimes h_{u'_2}^{Q'_2} \rangle \\
   & \times \Big\langle f_{\text{good}}, h_{u_1}^{Q_1} \otimes \frac{\chi_{Q_2}}{\mu_2(Q_2)} \Big\rangle
   \Big\langle g_{\text{good}}, \frac{\chi_{Q'_1}}{\mu_1(Q'_1)} \otimes h_{u'_2}^{Q'_2}\Big\rangle \\
   &\hspace{-0.5cm} = \Big\langle \Pi_{\text{mixed},T_1(1)}^{u_1u'_2}
    f_{\text{good}}, g_{\text{good}}  \Big \rangle
\end{align*}
for each fixed~$u_1$ and~$u'_2$, where
\begin{align}\label{eq:mixed paraproduct}
\Pi_{\text{mixed},b}^{u_1u'_2} w
&:= \sum_{Q'_1 } \,
\sum_{\substack{Q_1 \textup{ good} \\ Q_1 \subset Q'_1 \\ \ell(Q_1)=
   \delta^r \ell(Q'_1)}} \,
   \sum_{Q_2} \,
   \sum_{\substack{Q'_2 \textup{ good} \\ Q'_2 \subset Q_2 \\ \ell(Q'_2) = \delta^r \ell(Q_2)}} \,
   \Big\langle w, h_{u_1}^{Q_1} \otimes \frac{\chi_{Q_2}}{\mu_2(Q_2)}
   \Big\rangle \noz\\
   & \times \langle b, h_{u_1}^{Q_1} \otimes h_{u'_2}^{Q'_2} \rangle
   \frac{\chi_{Q'_1}}{\mu_1(Q'_1)} \otimes h_{u'_2}^{Q'_2}.
\end{align}

Using Proposition~\ref{prop1:mixed_para} below, we conclude that the sums in~\eqref{InIn_eq.15} are dominated by
\begin{align*}
\Big|\Big\langle \Pi_{\text{mixed},T_1(1)}^{u_1u'_2}
    f_{\text{good}}^{Q_1,u_1}, g_{\text{good}}^{Q'_2,u'_2}  \Big \rangle\Big|
  &\ls \|T_1(1)\|_{\bmo_{\textup{prod}}(\mu)} \| f_{\text{good}}^{Q_1,u_1}\|_{L^2(\mu)}  \|g_{\text{good}}^{Q'_2,u'_2} \|_{L^2(\mu)}\\
  &\ls \|f\|_{L^2(\mu)} \|g\|_{L^2(\mu)}.
\end{align*}

The final step is to prove Proposition~\ref{prop1:mixed_para}.

\begin{prop}\label{prop1:mixed_para}
  Given functions~$w, v \in L^2(\mu)$ and $b \in \bmo_{\textup{prod}}(\mu)$,
  there holds that
  \[|\langle \Pi_{\textup{mixed},b}^{u_1u'_2} w, v\rangle| \ls
  \|b\|_{\bmo_{\textup{prod}}(\mu)} \|w\|_{L^2(\mu)}  \|v\|_{L^2(\mu)}.
  \]
\end{prop}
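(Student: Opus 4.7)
The plan is to prove Proposition~\ref{prop1:mixed_para} via the $\hardy$--$\bmo_{\textup{prod}}(\mu)$ duality established in Proposition~\ref{prop1:dual_prod}. Setting $W_Q(y_1) := \mu_2(Q)^{-1}\int_Q w(y_1,y_2)\,d\mu_2(y_2)$ and $V_Q(y_2) := \mu_1(Q)^{-1}\int_Q v(y_1,y_2)\,d\mu_1(y_1)$ for the partial averages of $w$ and $v$, and unravelling the definition of $\Pi_{\textup{mixed},b}^{u_1u'_2}$, the pairing becomes
\[
\langle \Pi_{\textup{mixed},b}^{u_1u'_2}w,\,v\rangle \;=\; \sum_{R'}\sum_{R\in \mathcal{C}_{R'}}\lambda_R\,\langle b, h_R\rangle \;=\; \langle b,\varphi\rangle,
\]
where $R = Q_1\times Q'_2$ is a good rectangle, $R' = S(Q_1)\times S(Q'_2) = Q'_1\times Q_2$, $h_R = h^{Q_1}_{u_1}\otimes h^{Q'_2}_{u'_2}$, and the coefficients factorise as $\lambda_R = \alpha_R\beta_R$ with $\alpha_R := \langle W_{Q_2}, h^{Q_1}_{u_1}\rangle$, $\beta_R := \langle V_{Q'_1}, h^{Q'_2}_{u'_2}\rangle$, and $\varphi := \sum_{R'}\sum_{R}\lambda_R h_R$. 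Proposition~\ref{prop1:dual_prod} then reduces the task to the $\hardy$-type estimate
\[
\|S_{\Dd_1\Dd_2}\varphi\|_{L^1(\mu)}\;\ls\;\|w\|_{L^2(\mu)}\|v\|_{L^2(\mu)}.
\]

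Because $\alpha_R$ depends only on $(Q_1,u_1,Q_2)$ and $\beta_R$ only on $(Q'_2,u'_2,Q'_1)$, for each fixed $R'$ the inner sum factorises as $\sum_{R\in\mathcal{C}_{R'}}|\lambda_R|^2 = A_w(R')\cdot B_v(R')$, with $A_w(R') := \sum_{Q_1\subset Q'_1,u_1}|\alpha_R|^2$ and $B_v(R') := \sum_{Q'_2\subset Q_2,u'_2}|\beta_R|^2$. Bessel's inequality for single-generation Haar systems, followed by Jensen's inequality, gives $A_w(R')/\mu_1(Q'_1)\leq \mu(R')^{-1}\int_{R'}|w|^2\,d\mu$ and analogously for $B_v$. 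The plan is then to dominate $S_{\Dd_1\Dd_2}\varphi(x)$ pointwise by a product $\mathcal{S}_w(x)\,\mathcal{S}_v(x)$ of bi-parameter square functions adapted to $w$ and $v$ via Cauchy--Schwarz on the $R'$-sum, and then apply Cauchy--Schwarz in $L^2(\mu)$ to obtain
\[
\|S_{\Dd_1\Dd_2}\varphi\|_{L^1(\mu)}\;\leq\; \|\mathcal{S}_w\|_{L^2(\mu)}\,\|\mathcal{S}_v\|_{L^2(\mu)}.
\]

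The main obstacle is selecting $\mathcal{S}_w$ and $\mathcal{S}_v$ with the correct weights. A naive definition such as $\mathcal{S}_w(x)^2 := \sum_{R'\ni x}A_w(R')/\mu(R')$ diverges on non-trivial inputs, because the outer sum over $Q_2\in\Dd_2$ lacks a compensating $\mu_2(Q_2)$-factor. The correct choice must absorb the $\mu_2(Q_2)$-normalisation of the averaging kernel $\chi_{Q_2}/\mu_2(Q_2)$ into the square-function weights, effectively replacing the martingale averages in the second variable by their orthogonal martingale-difference analogues. With such a choice, Fubini's theorem, combined with the $L^2(\mu)$-boundedness of the one-parameter dyadic square function (invoked as in Lemma~\ref{lem4.sepnest}) and of the strong maximal function, will yield the bounds $\|\mathcal{S}_w\|_{L^2(\mu)}\ls \|w\|_{L^2(\mu)}$ and $\|\mathcal{S}_v\|_{L^2(\mu)}\ls \|v\|_{L^2(\mu)}$, completing the proof.
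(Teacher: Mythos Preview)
Your overall architecture --- rewrite $\langle \Pi_{\textup{mixed},b}^{u_1u'_2}w,v\rangle = \langle b,\varphi\rangle$, invoke Proposition~\ref{prop1:dual_prod}, then control $\|S_{\Dd_1\Dd'_2}\varphi\|_{L^1(\mu)}$ by exploiting the factorisation $\lambda_R=\alpha_R\beta_R$ and splitting into a $w$-piece and a $v$-piece --- is exactly the paper's. The gap is in how you resolve what you correctly flag as the ``main obstacle.''

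Your proposed fix, ``replacing the martingale averages in the second variable by their orthogonal martingale-difference analogues,'' is not the right move and has no clear implementation here: the coefficient $\alpha_R=\langle W_{Q_2},h_{u_1}^{Q_1}\rangle$ genuinely carries an average in the second variable, and re-expanding that average into differences gains nothing. Nor is the pointwise splitting a Cauchy--Schwarz over the $R'$-sum. The paper's resolution is simpler and asymmetric: one uses the elementary bound
\[
\sum_{Q_1,Q'_2} a_{Q_1}\,b_{Q'_2}\,c_{Q_1,Q'_2}\,d_{Q_1,Q'_2}
\;\le\;
\Big(\sum_{Q_1} a_{Q_1}\sup_{Q'_2} c_{Q_1,Q'_2}\Big)
\Big(\sum_{Q'_2} b_{Q'_2}\sup_{Q_1} d_{Q_1,Q'_2}\Big),
\]
so that in the $w$-factor one keeps the $Q_1$-sum with weight $\chi_{S(Q_1)}/\mu_1(S(Q_1))$ but replaces the sum over $Q'_2$ by a supremum. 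That supremum is then dominated pointwise by the \emph{one-parameter} dyadic maximal function in the second variable applied to the partial pairing: $\sup_{Q'_2}|\alpha_R|^2\chi_{S(Q'_2)}(x_2)\le \big(M_{\Dd_2}\langle w,h_{u_1}^{Q_1}\rangle_1\big)^2(x_2)$. This gives
\[
\mathcal S_w(x)^2 \;=\; \sum_{Q_1\ \textup{good}}\frac{\chi_{S(Q_1)}(x_1)}{\mu_1(S(Q_1))}\,\big(M_{\Dd_2}\langle w,h_{u_1}^{Q_1}\rangle_1\big)^2(x_2),
\]
whose $L^1(\mu)$-norm is $\ls\|w\|_{L^2(\mu)}^2$ by Fubini, the $L^2(\mu_2)$-boundedness of $M_{\Dd_2}$, and orthogonality of the $h_{u_1}^{Q_1}$ (not Lemma~\ref{lem4.sepnest}, which is a paraproduct bound). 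Symmetrically for $\mathcal S_v$ with $M_{\Dd'_1}$. Cauchy--Schwarz in $L^2(\mu)$ then finishes exactly as you wrote.
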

\begin{proof}
We write
  \begin{align*}
    &\langle \Pi_{\text{mixed},b}^{u_1u'_2} w, v\rangle \\
     & = \sum_{Q'_1 } \,
     \sum_{\substack{Q_1 \textup{ good} \\ Q_1 \subset Q'_1 \\ \ell(Q_1)=
   \delta^r \ell(Q'_1)}} \,
   \sum_{Q_2} \,
   \sum_{\substack{Q'_2 \textup{ good} \\ Q'_2 \subset Q_2 \\ \ell(Q'_2) = \delta^r \ell(Q_2)}} \,
   \Big\langle w, h_{u_1}^{Q_1} \otimes \frac{\chi_{Q_2}}{\mu_2(Q_2)}
   \Big\rangle
    \langle b, h_{u_1}^{Q_1} \otimes h_{u'_2}^{Q'_2} \rangle
   \Big\langle v, \frac{\chi_{Q'_1}}{\mu_1(Q'_1)} \otimes h_{u'_2}^{Q'_2} \Big\rangle \\
   & = \bigg \langle b,
   \sum_{Q'_1 } \,
   \sum_{\substack{Q_1 \textup{ good} \\ Q_1 \subset Q'_1 \\ \ell(Q_1)=
   \delta^r \ell(Q'_1)}} \,
   \sum_{Q_2} \,
   \sum_{\substack{Q'_2 \textup{ good} \\ Q'_2 \subset Q_2 \\ \ell(Q'_2) = \delta^r \ell(Q_2)}} \,
   \Big\langle w, h_{u_1}^{Q_1} \otimes \frac{\chi_{Q_2}}{\mu_2(Q_2)}
   \Big\rangle
   \Big\langle v, \frac{\chi_{Q'_1}}{\mu_1(Q'_1)} \otimes h_{u'_2}^{Q'_2} \Big\rangle
    h_{u_1}^{Q_1} \otimes h_{u'_2}^{Q'_2}
   \bigg\rangle \\
   & = \bigg \langle b,
   \sum_{Q_1 \textup{ good} } \,
   \sum_{Q'_2 \textup{ good} } \,
   \Big\langle w, h_{u_1}^{Q_1} \otimes \frac{\chi_{S(Q'_2)}}{\mu_2(S(Q'_2))}
   \Big\rangle
   \Big\langle v, \frac{\chi_{S(Q_1)}}{\mu_1(S(Q_1))} \otimes h_{u'_2}^{Q'_2} \Big\rangle
    h_{u_1}^{Q_1} \otimes h_{u'_2}^{Q'_2}
   \bigg\rangle \\
   & =: \langle b, \varphi\rangle.
  \end{align*}
  Using Proposition~\ref{prop1:dual_prod} we have
  \begin{equation}\label{eq1:mix_para}
  |\langle \Pi_{\text{mixed},b}^{u_1u'_2} w, v\rangle|
  \ls \|b\|_{\bmo_{\textup{prod}}(\mu)} \|S_{\Dd_1 \Dd'_2} \varphi\|_{L^1(\mu)}.
  \end{equation}

  Next we consider $\|S_{\Dd_1 \Dd'_2} \varphi\|_{L^1(\mu)}$. We note that
  \begin{align*}
    \big(S_{\Dd_1 \Dd'_2} \varphi(x_1,x_2)\big)^2
    &\ls  \sum_{Q_1 \textup{ good} } \,
   \sum_{Q'_2 \textup{ good} } \,
   \Big|\Big\langle w, h_{u_1}^{Q_1} \otimes \frac{\chi_{S(Q'_2)}}{\mu_2(S(Q'_2))}
   \Big\rangle \Big|^2
   \Big|\Big\langle v, \frac{\chi_{S(Q_1)}}{\mu_1(S(Q_1))} \otimes h_{u'_2}^{Q'_2} \Big\rangle\Big|^2 \\
   & \hspace{0.2cm} \times   \frac{\chi_{S(Q_1)}(x_1) \otimes \chi_{S(Q'_2)}(x_2)}{\mu_1(S(Q_1))\mu_2(S(Q'_2))} \\
   & \leq \bigg( \sum_{Q_1 \textup{ good}} \, \frac{\chi_{S(Q_1)}(x_1)}{\mu_1(S(Q_1))}
   \otimes \sup_{Q'_2 \textup{ good} } \,
   \Big[ \Big|\Big\langle w, h_{u_1}^{Q_1} \otimes \frac{\chi_{S(Q'_2)}}{\mu_2(S(Q'_2))}
   \Big\rangle \Big|^2 \chi_{S(Q'_2)}(x_2)\Big]
   \bigg)\\
   & \hspace{0.2cm} \times \bigg(
   \sum_{Q'_2 \textup{ good} } \,  \sup_{Q_1 \textup{ good} } \,
   \Big[  \Big|\Big\langle v, \frac{\chi_{S(Q_1)}}{\mu_1(S(Q_1))} \otimes h_{u'_2}^{Q'_2} \Big\rangle\Big|^2 \chi_{S(Q_1)}(x_1) \Big]
   \otimes \frac{\chi_{S(Q'_2)}(x_2)}{\mu_2(S(Q'_2))}
   \bigg).
  \end{align*}
  Moreover,
  \begin{align*}
     &\hspace{-0.5cm} \sup_{Q'_2 \textup{ good} } \,
   \Big[ \Big|\Big\langle w, h_{u_1}^{Q_1} \otimes \frac{\chi_{S(Q'_2)}}{\mu_2(S(Q'_2))}
   \Big\rangle \Big|^2 \chi_{S(Q'_2)}(x_2)\Big] \\
   & = \sup_{Q'_2 \textup{ good} } \,
   \Big[ \Big| \int_{X_2} \int_{X_1}
   w(y_1,y_2) h_{u_1}^{Q_1}(y_1)
    \frac{\chi_{S(Q'_2)}(y_1)}{\mu_2(S(Q'_2))} \,d\mu_1 \,d\mu_2
   \Big|^2 \chi_{S(Q'_2)}(x_2)\Big] \\
   & \leq  \sup_{Q'_2 \textup{ good} } \,
   \Big[ \Big( \frac{1}{\mu_2(Q'_2)}
   \int_{Q'_2}|\langle w,h_{u_1}^{Q_1}\rangle_1(y_2)| \,d\mu_2\Big)^2
   \chi_{S(Q'_2)}(x_2)\Big] \\
   & = \big(M_{\Dd'_2}\langle w, h_{u_1}^{Q_1}\rangle_1\big)^2(x_2) \chi_{S(Q'_2)}(x_2) \\
   &\leq \big( M_{\Dd'_2}\langle w, h_{u_1}^{Q_1}\rangle_1\big)^2(x_2).
  \end{align*}
  Similarly, we also have
  \[\sup_{Q_1 \textup{ good} } \,
   \Big[  \Big|\Big\langle v, \frac{\chi_{S(Q_1)}}{\mu_1(S(Q_1))} \otimes h_{u'_2}^{Q'_2} \Big\rangle\Big|^2 \chi_{S(Q_1)}(x_1) \Big]
   \leq  \big(M_{\Dd_1}\langle v, h_{u'_2}^{Q'_2}\rangle_2\big)^2(x_1).\]
   Hence,
   \begin{align*}
   \big(S_{\Dd_1 \Dd'_2} \varphi(x_1,x_2)\big)^2
   &\ls
    \bigg( \sum_{Q_1 \textup{ good}} \, \frac{\chi_{S(Q_1)}(x_1)}{\mu_1(S(Q_1))}
   \otimes  \big( M_{\Dd'_2}\langle w, h_{u_1}^{Q_1}\rangle-1\big)^2(x_2) \bigg) \\
   & \hspace{0.5cm} \times
   \bigg( \sum_{Q'_2 \text{ good}}
    \big(M_{\Dd_1}\langle v, h_{u'_2}^{Q'_2}\rangle_2\big)^2(x_1)
     \otimes \frac{\chi_{S(Q'_2)}(x_2)}{\mu_2(S(Q'_2))}
   \bigg).
   \end{align*}
Next, using Cauchy--Schwartz inequality we get
\begin{align} \label{eq2:mix_para}
  \|S_{\Dd_1 \Dd'_2} \varphi\|_{L^1(\mu)}
  &\leq \bigg\|\sum_{Q_1 \textup{ good}} \, \frac{\chi_{S(Q_1)}}{\mu_1(S(Q_1))}
   \otimes  \big( M_{\Dd'_2}\langle w, h_{u_1}^{Q_1}\rangle_1\big)^2\bigg\|^{1/2}_{L^1(\mu)}\noz\\
  &\hspace{0.5cm} \times\bigg\|\sum_{Q'_2 \text{ good}}
    \big(M_{\Dd_1}\langle v, h_{u'_2}^{Q'_2}\rangle_2\big)^2
     \otimes \frac{\chi_{S(Q'_2)}}{\mu_2(S(Q'_2))}\bigg\|^{1/2}_{L^1(\mu)} \noz\\
  & \ls \|w\|_{L^2(\mu)}  \|v\|_{L^2(\mu)}.
\end{align}
The last inequality above holds because
\begin{align*}
   &\hspace{-0.5cm} \bigg\|\sum_{Q_1 \textup{ good}} \, \frac{\chi_{S(Q_1)}}{\mu_1(S(Q_1))}
   \otimes  \big( M_{\Dd'_2}\langle w, h_{u_1}^{Q_1}\rangle_1\big)^2\bigg\|_{L^1(\mu)} \\
   & = \bigg(\int_{X_1} \int_{X_2} \Big| \sum_{Q_1 \textup{ good}} \, \frac{\chi_{S(Q_1)}(x_1)}{\mu_1(S(Q_1))}
   \big(M_{\Dd'_2}\langle w, h_{u_1}^{Q_1}\rangle_1\big)^2(x_2)\Big|^2 \,d\mu_2 \,d\mu_1 \bigg)^{1/2} \\
   & = \sum_{Q_1 \textup{ good}} \int_{X_1} \int_{X_2}
   \frac{\chi_{S(Q_1)}(x_1)}{\mu_1(S(Q_1))}
   \big(M_{\Dd'_2}\langle w, h_{u_1}^{Q_1}\rangle_1\big)^2(x_2)
   \,d\mu_2 \,d\mu_1 \\
   & = \sum_{Q_1 \textup{ good}} \int_{X_1}
   \frac{\chi_{S(Q_1)}(x_1)}{\mu_1(S(Q_1))} \,d\mu_1
   \int_{X_2}  \big(M_{\Dd'_2}\langle w, h_{u_1}^{Q_1}\rangle_1\big)^2(x_2)
   \,d\mu_2 \\
   & \ls \int_{X_2}  \big|\langle w, h_{u_1}^{Q_1}\rangle_1\big|^2
   \,d\mu_2  \ls \|w\|^2_{L^2(\mu)},
\end{align*}
and similarly,
\[
 \bigg\|\sum_{Q'_2 \text{ good}}
    \big(M_{\Dd_1}\langle v, h_{u'_2}^{Q'_2}\rangle_2\big)^2
     \otimes \frac{\chi_{S(Q'_2)}}{\mu_2(S(Q'_2))}\bigg\|^{1/2}_{L^1(\mu)}
 \ls \|v\|^2_{L^2(\mu)}.
\]

Thus inequalities~\eqref{eq1:mix_para} and~\eqref{eq2:mix_para} establish Proposition~\ref{prop1:mixed_para}.
\end{proof}

%%%%%%%%%%%%%%%%%%%%%%%%%%%%%%%%%%%%%%%%%%%%%%%%%
\section{Proof of Theorem~\ref{thm:T1thm_ver2} - $T(1)$ Theorem under Weaker \emph{A Priori} Assumption}\label{sec:T1 ver 2}
In this section, we establish Theorem~\ref{thm:T1thm_ver2} which is  a $T(1)$ type under the weaker \emph{a priori} assumption that for all functions $f, g \in C^{\eta}_{\textup{c}}(\mathcal{X}) $ which are supported on a rectangle~$R \in \mathcal{X}$, we have
      \[|\langle Tf,g \rangle| \leq C(R) \|g\|_{L^2(\mu)} \|g\|_{L^2(\mu)}.\]
Let $(\mathcal{X},\rho,\mu)$ be a non-homogeneous bi-parameter quasimetric space.
Let $T: L^2(\mu) \rightarrow L^2(\mu)$ be a bi-parameter singular integral operator on~$\mathcal{X}$ defined as in Section~\ref{subsec:assume}.
For each $x = x_1 \times x_2 \in \mathcal{X}$, $m \in \Z$ and~$\delta \in (0,1)$ satisfying $96A_0^6 \delta \leq 1$, define
\[
\mathcal{E} = \mathcal{E}(x,m,\delta)
:= \{f \in \mathcal{X}: \|f\|_{L^2(\mu)} = 1, \supp f \in R_{x,m,\delta}\},
\]
where $R_{x,m,\delta} := Q_{1,x_1,m,\delta} \times Q_{2,x_2,m,\delta}$,
$Q_{i,x,m,\delta}$ is the dyadic cube in~$X_i$, centred at~$x_i$ and at level~$\delta^m$, $i = 1,2$.
Define
\[M(\delta^m) := \sup \{|\langle Tf,g \rangle|: f, g \in \mathcal{E}\}.\]

Fix $f, g \in \mathcal{E}$ such that
$0.7 M(\delta^m) \leq |\langle Tf,g \rangle|$.
We want to show that $M(\delta^m) \leq C$, where the constant~$C$ is independent of~$f$ and~$g$. This then implies that~$\|T\| \leq C$.

Notice that the setup here is the same as that of the proof of Theorem~\ref{thm:T1thm_ver1} in Section~\ref{subsec:initial_reduce}, except that~$\|T\|$ is replaced by~$M(\delta^m)$.
Thus, the proof of Theorem~\ref{thm:T1thm_ver2} is exactly the same as that of Theorem~\ref{thm:T1thm_ver1}, except replacing~$\|T\|$ by~$M(\delta^m)$.

For convenience, we write down below the initial reduction of the proof of Theorem~\ref{thm:T1thm_ver2}. As explained above, this part is similar to Section~\ref{subsec:initial_reduce}, but we use~$M(\delta^m)$ instead of~$\|T\|$.

For each integer $k >m$ we define
\begin{eqnarray*}
  E_kf &:=& \sum_{\substack{Q_1 \in \Dd_1 \\ k>\text{gen}(Q_1) \geq m}} \sum_{u_1=1}^{M_{Q_1}-1}
   h_{u_1}^{Q_1} \otimes \langle f,h_{u_1}^{Q_1} \rangle_1
   + \sum_{\substack{Q_1 \in \Dd_1 \\ \text{gen}(Q_1) = m}}
   h_{0}^{Q_1} \otimes \langle f,h_{0}^{Q_1} \rangle_1\\
   &=:&  E_{k,u} f +E_{k,0} f, \\
  E'_kg &:=& \sum_{\substack{Q'_1 \in \Dd'_1 \\ k>\text{gen}(Q'_1) \geq m}}
  \sum_{u'_1=1}^{M_{Q'_1}-1}
  h_{u'_1}^{Q'_1} \otimes \langle g,h_{u'_1}^{Q'_1} \rangle_1
  + \sum_{\substack{Q'_1 \in \Dd'_1 \\ \text{gen}(Q'_1) = m}}
   h_{0}^{Q'_1} \otimes \langle g,h_{0}^{Q'_1} \rangle_1, \\
   &=:&  E'_{k,u} g +E'_{k,0} g, \\
  \widetilde{E}_kf &:=& \sum_{\substack{Q_2 \in \Dd_2 \\ k>\text{gen}(Q_2) \geq m}}
  \sum_{u_2=1}^{M_{Q_2}-1}
  h_{u_2}^{Q_2} \otimes \langle f,h_{u_2}^{Q_2} \rangle_2
   + \sum_{\substack{Q_2 \in \Dd_2 \\ \text{gen}(Q_2) = m}}
   h_{0}^{Q_2} \otimes \langle f,h_{0}^{Q_2} \rangle_2, \quad \text{and} \\
   &=:& \widetilde{E}_{k,u} f +\widetilde{E}_{k,0} f, \\
  \widetilde{E}'_kg &:=& \sum_{\substack{Q'_2 \in \Dd'_2 \\ k>\text{gen}(Q'_2) \geq m}}
  \sum_{u'_2=1}^{M_{Q'_2}-1}
   h_{u'_2}^{Q'_2} \otimes \langle g,h_{u'_2}^{Q'_2} \rangle_2
   + \sum_{\substack{Q'_2 \in \Dd'_2 \\ \text{gen}(Q'_2) = m}}
   h_{0}^{Q'_2} \otimes \langle g,h_{0}^{Q'_2} \rangle_2\\
   &=:& \widetilde{E}'_{k,u} g+\widetilde{E}'_{k,0} g.
\end{eqnarray*}
Here $\langle f,h_{u_1}^{Q_1} \rangle_1(y_2) = \langle f(\cdot,y_2),h_{u_1}^{Q_1} \rangle = \int f(y_1,y_2)h_{u_1}^{Q_1}(y_1)\,d\mu_1(y_1)$, and $\text{gen}(Q)$ means the generation of the cube~$Q$.

We recall the definitions of good and bad cubes from Section~\ref{subsec:good_bad_cube}. We fix the parameters $\gamma_i$, $\al_i$ and $t_i$, where $i = 1,2$, which are used to defined good and bad cubes.

We also denote $E_{k,u}f = E_{k,u,\text{good}}+ E_{k,u,\text{bad}}$, where
\begin{align*}
E_{k,u,\text{good}} f&:=\sum_{\substack{Q_1 \in \Dd_1, \text{good} \\ k>\text{gen}(Q_1) \geq m}} \sum_{u_1=1}^{M_{Q_1}-1}
   h_{u_1}^{Q_1} \otimes \langle f,h_{u_1}^{Q_1} \rangle_1,\\
E_{k,u,\text{bad}} f&:=\sum_{\substack{Q_1 \in \Dd_1, \text{bad} \\ k>\text{gen}(Q_1) \geq m}} \sum_{u_1=1}^{M_{Q_1}-1}
   h_{u_1}^{Q_1} \otimes \langle f,h_{u_1}^{Q_1} \rangle_1.
\end{align*}
We perform a similar split for other terms~$E_{k,0}f,\ldots,\widetilde{E}'_{k,0}g$ above, and  the terms $E_{k,0,\text{good}} f$, $E_{k,0,\text{bad}} f,\ldots, \widetilde{E}'_{k,0,\text{good}} g$, $\widetilde{E}'_{k,0,\text{bad}} g$ are defined analogously to~$E_{k,u,\text{good}}$ and~$E_{k,u,\text{bad}}$.

Note that the operator~$E_k$ and $E'_k$ in fact are the inhomogeneous Haar expansions of $f$ and $g$ respectively with respect to generation~$k$ in the first variable of functions.
Similarly, $\widetilde{E}_k$ and $\widetilde{E}'_k$ are the inhomogeneous Haar expansions with respect to generation~$k$ in the second variable.  Thus, they satisfy the following properties:
\begin{enumerate}
  \item[(1)] $E_k$, $E'_k$, $\widetilde{E}_k$ and $\widetilde{E}'_k$ are linear operators,
  \item[(2)] $f = \lim_{k \rightarrow \infty}E_kf = \lim_{k \rightarrow \infty}\widetilde{E}_kf$ in~$L^2(\mu)$,
  \item[(3)] $\|E_kf\|_{L^2(\mu)}, \|\widetilde{E}_kf\|_{L^2(\mu)} \leq \|f\|_{L^2(\mu)} = 1$, and
  \item[(4)] $E_k\widetilde{E}_kf = \widetilde{E}_kE_kf$.
\end{enumerate}

We write
\begin{eqnarray*}
  \langle Tf,g\rangle &=& \langle T(f-E_kf),g\rangle + \langle T(E_kf), g-E'_kg\rangle + \langle T(E_kf - \widetilde{E}_kE_kf), E'_kg\rangle \\
  &&+ \langle T(\widetilde{E}_kE_kf) , E'_kg - \widetilde{E}'_kE'_kg\rangle
+\langle T(\widetilde{E}_kE_kf),\widetilde{E}'_kE'_kg\rangle.
\end{eqnarray*}
More briefly,
\[\langle Tf,g\rangle = \langle T(\widetilde{E}_kE_kf),\widetilde{E}'_kE'_kg\rangle + \e_k(\Dd), \quad
\text{where~$\Dd = (\Dd_1, \Dd'_1, \Dd_2, \Dd'_2)$}.\]
Using the the property~(2) listed above we obtain
$\lim_{k \rightarrow \infty} \e_k(\Dd) = 0.$
Then, using the Dominated Convergence Theorem, we deduce that
\begin{equation}\label{eq2:Part3a}
\langle Tf,g\rangle = \lim_{k \rightarrow \infty} \E \langle T(\widetilde{E}_kE_kf),\widetilde{E}'_kE'_kg\rangle.
\end{equation}
Here $ \E := \E_{\Dd} = \E_{(\Dd_1, \Dd'_1, \Dd_2, \Dd'_2)} = \E_{\Dd_1} \E_{\Dd'_1} \E_{\Dd_2} \E_{\Dd'_2}$ is the mathematical expectation taken over all the random dyadic lattices $\Dd_1$, $\Dd'_1$, $\Dd_2$ and $\Dd'_2$ constructed above.
Note that
\begin{eqnarray}\label{initial_eq5a}
\widetilde{E}_kE_kf =\widetilde{E}_{k,u} E_{k,u} f +\widetilde{E}_{k,0} E_{k,u} f+\widetilde{E}_{k,u} E_{k,0} f+
\widetilde{E}_{k,0} E_{k,0} f.
\end{eqnarray}
We continue to decompose $\widetilde{E}_kE_kf$ into smaller sums as follows.

Denote~$f_k := \widetilde{E}_kE_kf$.
Let $f_k = f_{k,\text{good}} + f_{k,\text{bad}}$ where~$f_{k,\text{good}}$ contains the sums over~$Q_1$ and~$Q_2$ where both cubes are good, and~$f_{k,\text{bad}}$ contain the sums over~$Q_1$ and~$Q_2$ where at least one cube is bad:
\begin{eqnarray*}
  f_{k,\text{good}} := \widetilde E_{k,u,\text{good}} E_{k,u,\text{good}}f +\widetilde E_{k,0,\text{good}} E_{k,u,\text{good}}f + \widetilde E_{k,u,\text{good}} E_{k,0,\text{good}}f +\widetilde E_{k,0,\text{good}} E_{k,0,\text{good}}f
\end{eqnarray*}
and
\begin{align*}
  f_{k,\text{bad}} :=\,&
   \widetilde E_{k,u,\text{good}} E_{k,u,\text{bad}}f
   +\widetilde E_{k,0,\text{good}} E_{k,u,\text{bad}}f
   + \widetilde E_{k,u,\text{good}} E_{k,0,\text{bad}}f
   +\widetilde E_{k,0,\text{good}} E_{k,0,\text{bad}}f\\
   &+ \, \widetilde E_{k,u,\text{bad}} E_{k,u}f
   +\widetilde E_{k,0,\text{bad}} E_{k,u}f
   + \widetilde E_{k,u,\text{bad}} E_{k,0}f
   +\widetilde E_{k,0,\text{bad}} E_{k,0}f.
\end{align*}

Notice that
\begin{align*}%\label{initial_eq2}
  &\|f_k\|_{L^2(\mu)} = \|\widetilde{E}_kE_kf\|_{L^2(\mu)}
  \leq \|E_kf\|_{L^2(\mu)} \leq \|f\|_{L^2(\mu)}, \quad \text{and} \\
  &\|f_k\|_{L^2(\mu)} \leq \|f_{k,\text{good}}\|_{L^2(\mu)}  + \|f_{k,\text{bad}}\|_{L^2(\mu)}.
\end{align*}

Since $\|f\|_{L^2(\mu)} = \|g\|_{L^2(\mu)} = 1$, we have
\begin{eqnarray}%\label{initial_eq3}
 && \|f_{k,\text{good}}\|_{L^2(\mu)}  \leq \|f\|_{L^2(\mu)} = 1, \quad \text{and}  \label{initial_eq2a} \\
 && \|f_{k,\text{bad}}\|_{L^2(\mu)} \leq \|f\|_{L^2(\mu)} = 1. \label{initial_eq3a}
\end{eqnarray}

Similarly, denote
$g_k := \widetilde{E}'_kE'_kg = g_{k,\text{good}} + g_{k,\text{bad}}.$
We now write
\begin{align}\label{eq3:Part3a}
\langle T(\widetilde{E}_kE_kf),\widetilde{E}'_kE'_kg\rangle
= \langle Tf_k, g_k\rangle
= \langle Tf_{k,\text{good}}, g_{k,\text{good}}\rangle
+ \langle Tf_{k,\text{good}}, g_{k,\text{bad}}\rangle
+ \langle Tf_{k,\text{bad}}, g_{k}\rangle.
\end{align}
Using~\eqref{eq2:Part3a}, \eqref{initial_eq2a}, \eqref{initial_eq3a} and~\eqref{eq3:Part3a} we have
\begin{eqnarray*}
  |\langle Tf,g\rangle| &=& \lim_{k \rightarrow \infty} |\E \langle T(\widetilde{E}_kE_kf),\widetilde{E}'_kE'_kg\rangle|
   = \lim_{k \rightarrow \infty} |\E \langle Tf_k, g_k\rangle| \\
   &\leq&  \lim_{k \rightarrow \infty} \left( \E |\langle Tf_{k,\text{good}}, g_{k,\text{good}}\rangle|
   + \E |\langle Tf_{k,\text{good}}, g_{k,\text{bad}}\rangle|
   + \E |\langle Tf_{k,\text{bad}}, g_{k}\rangle|\right) \\
   &\leq&  \lim_{k \rightarrow \infty}  \left(\E |\langle Tf_{k,\text{good}}, g_{k,\text{good}}\rangle|
   +  M(\delta^m) \E \|g_{k,\text{bad}}\|_{L^2(\mu)}
   +  M(\delta^m) \E \|f_{k,\text{bad}}\|_{L^2(\mu)}\right).
\end{eqnarray*}

Now, we will estimate $\E \|f_{k,\text{bad}}\|_{L^2(\mu)}$.
We start with the first term $\widetilde E_{k,u,\text{good}} E_{k,u,\text{bad}}f$ in the definition of~$f_{k,\text{bad}}$.
Using property~\eqref{eq1:pro_Haar} of Haar functions, we have
\begin{align*}
  &\hspace{-1cm}\|\widetilde E_{k,u,\text{good}} E_{k,u,\text{bad}}f\|_{L^2(\mu)} \\
  &\ls  \bigg( \sum_{\substack{Q_1 \in \Dd_1,\text{good} \\k> \text{gen}(Q_1) \geq m}} \,
\sum_{\substack{Q_2 \in \Dd_2,\text{bad} \\ k>\text{gen}(Q_2) \geq m}}
\sum_{u_1=1}^{M_{Q_1}-1}
\sum_{u_2=1}^{M_{Q_2}-1}
|\langle f, h_{u_1}^{Q_1} \otimes h_{u_2}^{Q_2}\rangle|^2\bigg)^{1/2} \\
  &\leq \bigg( \sum_{\substack{Q_1 \in \Dd_1 \\k> \text{gen}(Q_1) \geq m}} \,
\sum_{\substack{Q_2 \in \Dd_2 \\ k>\text{gen}(Q_2) \geq m}}
\sum_{u_1=1}^{M_{Q_1}-1}
\sum_{u_2=1}^{M_{Q_2}-1}
\chi_{\text{bad}}(Q_2)
|\langle f, h_{u_1}^{Q_1} \otimes h_{u_2}^{Q_2}\rangle|^2\bigg)^{1/2},
\end{align*}
where  the function $\chi_{\text{bad}}(Q_2)$ is defined by
\[ \chi_{\text{bad}}(Q_2) := \left\{
      \begin{array}{l l}
        1, & \quad \text{if $Q_2$ is bad;}\\
        0, & \quad \text{if $Q_2$ is good.}
      \end{array} \right.\]
Hence,
\begin{eqnarray*}
  \lefteqn{\E \|\widetilde E_{k,u,\text{good}} E_{k,u,\text{bad}}f\|_{L^2(\mu)}}\\
  &=& \E_{(\Dd_1,\Dd'_1,\Dd_2,\Dd'_2)} \|\widetilde E_{k,u,\text{good}} E_{k,u,\text{bad}}f\|_{L^2(\mu)} \\
  &=&
 \E_{(\Dd_1,\Dd'_1,\Dd_2)} \bigg( \sum_{\substack{Q_1 \in \Dd_1 \\k> \text{gen}(Q_1) \geq m}} \,
\sum_{\substack{Q_2 \in \Dd_2 \\ k>\text{gen}(Q_2) \geq m}}
\sum_{u_1=1}^{M_{Q_1}-1}
\sum_{u_2=1}^{M_{Q_2}-1}
\Pb(Q_2 \in \Dd'_2\text{-bad})
|\langle f, h_{u_1}^{Q_1} \otimes h_{u_2}^{Q_2}\rangle|^2\bigg)^{1/2} \\
&\ls& c(r),
\end{eqnarray*}
where the last inequality follows from Theorem~\ref{thm.10.2.HM12} with $c(r) \sim \delta^{r\gamma_2\kappa}$, where $\delta \in (0,1)$, $\gamma_2 \in (0,1)$ and $\kappa \in (0,1]$.
It is clear that~$c(r) \rightarrow 0$, when~$r \rightarrow \infty$.
Recall that $ \Dd'_1\text{-bad}$ and $\Dd'_2\text{-bad}$ are collections of all bad cubes $Q_1 \in \Dd_1$ and $Q_2 \in \Dd_2$, respectively (see Section~\ref{subsec:good_bad_cube}).
Here~$\Pb$ is the probability measure on the collection~$\Omega$ of random dyadic grids.

Following a similar calculation, we obtain the same result for other terms in the definition of~$f_{k,\text{bad}}$. Then we obtain the bound
\begin{eqnarray*}
\E  \|f_{k,\text{bad}}\|_{L^2(\mu)} \ls c(r).
\end{eqnarray*}

The same result holds for $\E  \|g_{k,\text{bad}}\|_{L^2(\mu)}$.
Fixing~$r$ to be sufficiently large, we have shown that
\begin{equation}\label{Initialeq.1a}
0.7M(\delta^m) \leq |\langle Tf,g\rangle| \leq \lim_{k \rightarrow \infty} |\E\langle Tf_{k,\text{good}}, g_{k,\text{good}}\rangle| + 0.1 M(\delta^m).
\end{equation}

Notice that $|\E\langle Tf_{k,\text{good}}, g_{k,\text{good}}\rangle|$
can be split into 16 terms, and
our proof is reduced to showing that
\begin{eqnarray}\label{coreeq.1a}
  |\E\langle Tf_{k,\text{good}}, g_{k,\text{good}}\rangle|
 &=&
 |\E \langle T( \widetilde E_{k,u,\text{good}} E_{k,u,\text{good}}f),  \widetilde E'_{k,u,\text{good}} E'_{k,u,\text{good}}g\rangle \noz \\
 &&\,+ \cdots +
 \E \langle T( \widetilde E_{k,0,\text{good}} E_{k,0,\text{good}}f),  \widetilde E'_{k,0,\text{good}} E'_{k,0,\text{good}}g\rangle| \noz \\
  &=&
  \Bigg|\E \sum_{\substack{Q_1,Q_2 \text{ good } \\ k>\text{gen}(Q_1), \text{gen}(Q_2) \geq m }} \,
  \sum_{\substack{Q'_1,Q'_2 \text{ good} \\ k>\text{gen}(Q'_1), \text{gen}(Q'_2) \geq m }}
  \, \sum_{\substack{u_1,u_2 \\u'_1,u'_2}}
  \langle f, h_{u_1}^{Q_1} \otimes h_{u_2}^{Q_2}\rangle \noz\\
  &&\quad\times \langle g, h_{u'_1}^{Q'_1} \otimes h_{u'_2}^{Q'_2}\rangle
  \langle T(h_{u_1}^{Q_1} \otimes h_{u_2}^{Q_2}), h_{u'_1}^{Q'_1} \otimes h_{u'_2}^{Q'_2}\rangle  \noz\\
  && \,+ \cdots +   \E \sum_{\substack{Q_1,Q_2 \text{ good } \\ \text{gen}(Q_1)= \text{gen}(Q_2) = m }} \,
  \sum_{\substack{Q'_1,Q'_2 \text{ good} \\ \text{gen}(Q'_1)=\text{gen}(Q'_2) = m }}
  \langle f, h_{0}^{Q_1} \otimes h_{0}^{Q_2}\rangle \noz\\
  &&\quad\times \langle g, h_{0}^{Q'_1} \otimes h_{0}^{Q'_2}\rangle
  \langle T(h_{0}^{Q_1} \otimes h_{0}^{Q_2}), h_{0}^{Q'_1} \otimes h_{0}^{Q'_2}\rangle \Bigg| \noz\\
    &\leq& (C + 0.1 M(\delta^m)) \|f\|_{L^2(\mu)} \|g\|_{L^2(\mu)},
\end{eqnarray}
where~$C$ is a constant depending only on the assumptions and the $\bmo_{\textup{prod}}(\mu)$ norms of the four $S(1)$, $S \in \{T,T^*,T_1,T^*_1\}$, but independent of the \emph{a priori} bound.

Combining with~\eqref{Initialeq.1a}, we get
$0.6M(\delta^m)\leq C+ 0.1M(\delta^m),$
which implies
$M(\delta^m)\leq 2C.$

%%%%%%%%%%%%%%%%%%%%%%%%%%%%%%%%%%%%%%%%%%%%%%%%%%%%%%%%%%%%%%%%%%%%%%%%%%%%
\section{Proof of Theorem~\ref{thm:T1thm_ver3} - the Full $T(1)$ Theorem}\label{sec:T1 full}
In this section, we consider the most general version of the~$T(1)$ theorem on~$(\mathcal{X},\rho,\mu)$ in which the assumption of \emph{a priori} boundedness of~$T$ is relaxed (Theorem~\ref{thm:T1thm_ver3}). We only assume that  the bilinear form~$\langle Tf,g \rangle$ is defined for functions~$f$ and~$g \in C_c^{\eta}(\mathcal{X})$.

Let~$T$ be a product non-homogeneous SIO as defined in Section~\ref{subsec:assume}.
Given~$\tau >0$, let~$T_{\tau}$ be the \emph{truncated operator} defined by
\begin{align*}
T_{\tau}f(x) &:= \int_{\rho(x,y)>\tau} K(x,y) f(y) \,d\mu(y) \\
&= \int_{\rho_1(x_1,y_1)>\tau} \int_{\rho_2(x_2,y_2)>\tau} K(x_1,x_2,y_1,y_2) f(y_1,y_2) \,d\mu_2(y_2) d\mu_1(y_1).   \\
\end{align*}
The truncated operator~$T_{\tau}$ is clearly well defined on compactly supported functions and satisfies the weaker \emph{a priori} boundedness. That is, for compactly supported functions~$f$ and~$g$ we have
\[|\langle T_{\tau}f,g \rangle| \leq C(f,g,\tau)\|f\|_{L^2(\mu)} \|g\|_{L^2(\mu)}. \]
As shown in~\cite[p.165]{NTV03}, it is reasonable to think of boundedness of~$T$ as the uniform boundedness of~$T_{\tau}$.
Therefore, to show that the operator~$T$ is bounded on~$L^2(\mu)$ it is sufficient to show that the truncated operator~$T_{\tau}$ is uniformly bounded on~$L^2(\mu)$.
To do so, we will show that if~$T$ satisfies the hypotheses of Theorem~\ref{thm:T1thm_ver3}, then~$T_{\tau}$ satisfies the hypotheses of Theorem~\ref{thm:T1thm_ver2}, which is independent on the choice of~$\tau$.
This result is stated in Theorem~\ref{thm:bilinear imply a priori} below.
Consequently, the sequence~$T_{\tau}$ is uniformly bounded on~$L^2(\mu)$ and hence, $T$ is bounded on~$L^2(\mu)$.

\begin{thm}\label{thm:bilinear imply a priori}
Let~$T$ be a biparameter SIO with the bilinear form~$\langle Tf,g \rangle$ defined for functions~$f$ and~$g \in C_c^{\eta}(\mathcal{X})$. Assume that~$T$ is weakly bounded in the sense of Assumptions~\ref{assum:5b}--\ref{assum:7b}. Let~$T_{\tau}$ be truncated operators, where~$\tau >0$. Then for all $S \in \{T, T^*,T_1,T_1^*\}$, the condition $S(1) \in \bmo_{\textup{prod}}(\mu)$ implies that $S_{\tau}(1) \in \bmo_{\textup{prod}}(\mu)$ uniformly in~$\tau$.
Moreover, $S_{\tau}$ is weakly bounded (with uniform estimates) in the sense of Assumptions~\ref{assum_5} and~\ref{assum_6}.
\end{thm}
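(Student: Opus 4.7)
The plan is to prove Theorem~\ref{thm:bilinear imply a priori} by comparing $S_\tau$ with $S$ via the operator $S - S_\tau$, whose kernel equals $K_S(x,y)$ on the ``near-diagonal'' set $N_\tau := \{(x,y) : \rho_1(x_1,y_1) \le \tau \text{ or } \rho_2(x_2,y_2) \le \tau\}$ and vanishes elsewhere. Because $N_\tau$ is not a product set, it is convenient to decompose
\begin{equation*}
1 - \chi_{\rho_1>\tau}\chi_{\rho_2>\tau} = \chi_{\rho_1\le\tau} + \chi_{\rho_2\le\tau} - \chi_{\rho_1\le\tau,\,\rho_2\le\tau},
\end{equation*}
so that the correction $S - S_\tau$ splits into three pieces, each carrying at most one ``small distance'' constraint per variable. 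By symmetry of the hypotheses under $T \leftrightarrow T^*, T_1, T_1^*$, it suffices to treat $S = T$.

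For the BMO condition, I would first extend the interpretation of $\langle T(1), h_{Q_1}\otimes h_{Q_2}\rangle$ given in Section~\ref{subsec:intepretation} to $\langle T_\tau(1), h_{Q_1}\otimes h_{Q_2}\rangle$, which is legitimate because the truncated kernel satisfies the same pointwise size estimate as $K$. Writing
\begin{equation*}
\langle T_\tau(1), h_{Q_1}\otimes h_{Q_2}\rangle = \langle T(1), h_{Q_1}\otimes h_{Q_2}\rangle - \langle (T - T_\tau)(1), h_{Q_1}\otimes h_{Q_2}\rangle,
\end{equation*}
the first summand is controlled by $\|T(1)\|_{\bmo_{\textup{prod}}(\mu)}$ via the duality in Theorem~\ref{thm:product_duality}. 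For the correction, I would insert the three-piece expansion above and bound each piece by exploiting the cancellation $\int h_{Q_i}\,d\mu_i = 0$ together with either the full H\"{o}lder estimate or the mixed H\"{o}lder--size estimate of Assumption~\ref{assum_2} (whichever is available in the coordinate where cancellation is used), and then invoking Lemma~\ref{upper_dbl_lem1}. The crucial point is that the cancellation factor $\rho_i(\cdot,\cdot)^{\alpha_i}$ in the H\"{o}lder bound absorbs the $\tau$-dependence arising from restricting to $\{\rho_i \le \tau\}$, leaving a bound depending only on $\ell(Q_i)$ and geometric constants. Summing against the $\bmo_{\textup{prod}}$-testing family $\{R \subset \Omega\}$ then yields the uniform estimate.

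For the weak boundedness of $T_\tau$ in the sense of Assumptions~\ref{assum_5}--\ref{assum_6}, I would bridge between the rough test functions $\chi_B$ and the smooth ones $\widetilde{\chi}_{B,\epsilon}$ from Assumptions~\ref{assum:5b}--\ref{assum:7b}. Writing $\chi_B = \widetilde{\chi}_{B,\epsilon} - (\widetilde{\chi}_{B,\epsilon} - \chi_B)$ and applying the decomposition of $T_\tau = T - (T - T_\tau)$, the main term $\langle T(\widetilde{\chi}_{B_1',\epsilon}\otimes\widetilde{\chi}_{B_2',\epsilon}), \widetilde{\chi}_{B_1,\epsilon}\otimes\widetilde{\chi}_{B_2,\epsilon}\rangle$ is controlled by the weak boundedness hypothesis on $T$. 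The shell remainders $\widetilde{\chi}_{B,\epsilon} - \chi_B$, supported in $(1+\epsilon)B \setminus B$, and the near-diagonal correction $T - T_\tau$ are handled via Assumptions~\ref{assum_3}--\ref{assum_4}: the partial kernel representation together with integration against the appropriate size or H\"{o}lder bound gives a contribution of order $\mu_1(\Lambda B_1)^{1/2}\mu_2(\Lambda B_2)^{1/2}$ (or the analogous weighted versions for the mixed WBP/BMO conditions), with absolute constants. The analogous argument yields Assumption~\ref{assum_6}, using the zero-mean hypothesis on $a_{B_i}$ to produce additional cancellation that eats the truncation.

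The main obstacle will be keeping all estimates uniform in $\tau$. Because the rough cutoff $\chi_{\rho_i>\tau}$ destroys the smoothness of the kernel, one cannot apply H\"{o}lder estimates to $K_\tau$ directly; all such estimates must be applied to the original $K$, with the truncation treated as a multiplicative cutoff within the integral. In the product setting, the added difficulty is choosing the correct form of the H\"{o}lder condition (full, or mixed in only one coordinate) for each of the three pieces of the near-diagonal decomposition, depending on how much Haar cancellation is available in each variable. Careful bookkeeping of these cases, and checking that the resulting integrals over $\{\rho_i \le \tau\}$ contribute only $\tau$-independent constants after cancellation, is the principal technical step.
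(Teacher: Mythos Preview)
Your comparison strategy $T_\tau = T - (T - T_\tau)$ has a genuine gap. The correction $(T-T_\tau)$ carries the kernel $K(x,y)$ on the near-diagonal region $N_\tau$, and this kernel is \emph{not} locally integrable there: the size bound $|K(x,y)|\lesssim \lambda_1(x_1,\rho_1(x_1,y_1))^{-1}\lambda_2(x_2,\rho_2(x_2,y_2))^{-1}$ blows up as $y\to x$, and $\int_{\rho_i(x_i,y_i)\le\tau}\lambda_i(x_i,\rho_i(x_i,y_i))^{-1}\,d\mu_i(y_i)$ is typically divergent. Consequently $(T-T_\tau)f$ is not given by an absolutely convergent kernel integral when the test functions have overlapping supports, which is exactly the situation in the weak boundedness estimates. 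You cannot therefore write the difference $\langle T_\tau(\chi_{B_1}\otimes\chi_{B_2}),\chi_{B_1}\otimes\chi_{B_2}\rangle - \langle T(\widetilde\chi_{B_1}\otimes\widetilde\chi_{B_2}),\widetilde\chi_{B_1}\otimes\widetilde\chi_{B_2}\rangle$ as a kernel integral and estimate it with Assumptions~\ref{assum_3}--\ref{assum_4}. The same obstruction hits your BMO argument: the Haar cancellation is in the $x$ variable, but the singularity is in the $y$-integration near $x$, and the H\"older condition you want to invoke requires $\rho_i(x_i,y_i)\ge C_K\rho_i(x_i,x_{Q_i})\sim\ell(Q_i)$, which fails on $\{\rho_i\le\tau\}$ once $\tau<\ell(Q_i)$.

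The paper avoids this by never forming $T-T_\tau$. Instead it works with $T_\tau$ directly (which \emph{is} an absolutely convergent integral) and, for the delicate inner piece $A_1(U,V,\tau)=\langle T_\tau(\chi_U\otimes\chi_V),h_{Q_1}\otimes h_{Q_2}\rangle$, proves a pointwise bound $|T_\tau(\widetilde\chi_{U,\epsilon}\otimes\widetilde\chi_{V,\epsilon})(x)|\le C$. When $\tau$ is large relative to $r_U,r_V$ this follows from the size condition alone. When $\tau$ is small, the paper runs a climbing-scale argument: one builds a geometric sequence of balls $B^j=B(x_1,N^j\tau)\times B(x_2,N^j\tau)$, stops at the first $n$ where either the measure growth is controlled or $U\times V\subset B^n$, and bounds $|T_\tau-T_{r_n}|$ by summing the size estimate over the dyadic annuli $B^j\setminus B^{j-1}$. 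The remaining $T_{r_n}$ is then compared, via correction terms $D_1,D_2,D_3$ built from the smooth bump $\widetilde\chi_{B^{n-1}}$ and the point mass $\delta_x$, to pairings of the untruncated $T$ against $C^\eta_c$ functions, where Assumptions~\ref{assum:5b}--\ref{assum:7b} apply. This is the standard Nazarov--Treil--Volberg device in the one-parameter theory, adapted here to the product setting; it is precisely what replaces your attempted near-diagonal subtraction.
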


\begin{proof}
  We give the proof for~$S = T$. The proofs for~$T^*, T_1,T_1^*$ can be done analogously.

  Let $(\mathcal{X},\rho,\mu) := (X_1 \times X_2, \rho_1 \times \rho_2, \mu_1 \times \mu_2)$ be a non-homogeneous bi-parameter quasimetric space.
Let $\mathcal{D}_1$ and $\mathcal{D}_2$ be fixed dyadic systems in~$X_1$ and~$X_2$, respectively.
Take~$Q_1 \in \Dd_1$ and~$Q_2 \in \Dd_2$.
Let~$h_{Q_1} := h_{u_1}^{Q_1}$ and~$h_{Q_2} := h_{u_2}^{Q_2}$ be Haar functions as defined in Section~\ref{subsec:Haar_func}.
Fix $\tau >0$.
To interpret the condition~$T_{\tau}(1) \in \bmo_{\textup{prod}}(\mu)$, it is enough to define the pairing~$\langle T_{\tau}(1) , h_{Q_1}\otimes h_{Q_2}\rangle$.

Let~$U = B(x_U,r_U) \subset X_1$ and~$V = B(x_V,r_V) \subset X_2$ be arbitrary balls with the property that
$$C_KB(Q_1) := B(x_{Q_1}, C_KC_Q\ell(Q_1)) \subset U  \text{ \,and\, } C_KB(Q_2) := B(x_{Q_2}, C_KC_Q\ell(Q_2)) \subset V.$$
Note that for $x_1 \in Q_1$ and $y_1 \in U^c$ we have
$$\rho_1(x_{Q_1},y_1) \geq C_KC_Q \ell(Q_1) \geq C_K \rho_1(x_{Q_1},x_1).$$
Similarly, for $x_2 \in Q_2$ and $y_2 \in V^c$ we also have
$\rho_2(x_{Q_2},y_2) \geq C_K \rho_2(x_{Q_2},x_2)$.

Set~$\langle T_{\tau}1, h_{Q_1} \otimes h_{Q_2}\rangle := \sum_{i=1}^{4}A_i(U,V,\tau)$ where
\begin{eqnarray*}
  A_1(U,V,\tau) &:=& \langle T_{\tau}(\chi_U \otimes \chi_V), h_{Q_1} \otimes h_{Q_2}\rangle, \\
  A_2(U,V,\tau) &:=& \langle T_{\tau}(\chi_{U^c} \otimes \chi_V), h_{Q_1} \otimes h_{Q_2}\rangle, \\
  A_3(U,V,\tau) &:=& \langle T_{\tau}(\chi_{U} \otimes \chi_{V^c}), h_{Q_1} \otimes h_{Q_2}\rangle, \quad \text{and} \\
  A_4(U,V,\tau) &:=& \langle T_{\tau}(\chi_{U^c} \otimes \chi_{V^c}), h_{Q_1} \otimes h_{Q_2}\rangle.
\end{eqnarray*}
Following the same techniques used for $A_2(U,V)$, $A_3(U,V)$ and $A_4(U,V)$ in Section~\ref{subsec:intepretation}, we can show that $A_2(U,V,\tau)$, $A_3(U,V,\tau)$ and $A_4(U,V,\tau)$ are bounded by $\mu_1(U)^{1/2} \mu_2(V)^{1/2}$.

We are left to estimate $A_1(U,V,\tau)$. We will consider two cases:
$\tau > \max \{r_U, r_V\}$ and $\tau \leq \max \{r_U, r_V\}$.

Suppose $\tau > \max \{r_U, r_V\}$. Using the full standard kernel estimate assumption (Assumption~\ref{assum_2}), the doubling property of~$\lambda_1$ and~$\lambda_2$, and upper doubling property of~$\mu_1$ and~$\mu_2$, for each point $x = x_1 \times x_2 \in U \times V$ we have
\begin{align*}
  &|T_{\tau}(\chi_U \otimes \chi_V)(x)|\\
  &= \bigg| \int_{\rho(x,y) >\tau} K(x,y) \chi_U(y) \chi_V(y) \,d\mu(y)\bigg| \\
  &\leq \int_{U: \rho_1(x_1,y_1) >\tau}  \int_{V: \rho_2(x_2,y_2) >\tau}
  |K(x_1,x_2,y_1,y_1)| \,d\mu_2(y_2) d\mu_1(y_1) \\
  &\leq C \int_{U: \rho_1(x_1,y_1) >\tau}  \int_{V: \rho_2(x_2,y_2) >\tau}
  \frac{1}{\lambda_1(y_1,\rho_1(x_1,y_1))}
  \frac{1}{\lambda_2(y_2,\rho_2(x_2,y_2))} \,d\mu_2(y_2) d\mu_1(y_1) \\
  &< C\int_U \int_V \frac{1}{\lambda_1(y_1,\tau)}
  \frac{1}{\lambda_2(y_2,\tau)}
  \,d\mu_2(y_2) d\mu_1(y_1) \\
  &<C \int_U \int_V
   \frac{1}{\lambda_1(y_1,r_U)} \frac{1}{\lambda_2(y_2,r_V)}
   \,d\mu_2(y_2) d\mu_1(y_1)\\
  &\leq C \mu_1(U) \mu_2(V) \frac{1}{\lambda_1(x_U,r_U)} \frac{1}{\lambda_2(x_U,r_V)} \\
  &\leq C \mu_1(U) \mu_2(V) \frac{1}{\mu_1(U)} \frac{1}{\mu_2(V)} \\
  & = C.
\end{align*}
Hence,
\begin{equation}\label{eq5:reduce}
  \int_{U \times V} |T_{\tau}(\chi_U \otimes \chi_V)|^2 \,d\mu
  \ls \mu_1(U) \mu_2(V).
\end{equation}
Therefore
\begin{align*}
  |A_1(U,V,\tau)|
  &= |\langle T_{\tau}(\chi_U \otimes \chi_V), h_{Q_1} \otimes h_{Q_2}\rangle|
  \leq \| T_{\tau}(\chi_U \otimes \chi_V)\|_{L^2(U\times V)}
  \|  h_{Q_1} \otimes h_{Q_2}\|_{L^2(\mu)}\\
  &\ls \mu_1(U)^{1/2} \mu_2(V)^{1/2}.
\end{align*}

Notice also the proof of inequality~\eqref{eq5:reduce} holds for every balls~$U \in X_1$ and~$V \in X_2$. Hence Assumptions~\ref{assum_5} and~\ref{assum_6} on the weak boundedness properties are satisfied.

Next, we consider the case $\tau \leq \max \{r_U, r_V\}$.
Fix $\epsilon \in (0,1]$. It is sufficient to show that for all~$x \in U\times V$, we have
\begin{equation}\label{eq3:reduce}
 |T_{\tau}(\widetilde{\chi}_{U,\epsilon} \otimes \widetilde{\chi}_{V,\epsilon})(x)| \ls C,
\end{equation}
where $\chi_U \leq \widetilde{\chi}_{U,\epsilon} \leq \chi_{(1+\epsilon)U}$
and $\chi_V \leq \widetilde{\chi}_{V,\epsilon} \leq \chi_{(1+\epsilon)V}$.
This is because~\eqref{eq3:reduce} implies~\eqref{eq5:reduce}, which results in $|A_1(U,V,\tau)| \ls \mu_1(U)^{1/2} \mu_2(V)^{1/2}$ and Assumptions~\ref{assum_5} and~\ref{assum_6} on the weak boundedness properties hold.

Let $N := 3C_K\Lambda$, where $C_K >1$ is from the kernel estimates and $\Lambda >1$ is from the weak boundedness properties.
Fix a point $x = x_{1} \times x_{2} \in \mathcal{X}$, where $x_{1} \in U$ and~$x_{2} \in V$.
Consider a sequence of balls $B^j = B_1^j \times B_2^j := B(x_{1},r_j) \times B(x_{2},r_j)$, where $r_j = N^j\tau$.
Let~$n$ be the smallest number such that either $\mu_1(B_1^n) \leq \mathcal{C}\mu_1(B_1^{n-1})$ and $\mu_2(B_2^n) \leq \mathcal{C}\mu_2(B_2^{n-1})$, where $\mathcal{C} = (2C_{\lambda_1}^{\log_2N}C_{\lambda_2}^{\log_2N})^{1/2}$, or $U\subset B_1^n$ and $ V\subset B_2^n$.
By construction, for $j = 0,\ldots,n-1$ we have
\begin{equation}\label{eq7:reduce}
\mu_1(B^j_1) < \mathcal{C}^{j+1-n}\mu_1(B_1^{n-1})\quad \text{and}\quad
\mu_2(B^j_2) < \mathcal{C}^{j+1-n}\mu_2(B_2^{n-1}).
\end{equation}

We write
\begin{equation}\label{eq1:reduce}
|T_{\tau}(\widetilde{\chi}_{U,\epsilon} \otimes \widetilde{\chi}_{V,\epsilon})|
\leq |T_{\tau}(\widetilde{\chi}_{U,\epsilon} \otimes \widetilde{\chi}_{V,\epsilon})
- T_{r_n}(\widetilde{\chi}_{U,\epsilon} \otimes \widetilde{\chi}_{V,\epsilon})|
+ |T_{r_n}(\widetilde{\chi}_{U,\epsilon} \otimes \widetilde{\chi}_{V,\epsilon})|.
\end{equation}

We will consider each of the two terms on the right-hand side of~\eqref{eq1:reduce}.
Using Assumption~\ref{assum_2}, property~\eqref{eq7:reduce}, the facts that~$\mu_1$,~$\mu_2$ are upper doubling and~$\lambda_1$,~$\lambda_2$ are doubling we obtain
\begin{align*}
  &|T_{\tau}(\widetilde{\chi}_{U,\epsilon} \otimes \widetilde{\chi}_{V,\epsilon})
- T_{r_n}(\widetilde{\chi}_{U,\epsilon} \otimes \widetilde{\chi}_{V,\epsilon})|\\
&= \bigg| \int_{\rho(x,y)>\tau} K(x,y) \widetilde{\chi}_{U,\epsilon} \widetilde{\chi}_{V,\epsilon} \,d\mu(y)
- \int_{\rho(x,y)>3C_KR} K(x,y) \widetilde{\chi}_{U,\epsilon} \widetilde{\chi}_{V,\epsilon} \,d\mu(y) \bigg| \\
  &\leq \int_{B^n\backslash B^0} |K(x,y)| \,d\mu(y)
  \leq \sum_{j=1}^n \int_{B^j \backslash B^{j-1}} |K(x,y)| \,d\mu(y)\\
  &\leq \sum_{j=1}^n \int_{B_1^j \backslash B_1^{j-1}}
  \int_{B_2^j \backslash B_2^{j-1}}
  \frac{1}{\lambda_1(x_1,\rho_1(x_1,y_1))}
  \frac{1}{\lambda_2(x_2,\rho_1(x_2,y_2))} \,d\mu_2(y_2) d\mu_1(y_1) \\
  &\leq \sum_{j=1}^n \int_{B_1^j} \int_{B_2^j}
  \frac{1}{\lambda_1(x_1,r_{j-1})}
  \frac{1}{\lambda_2(x_2,r_{j-1})} \,d\mu_2(y_2) d\mu_1(y_1)
  =  \sum_{j=1}^n
  \frac{\mu_1(B_1^j)}{\lambda_1(x_1,r_{j-1})}
  \frac{\mu_2(B_2^j)}{\lambda_2(x_2,r_{j-1})}\\
  & \ls \sum_{j=1}^{n-1} \mathcal{C}^{2(j+1-n)}
  \frac{\mu_1(B_1^{n-1})}{\lambda_1(x_1,r_{j-1})}
  \frac{\mu_2(B_2^{n-1})}{\lambda_2(x_2,r_{j-1})}
  \leq \sum_{j=1}^{n-1} \mathcal{C}^{2(j+1-n)}
  \frac{\lambda_1(x_1,r_{n-1})}{\lambda_1(x_1,r_{j-1})}
  \frac{\lambda_2(x_2,r_{n-1})}{\lambda_2(x_2,r_{j-1})} \\
  &= \sum_{j=1}^{n-1} \mathcal{C}^{2(j+1-n)}
  C_{\lambda_1}^{\log_2 N^{n-j}} C_{\lambda_2}^{\log_2 N^{n-j}}
  = \sum_{j=1}^{n-1} (2C_{\lambda_1}^{\log_2 N} C_{\lambda_2}^{\log_2 N})^{j+1-n} C_{\lambda_1}^{\log_2 N^{n-j}}
  C_{\lambda_2}^{\log_2 N^{n-j}} \\
  &= 2C_{\lambda_1}^{\log_2 N} C_{\lambda_2}^{\log_2 N}
  \sum_{j=1}^{n-1}  2^{j-n}\\
  &\leq C(C_{\lambda_1}, C_{\lambda_2},N) .
\end{align*}

Next, we consider $|T_{r_n}(\widetilde{\chi}_{U,\epsilon} \otimes \widetilde{\chi}_{V,\epsilon})|$.
If we stop because $U \subset B_1^n$ and $V \subset B_2^n$,
then we have
$r_U \leq r_n$ and $r_V \leq r_n$. Hence, $r_n >\max\{r_U, r_V\}$.
Following the same calculation as in the case $\tau > \max\{r_U, r_V\}$ we get
\[
|T_{r_n}(\widetilde{\chi}_{U,\epsilon} \otimes \widetilde{\chi}_{V,\epsilon})| \leq C.
\]
Thus, we obtain~\eqref{eq3:reduce}.

Suppose we stop because  $\mu_1(B_1^n) \leq \mathcal{C}\mu_1(B_1^{n-1})$ and $\mu_2(B_2^n) \leq \mathcal{C}\mu_2(B_2^{n-1})$, where $\mathcal{C} = (2C_{\lambda_1}^{\log_2N}C_{\lambda_2}^{\log_2N})^{1/2}$.
Define $F_1 := \int_{X_1} \widetilde{\chi}_{B^{n-1}_{1,\epsilon}} \,d\mu_1$ and $F_2 := \int_{X_2} \widetilde{\chi}_{B^{n-1}_{2,\epsilon}} \,d\mu_2$,
where $\chi_{B_1^{n-1}} \leq \widetilde{\chi}_{B^{n-1}_{1,\epsilon}} \leq \chi_{(1+\epsilon)B_1^{n-1}}$ and $\chi_{B_2^{n-1}} \leq \widetilde{\chi}_{B^{n-1}_{2,\epsilon}} \leq \chi_{(1+\epsilon)B_2^{n-1}}$.
Recall that for~$i = 1,2$, we have
$\delta_{x_{i}}(x) = 1$ if $x=x_{i}$ and $0$ otherwise.
Let
\begin{align*}
D_1 &:= \frac{1}{F_1 F_2} \langle \widetilde{\chi}_{B^{n-1}_{1,\epsilon}} \otimes \widetilde{\chi}_{B^{n-1}_{2,\epsilon}},
T(\widetilde{\chi}_{U,\epsilon} \otimes \widetilde{\chi}_{V,\epsilon})
\rangle, \\
D_2 &:= \frac{1}{F_1} \langle \widetilde{\chi}_{B^{n-1}_{1,\epsilon}} \otimes \delta_{x_2},
T(\widetilde{\chi}_{U,\epsilon} \otimes \widetilde{\chi}_{V,\epsilon})
\rangle, \quad \text{and} \\
D_3 &:= \frac{1}{F_2} \langle \delta_{x_1} \otimes \widetilde{\chi}_{B^{n-1}_{2,\epsilon}},
T(\widetilde{\chi}_{U,\epsilon} \otimes \widetilde{\chi}_{V,\epsilon})
\rangle.
\end{align*}
We write
\begin{equation}\label{eq2:reduce}
  |T_{r_n}(\widetilde{\chi}_{U,\epsilon} \otimes \widetilde{\chi}_{V,\epsilon})|
  \leq |T_{r_n}(\widetilde{\chi}_{U,\epsilon} \otimes \widetilde{\chi}_{V,\epsilon}) + D_1 - D_2  - D_3| + |D_1| + |D_2| + |D_3|.
\end{equation}

We will estimate each term in the right-hand side of~\eqref{eq2:reduce}. We begin with $|D_1|$. Notice that $F_1 \geq \mu_1(B_1^{n-1})$ and $F_2 \geq \mu_2(B_2^{n-1})$. This together with the weak boundedness property of~$T$,  the fact that $\mu_1(B_1^n) \leq \mathcal{C}\mu_1(B_1^{n-1})$, $\mu_2(B_2^n) \leq \mathcal{C}\mu_2(B_2^{n-1})$ and $N > \Lambda$ will give bounds for $|D_1|$, $|D_2|$ and $|D_3|$.
\begin{align*}
  |D_1| &= \bigg|\frac{1}{F_1 F_2} \langle \widetilde{\chi}_{B^{n-1}_{1,\epsilon}} \otimes \widetilde{\chi}_{B^{n-1}_{2,\epsilon}},
T(\widetilde{\chi}_{U,\epsilon} \otimes \widetilde{\chi}_{V,\epsilon})
\rangle\bigg|
\leq C \frac{1}{\mu_1(B_1^{n-1}) \mu_2(B_2^{n-1})}
\mu_1(\Lambda B_1^{n-1}) \mu_2(\Lambda B_2^{n-1}) \\
& \leq C\mathcal{C}^2  \frac{1}{\mu_1(B_1^{n}) \mu_2(B_2^{n})}
\mu_1(N B_1^{n-1}) \mu_2(N B_2^{n-1})
 = C\mathcal{C}^2  \frac{1}{\mu_1(B_1^{n}) \mu_2(B_2^{n})}
\mu_1(B_1^{n}) \mu_2(B_2^{n}) \\
&\ls C.
\end{align*}
Below we estimate $|D_2|$. The bound for $|D_3|$ can be found analogously.
\begin{align*}
  |D_2| &= \bigg|\frac{1}{F_1} \langle \widetilde{\chi}_{B^{n-1}_{1,\epsilon}} \otimes \delta_{x_2},
T(\widetilde{\chi}_{U,\epsilon} \otimes \widetilde{\chi}_{V,\epsilon})
\rangle\bigg|
\leq C \frac{1}{\mu_1(B_1^{n-1})}
\mu_1(\Lambda B_1^{n-1}) \\
& \leq C\mathcal{C}  \frac{1}{\mu_1(B_1^{n})}
\mu_1(N B_1^{n-1})
 = C\mathcal{C} \frac{1}{\mu_1(B_1^{n})}
\mu_1(B_1^{n}) \\
&\ls C.
\end{align*}

We are left the term $|T_{r_n}(\widetilde{\chi}_{U,\epsilon} \otimes \widetilde{\chi}_{V,\epsilon}) + D_1(x) - D_2(x) - D_3(x)|$.
Note that
\begin{align*}
  &T_{r_n}(\widetilde{\chi}_{U,\epsilon} \otimes \widetilde{\chi}_{V,\epsilon})\\
  & = \int_{\rho(x,y)>r_n} K(x,y) \widetilde{\chi}_{U,\epsilon} \widetilde{\chi}_{V,\epsilon} \,d\mu
   = \int_{(1+\epsilon)U \backslash B_1^n}  \int_{(1+\epsilon)V \backslash B_2^n}
  T^*(\delta_{x_1} \otimes \delta_{x_2})
  \widetilde{\chi}_{U,\epsilon} \widetilde{\chi}_{V,\epsilon} \,d\mu.
\end{align*}
We also can write
\begin{align*}
  D_1& =
  \frac{1}{F_1 F_2} \langle \widetilde{\chi}_{B^{n-1}_{1,\epsilon}} \otimes \widetilde{\chi}_{B^{n-1}_{2,\epsilon}},
T(\widetilde{\chi}_{U,\epsilon} \otimes \widetilde{\chi}_{V,\epsilon})(\chi^c_{B_1^n} \otimes \chi^c_{B_2^n})
\rangle\\
& \hspace{0.5cm} +
 \frac{1}{F_1 F_2} \langle \widetilde{\chi}_{B^{n-1}_{1,\epsilon}} \otimes \widetilde{\chi}_{B^{n-1}_{2,\epsilon}},
T(\widetilde{\chi}_{U,\epsilon} \otimes \widetilde{\chi}_{V,\epsilon})(\chi_{B_1^n} \otimes \chi^c_{B_2^n})
\rangle\\
& \hspace{0.5cm} +
 \frac{1}{F_1 F_2} \langle \widetilde{\chi}_{B^{n-1}_{1,\epsilon}} \otimes \widetilde{\chi}_{B^{n-1}_{2,\epsilon}},
T(\widetilde{\chi}_{U,\epsilon} \otimes \widetilde{\chi}_{V,\epsilon})(\chi^c_{B_1^n} \otimes \chi_{B_2^n})
\rangle\\
& \hspace{0.5cm} +
 \frac{1}{F_1 F_2} \langle \widetilde{\chi}_{B^{n-1}_{1,\epsilon}} \otimes \widetilde{\chi}_{B^{n-1}_{2,\epsilon}},
T(\widetilde{\chi}_{U,\epsilon} \otimes \widetilde{\chi}_{V,\epsilon})(\chi_{B_1^n} \otimes \chi_{B_2^n})
\rangle\\
&= \int_{(1+\epsilon)U \backslash B_1^n} \int_{(1+\epsilon)V \backslash B_2^n}
T^*(F_1^{-1}\widetilde{\chi}_{B^{n-1}_{1,\epsilon}} \otimes F_2^{-1}\widetilde{\chi}_{B^{n-1}_{2,\epsilon}})
\widetilde{\chi}_{U,\epsilon} \widetilde{\chi}_{V,\epsilon} \,d\mu \\
& \hspace{0.5cm} +
 \frac{1}{F_1 F_2}\int_{\mathcal{X}}\widetilde{\chi}_{B^{n-1}_{1,\epsilon}} \widetilde{\chi}_{B^{n-1}_{2,\epsilon}}
T[(\widetilde{\chi}_{U,\epsilon} \otimes \widetilde{\chi}_{V,\epsilon})
(\chi_{B_1^n} \otimes \chi^c_{B_2^n})]\,d\mu \\
& \hspace{0.5cm} +
 \frac{1}{F_1 F_2}\int_{\mathcal{X}}\widetilde{\chi}_{B^{n-1}_{1,\epsilon}} \widetilde{\chi}_{B^{n-1}_{2,\epsilon}}
T[(\widetilde{\chi}_{U,\epsilon} \otimes \widetilde{\chi}_{V,\epsilon})
(\chi^c_{B_1^n} \otimes \chi_{B_2^n})]\,d\mu \\
& \hspace{0.5cm} +
 \frac{1}{F_1 F_2}\int_{\mathcal{X}}\widetilde{\chi}_{B^{n-1}_{1,\epsilon}} \widetilde{\chi}_{B^{n-1}_{2,\epsilon}}
T[(\widetilde{\chi}_{U,\epsilon} \otimes \widetilde{\chi}_{V,\epsilon})
(\chi_{B_1^n} \otimes \chi_{B_2^n})]\,d\mu \\
&=: D_{1,a} + D_{1,b} + D_{1,c} + D_{1,d}.
\end{align*}
Similarly we have
\begin{align*}
  D_2
  & = \int_{(1+\epsilon)U \backslash B_1^n} \int_{(1+\epsilon)V \backslash B_2^n}
T^*(\delta_{x_1} \otimes F_2^{-1}\widetilde{\chi}_{B^{n-1}_{2,\epsilon}})
\widetilde{\chi}_{U,\epsilon} \widetilde{\chi}_{V,\epsilon} \,d\mu \\
& \hspace{0.5cm} +
 \frac{1}{F_2}\int_{\mathcal{X}}\delta_{x_1} \widetilde{\chi}_{B^{n-1}_{2,\epsilon}}
T[(\widetilde{\chi}_{U,\epsilon} \otimes \widetilde{\chi}_{V,\epsilon})
(\chi_{B_1^n} \otimes \chi^c_{B_2^n})]\,d\mu \\
& \hspace{0.5cm} +
 \frac{1}{F_2}\int_{\mathcal{X}}\delta_{x_1} \widetilde{\chi}_{B^{n-1}_{2,\epsilon}}
T[(\widetilde{\chi}_{U,\epsilon} \otimes \widetilde{\chi}_{V,\epsilon})
(\chi^c_{B_1^n} \otimes \chi_{B_2^n})]\,d\mu \\
& \hspace{0.5cm} +
 \frac{1}{F_2}\int_{\mathcal{X}}\delta_{x_1} \widetilde{\chi}_{B^{n-1}_{2,\epsilon}}
T[(\widetilde{\chi}_{U,\epsilon} \otimes \widetilde{\chi}_{V,\epsilon})
(\chi_{B_1^n} \otimes \chi_{B_2^n})]\,d\mu \\
&=: D_{2,a} + D_{2,b} + D_{2,c} + D_{2,d}, \quad \text{and}
\end{align*}
\begin{align*}
  D_3
  & = \int_{(1+\epsilon)U \backslash B_1^n} \int_{(1+\epsilon)V \backslash B_2^n}
T^*(F_1^{-1}\widetilde{\chi}_{B^{n-1}_{1,\epsilon}} \otimes \delta_{x_2})
\widetilde{\chi}_{U,\epsilon} \widetilde{\chi}_{V,\epsilon} \,d\mu \\
& \hspace{0.5cm} +
 \frac{1}{F_1}\int_{\mathcal{X}}\widetilde{\chi}_{B^{n-1}_{1,\epsilon}} \delta_{x_2}
T[(\widetilde{\chi}_{U,\epsilon} \otimes \widetilde{\chi}_{V,\epsilon})
(\chi_{B_1^n} \otimes \chi^c_{B_2^n})]\,d\mu \\
& \hspace{0.5cm} +
 \frac{1}{F_1}\int_{\mathcal{X}}\widetilde{\chi}_{B^{n-1}_{1,\epsilon}} \delta_{x_2}
T[(\widetilde{\chi}_{U,\epsilon} \otimes \widetilde{\chi}_{V,\epsilon})
(\chi^c_{B_1^n} \otimes \chi_{B_2^n})]\,d\mu \\
& \hspace{0.5cm} +
 \frac{1}{F_1}\int_{\mathcal{X}}\widetilde{\chi}_{B^{n-1}_{1,\epsilon}} \delta_{x_2}
T[(\widetilde{\chi}_{U,\epsilon} \otimes \widetilde{\chi}_{V,\epsilon})
(\chi_{B_1^n} \otimes \chi_{B_2^n})]\,d\mu \\
&=: D_{3,a} + D_{3,b} + D_{3,c} + D_{3,d}.
\end{align*}
Now we have
\begin{align}\label{eq4:reduce}
  &|T_{r_n}(\widetilde{\chi}_{U,\epsilon} \otimes \widetilde{\chi}_{V,\epsilon}) + D_1(x) - D_2(x) - D_3(x)| \noz\\
   \hspace{0.5cm}& \leq
   |T_{r_n}(\widetilde{\chi}_{U,\epsilon} \otimes \widetilde{\chi}_{V,\epsilon}) + D_{1,a} - D_{2,a} - D_{3,a}| \noz\\
   \hspace{1cm}&+ |D_{1,b}| + |D_{1,c}| + |D_{1,d}|
   + |D_{2,b}| + |D_{2,c}| + |D_{2,d}|
   + |D_{3,b}| + |D_{3,c}| + |D_{3,d}|.
\end{align}
We will estimate each term in the right-hand side of~\eqref{eq4:reduce}.
We begin with
\begin{align*}
  &T_{r_n}(\widetilde{\chi}_{U,\epsilon} \otimes \widetilde{\chi}_{V,\epsilon}) + D_{1,a} - D_{2,a} - D_{3,a} \\
  & = \int_{(1+\epsilon)U \backslash B_1^n}  \int_{(1+\epsilon)V \backslash B_2^n}
  T^*\big(\delta_{x_1} \otimes \delta_{x_2}
  + F_1^{-1}\widetilde{\chi}_{B^{n-1}_{1,\epsilon}} \otimes F_2^{-1}\widetilde{\chi}_{B^{n-1}_{2,\epsilon}} \\
  &\hspace{0.5cm}- \delta_{x_1}\otimes F_2^{-1}\widetilde{\chi}_{B^{n-1}_{2,\epsilon}}
  - F_1^{-1}\widetilde{\chi}_{B^{n-1}_{1,\epsilon}} \otimes \delta_{x_2}\big)
  \widetilde{\chi}_{U,\epsilon} \widetilde{\chi}_{V,\epsilon} \,d\mu \\
  & =  \int_{(1+\epsilon)U \backslash B_1^n}  \int_{(1+\epsilon)V \backslash B_2^n}
  T^*\big((\delta_{x_1} - F_1^{-1}\widetilde{\chi}_{B^{n-1}_{1,\epsilon}}) \otimes (\delta_{x_2} - F_2^{-1}\widetilde{\chi}_{B^{n-1}_{2,\epsilon}})\big)
  \widetilde{\chi}_{U,\epsilon} \widetilde{\chi}_{V,\epsilon} \,d\mu \\
  &= \Big\langle
   T^*\big((\delta_{x_1} - F_1^{-1}\widetilde{\chi}_{B^{n-1}_{1,\epsilon}}) \otimes (\delta_{x_2} - F_2^{-1}\widetilde{\chi}_{B^{n-1}_{2,\epsilon}})\big),
  \widetilde{\chi}_{U,\epsilon}\chi^c_{B_1^n} \otimes \widetilde{\chi}_{V,\epsilon}\chi^c_{B_2^n}
   \Big\rangle\\
   & =: \langle T^*(\phi_1 \otimes \phi_2), \theta_1 \otimes \theta_2\rangle.
\end{align*}
Notice that $\int \phi_1 \,d\mu_1 = \int \phi_2 \,d\mu_2 = 0$, $\supp \phi_1 \cap \supp \theta_1 = \emptyset$ and $\supp \phi_2 \cap \supp \theta_2 = \emptyset$. Further more, for $x_1 \in \supp \theta_1$ and $y_1 \in \supp \phi_1$ we have
\[\rho_1(x_1, x_{B_1}) \geq r_n - (1+\delta)r_{n-1}
\geq (N -2) r_{n-1}
\geq (3C_K - 2C_K) r_{n-1} \geq C_K\rho_1(y_1,x_{B_1}).\]
Similarly,  for $x_2 \in \supp \theta_2$ and $y_2 \in \supp \phi_2$ we have
$\rho_2(x_2, x_{B_2}) \geq C_K\rho_2(y_2,x_{B_2}).$
By Lemma~\ref{lem1:property_kernel} we obtain
\begin{align*}
    &\hspace{-0.5cm}|T_{r_n}(\widetilde{\chi}_{U,\epsilon} \otimes \widetilde{\chi}_{V,\epsilon}) + D_{1,a} - D_{2,a} - D_{3,a}|\\
    &\ls \|\phi_1 \|_{L^1(\mu_1)} \|\phi_2 \|_{L^1(\mu_2)} \\
    &\hspace{0.5cm} \times
    \int_{(1+\epsilon)U \backslash B_1^n}
    \frac{\rho_1(y_1,x_{B_1})^{\al_1}}{\rho_1(x_1,x_{B_1})^{\al_1}
    \lambda_1(x_{B_1},\rho_1(x_1,x_{B_1})) } \,d\mu_1(x_1) \\
    &\hspace{0.5cm} \times
    \int_{(1+\epsilon)V \backslash B_2^n}
    \frac{\rho_2(y_2,x_{B_2})^{\al_2}}{\rho_2(x_2,x_{B_2})^{\al_2}
    \lambda_2(x_{B_2},\rho_2(x_2,x_{B_2})) } \,d\mu_2(x_2).
\end{align*}
By Lemma~\ref{upper_dbl_lem1}, the first integral above is dominated by
$ r_{n-1}^{\al_1} r_{n-1}^{-\al_1} = 1.$
Similarly, the second integral above is also bounded by 1.
Furthermore,
\[
\|\phi_1 \|_{L^1(\mu_1)}
= \| \delta_{x_1} - F_1^{-1}\widetilde{\chi}_{B^{n-1}_{1,\epsilon}} \|_{L^1(\mu_1)}
\leq \|\delta_{x_1}\|_{L^1(\mu_1)} + |F_1^{-1}| \|\widetilde{\chi}_{B^{n-1}_{1,\epsilon}}\|_{L^1(\mu_1)}
=1 + |F_1^{-1}| |F_1| \ls 1.
\]
The same result applies to $\|\phi_2 \|_{L^1(\mu_2)}$.
We conclude that
\[
|T_{r_n}(\widetilde{\chi}_{U,\epsilon} \otimes \widetilde{\chi}_{V,\epsilon}) + D_{1,a} - D_{2,a} - D_{3,a}| \ls 1.
\]

The remaining terms $ |D_{1,b}| + |D_{1,c}| + |D_{1,d}|
   + |D_{2,b}| + |D_{2,c}| + |D_{2,d}|
   + |D_{3,b}| + |D_{3,c}| + |D_{3,d}|$
can be estimated by $3|D_1| +3|D_2| + 3|D_3|$.  And since we have the desired estimates on $D_1$, $D_2$ and $D_3$ we are done.
\end{proof}

\section{Application to Carleson Measures on Reproducing Kernel Hilbert Spaces}\label{sec:appln}

\subsection{Equivalence between Certain Testing Conditions}

\begin{assume}\label{assum_3'}\textbf{(Partial kernel representation, v2)}
  If~$f=f_1 \otimes f_2 \in C^{\eta}_{c}(\mathcal{X}) $ and $g=g_1\otimes g_2 \in C^{\eta}_{c}(\mathcal{X}) $, then we assume the \emph{partial kernel representation}
  \[ \langle Tf,g\rangle = \int_{X_1} \int_{X_1} g_1(x_1) \langle g_2, K_1(x_1,y_1) f_2\rangle f_1(y_1)\,d\mu_1(x_1) \,d\mu_1(y_1) \quad {\rm for}\ \supp f_1 \cap \supp g_1 = \emptyset,\]
  \[ \langle Tf,g\rangle = \int_{X_2} \int_{X_2} g_2(x_2) \langle g_1, K_2(x_2,y_2) f_1\rangle g_2(y_2)\,d\mu_2(x_2) \,d\mu_2(y_2) \quad {\rm for}\ \supp f_2 \cap \supp g_2 = \emptyset.\]
\end{assume}
\begin{assume}\label{assum_4'}\textbf{(Partial boundedness $\times$ standard estimates, v2)}
The kernel from Assumption~\ref{assum_3'}
$K_1:(X_1 \times X_1)\backslash \left\{(x_1,y_1)\in X_1 \times X_1: x_1=y_1\right\} \rightarrow \C$
is assumed to satisfy the \emph{size condition}
\[\|K_1(x_1,y_1)\|_{L^2(X_2)\to L^2(X_2)} \leq C\frac{1}{\lambda_1(x_1,\rho_1(x_1,y_1))},\]
for some constant $C(f_2,g_2)$,
and the \emph{H\"{o}lder conditions}
\[\|K_1(x_1,y_1) - K_1(x_1',y_1)\|_{L^2(X_2)\to L^2(X_2)} \leq C \frac{\rho_1(x_1,x_1')^{\al_1}}{\rho_1(x_1,y_1)^{\al_1}\lambda_1(x_1,\rho_1(x_1,y_1))}\]
whenever~$\rho_1(x_1,y_1) \geq C_K \rho_1(x_1,x_1')$, and
\[\|K_1(x_1,y_1) - K_1(x_1,y_1')\|_{L^2(X_2)\to L^2(X_2)} \leq C \frac{\rho_1(y_1,y_1')^{\al_1}}{\rho_1(x_1,y_1)^{\al_1}\lambda_1(x_1,\rho_1(x_1,y_1))}\]
whenever~$\rho_1(x_1,y_1) \geq C_K \rho_1(y_1,y_1')$. Here the constant $C$ is independent of  $x_1$, $x'_1$ and $y_1$.

We also assume the analogous properties with the kernel~$K_2$.
\end{assume}

We point out that if $T$ satisfies a stronger  partial regularity condition, i.e.,  Assumptions \ref{assum_3'} and \ref{assum_4'}, then we have the following equivalent statements.
\begin{prop}\label{re1.2}
Let $(\mathcal{X},\rho,\mu)$ be a non-homogeneous bi-parameter quasimetric space.
Assume that $T: C^{\eta}_{c}(\mathcal{X}) \rightarrow C^{\eta}_{c}(\mathcal{X})'$ satisfies Assumptions \ref{assum_1}, \ref{assum_2}, \ref{assum_3'} and \ref{assum_4'}.
Then
the following are equivalent:

\begin{itemize}
\item[{\rm(i)}] $T$ is bounded on $L^2(\mathcal X)$;

\item[{\rm(ii)}] $T$ satisfies the weak boundedness property \eqref{assum:5b}--\eqref{assum:7b}, and $S(1)\in \bmo_{\textup{prod}}(\mu)$ where $S\in \{T, T^*, T_1, T_1^*\}$;

\item[{\rm(iii)}] the global testing of  $S$ with $S\in \{T, T^*, T_1, T_1^*\}$:
$$\int_{\mathcal X} |S(1_\Omega)|^2 d\mu\lesssim \mu(\Omega)\quad {\rm\ where}\  \Omega {\rm\ is\ an\ arbitrary\ open\ set\ in}\ \ \mathcal X\ {\rm with\ finite\ measure}.$$
\end{itemize}
\end{prop}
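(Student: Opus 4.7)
The plan is to combine Theorem~\ref{thm:T1thm_ver3}, which already establishes (i) $\Leftrightarrow$ (ii), with two additional implications: the trivial (i) $\Rightarrow$ (iii) and the substantive (iii) $\Rightarrow$ (ii). The first of these is immediate from the $L^2(\mu)$-boundedness inherited by each $S \in \{T, T^*, T_1, T_1^*\}$ (the adjoint and partial adjoint inherit boundedness from $T$), yielding $\|S(1_\Omega)\|_{L^2(\mu)}^2 \leq \|S\|^2\, \mu(\Omega)$.

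For (iii) $\Rightarrow$ (ii), I would first verify the weak boundedness Assumptions~\ref{assum:5b}--\ref{assum:7b} by applying the Cauchy--Schwarz inequality to one factor of the bilinear pairing and invoking (iii) on the other. For example, in Assumption~\ref{assum:5b},
$$|\langle T(\widetilde{\chi}_{B_1',\epsilon} \otimes \widetilde{\chi}_{B_2',\epsilon}), \widetilde{\chi}_{B_1,\epsilon} \otimes \widetilde{\chi}_{B_2,\epsilon}\rangle| \leq \|T(\widetilde{\chi}_{B_1',\epsilon} \otimes \widetilde{\chi}_{B_2',\epsilon})\|_{L^2(\mu)}\, \|\widetilde{\chi}_{B_1,\epsilon} \otimes \widetilde{\chi}_{B_2,\epsilon}\|_{L^2(\mu)},$$
with the first factor controlled by (iii) applied to $\Omega = (1+\epsilon)B_1' \times (1+\epsilon)B_2'$ and the second a trivial measure computation, the overlap of supports absorbing constants into $\Lambda$. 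The partial versions involving mean-zero $a_B$ or point masses $\delta_{x_B}$ are handled analogously, by absorbing them into the norm on one factor.

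The main step is to prove $S(1) \in \bmo_{\textup{prod}}(\mu)$. Fix a dyadic system $\mathcal{D}' = \mathcal{D}_1' \times \mathcal{D}_2'$ and an admissible open set $\Omega$ with $\mu(\Omega) < \infty$ as in Definition~\ref{defn:BMO_prod}. Split $1 = 1_\Omega + 1_{\Omega^c}$, so that for each good $R \in \mathcal{C}_{R'}$ with $R' \subset \Omega$,
$$\langle S(1), h_R\rangle = \langle S(1_\Omega), h_R\rangle + \langle S(1_{\Omega^c}), h_R\rangle.$$
The local contribution is controlled by orthonormality of the product Haar basis together with (iii):
$$\sum_{R' \subset \Omega} \sum_{R \in \mathcal{C}_{R'}} |\langle S(1_\Omega), h_R\rangle|^2 \leq \|S(1_\Omega)\|_{L^2(\mu)}^2 \lesssim \mu(\Omega).$$
For the far-field term we use $\supp(1_{\Omega^c}) \cap \supp h_R = \emptyset$ and decompose $\Omega^c = (\Omega^c \cap (Q_1^c \times X_2)) \sqcup (\Omega^c \cap (Q_1 \times Q_2^c))$ relative to $R = Q_1 \times Q_2$. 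On each piece the supports are separated in one fixed variable, so the partial kernel representation with operator-valued kernel (Assumptions~\ref{assum_3'}--\ref{assum_4'}), combined with the cancellation $\int h_{Q_i}\,d\mu_i = 0$ and the H\"older bound, produces the necessary decay; a Carleson-type summation analogous to that in Section~\ref{subsec:intepretation} then yields $\lesssim \mu(\Omega)$.

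The principal obstacle is the far-field estimate. Since $\Omega^c$ is not a tensor product, the partial kernel representation of Assumption~\ref{assum_3'} does not apply to $1_{\Omega^c}$ directly; the $R$-dependent splitting reduces the problem to slicewise pairings through the operator-valued kernels $K_1$ and $K_2$, and the delicate point is to verify that these slicewise bounds combine summably over good rectangles $R \subset \Omega$ to give $\mu(\Omega)$ rather than a larger quantity such as $\mu(X_1) \mu(\Omega)$. This is precisely where the structural hypothesis that $\Omega$ is covered by dyadic rectangles $R' \subset \Omega$ and the Carleson geometry of the good-rectangle sum become essential.
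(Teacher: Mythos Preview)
Your overall strategy---reducing to (iii) $\Rightarrow$ (ii) and invoking Theorem~\ref{thm:T1thm_ver3}---matches the paper's, and your treatment of the local piece $\langle S(1_\Omega), h_R\rangle$ is correct. The gap is exactly where you flag it: the far-field contribution $\langle S(1_{\Omega^c}), h_R\rangle$.

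There are two related problems with your proposed $R$-dependent splitting $\Omega^c = (\Omega^c \cap (Q_1^c \times X_2)) \sqcup (\Omega^c \cap (Q_1 \times Q_2^c))$. First, neither piece is a tensor product $f_1 \otimes f_2$, so the partial kernel representation of Assumption~\ref{assum_3'} does not literally apply; the operator-valued kernel $K_1(x_1,y_1)$ only enters through pairings of tensor-product form. Second, even granting a slicewise bound, the sum over good $R \subset \Omega$ cannot be closed with only the Carleson geometry of Section~\ref{subsec:intepretation}: in the bi-parameter setting, rectangles $R' \subset \Omega$ can approach $\partial\Omega$ along thin strips in one direction while remaining large in the other, so the separation between $R$ and $\Omega^c$ is not uniformly controlled by $\ell(Q_1)$ or $\ell(Q_2)$ alone.

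The paper repairs both issues at once. It replaces $\Omega$ by the enlargement $\widetilde{\Omega} = \{M_{\mathcal{D}'}(\chi_\Omega) > 1/2\}$, so that $\mu(\widetilde{\Omega}) \lesssim \mu(\Omega)$ while every maximal rectangle $R' = I_1 \times I_2 \subset \Omega$ sits well inside $\widetilde{\Omega}$ in the quantitative sense that its dilate $\widehat{I}_1 \times I_2$ (or $I_1 \times \widehat{I}_2$) is still mostly in $\Omega$. The far-field term then involves $1_{(\widetilde{\Omega})^c}$, and the pairing $\langle S(1_{(\widetilde{\Omega})^c}), h_R\rangle$ gains a factor like $(\ell(I_i)/\ell(\widehat{I}_i))^{\alpha_i}$ from the H\"older condition in Assumption~\ref{assum_4'}. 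The sum of these factors over maximal rectangles is exactly what Journ\'e's covering lemma (Lemma~\ref{theorem-cover lemma}) controls by $\mu(\Omega)$. This is the missing ingredient: without the enlargement and Journ\'e's lemma, the summation you identify as ``delicate'' does not close.
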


We point out that the only implication that we need to point out is (iii)$\Rightarrow$(i). In fact, this follows from Journ\'e's covering lemma and the Assumptions \ref{assum_3'} and \ref{assum_4'} of $S$ with $S\in \{T, T^*, T_1, T_1^*\}$. To be more precise, we first state  Journ\'e's covering lemma in the non-homogeneous metric measure spaces.

Let $\{I_{\tau_i}^{k_i} \subset X_i: k_i\in \mathbb{Z},
\tau_i \in I^{k_i} \}$ be the same as in Section 2.6.
We call $R=I_{\tau_1}^{k_1}\times
I_{\tau_2}^{k_2}$ a dyadic rectangle in $\mathcal X$.   Let
$\Omega\subset \mathcal X$ be an open set of finite measure and
$\mathfrak{m}_i(\Omega)$ denote the family of dyadic rectangles
$R\subset\Omega$ which are maximal in the $i$th ``direction'', $i=1,2$. Also we denote by $\mathfrak{m}(\Omega)$ the set of all maximal
dyadic rectangles contained in $\Omega$. For the sake of
simplicity, we denote by $R=I_1\times I_2$ any dyadic rectangles on
$\mathcal X$. Given $R=I_1\times I_2\in \mathfrak{m}_1(\Omega)$,
let $\widehat{I}_2=\widehat{I}_2(I_1)$ be the biggest dyadic cube
containing $I_2$ such that
$$\mu\big(\big(I_1\times \widehat{I}_2\big)\cap \Omega\big)>{1\over 2}\mu(I_1\times \widehat{I}_2),$$
where $\mu=\mu_1\times\mu_2$ is the measure on $\mathcal X$.
Similarly, Given $R=I_1\times I_2\in \mathfrak{m}_2(\Omega)$, let
$\widehat{I}_1=\widehat{I}_1(I_2)$ be the biggest dyadic cube
containing $I_1$ such that
$$\mu\big(\big(\widehat{I}_1\times I_2\big)\cap \Omega\big)>{1\over 2}\mu(\widehat{I}_1\times I_2).$$

For $I_i=I_{\tau_i}^{k_i}\subset {X}_i$, we denote by
$(I_i)_k$, $k\in\mathbb{N}$, any dyadic cube $I_{\beta_i}^{k_i-k}$
containing $I_{\tau_i}^{k_i}$, and $(I_i)_0=I_i$, where $i=1,2$.
Moreover, let $w(x)$ be any increasing function such that
$\sum_{j=0}^\infty jw(C_02^{-j})<\infty$, where $C_0$ is any given
positive constant. In applications, we may take $w(x)=x^\delta$ for
any $\delta>0$.

The Journ\'e-type covering lemma on $\mathcal X$ is the following

\begin{lem}\label{theorem-cover lemma}
Let  $\Omega$ be any open subset in $\mathcal X$ with finite
measure. Then there
exists a positive constant $C$ such that
\begin{eqnarray}\label{1 direction}
\sum_{R=I_1\times I_2\in
\mathfrak{m}_1(\Omega)}\mu(R)w\Big({\ell(I_2)\over\ell(\widehat{I}_2)}\Big)\leq
C\mu(\Omega),
\end{eqnarray}
and
\begin{eqnarray}\label{2 direction}
\sum_{R=I_1\times I_2\in
\mathfrak{m}_2(\Omega)}\mu(R)w\Big({\ell(I_1)\over\ell(\widehat{I}_1)}\Big)\leq
C\mu(\Omega).
\end{eqnarray}

\end{lem}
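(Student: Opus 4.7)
The plan is to establish \eqref{1 direction}; the inequality \eqref{2 direction} then follows by symmetric arguments interchanging the roles of $X_1$ and $X_2$. The main tool will be the $L^2(\mu)$-boundedness of the dyadic strong maximal function
\[
M_s^d f(x_1,x_2) := \sup_{R=I_1\times I_2 \ni (x_1,x_2)} \frac{1}{\mu(R)}\int_R |f|\,d\mu,
\]
which is dominated pointwise by $M_1 M_2 f$ and is therefore $L^2(\mu)$-bounded by the one-parameter Hardy--Littlewood maximal estimates used in the proof of Theorem~\ref{thm:product_duality}. Chebyshev's inequality then delivers the weak-type bound $\mu(\{M_s^d\chi_\Omega > \alpha\}) \lesssim \alpha^{-2}\mu(\Omega)$ for every $\alpha\in(0,1)$.

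First I will observe that the definition of $\widehat{I}_2$, namely $\mu((I_1\times\widehat{I}_2)\cap\Omega) > \tfrac12\mu(I_1\times\widehat{I}_2)$, forces every point of $I_1\times\widehat{I}_2$ to satisfy $M_s^d\chi_\Omega > 1/2$. Hence $I_1\times\widehat{I}_2 \subseteq \widetilde\Omega := \{M_s^d\chi_\Omega > 1/2\}$ for every $R=I_1\times I_2\in\mathfrak{m}_1(\Omega)$, with $\mu(\widetilde\Omega)\lesssim\mu(\Omega)$. Next I will group the rectangles by the ancestor gap $k$: set $\mathcal{F}_k := \{R=I_1\times I_2\in\mathfrak{m}_1(\Omega):\widehat{I}_2=(I_2)_k\}$, so that
\[
\sum_{R\in\mathfrak{m}_1(\Omega)}\mu(R)\,w\!\left(\tfrac{\ell(I_2)}{\ell(\widehat{I}_2)}\right) = \sum_{k\geq 0}w(\delta^k)\sum_{R\in\mathcal{F}_k}\mu(R),
\]
and the task reduces to showing $\sum_{R\in\mathcal{F}_k}\mu(R) \lesssim (k+1)\mu(\Omega)$. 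Combined with the summability hypothesis $\sum_j j\,w(C_0 2^{-j}) < \infty$ (applied with $C_0$ large enough that $w(\delta^k)\leq w(C_0 2^{-k})$), this completes the proof.

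To prove the linear-in-$k$ bound, I will exploit two structural facts. First, for each fixed dyadic cube $I_2\subset X_2$, the first coordinates of rectangles in $\mathfrak{m}_1(\Omega)$ with that second coordinate form a pairwise disjoint family of dyadic cubes in $X_1$, since they are maximal dyadic cubes contained in the open slice $\{x_1\in X_1:\{x_1\}\times I_2\subset \Omega\}$. Second, the maximality of $\widehat{I}_2$ forces $\mu((I_1\times\widehat{I}_2^{(1)})\cap\Omega)\leq \tfrac12\mu(I_1\times\widehat{I}_2^{(1)})$ at the parent. Iterating the strong-maximal construction by $\widetilde\Omega^{(0)}:=\Omega$, $\widetilde\Omega^{(j)}:=\{M_s^d\chi_{\widetilde\Omega^{(j-1)}}>1/2\}$ gives $\mu(\widetilde\Omega^{(j)})\lesssim C^j\mu(\Omega)$; telescoping along the $k$ ancestors separating $I_2$ and $\widehat{I}_2$ and applying the disjointness fact at each of the at most $k$ intermediate levels yields the desired $(k+1)\mu(\Omega)$ estimate.

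The main technical obstacle is the absence of a doubling property for $\mu_2$, which precludes any direct comparison between $\mu_2(I_2)$ and $\mu_2(\widehat{I}_2)$. I will circumvent this by bypassing volumetric comparisons entirely: the argument passes through the strong maximal function and the disjointness of maximal rectangles sharing a common second coordinate, so only the upper doubling hypothesis (which is what the $L^2$-boundedness of $M_s^d$ ultimately rests on) is needed. The linear growth in $k$ thus produced is precisely what the assumed summability $\sum_j j w(C_0 2^{-j})<\infty$ is calibrated to absorb.
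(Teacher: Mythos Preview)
Your overall strategy --- reducing to the $L^2$-boundedness of the dyadic strong maximal function, stratifying $\mathfrak{m}_1(\Omega)$ by the gap $k$ between $I_2$ and $\widehat{I}_2$, and summing the weights against a per-level measure bound --- is exactly what the paper invokes (it simply cites Pipher \cite{P} and the non-homogeneous adaptation in \cite{HM14}, together with the $L^2(\mu)$-boundedness of $M_{\Dd'}$). So architecturally you are aligned with the paper's proof, and your reduction to the target inequality $\sum_{R\in\mathcal{F}_k}\mu(R)\lesssim(k+1)\mu(\Omega)$ is the right one and explains the factor~$j$ in the summability hypothesis on $w$.

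The gap is in the argument for that per-level estimate, which you assert but do not prove. Your sketch records that the iterated enlargements satisfy $\mu(\widetilde\Omega^{(j)})\lesssim C^j\mu(\Omega)$ --- an \emph{exponential} bound --- and then claims that ``telescoping along the $k$ ancestors \dots\ and applying the disjointness fact at each of the at most $k$ intermediate levels'' yields the linear bound $(k+1)\mu(\Omega)$. These two ingredients do not fit together: an exponential estimate on iterated maximal sets cannot be telescoped into a linear one, and the sets $\widetilde\Omega^{(j)}$ for $j\geq 2$ play no role in the two-parameter argument (iteration across \emph{different} parameters is what appears in Pipher's higher-parameter extension, not repeated application of the same strong maximal operator). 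Moreover, your disjointness observation --- that for a \emph{fixed} $I_2$ the admissible $I_1$'s are pairwise disjoint --- does not transfer to fixed ancestors $(I_2)_j$: distinct descendants $I_2,I_2'\subset(I_2)_j$ may well have overlapping or nested maximal first factors, so ``applying disjointness at each intermediate level'' is not available as stated. The actual Pipher argument requires a more careful comparison of $\mathfrak{m}_1(\Omega)$ with $\mathfrak{m}_1(\widetilde\Omega)$, using that $I_1\times\widehat{I}_2\subset\widetilde\Omega$ together with the stopping condition at the parent $\widehat{I}_2^{(1)}$; this combinatorial step is precisely what is missing from your outline.
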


The proof of Lemma \ref{theorem-cover lemma} follows from \cite{P} with the fact that the dyadic strong maximal function $M_{\Dd'}$ on $\mathcal X$ is bounded on $L^2(\mathcal X)$.
See more details from \cite{HM14} for Journ\'e's covering lemma in $\mathbb R^n\times\mathbb R^m$ equiped with non-homogeneous measures.

We now explain the outline of (iii)$\Rightarrow$(i). To see this, we split $1= 1_{\widetilde \Omega} +  1_{(\widetilde \Omega)^c} $, where $\widetilde \Omega = \{ (x_1,x_2)\in\mathcal X:  M_{\Dd'} (\chi_\Omega(x_1,x_2))>1/2 \}$. Hence, $S(1) = S(1_{\widetilde \Omega}) +  S(1_{(\widetilde \Omega)^c})$.  Thus we have,
\begin{align*}
&\bigg(\sum_{\substack{R=Q_1 \times Q_2 \\R \text{ good}, R \subset \Om }}
\sum_{u_1,u_2}
|\langle S1, h_{u_1}^{Q_1} \otimes h_{u_2}^{Q_2}\rangle|^2\bigg)^{1/2}\\
&\lesssim \bigg(\sum_{\substack{R=Q_1 \times Q_2 \\R \text{ good}, R \subset \Om }}
\sum_{u_1,u_2}
|\langle  S(1_{\widetilde \Omega}) , h_{u_1}^{Q_1} \otimes h_{u_2}^{Q_2}\rangle|^2\bigg)^{1/2}+\bigg(\sum_{\substack{R=Q_1 \times Q_2 \\R \text{ good}, R \subset \Om }}
\sum_{u_1,u_2}
|\langle S(1_{(\widetilde \Omega)^c}), h_{u_1}^{Q_1} \otimes h_{u_2}^{Q_2}\rangle|^2\bigg)^{1/2}\\
&=:I+I\!I.
\end{align*}
The first term $I$ is bounded by $\|S(1_{\widetilde \Omega})\|_{L^2(\mathcal X)}$, which is further bounded by $\mu(\Omega)$ following from (iii) and the fact that $M_{\Dd'}$ is bounded on $L^2(\mathcal X)$. The estimate of the second term follows from Lemma \ref{theorem-cover lemma} and Assumptions \ref{assum_3'} and \ref{assum_4'} of $S$ with $S\in \{T, T^*, T_1, T_1^*\}$. See \cite[Section 8]{HM14} for similar details in the Euclidean version.

The function space~$C^{\eta}_{c}(\mathcal{X})$ is the substitute for $C^{\infty}_{c}(\R^n)$ on Euclidean spaces~$\R^n$.
Please refer to Section~\ref{subsec:assume} for the definition of~$C^{\eta}_{\textup{c}}(\mathcal{X})$.

\subsection{Application to Carleson Measures}
Let~$\mathcal{J}$ be a Hilbert space function on a domain~$\mathcal{X}$ with reproducing kernel function~$j_x$.  Recall that this means that $\mathcal{J}$ is a Hilbert space of holomorphic functions on the domain $\mathcal{X}$ so that for each $x\in \mathcal{X}$ we have
$$
f(x)=\langle f,j_x\rangle_{\mathcal{J}}
$$
namely, point evaluation is recovered by inner product against the function $j_x$.  By the Riesz Representation Theorem this means that point evaluation is a bounded linear functional.

An important class of measures for Hilbert function spaces are those for which the embedding inequality holds:
$$
\int_{\mathcal{X}} |f(z)|^2\, d\mu(z)\leq C(\mu)^2\| f\|_{\mathcal{J}}^2\quad \text{for all } f\in\mathcal{J}.
$$
In this context, a measure~$\mu$ is Carleson exactly if the inclusion map~$i$ map from~$\mathcal{J}$ to~$L^2(\mathcal{X};\mu)$ is bounded.  These are called the $\mathcal{J}$-\textit{Carleson measures}.

Using simple functional analysis arguments one can rephrase the embedding condition as one about the boundedness of a certain operator.  See \cite{ARS02} for details.
\begin{prop}\label{app:prop 1}
  A measure~$\mu$ is a $\mathcal{J}$-Carleson measure if and only if the linear map
  \[
  f(z) \rightarrow T(f)(z) = \int_X \textup{Re}j_x(z)f(x)\, d\mu(x)
  \]
  is bounded on~$L^2(\mathcal{X};\mu)$.
\end{prop}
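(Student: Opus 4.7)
My plan is to prove the equivalence by reducing both statements to the boundedness of a single auxiliary operator, namely $S := ii^*$, where $i: \mathcal{J} \to L^2(\mathcal{X};\mu)$ is the inclusion map. Since the $\mathcal{J}$-Carleson condition is precisely the boundedness of $i$, which by elementary functional analysis is equivalent to the boundedness of $S$ on $L^2(\mathcal{X};\mu)$, the task becomes relating $S$ to the operator $T$ of the proposition. First I would use the reproducing property $f(x) = \langle f, j_x\rangle_{\mathcal{J}}$ to compute
\[
i^* g = \int_{\mathcal{X}} g(x)\, j_x\, d\mu(x), \qquad Sg(z) = ii^* g(z) = \int_{\mathcal{X}} g(x)\, j_x(z)\, d\mu(x),
\]
so that $S$ is a positive semi-definite self-adjoint operator on $L^2(\mathcal{X};\mu)$ whose kernel is the Hermitian-symmetric function $j_x(z)$ (with $\overline{j_x(z)} = j_z(x)$). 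The formal identity $\langle Sg, h\rangle_{L^2(\mu)} = \langle i^*g, i^*h\rangle_{\mathcal{J}}$ furnishes a Cauchy--Schwarz inequality on the bilinear form defined by $S$, valid on a suitable dense class of test functions even before assuming $S$ itself is bounded.

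For the forward direction ($\mu$ Carleson $\Rightarrow T$ bounded) I would decompose $j_x(z) = \textup{Re}\,j_x(z) + i\,\textup{Im}\,j_x(z)$. Hermitian symmetry forces $\textup{Im}\,j_x(z)$ to be \emph{antisymmetric} in $(x,z)$, so its contribution to $\langle Sf, f\rangle$ vanishes whenever $f$ is real-valued. This gives $\langle Sf, f\rangle = \langle Tf, f\rangle$ for real $f$, and combined with positivity of $S$ (so that $\|S\| = \sup_f \langle Sf, f\rangle/\|f\|^2$) yields $\|T\|_{L^2_{\R}} \le \|S\|$. Because $T$ has a real kernel, its complex $L^2$-norm coincides with its real $L^2$-norm, and hence $\|T\| \le \|S\|$.

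The main obstacle, and the content of the converse, is to extract boundedness of $S$ from boundedness of $T$. Writing $f = f_1 + if_2$ with real $f_1, f_2$ and expanding, the antisymmetry of $\textup{Im}\,j_x(z)$ produces exactly one surviving cross term:
\[
\langle Sf, f\rangle \;=\; \langle Tf_1, f_1\rangle + \langle Tf_2, f_2\rangle \;+\; 2c, \qquad c \;=\; \textup{Im}\,\langle Sf_2, f_1\rangle.
\]
To control $c$ I would invoke Cauchy--Schwarz for the positive semi-definite form $\langle Sg, h\rangle = \langle i^*g, i^*h\rangle_{\mathcal{J}}$, obtaining
\[
|c|^2 \;\le\; |\langle Sf_2, f_1\rangle|^2 \;\le\; \langle Sf_1, f_1\rangle\,\langle Sf_2, f_2\rangle \;=\; \langle Tf_1, f_1\rangle\,\langle Tf_2, f_2\rangle \;\le\; \|T\|^2\,\|f_1\|^2\,\|f_2\|^2,
\]
where the penultimate equality reuses the real-$f$ identity from the forward direction. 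Combining these bounds gives $\langle Sf, f\rangle \le 2\|T\|\,\|f\|^2$, whence $\|S\| \le 2\|T\|$ by positivity and self-adjointness of $S$, completing the equivalence. The delicate point is that the crucial Cauchy--Schwarz must be applied to the inner product inherited from $\mathcal{J}$ rather than to any assumed boundedness of $S$; thus one must verify that $i^* g$ is well-defined on a sufficiently rich test class (e.g.\ compactly supported continuous $g$) and then extend by density.
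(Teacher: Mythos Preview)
The paper does not actually supply a proof of this proposition; it simply states the result and refers the reader to \cite{ARS02} for details. Your argument is precisely the standard functional-analytic proof behind that citation: identify the Carleson condition with boundedness of $S=ii^*$, compute $S$ as the integral operator with kernel $j_x(z)$, and then exploit the Hermitian symmetry $\overline{j_x(z)}=j_z(x)$ to pass between $S$ and the real-kernel operator $T$. The forward direction and your treatment of the cross term via the Cauchy--Schwarz inequality for the positive form $\langle i^*g,i^*h\rangle_{\mathcal J}$ are both correct, and you rightly flag that this Cauchy--Schwarz step must be justified on a dense test class (e.g.\ compactly supported continuous $g$, for which the Bochner integral $\int g(x)j_x\,d\mu(x)$ lands in $\mathcal J$) before any boundedness of $S$ is known.

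One inconsequential slip: in your expansion of $\langle Sf,f\rangle$ with $f=f_1+if_2$, the cross term carries the sign $-2\,\mathrm{Im}\langle Sf_2,f_1\rangle$ rather than $+2c$; since you immediately bound $|c|$, this has no effect on the conclusion $\|S\|\le 2\|T\|$.
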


This rephrasing lets methods of multiparameter harmonic analysis to be used.  To fall into the framework of the main theorem of this paper we now specialise to the case when $\mathcal{X}=\mathbb{D}^2 := \{(z_1,z_2): |z_1|<1, |z_2|<1\}$, the bidisc.  We will further specialize to a class of reproducing kernel Hilbert spaces given by the following norm.  Given $s=(s_1,s_2)\in\mathbb{R}^2$ consider the space of analytic functions $\mathcal{J}_s$, $f(z_1,z_2)=\sum_{n_1=0}^{\infty}\sum_{n_2=0}^{\infty} \hat{f}(n_1,n_2) z_1^{n_1}z_2^{n_2}$ on the bidisc for which:
$$
\sum_{n_1=0}^{\infty}\sum_{n_2=0}^{\infty}(n_1+1)^{s_1}(n_2+1)^{s_2}|\hat{f}(n_1,n_2)|^2:=\| f\|_{s}^{2}
$$
is finite.  For example, choosing $(s_1,s_2)=(0,0)$ we have $\mathcal{J}_{(0,0)}=H^2(\mathbb{D}^2)$, the Hardy space on the bidisc; and $\mathcal{J}_{(-1,-1)}=A^2(\mathbb{D}^2)$ the Bergman space on the bidisc.  More generally, $\mathcal{J}_s$ are Besov-Sobolev type spaces on the bidisc.  See \cite{AMPVK20} where related results are obtained.

For~$\lambda \in \mathbb{D}^2$, let $K_{\lambda}^{s}(z)$ denote the reproducing kernel for the space $\mathcal{J}_s$. One can show that these kernels factor in the following sense  $K_{\lambda}^{s}(z):= K_{\lambda_1}^{s_1}(z_1) K_{\lambda_2}^{s_2}(z_2)$.  Namely~$K_{\lambda}^{s}$ is a product of~$K_{\lambda_i}^{s_i}(z_i)$, $i =1,2$ because of the structure of the spaces.

We study~$\mu$ or~$\mathbb{D}^2$ such that for $\mathcal{J} \subset L^2(\mathbb{D}^2; \mu)$ we have
\begin{equation}\label{app eq1}
  \int_{\mathbb{D}^2} |f(z)|^2 \,d\mu(z) \ls \|f\|^2_{\mathcal{J}_s}.
\end{equation}
Ideally we would like to obtain geometric necessary and sufficient conditions on $\mu$ so that \eqref{app eq1} holds.  By Proposition~\ref{app:prop 1}, equation~\eqref{app eq1} holds if and only if we also have
\[
T_{\mu,s}f(z) = \int_{\mathbb{D}^2} \textup{Re}(K_z^{s}(w)) f(w) \,d\mu(w)
\]
is bounded on~$L^2(\mathbb{D}^2;\mu)$. Here we point out that  $(\mathbb{D}^2, \rho)$, the bidisc together with the induced Euclidean metric, is a product metric space.

Now observe that the main theorem we have is concerned with $\mu=\mu_1\times\mu_2$, and that by testing \eqref{app eq1} on the reproducing kernels we have that $\mu(B_{r_1}\times B_{r_2})\lesssim r_1r_2$ and so $\mu$ is a product of measures that satisfy the polynomial growth condition.

See Remark \ref{rem:prod kernel} where we point out that for kernels that are products of one variable kernels that the kernel hypotheses in our main theorem all hold.

This is all to say we are in a framework where the main result applies and we have the following theorem.
\begin{thm}\label{thm:appln}
Let $s=(s_1,s_2)\in\mathbb{R}^2$ and let $\mathcal{J}_s$ be defined as above.  Let~$\mu$ be a product measure with non-homogeneous growth.
Then if $S$ satisfies the global testing condition as in (iii) of Proposition \ref{re1.2} with $S\in \{T_{\mu,s},T^*_{\mu,s}, (T_{\mu,s})_1, (T_{\mu,s})_1^*\}$,
we get that  $T_{\mu,s}: L^2(\mathbb{D}^2;\mu) \rightarrow L^2(\mathbb{D}^2;\mu)$ is bounded.  In particular, $\mu$ is $\mathcal{J}_s$-Carleson
\[
\int_{\mathbb{D}^2} |f(z)|^2 \,d\mu(z) \ls \|f\|_{\mathcal{J}}^2.
\]
\end{thm}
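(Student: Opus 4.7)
The plan is to reduce the statement to an application of Proposition~\ref{re1.2}, so the bulk of the work is verifying that the operator $T_{\mu,s}$ fits the hypotheses of that proposition. First I would set up the geometric framework. The bidisc $\mathbb{D}^2$ equipped with the Euclidean metric $\rho=\rho_1\times\rho_2$ is a product quasimetric space (in fact a product metric space). Testing the assumed embedding~\eqref{app eq1} on the normalized reproducing kernels $K^{s_i}_{\lambda_i}/\|K^{s_i}_{\lambda_i}\|_{\mathcal{J}_{s_i}}$ forces a polynomial ball growth $\mu_i(B(z_i,r))\lesssim r^{d_i}$ on each factor, so the dominating functions $\lambda_i(z_i,r):=r^{d_i}$ exhibit $\mu=\mu_1\times\mu_2$ as an upper doubling product measure. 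Hence $(\mathbb{D}^2,\rho,\mu)$ is a non-homogeneous bi-parameter quasimetric space in the sense of Section~\ref{subsec:product_space}.

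Next I would verify that $T_{\mu,s}$ is a bi-parameter SIO. The kernel of $T_{\mu,s}$ is $\operatorname{Re}(K^s_z(w))=\operatorname{Re}(K^{s_1}_{z_1}(w_1)K^{s_2}_{z_2}(w_2))$, which, after writing $\operatorname{Re}(ab)=\tfrac{1}{2}(ab+\overline{ab})$, decomposes into a finite sum of tensor products of one-variable holomorphic/antiholomorphic reproducing kernels. The classical pointwise bounds and derivative bounds for $K^{s_i}_{\lambda_i}(z_i)$ on the disc (which are standard Calder\'on--Zygmund kernel estimates with respect to $\lambda_i$ and regularity exponent $\alpha_i>0$ depending on $s_i$) then make each factor a one-parameter standard kernel in the sense of \cite[Section~2.3]{HM12a}. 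By Remark~\ref{rem:prod kernel}, the full, partial, H\"older, and mixed kernel estimates of Assumptions~\ref{assum_1}--\ref{assum_4} are automatic. Moreover, the tensor structure yields the stronger operator-valued estimates in Assumptions~\ref{assum_3'}--\ref{assum_4'}: the partial kernels in the first variable are literally multiplication operators on $L^2(\mathbb{D};\mu_2)$ by $K^{s_2}_{z_2}(w_2)$, whose $L^2\to L^2$ norms are controlled by the one-variable size and H\"older estimates for $K^{s_1}_{z_1}(w_1)$. The symmetric statement handles the second variable.

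With these assumptions verified, I would then invoke Proposition~\ref{re1.2}: the hypothesis of Theorem~\ref{thm:appln} is exactly statement~(iii) of that proposition for $S\in\{T_{\mu,s},T^*_{\mu,s},(T_{\mu,s})_1,(T_{\mu,s})_1^*\}$, and the implication (iii)$\Rightarrow$(i) yields that $T_{\mu,s}$ is bounded on $L^2(\mathbb{D}^2;\mu)$. Finally, by Proposition~\ref{app:prop 1}, this boundedness is equivalent to the Carleson embedding $\int_{\mathbb{D}^2}|f|^2\,d\mu\lesssim\|f\|_{\mathcal{J}_s}^2$, which is the desired conclusion.

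The main obstacle I anticipate is the verification of the operator-valued Assumption~\ref{assum_4'} for $T_{\mu,s}$, because this is stronger than the pointwise scalar-valued Assumption~\ref{assum_4} used throughout the body of the paper. The tensor product structure of $K^s_z$ makes this manageable: the partial kernel $K_1(x_1,y_1)$ acts on $L^2(\mathbb{D};\mu_2)$ as multiplication by a function whose $L^\infty$ norm (and hence operator norm) is controlled by the one-variable Calder\'on--Zygmund estimate $|K^{s_1}_{x_1}(y_1)|\lesssim 1/\lambda_1(x_1,\rho_1(x_1,y_1))$, with analogous control of the H\"older differences. The careful bookkeeping of constants independent of the test functions $f_2,g_2$ (as required by Assumption~\ref{assum_4'}) is the only point where one must be attentive; everything else is a direct transcription of one-variable kernel estimates into the product setting via Remark~\ref{rem:prod kernel}.
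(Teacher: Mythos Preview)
Your overall strategy matches the paper's: the paper does not give a separate formal proof of Theorem~\ref{thm:appln} but sets it up in the preceding paragraphs exactly as you do, by (a) observing that $(\mathbb{D}^2,\rho,\mu)$ is a non-homogeneous bi-parameter quasimetric space with polynomial ball growth, (b) invoking the tensor structure of $K^s_\lambda$ together with Remark~\ref{rem:prod kernel} to get the kernel assumptions, (c) applying (iii)$\Rightarrow$(i) of Proposition~\ref{re1.2}, and (d) concluding via Proposition~\ref{app:prop 1}.

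There is, however, a concrete misidentification in your verification of Assumption~\ref{assum_4'}. For a tensor kernel $K^{(1)}(x_1,y_1)K^{(2)}(x_2,y_2)$ the partial kernel $K_1(x_1,y_1)$ in Assumption~\ref{assum_3'} is \emph{not} a multiplication operator on $L^2(\mathbb{D};\mu_2)$; it is the scalar $K^{(1)}(x_1,y_1)$ times the one-variable \emph{integral} operator $T^{(2)}$ with kernel $K^{(2)}$. Consequently the bound $\|K_1(x_1,y_1)\|_{L^2\to L^2}\lesssim 1/\lambda_1(x_1,\rho_1(x_1,y_1))$ does not come from an $L^\infty$ estimate on $K^{(2)}$ but from the one-variable size bound on $K^{(1)}$ \emph{together with} the $L^2(\mu_2)$-boundedness of $T^{(2)}$. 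The same issue already sits inside Assumption~\ref{assum_4} (the constraint $C(f_2,g_2)\lesssim\|f_2\|\|g_2\|$ for a tensor kernel equals $|\langle T^{(2)}f_2,g_2\rangle|\lesssim\|f_2\|\|g_2\|$), and the paper's Remark~\ref{rem:prod kernel} is silent on this point as well. So your argument and the paper's share the same implicit input: one-parameter $L^2$-boundedness of the factor operators, which in this setting amounts to the one-variable $\mathcal{J}_{s_i}$-Carleson condition on each $\mu_i$. You should either state this as an additional hypothesis or argue it separately from the one-variable theory; as written, the ``multiplication operator'' description would lead you astray if you tried to carry out the estimate.
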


In certain situations it should be possible to reduce checking the conditions that $S(1)\in \bmo_{\textup{prod}}(\mu)$ to simpler geometric conditions of product type.  And further the conditions will in fact be necessary and sufficient for $\mathcal{J}_s$-Carleson measures.  We do not purse this here, but point to the related results in \cite{AMPVK20}.

\bigskip
\noindent {\bf Acknowledgement}: Ji Li would like to thank Henri Martikainen for introducing the paper \cite{HM14} when both were attending the American Mathematical Society Western Spring Sectional Meeting at University of New Mexico in April, 2014.  Trang Nguyen would like to thank Tuomas Hyt\"onen for helpful discussion when both were in the harmonic analysis conference celebrating the mathematical legacy of Alan McIntosh at Australian National University in Feb 2018.

\addcontentsline{toc}{section}{Bibliography}

\end{document}